\providecommand{\tabularnewline}{\\}
\providecommand{\algorithmname}{Algorithm}
\numberwithin{equation}{section}
\numberwithin{figure}{section}
\theoremstyle{plain}
\newtheorem{thm}{\protect\theoremname}
\theoremstyle{definition}
\newtheorem{defn}[thm]{\protect\definitionname}
\theoremstyle{plain}
\newtheorem{prop}[thm]{\protect\propositionname}
\theoremstyle{plain}
\newtheorem{cor}[thm]{\protect\corollaryname}
\theoremstyle{remark}
\newtheorem{rem}[thm]{\protect\remarkname}
\theoremstyle{plain}
\newtheorem{lem}[thm]{\protect\lemmaname}
\newlist{casenv}{enumerate}{4}
\setlist[casenv]{leftmargin=*,align=left,widest={iiii}}
\setlist[casenv,1]{label={{\itshape\ \casename} \arabic*.},ref=\arabic*}
\setlist[casenv,2]{label={{\itshape\ \casename} \roman*.},ref=\roman*}
\setlist[casenv,3]{label={{\itshape\ \casename\ \alph*.}},ref=\alph*}
\setlist[casenv,4]{label={{\itshape\ \casename} \arabic*.},ref=\arabic*}
\definecolor{lightgray}{rgb}{0.9,0.9,0.9}
\definecolor{lightred}{rgb}{1,0.8,0.8}
\definecolor{lightgreen}{rgb}{0.6,1,0.6}
\definecolor{lightyellow}{rgb}{1,1,0.5}
\definecolor{lightgrey}{rgb}{0.8,0.8,0.8}
\providecommand{\casename}{Case}
\providecommand{\corollaryname}{Corollary}
\providecommand{\definitionname}{Definition}
\providecommand{\lemmaname}{Lemma}
\providecommand{\propositionname}{Proposition}
\providecommand{\remarkname}{Remark}
\providecommand{\theoremname}{Theorem}
\begin{document}
\title{Pairwise Multi-marginal Optimal Transport and Embedding for Earth
Mover's Distance}
\author{Cheuk Ting Li and Venkat Anantharam\\
EECS, UC Berkeley, Berkeley, CA, USA\\
Email: ctli@berkeley.edu, ananth@eecs.berkeley.edu}
\maketitle
\begin{abstract}
We investigate the problem of pairwise multi-marginal optimal transport,
that is, given a collection of probability distributions $\{P_{\alpha}\}$
on a Polish space $\mathcal{X}$, to find a coupling $\{X_{\alpha}\}$,
$X_{\alpha}\sim P_{\alpha}$, such that $\mathbf{E}[c(X_{\alpha},X_{\beta})]\le r\inf_{X\sim P_{\alpha},Y\sim P_{\beta}}\mathbf{E}[c(X,Y)]$
for all $\alpha,\beta$, where $c$ is a cost function and $r\ge1$.
In other words, every pair $(X_{\alpha},X_{\beta})$ has an expected
cost at most a factor of $r$ from its lowest possible value. This
can be regarded as a locality sensitive hash function for probability
distributions, and has applications such as robust and distributed
computation of transport plans. It can also be considered as a bi-Lipschitz
embedding of the collection of probability distributions into the
space of random variables taking values on $\mathcal{X}$. For $c(x,y)=\Vert x-y\Vert_{2}^{q}$
on $\mathbb{R}^{n}$, where $q>0$, we show that a finite $r$ is
attainable if and only if either $n=1$ or $0<q<1$. As $n\to\infty$,
the growth rate of the smallest possible $r$ is exactly $\Theta(n^{q/2})$
if $0<q<1$. Hence, the metric space of probability distributions
on $\mathbb{R}^{n}$ with finite $q$-th absolute moments, $0<q<1$,
with the earth mover's distance (or 1-Wasserstein distance) with respect
to the snowflake metric $c(x,y)=\Vert x-y\Vert_{2}^{q}$, is bi-Lipschitz
embeddable into $L_{1}$ with distortion $O(n^{q/2})$. If we consider
$c(x,y)=\Vert x-y\Vert_{2}$ (i.e., $q=1$) on the grid $[0..s]^{n}$
instead of $\mathbb{R}^{n}$, then $r=O(\sqrt{n}\log s)$ is attainable,
which implies the embeddability of the space of probability distributions
on $[0..s]^{n}$ into $L_{1}$ with distortion $O(\sqrt{n}\log s)$,
and improves upon the $O(n\log s)$ result by Indyk and Thaper. The
case of the discrete metric cost $c(x,y)=\mathbf{1}\{x\neq y\}$ and
more general metric and ultrametric costs are also investigated.

\end{abstract}

\section{Introduction}

The Monge-Kantorovich optimal transport problem \cite{monge1781memoire,kantorovich1942translocation}
 is to find a coupling between two probability distributions $P_{1},P_{2}$
over the Polish space $\mathcal{X}$ such that the expected cost $\mathbf{E}[c(X_{1},X_{2})]$
is minimized, where the marginal distributions satisfy $X_{i}\sim P_{i}$,
and $c:\mathcal{X}^{2}\to\mathbb{R}_{\ge0}$ is a cost function. Its
generalization to more than two marginal distributions has been studied
by Kellerer \cite{kellerer1984duality}, Gangbo and {\'S}wi{\k{e}}ch
\cite{gangbo1998optimal}, Heinich \cite{heinich2002probleme}, Carlier
\cite{carlier2003class}, and Pass \cite{pass2011uniqueness,pass2012local}.
Given a collection of probability distributions $P_{1},\ldots,P_{m}$
over $\mathcal{X}$, and the cost function $c_{\mathrm{m}}:\mathcal{X}^{m}\to\mathbb{R}_{\ge0}$,
the multi-marginal optimal transport problem is to minimize $\mathbf{E}[c_{\mathrm{m}}(X_{1},\ldots,X_{m})]$
over all couplings of $P_{1},\ldots,P_{m}$ (i.e., $X_{i}\sim P_{i}$).
Multi-marginal optimal transport has various applications, for example
in economics, condensed matter physics and image processing (see the
references in \cite{pass2012multi}).

In this paper, we consider a different generalization of the optimal
transport problem to more than two marginal distributions $P_{1},\ldots,P_{m}$,
which we call the \emph{pairwise multi-marginal optimal transport
problem}. Let $c:\mathcal{X}^{2}\to\mathbb{R}_{\ge0}$ be a symmetric
cost function. We study the set of achievable tuples of pairwise
costs $\{\mathbf{E}[c(X_{\alpha},X_{\beta})]\}_{\alpha,\beta}$ over
couplings of $\{P_{\alpha}\}_{\alpha}$.\footnote{This setting is related to the original multi-marginal optimal transport
problem in the sense that finding the set of achievable $m(m-1)/2$
tuples $\{\mathbf{E}[c(X_{\alpha},X_{\beta})]\}_{\alpha<\beta}$ is
equivalent to finding the optimal cost for the cost function $c_{\mathrm{m}}(x_{1},\ldots,x_{m})=\sum_{i<j}\nu_{i,j}c(x_{i},x_{j})$
for all values of $\{\nu_{i,j}\}$, i.e. the Lagrangian formulation.
The case where $c(x,y)=\Vert x-y\Vert_{2}^{2}$, $c_{\mathrm{m}}(x_{1},\ldots,x_{m})=\sum_{i<j}c(x_{i},x_{j})$
was studied in \cite{gangbo1998optimal}.} Specifically, we are interested in finding $r\ge1$ such that there
exists a coupling $\{X_{\alpha}\}_{\alpha}$ satisfying
\begin{equation}
C_{c}^{*}(P_{\alpha},P_{\beta})\le\mathbf{E}[c(X_{\alpha},X_{\beta})]\le rC_{c}^{*}(P_{\alpha},P_{\beta})\label{eq:ecr_bd}
\end{equation}
for all $\alpha,\beta$, where $C_{c}^{*}(P_{\alpha},P_{\beta}):=\inf_{X\sim P_{\alpha},\,Y\sim P_{\beta}}\mathbf{E}[c(X,Y)]$
is the optimal value of the original 2-marginal optimal transport
problem, which is the earth mover's distance (or 1-Wasserstein distance)
when $c$ is a metric. In other words, the expected cost between
each pair $(X_{\alpha},X_{\beta})$ is within a factor of $r$ from
the lowest possible expected cost when we only consider couplings
of $P_{\alpha},P_{\beta}$ and ignore the rest. This setting can be
generalized to an arbitrary collection of probability distributions
$\{P_{\alpha}\}_{\alpha}$, or even the collection of all distributions
over $\mathcal{X}$ in some cases.

This setting has appeared in other forms in the literature. For instance,
a coupling achieving \eqref{eq:ecr_bd} can be considered as a sketch
or a locality sensitive hash function for the estimation of $C_{c}^{*}$
in the settings in \cite{charikar2002similarity,indyk2003fast,lv2004image,andoni2009efficient}.
It is also utilized in the labeling problem for classification studied
in \cite{kleinberg2002approximation,archer2004approximate}. Furthermore,
this setting is connected to the embedding of metric spaces \cite{indyk2003fast,khot2006nonembeddability,naor2007planar},
in the sense that a coupling achieving \eqref{eq:ecr_bd} can be considered
as a bi-Lipschitz embedding of the collection of probability distributions
into the space of random variables taking values on $\mathcal{X}$
(see Section \ref{sec:geom}). Nevertheless, these previous works
do not regard their constructions as couplings, and are often only
applicable for finite spaces $\mathcal{X}$. The coupling interpretation
allows us to discover more applications in optimal transport, namely
robust, distributed and online computation of transport plans, and
a multi-agent matching problem with a fairness requirement. Refer
to Section \ref{sec:applications} for a discussion.

The case where $\mathcal{X}$ is finite, $\{P_{\alpha}\}_{\alpha}$
is the collection of all probability distributions over $\mathcal{X}$,
and $c$ is a metric was studied in Kleinberg and Tardos' work on
metric labeling \cite{kleinberg2002approximation}, and Charikar's
work on locality sensitive hashing \cite{charikar2002similarity}\footnote{We remark that \cite{kleinberg2002approximation} and \cite{charikar2002similarity}
do not regard their constructions as a coupling of all probability
distributions over $\mathcal{X}$. While they provide a way to compute
$X_{\alpha}\sim P_{\alpha}$ given any probability distribution $P_{\alpha}$,
the distribution of $X_{\alpha}$ is used as a means to certain goals
(metric labeling in \cite{kleinberg2002approximation}, and  the
approximation of earth mover's distance in \cite{charikar2002similarity}),
and is not considered as a goal itself. In this paper, we consider
$\{X_{\alpha}\}_{\alpha}$ as a coupling for the pairwise multi-marginal
optimal transport problem, and provide the first systematic study
on this problem.}, which show that $r=O(\log|\mathcal{X}|\log\log|\mathcal{X}|)$ is
achievable. Their results can be improved to $O(\log|\mathcal{X}|)$
using a tighter bound on the approximation of the metric by a tree
metric given by Fakcharoenphol, Rao and Talwar \cite{fakcharoenphol2004tight}.
In this paper, we show that
\[
r=55.7(1+\log|\mathcal{X}|)=O(\log|\mathcal{X}|)
\]
 is achievable, using a completely different (and arguably simpler)
construction compared to the tree metric construction.

The case where $c(x,y)=\mathbf{1}\{x\neq y\}$ is the discrete metric
was studied by Kleinberg and Tardos \cite{kleinberg2002approximation}
(only for finite $\mathcal{X}$), and also recently by Angel and Spinka
\cite{angel2019pairwise}.\footnote{Angel and Spinka also raised the question for general cost functions
\cite[Question 19]{angel2019pairwise}, though they did not give any
result for cost functions other than the discrete metric.} Their result implies that $r=2$ is attainable for the discrete metric,
that is, for any countable collection of probability distributions
$\{P_{\alpha}\}_{\alpha}$, there exists a coupling such that, for
all $\alpha,\beta$,
\[
\mathbf{P}(X_{\alpha}\neq X_{\beta})\le2d_{\mathrm{TV}}(P_{\alpha},P_{\beta}),
\]
where $d_{\mathrm{TV}}$ is the total variation distance. One of the
constructions in \cite{angel2019pairwise} (Coupling II) coincides
with the Poisson functional representation previously studied by Li
and El Gamal \cite{sfrl_trans}, and Li and Anantharam \cite{li2018unified}
(note that the construction in \cite{sfrl_trans,li2018unified} applies
to general distributions, while Coupling II in \cite{angel2019pairwise}
is only for discrete distributions). In this paper, we show the result
in \cite{angel2019pairwise} for any Polish space $\mathcal{X}$,
as a corollary of the exact formula for the Poisson matching lemma
in \cite{li2018unified}.  To the best of our knowledge, the pairwise
multi-marginal optimal transport setting (or any equivalent setting)
has not been studied for any other infinite spaces $\mathcal{X}$
and cost functions $c$.

In this paper, we present a general theorem which gives an upper bound
on $r$ when the cost function $c(x,y)=(d(x,y))^{q}$ is a snowflake
metric \cite{assouad1983plongements}, i.e., a power of a metric $d$,
where $0<q<1$ (see Theorem \ref{thm:metric_pow}). As a consequence,
when $c(x,y)=\Vert x-y\Vert_{2}^{q}$ over $\mathcal{X}=\mathbb{R}^{n}$
where $q>0$, and $\{P_{\alpha}\}_{\alpha}$ is the collection of
all probability distributions over $\mathbb{R}^{n}$, we can show
that 
\[
r=\frac{10.55}{1-q}n^{q/2}
\]
is attainable in \eqref{eq:ecr_bd} when $0<q<1$. We prove that such
$r$ does not exist when $n\ge2$ and $q\ge1$.  Moreover, we show
that, as $n\to\infty$, the growth rate of the smallest possible $r$
is exactly $\Theta(n^{q/2})$ when $0<q<1$. As a consequence, we
can show that the metric space of probability distributions on $\mathbb{R}^{n}$
with finite $q$-th absolute moments (i.e., probability distributions
$P$ with $\mathbf{E}_{X\sim P}[\Vert X\Vert_{2}^{q}]<\infty$), $0<q<1$,
with the earth mover's distance (or 1-Wasserstein distance) $C_{c}^{*}$
where $c(x,y)=\Vert x-y\Vert_{2}^{q}$, is embeddable into $L_{1}$
(the space of Lebesgue measurable functions $f:[0,1]\to\mathbb{R}$
with $\Vert f\Vert_{1}<\infty$) with a bi-Lipschitz embedding function
with distortion $10.55n^{q/2}/(1-q)$ (see Section \ref{sec:geom}).
In contrast, when $q=1$, $n\ge2$, the non-existence of such an $L_{1}$
embedding is proved in \cite{naor2007planar}.\footnote{We remark that the embedding for earth mover's distance with snowflake
metrics is also studied in \cite{bavckurs2014better}, though they
focus on the case where the probability distributions are discrete
and supported on small sets.}

If we consider only the grid points $\mathcal{X}=[0..s]^{n}$, $s\in\mathbb{N}$,
$c(x,y)=\Vert x-y\Vert_{2}$, and $\{P_{\alpha}\}_{\alpha}$ is the
collection of all probability distributions over $[0..s]^{n}$, then
we show that
\begin{align*}
r & =28.66\sqrt{n}\log(s+1)
\end{align*}
is attainable in \ref{eq:ecr_bd}. This implies the embeddability
of the space of probability distributions over $[0..s]^{n}$ into
$L_{1}$ with distortion $O(\sqrt{n}\log s)$, which improves upon
the $O(n\log s)$ result by Indyk and Thaper \cite{indyk2003fast}
(also see \cite{kleinberg2002approximation,charikar2002similarity,fakcharoenphol2004tight}).
This improvement is due to the fact that the construction used in
this paper, called sequential Poisson functional representation, does
not rely on a hierarchical partition of the space (e.g. the quadtree
or hyperoctree in \cite{indyk2003fast}, or tree metrics in \cite{kleinberg2002approximation,charikar2002similarity,fakcharoenphol2004tight}).
Partitioning the $n$-dimensional space into hypercubes is ill-fitted
for the $\ell_{2}$ metric, since a hypercube is more ``pointy''
compared to a ball. A hypercube has larger surface area and larger
diameter than a ball of the same volume; both contribute to a larger
distortion in \cite{indyk2003fast}. The sequential Poisson functional
representation utilizes balls instead of hypercubes, and thus is more
suitable for the $\ell_{2}$ metric. We give an algorithm with time
complexity $O(2^{n}|\mathcal{X}|\log^{2}|\mathcal{X}|)$ for computing
this coupling.

Furthermore, we show that if $c$ is an ultrametric over $\mathcal{X}$
(where $(\mathcal{X},c)$ is a complete separable metric space), $r=7.56$
is attainable. If $(\mathcal{X},c)$ is any complete separable metric
space, and $\{P_{\alpha}\}_{\alpha}$ is a finite collection with
size $m\ge2$, then $r=23.1\log m$ is attainable. The case where
$\mathcal{X}$ is a Riemannian manifold is also investigated.

This paper is organized as follows. In Section \ref{sec:main}, we
present the main results of this paper. In Section \ref{sec:applications},
we describe some applications of the pairwise multi-marginal optimal
transport problem. In Section \ref{sec:upc}, we define the universal
Poisson coupling, which is the main ingredient of the proofs of the
achievability results in this paper. In Section \ref{sec:spfr}, we
generalize the universal Poisson coupling to a random process. In
Section \ref{sec:lbs}, we present several impossibility results.
In Section \ref{sec:props}, we give some miscellaneous properties
that can be proved about the problem. In Section \ref{sec:trunc},
we study a modification of the definition of the pairwise multi-marginal
optimal transport problem which may be suitable for convex costs.
In Section \ref{sec:geom}, we discuss the bi-Lipschitz embedding
of the collection of probability distributions into the space of random
variables taking values on $\mathcal{X}$. In Section \ref{sec:unresolved},
we state several conjectures.

\subsection*{Notation}

Throughout the paper, we assume that the space $\mathcal{X}$ is Polish
with its Borel $\sigma$-algebra. Logarithms are to the natural base.
We use ``$:=$'' to denote equality by definition.  Write $\mathbb{R}_{\ge a}:=[a,\infty)$,
$\mathbb{Z}_{\ge a}:=\mathbb{R}_{\ge a}\cap\mathbb{Z}$, $\mathbb{N}:=\mathbb{Z}_{\ge1}$.
 Write $[a..b]:=[a,b]\cap\mathbb{Z}$, $[a..b):=[a,b)\cap\mathbb{Z}$,
$(a..b]:=(a,b]\cap\mathbb{Z}$, $(a..b):=(a,b)\cap\mathbb{Z}$.

For $f,g:\mathbb{N}\to\mathbb{R}_{\ge0}$, we write $f(n)=O(g(n))$
if $\lim\sup_{n\to\infty}f(n)/g(n)<\infty$. We write $f(n)=\Omega(g(n))$
if $\lim\inf_{n\to\infty}f(n)/g(n)>0$. We write $f(n)=\Theta(g(n))$
if $f(n)=O(g(n))$ and $f(n)=\Omega(g(n))$.

For a collection $\mathcal{E}$ of subsets of $\mathcal{X}$, the
$\sigma$-algebra generated by $\mathcal{E}$ is denoted as $\sigma(\mathcal{E})$.

The Lebesgue measure over $\mathbb{R}^{n}$ is denoted as $\lambda$.
The uniform distribution over $S$ (a finite set or a subset of $\mathbb{R}^{n}$
with finite positive Lebesgue measure) is denoted as $\mathrm{Unif}(S)$.
The degenerate distribution $\mathbf{P}\{X=a\}=1$ is denoted as $\delta_{a}$.
We use the terms ``probability measure'' and ``probability distribution''
interchangeably.

The standard basis for $\mathbb{R}^{n}$ is denoted as $\{\mathrm{e}_{1},\ldots,\mathrm{e}_{n}\}$,
where $(\mathrm{e}_{i})_{j}=\mathbf{1}\{i=j\}$. The Hamming distance
is defined as $d_{\mathrm{H}}(x,y):=|\{i:x_{i}\neq y_{i}\}|$ for
$x,y\in\mathcal{X}^{n}$.

We write $L_{p}$ for the space of Lebesgue measurable functions $f:[0,1]\to\mathbb{R}$
with $\Vert f\Vert_{p}:=(\int_{0}^{1}|f(t)|^{p}\mathrm{d}t)^{1/p}<\infty$.
We write $\ell_{p}:=\{x\in\mathbb{R}^{\mathbb{N}}:\Vert x\Vert_{p}<\infty\}$,
where $\Vert x\Vert_{p}:=(\sum_{i=1}^{\infty}|x_{i}|^{p})^{1/p}$.

Let $d$ be a metric over the space $\mathcal{X}$. For $S\subseteq\mathcal{X}$,
write $\mathrm{diam}(S):=\sup_{x,y\in S}d(x,y)$. Metric balls are
denoted as $\mathcal{B}_{d,w}(x):=\{y\in\mathcal{X}:\,d(x,y)\le w\}$,
e.g., $\mathcal{B}_{\Vert\cdot\Vert_{p},w}(x)$ is the $\ell_{p}$
ball of radius $w$ centered at $x$ (where $d(x,y)=\Vert x-y\Vert_{p}$).
We omit $d$ and write $\mathcal{B}_{w}(x)$ if $d$ is clear from
the context. The volume of the unit $\ell_{p}$ ball over $\mathbb{R}^{n}$
is \cite{dirichlet1839volume,wang2005volumes}
\begin{align}
\mathrm{V}_{n,p} & :=\lambda\big(\mathcal{B}_{\Vert\cdot\Vert_{p},1}(0)\big)\nonumber \\
 & =\frac{2^{n}(\mathit{\Gamma}(1+1/p))^{n}}{\mathit{\Gamma}(1+n/p)},\label{eq:lp_ball_vol}
\end{align}
where $\mathit{\Gamma}$ denotes the gamma function.

A function $f$ from the metric space $(\mathcal{X},d_{\mathcal{X}})$
to the metric space $(\mathcal{Y},d_{\mathcal{Y}})$ (where the metrics
can take the value $\infty$) is $r$-Lipschitz, $r>0$, if $d_{\mathcal{Y}}(f(x_{1}),f(x_{2}))\le rd_{\mathcal{X}}(x_{1},x_{2})$
for any $x_{1},x_{2}\in\mathcal{X}$. The function is $(r_{1},r_{2})$-bi-Lipschitz,
$r_{1},r_{2}>0$, if, for any $x_{1},x_{2}\in\mathcal{X}$,
\[
r_{2}^{-1}d_{\mathcal{X}}(x_{1},x_{2})\le d_{\mathcal{Y}}(f(x_{1}),f(x_{2}))\le r_{1}d_{\mathcal{X}}(x_{1},x_{2}),
\]
i.e., $f$ is $r_{1}$-Lipschitz, and has an inverse (restricted to
the range of $f$) that is $r_{2}$-Lipschitz. The distortion of a
function $f$ is the infimum of $r_{1}r_{2}$ such that $f$ is $(r_{1},r_{2})$-bi-Lipschitz.

For two $\sigma$-finite measures $\mu,\nu$ over $\mathcal{X}$ (a
Polish space with its Borel $\sigma$-algebra) such that $\nu$ is
absolutely continuous with respect to $\mu$ (denoted as $\nu\ll\mu$),
the Radon-Nikodym derivative is written as 
\[
\frac{\mathrm{d}\nu}{\mathrm{d}\mu}:\,\mathcal{X}\to[0,\infty).
\]
If $\nu_{1},\nu_{2}\ll\mu$ (but $\nu_{1}\ll\nu_{2}$ may not hold),
we write
\begin{equation}
\frac{\mathrm{d}\nu_{1}}{\mathrm{d}\nu_{2}}(x)=\frac{\mathrm{d}\nu_{1}}{\mathrm{d}\mu}(x)\left(\frac{\mathrm{d}\nu_{2}}{\mathrm{d}\mu}(x)\right)^{-1}\in[0,\infty],\label{eq:rnderiv}
\end{equation}
which is $0$ if $(\mathrm{d}\nu_{1}/\mathrm{d}\mu)(x)=0$, and is
$\infty$ if $(\mathrm{d}\nu_{1}/\mathrm{d}\mu)(x)>0$ and $(\mathrm{d}\nu_{2}/\mathrm{d}\mu)(x)=0$.

Let $\mathcal{X}_{i}$ be a measurable space with $\sigma$-algebra
$\mathcal{F}_{i}$ for $i\in I$, where $I$ is an index set. The
product $\sigma$-algebra (over the space $\prod_{i\in I}\mathcal{X}_{i}$)
is defined as
\[
\bigotimes_{i\in I}\mathcal{F}_{i}:=\sigma\bigg(\bigg\{\prod_{i\in I}E_{i}:\,E_{i}\in\mathcal{F}_{i},\,|\{i:\,E_{i}\neq\mathcal{X}_{i}\}|<\infty\bigg\}\bigg).
\]
If $(\mathcal{X}_{i},\mathcal{F}_{i})=(\mathcal{X},\mathcal{F})$
for all $i\in I$, we write $\mathcal{F}^{\otimes I}:=\bigotimes_{i\in I}\mathcal{F}_{i}$.

Let $\mathcal{X},\mathcal{Y},\mathcal{Z}$ be measurable spaces with
$\sigma$-algebras $\mathcal{F},\mathcal{G},\mathcal{H}$ respectively.
For a measure $\mu$ over $\mathcal{X}$, and a measurable function
$g:\mathcal{X}\to\mathcal{Y}$, the pushforward measure (which is
a measure over $\mathcal{Y}$) is denoted as $g_{*}\mu(E):=\mu(g^{-1}(E))$
for $E\in\mathcal{G}$. For sub-$\sigma$-algebra $\tilde{\mathcal{F}}\subseteq\mathcal{F}$,
denote the restriction of $\mu$ to $\tilde{\mathcal{F}}$ as $\mu\!\!\upharpoonright_{\tilde{\mathcal{F}}}$
(i.e., $\mu\!\!\upharpoonright_{\tilde{\mathcal{F}}}(E)=\mu(E)$ for
$E\in\tilde{\mathcal{F}}$). For $E\in\mathcal{F}$, denote the $E$-restriction
of $\mu$ as $\mu_{E}$ (i.e., $\mu_{E}(\tilde{E})=\mu(\tilde{E}\cap E)$
for $\tilde{E}\in\mathcal{F}$). For a probability measure $P$ over
$\mathcal{X}$ and $E\in\mathcal{F}$ with $P(E)>0$, denote the conditional
distribution as $P(\cdot|E)=(1/P(E))P_{E}$. If $\kappa:\mathcal{X}\times\mathcal{G}\to[0,1]$
is a probability kernel from $\mathcal{X}$ to $\mathcal{Y}$, and
$P$ is a probability measure over $\mathcal{X}$, then the semidirect
product (which is a probability measure over $\mathcal{X}\times\mathcal{Y}$)
is denoted as $P\kappa$, and the $\mathcal{Y}$-marginal of the semidirect
product is denoted as $\kappa\circ P$. We sometimes write $\kappa(E|x)=\kappa(x,E)$.
If $\kappa_{1}:\mathcal{X}\times\mathcal{G}\to[0,1]$, $\kappa_{2}:(\mathcal{X}\times\mathcal{Y})\times\mathcal{H}\to[0,1]$
are probability kernels from $\mathcal{X}$ to $\mathcal{Y}$, and
from $\mathcal{X}\times\mathcal{Y}$ to $\mathcal{Z}$, respectively,
then the semidirect product is denoted as $\kappa_{1}\kappa_{2}:\mathcal{X}\times(\mathcal{G}\otimes\mathcal{H})\to[0,1]$
(which satisfies $\kappa_{1}\kappa_{2}(x,E_{1}\times E_{2}):=\int_{E_{1}}\kappa_{2}((x,y),E_{2})\kappa_{1}(x,\mathrm{d}y)$
for any $E_{1}\in\mathcal{G}$, $E_{2}\in\mathcal{H}$).

For a measure $\mu$ over a topological space $\mathcal{X}$, its
support is defined as
\[
\mathrm{supp}(\mu):=\left\{ x\in\mathcal{X}:\,\mu(S)>0\;\forall\,\mathrm{open}\,\mathrm{set}\;S\ni x\right\} .
\]
Note that if $\mu$ is a $\sigma$-finite measure over a Polish space
$\mathcal{X}$, and $k\in\mathbb{N}$, then $|\mathrm{supp}(\mu)|\le k$
if and only if $\mu=\sum_{i=1}^{k}a_{i}\delta_{x_{i}}$ for some $x_{i}\in\mathcal{X}$
and $a_{i}\ge0$. \footnote{Since $\mathcal{X}$ is Polish, it is second-countable, and hence
strongly Lindel\"of. We have $(\mathrm{supp}(\mu))^{\mathrm{c}}=\{x\in\mathcal{X}:\,\exists\,\mathrm{open}\;S\ni x\;\mathrm{s.t.}\;\mu(S)=0\}=\bigcup_{\mathrm{open}\;S:\,\mu(S)=0}S$,
and hence there is a countable subcover $\{S_{i}\}_{i\in\mathbb{N}}$
satisfying $\mu(S_{i})=0$ such that $(\mathrm{supp}(\mu))^{\mathrm{c}}=\bigcup_{i}S_{i}$,
and hence $\mu((\mathrm{supp}(\mu))^{\mathrm{c}})=0$. If $|\mathrm{supp}(\mu)|\le k$,
we have $\mu=\sum_{x\in\mathrm{supp}(\mu)}\mu(\{x\})\delta_{x}$ (note
that $\mu(\{x\})<\infty$ by $\sigma$-finiteness). The other direction
follows directly from the fact that $\mathcal{X}$ is Hausdorff.}

The total variation distance between two probability distributions
$P,Q$ over $\mathcal{X}$ is denoted as $d_{\mathrm{TV}}(P,Q)=\sup_{A\subseteq\mathcal{X}\,\mathrm{measurable}}|P(A)-Q(A)|$.

For a set $\mathcal{X}$, denote the set of finite or countably infinite
subsets of $\mathcal{X}$ as $[\mathcal{X}]^{\le\aleph_{0}}$. For
a $\sigma$-finite measure $\mu$ over the measurable space $\mathcal{X}$,
denote the probability distribution of the set of points of a Poisson
process with intensity measure $\mu$ as $\mathrm{PP}(\mu)$, which
is a probability distribution over the space of integer-valued measures
over $\mathcal{X}$. Refer to \cite{last2017lectures} for the definition
of this space. 

The collection of all probability distributions over the measurable
space $(\mathcal{X},\mathcal{F})$ is denoted as $\mathcal{P}(\mathcal{X})$.
For a measure $\mu$ over $\mathcal{X}$, define $\mathcal{P}_{\ll\mu}(\mathcal{X}):=\{P\in\mathcal{P}(\mathcal{X}):\,P\ll\mu\}$.
For a collection of probability distributions $\{P_{\alpha}\}_{\alpha\in\mathcal{A}}$
over the space $\mathcal{X}$, the set of all couplings of $\{P_{\alpha}\}_{\alpha\in\mathcal{A}}$
on the standard probability space, denoted as $\Gamma_{\lambda}(\{P_{\alpha}\}_{\alpha\in\mathcal{A}})$,
is defined as the set of collections of random variables $\{X_{\alpha}\}_{\alpha\in\mathcal{A}}$,
$X_{\alpha}:[0,1]\to\mathcal{X}$ on the standard probability space
$([0,1],\mathcal{L}([0,1]),\lambda_{[0,1]})$ (where $\mathcal{L}([0,1])$
is the set of Lebesgue measurable sets) such that $X_{\alpha*}\lambda_{[0,1]}=P_{\alpha}$
(i.e., $X_{\alpha}\sim P_{\alpha}$) for all $\alpha\in\mathcal{A}$.
We use this definition  so that $\Gamma_{\lambda}(\{P_{\alpha}\}_{\alpha\in\mathcal{A}})$
can be defined on the standard probability space regardless of whether
$\mathcal{A}$ is countable. On the other hand, we can define the
set of all coupling distributions of $\{P_{\alpha}\}_{\alpha\in\mathcal{A}}$
as
\begin{equation}
\Gamma(\{P_{\alpha}\}_{\alpha}):=\left\{ Q\in\mathcal{P}(\mathcal{X}^{\mathcal{A}}):\,Q_{\alpha}=P_{\alpha},\,\forall\,\alpha\right\} ,\label{eq:coupling_def}
\end{equation}
where $\mathcal{X}^{\mathcal{A}}$ denotes the measurable space $(\mathcal{X}^{\mathcal{A}},\mathcal{F}^{\otimes\mathcal{A}})$,
and $Q_{\alpha}$ is the $\alpha$-th marginal of $Q$. Note that
$\Gamma(\{P_{\alpha}\}_{\alpha})$ is comprised of distributions over
$\mathcal{X}^{\mathcal{A}}$ (rather than random variables), which
is the more conventional definition of the set of couplings. When
$\mathcal{A}$ is finite or countably infinite, and $\mathcal{X}$
is Polish, then the two definitions are equivalent in the sense that
\[
\Gamma(\{P_{\alpha}\}_{\alpha})=\left\{ (u\mapsto\{X_{\alpha}(u)\}_{\alpha})_{*}\lambda_{[0,1]}:\,\{X_{\alpha}\}_{\alpha}\in\Gamma_{\lambda}(\{P_{\alpha}\}_{\alpha})\right\} .
\]
Nevertheless, when $\mathcal{A}$ is uncountable, the two definitions
are different since we may not be able to construct random variables
on the standard probability space with distribution $Q$ for some
$Q\in\mathcal{P}(\mathcal{X}^{\mathcal{A}})$.

\medskip{}

\[
\]

\section{Main Results\label{sec:main}}

In this paper, we only consider cost functions satisfying the following
conditions.
\begin{defn}
\label{def:costfcn}A \emph{symmetric cost function} over the Polish
space $\mathcal{X}$ is a function $c:\mathcal{X}^{2}\to\mathbb{R}_{\ge0}$
that is measurable (over the product $\sigma$-algebra of $\mathcal{X}^{2}$),
symmetric ($c(x,y)=c(y,x)$ for all $x,y$), not the constant zero
function, and satisfies $c(x,x)=0$ for all $x\in\mathcal{X}$. For
probability distributions $P,Q$ over $\mathcal{X}$ and a symmetric
cost function $c$, the optimal value of Kantorovich's optimal transport
problem (which is the 1-Wasserstein distance if $c$ is also a metric)
is denoted as
\begin{equation}
C_{c}^{*}(P,Q):=\inf_{(X,Y)\in\Gamma_{\lambda}(P,Q)}\mathbf{E}[c(X,Y)].\label{eq:otp2}
\end{equation}
\end{defn}

Let $\{P_{\alpha}\}_{\alpha\in\mathcal{A}}$ be a collection of probability
distributions over $\mathcal{X}$, a Polish space with its Borel $\sigma$-algebra,
where $\mathcal{A}$ is an arbitrary  index set. For any symmetric
cost function $c$ and coupling $\{X_{\alpha}\}_{\alpha\in\mathcal{A}}\in\Gamma_{\lambda}(\{P_{\alpha}\}_{\alpha\in\mathcal{A}})$
(i.e., $X_{\alpha}\sim P_{\alpha}$), we have $\mathbf{E}[c(X_{\alpha},X_{\beta})]\ge C_{c}^{*}(P_{\alpha},P_{\beta})$
for $\alpha,\beta\in\mathcal{A}$, where $C_{c}^{*}(P_{\alpha},P_{\beta})$
is the optimal value of the optimal transport problem \eqref{eq:otp2}
with only two marginals $P_{\alpha},P_{\beta}$. This lower bound
can be attained (or approached) when $|\mathcal{A}|=2$. However,
for $|\mathcal{A}|\ge3$, it may not be possible to attain this lower
bound for all pairs $\alpha,\beta$ simultaneously. We define the
pairwise coupling ratio to measure the gap from this lower bound.
\begin{defn}
\label{def:pcr}For a symmetric cost function $c$ over the Polish
space $\mathcal{X}$, a collection of probability distributions $\{P_{\alpha}\}_{\alpha\in\mathcal{A}}$
and a coupling $\{X_{\alpha}\}_{\alpha\in\mathcal{A}}\in\Gamma_{\lambda}(\{P_{\alpha}\}_{\alpha\in\mathcal{A}})$,
the \emph{pairwise coupling ratio} is defined as
\[
r_{c}(\{X_{\alpha}\}_{\alpha\in\mathcal{A}}):=\inf\left\{ r\ge1:\,\mathbf{E}[c(X_{\alpha},X_{\beta})]\le rC_{c}^{*}(P_{\alpha},P_{\beta}),\,\forall\,\alpha,\beta\in\mathcal{A}\right\} .
\]
The infimum is regarded as $\infty$ if no such $r$ exists. The
\emph{optimal pairwise coupling ratio} of the collection $\{P_{\alpha}\}_{\alpha\in\mathcal{A}}$
is defined as 
\begin{equation}
r_{c}^{*}(\{P_{\alpha}\}_{\alpha\in\mathcal{A}}):=\inf_{\{X_{\alpha}\}_{\alpha\in\mathcal{A}}\in\Gamma_{\lambda}(\{P_{\alpha}\}_{\alpha\in\mathcal{A}})}r_{c}(\{X_{\alpha}\}_{\alpha\in\mathcal{A}}).\label{eq:rc_def}
\end{equation}
We are particularly interested in the optimal pairwise coupling ratio
of the collection of all probability distributions over $\mathcal{X}$,
i.e., $r_{c}^{*}(\mathcal{P}(\mathcal{X}))$, and that of the collection
of all probability distributions dominated by some measure $\mu$,
i.e., $r_{c}^{*}(\mathcal{P}_{\ll\mu}(\mathcal{X}))$ . \footnote{While we usually denote a collection $\{P_{\alpha}\}_{\alpha\in\mathcal{A}}$
using the index set $\mathcal{A}$, any collection of probability
distributions $\mathcal{P}$ can be indexed using itself as the index
set, i.e., $\mathcal{P}=\{P_{\alpha}\}_{\alpha\in\mathcal{P}}$, $P_{\alpha}=\alpha$.
}
\end{defn}

The coupling $\{X_{\alpha}\}_{\alpha}$ can also be regarded as an
embedding of the collection of probability distributions $\{P_{\alpha}\}_{\alpha}$
into the space of random variables taking values on $\mathcal{X}$
(i.e., measurable functions from the sample space $[0,1]$ of the
standard probability space to $\mathcal{X}$), under the constraints
that each $P_{\alpha}$ is mapped to an $X_{\alpha}$ with distribution
$P_{\alpha}$, and the embedding roughly preserves the distance in
the sense that $\mathbf{E}[c(X_{\alpha},X_{\beta})]$ is within a
constant factor from $C_{c}^{*}(P_{\alpha},P_{\beta})$. This is formally
discussed in Section \ref{sec:geom}.

\medskip{}

\subsection{Discrete Metric}

In this paper, we find the optimal pairwise coupling ratio for the
discrete metric $\mathbf{1}_{\neq}(x,y):=\mathbf{1}\{x\neq y\}$,
where $C_{\mathbf{1}_{\neq}}^{*}(P,Q)=d_{\mathrm{TV}}(P,Q)$ is the
total variation distance. The proof is based on the Poisson functional
representation \cite{sfrl_trans,li2018unified}, and is given in Section
\ref{sec:upc}. We remark that a similar result is also given in \cite[Theorem 2, Proposition 6]{angel2019pairwise},
though the earlier work in \cite{sfrl_trans,li2018unified} provides
a more general and unified construction of the underlying coupling
(which is more general than Coupling II in \cite{angel2019pairwise}).
\footnote{Other constructions based on rejection sampling were given in \cite{kleinberg2002approximation}
and Coupling I in \cite{angel2019pairwise}. We use the Poisson functional
representation instead due to its simplicity.}
\begin{thm}
\label{thm:rd_bd}For any $\sigma$-finite measure $\mu$ over the
Polish space $\mathcal{X}$ with $|\mathrm{supp}(\mu)|\ge2$ (where
$|\mathrm{supp}(\mu)|$ is the cardinality of the support of $\mu$),
we have
\[
2\left(1-\frac{1}{|\mathrm{supp}(\mu)|}\right)\le r_{\mathbf{1}_{\neq}}^{*}(\mathcal{P}_{\ll\mu}(\mathcal{X}))\le\min\left\{ 2,\,\frac{|\mathrm{supp}(\mu)|+1}{3}\right\} .
\]
Moreover, there exists a coupling $\{X_{\alpha}\}_{\alpha}$ such
that $r_{\mathbf{1}_{\neq}}(\{X_{\alpha}\}_{\alpha})\le\min\{2,\,(|\mathrm{supp}(\mu)|+1)/3\}$.
In particular, when $|\mathrm{supp}(\mu)|=\infty$, we have 
\[
r_{\mathbf{1}_{\neq}}^{*}(\mathcal{P}_{\ll\mu}(\mathcal{X}))=2.
\]
\end{thm}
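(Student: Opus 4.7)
\emph{Overall plan.} I would use the universal Poisson coupling obtained from the Poisson functional representation of \cite{sfrl_trans,li2018unified} (see Section~\ref{sec:upc}). Starting from a Poisson process $\Pi=\{(V_i,T_i)\}_{i\ge1}$ on $\mathcal{X}\times[0,\infty)$ with intensity $\mu\otimes\lambda$, I would define, for each $P\in\mathcal{P}_{\ll\mu}(\mathcal{X})$ with density $f:=\mathrm{d}P/\mathrm{d}\mu$, the random variable $X_P:=V_{i^{*}(P)}$ where $i^{*}(P):=\arg\min_{i} T_i/f(V_i)$. The standard PFR argument gives $X_P\sim P$, so this \emph{single} construction simultaneously couples all of $\mathcal{P}_{\ll\mu}(\mathcal{X})$ on one probability space, and will provide the witness for both upper bounds claimed in the theorem.

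\emph{Upper bound.} To obtain $r\le 2$, I would plug the two distributions $P_\alpha,P_\beta$ into the exact Poisson matching lemma of \cite{li2018unified} and simplify the resulting expression to $\mathbf{P}(X_\alpha\ne X_\beta)\le 2d_{\mathrm{TV}}(P_\alpha,P_\beta)$. For the refined bound $(|\mathrm{supp}(\mu)|+1)/3$, which is informative only when $|\mathrm{supp}(\mu)|\in\{2,3,4,5\}$, I would specialize to $\mu=\sum_{k=1}^{m}a_{k}\delta_{x_{k}}$. In this case the coupling degenerates to a race among $m$ independent exponential clocks, one per atom, and $\mathbf{P}(X_\alpha\ne X_\beta)$ admits a tractable closed form. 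A worst-case optimisation of $\mathbf{P}(X_\alpha\ne X_\beta)/d_{\mathrm{TV}}(P_\alpha,P_\beta)$ over density pairs supported on $m$ atoms should then produce the factor $(m+1)/3$.

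\emph{Lower bound.} For any $N$ with $2\le N\le|\mathrm{supp}(\mu)|$, I would use standard splitting properties of $\sigma$-finite measures on Polish spaces (together with the footnoted description of $\mathrm{supp}(\mu)$) to choose disjoint measurable sets $A_{1},\dots,A_{N}$ with $\mu(A_{k})>0$, and define
\begin{equation*}
P_{i}:=\frac{1}{N-1}\sum_{k\in[1..N]\setminus\{i\}}\frac{\mu_{A_{k}}}{\mu(A_{k})}\;\in\;\mathcal{P}_{\ll\mu}(\mathcal{X}),\qquad i\in[1..N].
\end{equation*}
A direct calculation gives $d_{\mathrm{TV}}(P_{i},P_{j})=1/(N-1)$ for $i\ne j$. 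The core combinatorial step goes as follows: for any coupling $\{X_{i}\}_{i\in[1..N]}$, let $I_{i}\in[1..N]$ be the (a.s.\ unique) index with $X_{i}\in A_{I_{i}}$; the support constraint forces $I_{i}\ne i$. Writing $N_{k}:=|\{i:I_{i}=k\}|$, I would use (i) $\sum_{k}N_{k}=N$, (ii) $N_{k}\le N-1$ (since $I_{k}\ne k$), and (iii) the elementary inequality $\sum_{k}N_{k}^{2}\le(N-1)^{2}+1$ for non-negative integers under (i)--(ii), whose maximum is attained at $(N-1,1,0,\dots,0)$. Combining this with the identity $\sum_{k}N_{k}^{2}=N+2|\{(i,j):i<j,\,I_{i}=I_{j}\}|$ forces at least $N-1$ pairs $(i,j)$ with $I_{i}\ne I_{j}$, hence (by disjointness of the $A_{k}$'s) the same lower bound on the number of pairs with $X_{i}\ne X_{j}$. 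Averaging over the $\binom{N}{2}$ pairs shows some pair satisfies $\mathbf{P}(X_{i}\ne X_{j})\ge 2/N$, yielding ratio $\ge 2(1-1/N)$. Setting $N=|\mathrm{supp}(\mu)|$ when finite, and sending $N\to\infty$ when $|\mathrm{supp}(\mu)|=\infty$, completes the lower bound (including the last sentence of the statement).

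\emph{Main obstacle.} The $r\le 2$ bound and the lower bound are both essentially routine once the Poisson matching lemma and the elementary combinatorial estimate above are in hand. The hardest part, I expect, will be the refined upper bound $(|\mathrm{supp}(\mu)|+1)/3$: it does not follow from the loose $2d_{\mathrm{TV}}$ estimate and calls for a delicate extremal analysis of the universal Poisson coupling on $m$-atom measures, showing that the worst-case pair of densities attains exactly the constant $(m+1)/3$ --- matching (up to scaling) the symmetric family $\{P_{i}\}$ used in the lower bound.
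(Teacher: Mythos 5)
Your lower bound and your $r\le 2$ upper bound are sound: the $2d_{\mathrm{TV}}$ estimate via the Poisson matching lemma is exactly the paper's route, and your all-pairs counting argument (capping $N_k\le N-1$, bounding $\sum_k N_k^2$, and averaging over $\binom{N}{2}$ pairs) is a valid variant of the paper's cycle argument — the paper instead notes that the $X_i$ cannot all coincide, so at least two of the $N$ cyclically adjacent pairs differ, and averages over the cycle; both yield $2(1-1/N)$. (Minor care: take $A_k$ with $0<\mu(A_k)<\infty$, or replace $\mu_{A_k}/\mu(A_k)$ by any probability measure dominated by $\mu_{A_k}$, as the paper does.)

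The genuine gap is the refined upper bound $(|\mathrm{supp}(\mu)|+1)/3$. Your plan — an extremal analysis of the universal Poisson coupling over pairs supported on $m$ atoms — cannot produce this constant, because the Poisson coupling's worst-case ratio is $2$ already for $m=3$. Concretely, on three atoms take $P=(a,1-a,0)$ and $Q=(0,1-a,a)$ with $\mu$ the counting measure: then $d_{\mathrm{TV}}(P,Q)=a$, while the exact formula gives
\[
d_{\mathrm{PC}}(P,Q)=1-\left(1+\frac{2a}{1-a}\right)^{-1}=\frac{2a}{1+a},
\]
so $d_{\mathrm{PC}}/d_{\mathrm{TV}}=2/(1+a)\to 2$ as $a\to0$; this saturates the general bound $d_{\mathrm{PC}}\le 2d_{\mathrm{TV}}/(1+d_{\mathrm{TV}})$ in the small-$d_{\mathrm{TV}}$ regime. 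Since $r_{\mathbf{1}_{\neq}}^{*}(\mathcal{P}([1..3]))=4/3<2$, the Poisson coupling is not a witness for the $(m+1)/3$ bound, and no amount of optimizing over density pairs will change that: the construction itself must change. The paper's proof of this part uses a different coupling altogether: draw a uniformly random permutation $\varsigma$ of the $s$ atoms and apply the quantile coupling to the relabeled distributions $\{\varsigma_{*}P_{\alpha}\}$. Then $\mathbf{P}(X_{\alpha}\neq X_{\beta})\le\mathbf{E}[|\varsigma(X_{\alpha})-\varsigma(X_{\beta})|]$, the quantile coupling is optimal for the cost $|x-y|$ so one may compare with any coupling $(\tilde{X}_{\alpha},\tilde{X}_{\beta})$, and for distinct $x,y$ a uniform permutation gives $\mathbf{E}[|\varsigma(x)-\varsigma(y)|]=(s+1)/3$; combining these yields $\mathbf{P}(X_{\alpha}\neq X_{\beta})\le\frac{s+1}{3}d_{\mathrm{TV}}(P_{\alpha},P_{\beta})$. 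You would need to adopt this (or some other non-Poisson) construction to close the gap.
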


Hence, for any collection of probability distributions $\{P_{\alpha}\}_{\alpha\in\mathcal{A}}$
(where there exists a $\sigma$-finite measure $\mu$ such that $P_{\alpha}\ll\mu$
for all $\alpha$), there exists a coupling $\{X_{\alpha}\}_{\alpha\in\mathcal{A}}$
such that $\mathbf{P}(X_{\alpha}\neq X_{\beta})\le2d_{\mathrm{TV}}(P_{\alpha},P_{\beta})$
for all $\alpha,\beta$. As consequences of Theorem \ref{thm:rd_bd},
$r_{\mathbf{1}_{\neq}}^{*}(\{P_{\alpha}\}_{\alpha\in\mathcal{A}})\le2$
for any countable collection $\{P_{\alpha}\}_{\alpha\in\mathcal{A}}$
(assuming $\mathcal{A}=\mathbb{N}$, we can take $\mu=\sum_{i=1}^{\infty}2^{-i}P_{i}$),
$r_{\mathbf{1}_{\neq}}^{*}(\mathcal{P}(\mathbb{Z}))=2$, and $r_{\mathbf{1}_{\neq}}^{*}(\mathcal{P}_{\ll\lambda}(\mathbb{R}^{n}))=2$,
where $\mathcal{P}_{\ll\lambda}(\mathbb{R}^{n})$ is the collection
of all continuous probability distributions over $\mathbb{R}^{n}$.
\footnote{It is possible to remove the condition about $\mu$ and prove Theorem
\ref{thm:rd_bd} on $\mathcal{P}(\mathcal{X})$ instead of $\mathcal{P}_{\ll\mu}(\mathcal{X})$,
if we lift the restriction that the coupling has to be defined on
the standard probability space. The proof is given in Appendix \ref{subsec:pf_rd_bd_nonst}.

It is unknown whether $r_{\mathbf{1}_{\neq}}^{*}(\mathcal{P}(\mathcal{X}))\le2$
(if we require the coupling to be defined on the standard probability
space) for an uncountable Polish space $\mathcal{X}$. Nevertheless,
we can show that $r_{\mathbf{1}_{\neq}}^{*}(\{P\in\mathcal{P}(\mathcal{X}):\,\mathrm{supp}(P)\le k\})\le k$
for any Polish space $\mathcal{X}$ and $k\in\mathbb{N}$. Therefore
the existence of a $\sigma$-finite measure $\mu$ such that $P_{\alpha}\ll\mu$
for all non-degenerate $P_{\alpha}\in\{P_{\alpha}\}_{\alpha}$ (non-degenerate
means $P_{\alpha}\neq\delta_{x}$ for all $x\in\mathcal{X}$) is not
a necessary condition for $r_{\mathbf{1}_{\neq}}^{*}(\{P_{\alpha}\}_{\alpha})\le2$
to hold. The proof is given in Appendix \ref{subsec:pf_rd_bd_supp2}.}

\medskip{}

\subsection{Snowflake Metric Cost}

We present the main result in this paper, which is a general upper
bound on $r_{c}^{*}(\mathcal{P}(\mathcal{X}))$ for the case where
the symmetric cost function $c(x,y)=(d(x,y))^{q}$ is a snowflake
metric, i.e., power of a metric $d$, where $0<q<1$. The proof is
given in Section \ref{subsec:metric_pow}.
\begin{thm}
\label{thm:metric_pow}Let $(\mathcal{X},d)$ be a complete separable
metric space. Consider the symmetric cost function $c(x,y)=(d(x,y))^{q}$,
$0<q<1$. Let $\mathcal{B}_{w}(x):=\{y\in\mathcal{X}:\,d(x,y)\le w\}$.
Let $\mu$ be a $\sigma$-finite measure over $\mathcal{X}$, and
$\Psi>0$ satisfying:
\begin{itemize}
\item $0<\mu(\mathcal{B}_{w}(x))<\infty$ for any $x\in\mathcal{X}$, $w>0$;
\item For any $x,y\in\mathcal{X}$ and $w>0$,
\begin{equation}
\frac{\mu(\mathcal{B}_{w}(x)\backslash\mathcal{B}_{w}(y))}{\mu(\mathcal{B}_{w}(x))}\le\frac{\Psi d(x,y)}{w};\label{eq:metric_pow_delta_dist}
\end{equation}
\item $\mu$ satisfies that \footnote{Condition \eqref{eq:metric_pow_limsup} is automatically satisfied,
for example, if $\mu(\mathcal{B}_{w}(x))$ only depends on $w$, or
$\mathrm{diam}(\mathcal{X}):=\sup\{d(x,y):x,y\in\mathcal{X}\}<\infty$.
It is also implied by \eqref{eq:metric_pow_delta_dist} if $d$ is
an intrinsic metric \cite{burago2001course}. We can show this as
follows: for any $x,y$ and $w\ge d(x,y)$, let $k:=\lceil4\Psi\rceil$,
and $z_{0},\ldots,z_{k}$ such that $z_{0}=x$, $z_{k}=y$, and $d(x_{i-1},x_{i})\le2k^{-1}d(x,y)$
(see \cite[Corollary 2.4.17]{burago2001course}). By \eqref{eq:metric_pow_delta_dist},
we have $\mu(\mathcal{B}_{w}(x_{i-1}))\le\mu(\mathcal{B}_{w}(x_{i}))+\mu(\mathcal{B}_{w}(x_{i-1})\backslash\mathcal{B}_{w}(x_{i}))\le\mu(\mathcal{B}_{w}(x_{i}))+2k^{-1}\Psi\mu(\mathcal{B}_{w}(x_{i-1}))$,
and thus $(1-2k^{-1}\Psi)\mu(\mathcal{B}_{w}(x_{i-1}))\le\mu(\mathcal{B}_{w}(x_{i}))$.
Hence $(1-2k^{-1}\Psi)^{k}\mu(\mathcal{B}_{w}(x))\le\mu(\mathcal{B}_{w}(y))$.}
\begin{equation}
\underset{w\to\infty}{\lim\sup}\sup_{x,y\in\mathcal{X}:\,d(x,y)\le w}\frac{\mu(\mathcal{B}_{w}(x))}{\mu(\mathcal{B}_{w}(y))}<\infty.\label{eq:metric_pow_limsup}
\end{equation}
\end{itemize}
Then we have
\[
r_{c}^{*}(\mathcal{P}(\mathcal{X}))<7.56\cdot\frac{(2.47\Psi)^{q}}{1-q}.
\]
\end{thm}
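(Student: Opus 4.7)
The plan is to build the coupling via the \emph{sequential Poisson functional representation} described in Section \ref{sec:spfr}, using the universal Poisson coupling of Section \ref{sec:upc} as the single-scale ingredient, and then integrate the resulting collision probabilities over scales to recover the $q$-th moment bound $(1-q)^{-1}$ behaviour characteristic of the snowflake cost. The guiding identity is
\[
\mathbf{E}[(d(X_{P},X_{Q}))^{q}] \;=\; q\int_{0}^{\infty}\!\!w^{q-1}\,\mathbf{P}\!\left(d(X_{P},X_{Q})>w\right)\mathrm{d}w,
\]
which converts a family of pairwise collision bounds at all scales $w$ into the desired cost estimate.

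For a fixed scale $w>0$, I would first construct a ``smoothed'' distribution $\tilde{P}_{w}\in\mathcal{P}_{\ll\mu}(\mathcal{X})$ whose density against $\mu$ is $\tilde{p}_{w}(y)=\int\mathbf{1}_{\mathcal{B}_{w}(x)}(y)/\mu(\mathcal{B}_{w}(x))\,P(\mathrm{d}x)$ --- that is, sample $x\sim P$ then $y$ uniformly (with respect to $\mu$) from $\mathcal{B}_{w}(x)$. Apply Theorem \ref{thm:rd_bd} to the family $\{\tilde{P}_{w}\}_{P\in\mathcal{P}(\mathcal{X})}$ (all dominated by $\mu$) to obtain a coupling $\{\tilde{X}_{P}^{(w)}\}$ on a common Poisson process with $\mathbf{P}(\tilde{X}_{P}^{(w)}\neq\tilde{X}_{Q}^{(w)})\le 2\,d_{\mathrm{TV}}(\tilde{P}_{w},\tilde{Q}_{w})$. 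The hypothesis \eqref{eq:metric_pow_delta_dist} is precisely what one needs to bound the smoothed total variation: for any transport plan $(X,Y)$ of $(P,Q)$ one has
\[
d_{\mathrm{TV}}(\tilde{P}_{w},\tilde{Q}_{w})\;\le\;\mathbf{E}\!\left[\tfrac{\mu(\mathcal{B}_{w}(X)\triangle\mathcal{B}_{w}(Y))}{\mu(\mathcal{B}_{w}(X)\cup\mathcal{B}_{w}(Y))}\right]\;\lesssim\;\Psi\,\mathbf{E}[d(X,Y)]/w,
\]
giving, after optimizing over plans, $d_{\mathrm{TV}}(\tilde{P}_{w},\tilde{Q}_{w})\le \Psi\,C_{d}^{*}(P,Q)/w$.

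Next I would combine scales $w_{k}:=2^{k}$, $k\in\mathbb{Z}$, via the sequential Poisson functional representation: run the universal Poisson coupling at the coarsest scale, then iteratively refine the selected point at geometrically finer scales using independent Poisson processes, producing a terminal random variable $X_{P}\sim P$ such that $d(X_{P},\tilde{X}_{P}^{(w_{k})})$ is controlled by $w_{k}$ with high probability. Condition \eqref{eq:metric_pow_limsup} ensures that the coarse-scale tail of this construction converges, so $X_{P}$ is well-defined. A union bound across scales yields
\[
\mathbf{P}(d(X_{P},X_{Q})>Cw_{k})\;\lesssim\;\min\!\Big(1,\;\sum_{j\ge k}\tfrac{\Psi\,C_{d}^{*}(P,Q)}{w_{j}}\Big)\;\lesssim\;\min(1,\,\Psi\,C_{d}^{*}(P,Q)/w_{k}).
\]
Plugging this into the displayed identity and splitting the integral at $w\asymp \Psi\, C_{d}^{*}(P,Q)$ produces
\[
\mathbf{E}[(d(X_{P},X_{Q}))^{q}]\;\lesssim\;\tfrac{(2.47\Psi)^{q}}{1-q}\,(C_{d}^{*}(P,Q))^{q}\;\le\;\tfrac{(2.47\Psi)^{q}}{1-q}\,C_{c}^{*}(P,Q),
\]
where the last step uses Jensen applied to the concave function $t\mapsto t^{q}$ to obtain $(C_{d}^{*}(P,Q))^{q}\le C_{c}^{*}(P,Q)$ for $c=d^{q}$. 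Tracking the geometric-sum and integral constants carefully, together with the factor $2$ coming from Theorem \ref{thm:rd_bd}, should yield the stated $7.56$.

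The main obstacle is Step 3: making the sequential Poisson functional representation work as a single coupling of \emph{all} probability distributions simultaneously on the standard probability space, with the required multi-scale consistency $d(X_{P},\tilde{X}_{P}^{(w_{k})})\lesssim w_{k}$. This requires care in verifying that the iterative refinement terminates (needing \eqref{eq:metric_pow_limsup} in the upward direction and the regularity of $\mu(\mathcal{B}_{w}(x))$ in the downward direction), and in extracting tight enough per-scale constants so that the telescoping over all $k\in\mathbb{Z}$ does not blow up the prefactor. The rest of the argument --- the smoothed total-variation bound, the integration identity, and the Jensen step --- is essentially a routine computation once the multi-scale coupling is in hand.
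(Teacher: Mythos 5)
Your multiscale architecture (smoothing $P$ by the uniform-ball kernel $\mathrm{U}\mathcal{B}_{w}$, coupling the smoothed distributions through a Poisson process at each scale, chaining scales via the sequential Poisson functional representation, and integrating collision probabilities over $w$) is the same as the paper's. However, there is a genuine gap in the final conversion step, and it is not a matter of constants. Jensen's inequality for the concave map $t\mapsto t^{q}$ gives, for any coupling, $\mathbf{E}[(d(X,Y))^{q}]\le(\mathbf{E}[d(X,Y)])^{q}$, and hence $C_{c}^{*}(P,Q)\le(C_{d}^{*}(P,Q))^{q}$ --- the \emph{opposite} of the inequality $(C_{d}^{*})^{q}\le C_{c}^{*}$ that you invoke. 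The gap between the two sides is unbounded: take $P=\delta_{x}$ and $Q=(1-\epsilon)\delta_{x}+\epsilon\delta_{y}$ with $d(x,y)=M$; then $C_{c}^{*}=\epsilon M^{q}$ while $(C_{d}^{*})^{q}=(\epsilon M)^{q}$, so $(C_{d}^{*})^{q}/C_{c}^{*}=\epsilon^{q-1}\to\infty$ as $\epsilon\to0$. Consequently a bound of the form $\mathbf{E}[(d(X_{P},X_{Q}))^{q}]\lesssim\Psi^{q}(1-q)^{-1}(C_{d}^{*}(P,Q))^{q}$, which is all your chain of estimates delivers, does not imply the theorem.

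The structural cause is that you collapse the per-scale bound too early: replacing $\mathbf{E}[\min\{\Psi\,d(\tilde{X},\tilde{Y})/w,\,1\}]$ by $\min\{\Psi\,C_{d}^{*}(P,Q)/w,\,1\}$ (``optimizing over plans'' inside the total-variation estimate) retains only the first moment of the transport distance, whereas the snowflake cost requires the $q$-th moment of the \emph{random} distance under a fixed coupling. The paper's proof keeps a single fixed coupling $(\tilde{X}_{\alpha},\tilde{X}_{\beta})$ throughout, bounds the collision probability at scale $w$ by $\mathbf{E}[\min\{\Psi(1-e^{-\eta})^{-1}e^{\eta(j+\theta)}d(\tilde{X}_{\alpha},\tilde{X}_{\beta}),1\}]$ with the minimum \emph{inside} the expectation, integrates over scales using the identity $\mathbf{E}[Z^{q}]=\int_{0}^{\infty}q(1-q)t^{q-2}\,\mathbf{E}[\min\{Z,t\}]\,\mathrm{d}t$ to produce $\mathbf{E}[(d(\tilde{X}_{\alpha},\tilde{X}_{\beta}))^{q}]$ directly (this is where the $(1-q)^{-1}$ factor arises), and only at the very end takes the infimum over couplings to reach $C_{c}^{*}(P_{\alpha},P_{\beta})$. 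If you rework your per-scale and scale-integration steps in this order, your outline becomes essentially the paper's argument; as written, the last inequality is false and the proof does not go through.
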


In comparison, it is shown in \cite[Theorem 1]{leeb2018approximating}
that it is possible to approximate the snowflake metric $c(x,y)=(d(x,y))^{q}$,
$0<q<1$ by (the $q$-th power of) a random tree metric with expected
distortion $O(\dim\mathcal{X}/(1-q))$, where $\dim\mathcal{X}$ is
the doubling dimension of $\mathcal{X}$ (refer to \cite{leeb2018approximating}
for the definition).\footnote{It appears that \cite[Theorem 1]{leeb2018approximating} concerns
only the case where $\mathcal{X}$ is finite, though it should be
straightforward to generalize it to any bounded metric space (under
certain regularity conditions) by considering an infinite tree. Nevertheless,
there is no obvious way to generalize tree metrics to general unbounded
spaces.} Then a coupling for that tree metric can be constructed as in \cite{kleinberg2002approximation},
resulting in a pairwise coupling ratio at least $O(\dim\mathcal{X}/(1-q))$.
The bound $O(\Psi^{q}/(1-q))$ in Theorem \ref{thm:metric_pow} can
be significantly better. For example, when $\mathcal{X}=\mathbb{R}^{n}$,
$c(x,y)=\Vert x-y\Vert_{2}^{q}$, $0<q<1$, we have $\Psi=O(\sqrt{n})$,
which is much smaller than $\dim\mathcal{X}=O(n)$. More discussion
is given in Section \ref{subsec:lp_rn_main}. The construction in
Theorem \ref{thm:metric_pow} is based on the sequential Poisson functional
representation in Section \ref{sec:spfr}, not on tree metrics. The
improvement of Theorem \ref{thm:metric_pow} over \cite{leeb2018approximating}
shows an advantage of the sequential Poisson functional representation
over tree metrics.

\medskip{}

\subsection{$\ell_{p}$ Metric over $\mathbb{R}^{n}$\label{subsec:lp_rn_main}}

The one-dimensional case with convex cost can be addressed directly
via the quantile coupling $X_{\alpha}=F_{P_{\alpha}}^{-1}(U)$ (where
$F_{P_{\alpha}}^{-1}(u):=\inf\{x:\,F_{P_{\alpha}}(x)\ge u\}$ is the
inverse of the cumulative distribution function of $P_{\alpha}$),
$U\sim\mathrm{Unif}[0,1]$, which is optimal for each pair of probability
distributions, i.e., $\mathbf{E}[c(X_{\alpha},X_{\beta})]=C_{c}^{*}(P_{\alpha},P_{\beta})$
for any $\alpha,\beta$ (see \cite{rachev1998mass}), implying that
$r_{c}^{*}(\mathcal{P}(\mathbb{R}))=1$ in this case. This is stated
in the following proposition, of which the proof is omitted.
\begin{prop}
\label{prop:r_rc}When $\mathcal{X}=\mathbb{R}$, for any symmetric
cost function $c$ having the form $c(x,y)=\tilde{c}(|x-y|)$, where
$\tilde{c}$ is convex, we have
\[
r_{c}^{*}(\mathcal{P}(\mathbb{R}))=1.
\]
\smallskip{}
\end{prop}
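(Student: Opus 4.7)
The plan is to exhibit a single coupling $\{X_\alpha\}_{\alpha\in\mathcal{P}(\mathbb{R})}$ achieving $\mathbf{E}[c(X_\alpha,X_\beta)] = C_c^*(P_\alpha,P_\beta)$ simultaneously for every pair, which then forces $r_c^*(\mathcal{P}(\mathbb{R}))\le 1$; combined with the trivial inequality $r_c^*\ge 1$, this gives equality. The natural candidate, as already pointed out in the remark preceding the proposition, is the quantile (monotone) coupling: take $U(u):=u$ on the standard probability space $([0,1],\mathcal{L}([0,1]),\lambda_{[0,1]})$, so $U\sim\mathrm{Unif}[0,1]$, and define $X_\alpha := F_{P_\alpha}^{-1}(U)$, where $F_{P_\alpha}^{-1}(t):=\inf\{x:F_{P_\alpha}(x)\ge t\}$.

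First I would verify the marginal constraint: the standard inverse-CDF fact gives $X_\alpha\sim P_\alpha$ for each $\alpha$, so $\{X_\alpha\}\in\Gamma_\lambda(\{P_\alpha\}_\alpha)$. Next, for a fixed pair $(\alpha,\beta)$, the joint distribution of $(X_\alpha,X_\beta)$ is the comonotone coupling of $P_\alpha$ and $P_\beta$. The key input is the classical one-dimensional optimal transport result (see \cite{rachev1998mass}): for any cost of the form $c(x,y)=\tilde c(|x-y|)$ with $\tilde c$ convex, the comonotone coupling minimizes $\mathbf{E}[c(X,Y)]$ over all couplings of two marginals on $\mathbb{R}$. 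Hence $\mathbf{E}[c(X_\alpha,X_\beta)] = C_c^*(P_\alpha,P_\beta)$.

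Since this holds for every pair $(\alpha,\beta)\in\mathcal{P}(\mathbb{R})^2$, we obtain $r_c(\{X_\alpha\}_\alpha) = 1$, and therefore $r_c^*(\mathcal{P}(\mathbb{R})) \le 1$. The reverse inequality $r_c^*(\mathcal{P}(\mathbb{R}))\ge 1$ is built into Definition \ref{def:pcr}. The only genuine work is the invocation of the 1D optimality of the monotone rearrangement; everything else is routine. There is no real obstacle here, which is why the authors omit the proof — the novelty of the paper lies in the higher-dimensional and non-convex-cost regimes where this simple trick fails.
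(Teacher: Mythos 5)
Your proposal is correct and coincides with the argument the paper itself sketches in the paragraph preceding the proposition: the quantile coupling $X_{\alpha}=F_{P_{\alpha}}^{-1}(U)$ is simultaneously optimal for every pair by the classical one-dimensional result for convex costs cited to \cite{rachev1998mass}, and $r_{c}^{*}\ge1$ holds by definition. Nothing is missing.
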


For higher dimensional Euclidean spaces, we prove an upper bound for
$c(x,y)=\Vert x-y\Vert_{p}^{q}$, $p\in\mathbb{R}_{\ge1}\cup\{\infty\}$,
$0<q<1$ using Theorem \ref{thm:metric_pow}. The proof is given in
Section \ref{subsec:rn_lp}.
\begin{thm}
\label{thm:rn_rc_ub}When $\mathcal{X}=\mathbb{R}^{n}$, $n\ge1$,
$c(x,y)=\Vert x-y\Vert_{p}^{q}$, $p\in\mathbb{R}_{\ge1}\cup\{\infty\}$,
$0<q<1$, we have
\begin{align}
r_{c}^{*}(\mathcal{P}(\mathbb{R}^{n})) & <\frac{7.56}{1-q}\left(2.47n^{\mathbf{1}\{p>2\}(1-1/p)}\mathrm{V}_{n-1,p}/\mathrm{V}_{n,p}\right)^{q},\label{eq:rn_rc_ub_Bnp}
\end{align}
where $\mathrm{V}_{n,p}$ is the volume of the unit $\ell_{p}$ ball
given in \eqref{eq:lp_ball_vol}. As a result, we have
\[
r_{c}^{*}(\mathcal{P}(\mathbb{R}^{n}))<\frac{10.55}{1-q}n^{q\max\{1/p,\,1-1/p\}}.
\]
\smallskip{}
\end{thm}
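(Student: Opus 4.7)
The plan is to apply Theorem~\ref{thm:metric_pow} with metric $d(x,y)=\|x-y\|_p$ and reference measure $\mu=\lambda$ (Lebesgue measure on $\mathbb{R}^n$). Since $\lambda(\mathcal{B}_w(x))=V_{n,p}w^n$ depends only on $w$, the first hypothesis of that theorem and \eqref{eq:metric_pow_limsup} hold trivially (the supremum is $1$). The proof of \eqref{eq:rn_rc_ub_Bnp} reduces entirely to verifying \eqref{eq:metric_pow_delta_dist} with $\Psi=n^{\mathbf{1}\{p>2\}(1-1/p)}V_{n-1,p}/V_{n,p}$, i.e.\ to bounding the volume of the symmetric difference of two translated $\ell_p$ balls.

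For an axis-aligned translation $v=t\mathrm{e}_i$, Fubini along $\mathrm{e}_i$ decomposes $\mathcal{B}_{\|\cdot\|_p,w}(0)$ into fibers $[-a_y,a_y]$ over $y\in\mathbb{R}^{n-1}$, where $a_y=(w^p-\|y\|_p^p)^{1/p}$ when $\|y\|_p\le w$ (and empty otherwise); the fiber of the set-difference has length $\min(t,2a_y)\le t$, yielding $\lambda(\mathcal{B}_w(0)\setminus\mathcal{B}_w(t\mathrm{e}_i))\le tV_{n-1,p}w^{n-1}$. For general $v=\sum_k v_k\mathrm{e}_k$, I would iterate along the coordinate polyline $0,\,v_1\mathrm{e}_1,\,\ldots,\,v$ using the triangle inequality $|\mathbf{1}_A-\mathbf{1}_C|\le|\mathbf{1}_A-\mathbf{1}_B|+|\mathbf{1}_B-\mathbf{1}_C|$ and the identity $\lambda(K\setminus(K+v))=\tfrac{1}{2}\lambda(K\triangle(K+v))$ (valid since $\lambda(K)=\lambda(K+v)$), obtaining
\[
\lambda\bigl(\mathcal{B}_w(0)\setminus\mathcal{B}_w(v)\bigr)\le V_{n-1,p}w^{n-1}\|v\|_1.
\]
H\"older's inequality $\|v\|_1\le n^{1-1/p}\|v\|_p$ then gives $\Psi\le n^{1-1/p}V_{n-1,p}/V_{n,p}$, matching the claim for $p>2$.

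For $p\le 2$ this axis-H\"older bound is wasteful (losing up to $\sqrt{n}$ at $p=2$), so I would replace it by a Fubini slice along the $\ell_2$-direction $v/\|v\|_2$: by convexity of $\mathcal{B}_{\|\cdot\|_p,w}(0)$,
\[
\lambda\bigl(\mathcal{B}_w(0)\setminus\mathcal{B}_w(v)\bigr)\le\|v\|_2\cdot\lambda_{n-1}\bigl(\pi_{v^\perp}(\mathcal{B}_w(0))\bigr),
\]
where $\pi_{v^\perp}$ is the $\ell_2$-orthogonal projection onto $v^\perp$. A Cauchy-projection-formula/symmetrization argument shows that for $p\le 2$ the maximum $(n-1)$-volume of a hyperplane projection of the unit $\ell_p$ ball equals $V_{n-1,p}$, attained at coordinate hyperplanes. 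Combined with $\|v\|_2\le\|v\|_p$ for $p\le 2$, this yields $\Psi=V_{n-1,p}/V_{n,p}$, completing \eqref{eq:rn_rc_ub_Bnp}.

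For the simplified second bound, Stirling gives $\Gamma(1+n/p)/\Gamma(1+(n-1)/p)=\Theta(n^{1/p})$ and hence $V_{n-1,p}/V_{n,p}=\Theta(n^{1/p})$. For $p\le 2$, substituting into \eqref{eq:rn_rc_ub_Bnp} directly produces $r_c^*=O(n^{q/p})=O(n^{q\max\{1/p,\,1-1/p\}})$. For $p>2$, however, \eqref{eq:rn_rc_ub_Bnp} only yields $O(n^q)$, which is weaker than the claimed $O(n^{q(1-1/p)})$; I would bridge this by using the norm comparison $\|v\|_p\le\|v\|_2\le n^{1/2-1/p}\|v\|_p$ (valid for $p\ge 2$) to transfer the $\ell_2$ pairwise coupling (which has ratio $O(n^{q/2})$ from \eqref{eq:rn_rc_ub_Bnp} at $p=2$) to the $\ell_p$ cost, picking up an extra factor $n^{q(1/2-1/p)}$ and giving total $O(n^{q(1-1/p)})$. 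The main obstacle is the projection estimate for $p\in(1,2)$, where neither axis-slicing (tight only at $p=1$) nor rotation invariance (exact only at $p=2$) applies directly, and one must argue that the unit $\ell_p$ ball's hyperplane projection is maximized at coordinate hyperplanes.
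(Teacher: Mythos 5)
Your plan is essentially the paper's proof: apply Theorem \ref{thm:metric_pow} with $d(x,y)=\Vert x-y\Vert_{p}$ and $\mu=\lambda$, verify \eqref{eq:metric_pow_delta_dist} by slicing the symmetric difference of translated balls (along the $\ell_{2}$-direction of the translation for $p\le2$, along the coordinate polyline plus H\"older for $p>2$), and then, for the simplified bound when $p>2$, transfer the $\ell_{2}$ coupling to the $\ell_{p}$ cost via the norm comparison $\Vert v\Vert_{p}\le\Vert v\Vert_{2}\le n^{1/2-1/p}\Vert v\Vert_{p}$ --- this last step is exactly the paper's use of the ratio bound in Proposition \ref{prop:rc_prop_misc}.

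The one step you flag as the main obstacle --- that for $1\le p\le2$ the hyperplane projections of the unit $\ell_{p}$ ball are maximized at coordinate hyperplanes, with value $\mathrm{V}_{n-1,p}$ --- is precisely where the paper does not argue from scratch but cites Barthe's theorem \cite[Theorem 12]{barthe2002hyperplane}. Be aware that this is a genuinely delicate result, not a routine Cauchy-formula/symmetrization exercise: for $p>2$ the coordinate hyperplane in fact gives the \emph{minimal} projection (which is why both you and the paper switch to the axis decomposition in that range), so any symmetrization argument must be sensitive to the sign of $p-2$. Citing that theorem closes your gap; without it, your $p\in(1,2)$ case is incomplete. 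Two minor further points: your Stirling estimate only yields the order $\mathrm{V}_{n-1,p}/\mathrm{V}_{n,p}=\Theta(n^{1/p})$, whereas the explicit constant $10.55$ requires the clean bound $\mathrm{V}_{n-1,p}/\mathrm{V}_{n,p}\le(1+(n-1)/p)^{1/p}/(2\mathit{\Gamma}(1+1/p))$ obtained from log-convexity of $\mathit{\Gamma}$; and the transfer step for $p>2$ should be phrased through the ratio bound \eqref{eq:rc_prop_misc_ratio} applied to $c_{1}=\Vert\cdot\Vert_{p}^{q}$, $c_{2}=\Vert\cdot\Vert_{2}^{q}$, which is exactly the factor $n^{q(1/2-1/p)}$ you describe.
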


We have seen in Proposition \ref{prop:r_rc} that there exists a coupling
that is optimal for each pair of probability distributions over $\mathbb{R}$
for a convex symmetric cost function. Two natural questions are whether
this continues to hold for the non-convex cost $c(x,y)=|x-y|^{q}$,
$q<1$ (note that $X_{\alpha}=F_{P_{\alpha}}^{-1}(U)$ can be arbitrarily
far from optimal by considering $P_{1}=\mathrm{Unif}[0,1]$, $P_{2}=\mathrm{Unif}[\epsilon,1+\epsilon]$
for small $\epsilon>0$), and for higher dimensional spaces $\mathbb{R}^{n}$,
$n\ge2$, with $c(x,y)=\Vert x-y\Vert_{p}^{q}$. The answers are both
negative, and $r_{c}^{*}>1$ in both cases. Moreover, in $\mathbb{R}^{n}$
when $n\ge2$, with $c(x,y)=\Vert x-y\Vert_{p}^{q}$ where $q\ge1$,
we have $r_{c}^{*}(\mathcal{P}(\mathbb{R}^{n}))=\infty$, i.e., a
uniform bound in the form of \eqref{eq:ecr_bd} does not exist. This
shows that the behavior of $r_{c}^{*}(\mathcal{P}(\mathbb{R}))$ is
very different from that of $r_{c}^{*}(\mathcal{P}(\mathbb{R}^{n}))$,
$n\ge2$. The proof of the following proposition is given in Section
\ref{sec:lbs} (for the bounds $r_{c}^{*}(\mathcal{P}(\mathbb{R}^{n}))\ge2$)
and Section \ref{sec:geom} (for the other bounds).
\begin{prop}
\label{prop:rn_rc_lb}When $\mathcal{X}=\mathbb{R}^{n}$, $c(x,y)=\Vert x-y\Vert_{p}^{q}$,
$p\in\mathbb{R}_{\ge1}\cup\{\infty\}$, $q>0$, we have
\[
\begin{array}{ll}
r_{c}^{*}(\mathcal{P}(\mathbb{R}^{n}))\ge2 & \text{for}\;n=1,\,q<1,\\
r_{c}^{*}(\mathcal{P}(\mathbb{R}^{n}))\ge\max\big\{2,\,\frac{1}{1000\sqrt{1-q}}\big\} & \text{for}\;n\ge2,\,q<1,\\
r_{c}^{*}(\mathcal{P}(\mathbb{R}^{n}))=\infty & \text{for}\;n\ge2,\,q\ge1.
\end{array}
\]
The above statements are also true for the cases given in Remark \ref{rem:rc_lb_cases}
below.

\smallskip{}
\end{prop}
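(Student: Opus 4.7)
The proof of Proposition \ref{prop:rn_rc_lb} naturally splits along the three regimes stated, and I would handle each by a distinct technique. For the bounds $r_c^*(\mathcal{P}(\mathbb{R}^n)) \ge 2$ (for both $n=1,\,q<1$ and $n\ge2,\,q<1$, carried out in Section \ref{sec:lbs}), my plan is to reduce to the discrete-metric lower bound of Theorem \ref{thm:rd_bd}, which asserts $r_{\mathbf{1}_{\neq}}^*(\mathcal{P}_{\ll\mu}(\mathcal{X}))\to 2$ as $|\mathrm{supp}(\mu)|\to\infty$. It is enough to exhibit, for any $\epsilon>0$, a countable family of probability distributions on $\mathbb{R}^n$ whose pairwise transport costs $C_c^*$ mimic the discrete metric on an arbitrarily large index set, up to multiplicative factors tending to $1$. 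For $n\ge2$ this is natural via large near-equidistant configurations in $(\mathbb{R}^n,\|\cdot\|_p)$, e.g., a dense $\epsilon$-net on a sphere of a chosen radius, on which $\|x-y\|_p^q$ is constant up to vanishing multiplicative error. For $n=1$ this route fails because equidistant subsets of a line contain at most two points, and one must instead exploit the strict subadditivity of $t\mapsto t^q$ for $q<1$; I would build a multi-scale family of two-atom distributions $\frac{1}{2}(\delta_{a_k}+\delta_{b_k})$ at geometrically separated scales, so that the induced structure on the distributions is ultrametric-like and the discrete-metric lower bound of Theorem \ref{thm:rd_bd} applies on a countable substructure.

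For $r_c^*(\mathcal{P}(\mathbb{R}^n)) = \infty$ when $n\ge2$, $q\ge1$, I would invoke the embedding correspondence developed in Section \ref{sec:geom}: a coupling achieving finite pairwise coupling ratio $r$ furnishes, via the map $P\mapsto X_P$, a bi-Lipschitz embedding of the Wasserstein space $(\mathcal{P}(\mathbb{R}^n), W_q)$ into the Bochner space $L^q(\Omega;(\mathbb{R}^n,\|\cdot\|_p))$ with distortion $r^{1/q}$. For $q=1$ this contradicts Naor--Schechtman's non-embeddability of the earth mover's distance over $\mathbb{R}^n$ ($n\ge2$) into $L_1$. For $q>1$ I would reduce to the $q=1$ case, either by extending the Naor--Schechtman Poincar\'e-inequality argument directly to $W_q$ for $q>1$, or by restricting attention to distributions supported in a fixed bounded region where $W_q^q$ controls a positive power of $W_1$ up to universal constants, and pulling the non-embeddability back. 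The quantitative bound $r_c^*(\mathcal{P}(\mathbb{R}^n))\ge 1/(1000\sqrt{1-q})$ for $n\ge 2$, $q<1$ would then come from a quantitative refinement of the same embedding argument: as $q\uparrow 1$ the snowflaked Wasserstein space approaches the non-$L_1$-embeddable $q=1$ space, and standard Poincar\'e/Markov-cotype estimates yield distortion lower bounds for embedding into $L_1$ that grow like $(1-q)^{-1/2}$, which translate through the embedding correspondence into the stated lower bound on $r_c^*$.

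The main obstacle I anticipate is the one-dimensional $r\ge 2$ bound. Proposition \ref{prop:r_rc} shows that in one dimension with convex cost the optimal pairwise coupling ratio is $1$ (attained by the quantile coupling), so the jump to $\ge 2$ for $q<1$ truly exploits the concavity of the snowflake cost. Many obvious candidate families (shifted uniform distributions, signed two-atom distributions at varying scales, simple three-atom mixtures) admit an ``all-signs-agree'' or common-random-index coupling that attains ratio $1$ on every pair simultaneously; the multi-scale construction must therefore be chosen so that no such global index can realize the optimal couplings of all pairs at once. The remaining cases listed in the forthcoming Remark \ref{rem:rc_lb_cases} should then follow from minor variants of the same three arguments.
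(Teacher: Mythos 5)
Your plan has genuine gaps in two of the three regimes. For the bounds $r_c^*\ge 2$: the reduction to Theorem \ref{thm:rd_bd} via ``near-equidistant'' configurations does not work in fixed dimension $n\ge 2$ --- a dense $\epsilon$-net on a sphere does \emph{not} have pairwise distances that are constant up to vanishing error (nearby net points are at distance $\approx\epsilon$, antipodal ones at the diameter), and point sets in $\mathbb{R}^n$ whose pairwise distances all lie in $[1,1+\epsilon]$ have cardinality bounded in terms of $n$ (for small $\epsilon$ essentially $n+1$), so the discrete-metric bound $2(1-1/m)$ can never be pushed to $2$ this way; and for $n=1$ you explicitly leave the construction open. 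The paper (Proposition \ref{prop:rn_rc_lb_1}) does not approximate the discrete metric at all: it generalizes the cycle argument from the lower bound of Theorem \ref{thm:rd_bd} to an arbitrary symmetric cost, proving $r_c^*\ge 2(k-1)\min_{i<j}c(x_i,x_j)/\sum_{i}c(x_i,x_{i+1})$ for any cyclic sequence, using the leave-one-out distributions $P_i=(k-1)^{-1}\sum_{j\neq i}\delta_{x_j}$. One then only needs a long cycle whose consecutive steps realize the global minimum cost: a Hamiltonian cycle of the grid $[0..s]^n$ for $n\ge 2$, and for $n=1$ the path $0,1,\dots,s$ closed by a single long jump whose cost $s^q$ is negligible compared with $s$ precisely because $q<1$, giving $2/(1+s^{q-1})\to 2$. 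That is where concavity enters, and it sidesteps the equidistance obstruction entirely.

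For $n\ge 2$, $q\ge 1$: your $q=1$ argument coincides with the paper's (Proposition \ref{prop:bilip_L1} plus the Naor--Schechtman lower bound on $\mathcal{P}([0..s]^2)$, then $s\to\infty$), and the $(1-q)^{-1/2}$ bound is obtained in the paper by the concrete form of your idea --- compare $\Vert x-y\Vert_p^q$ with $\Vert x-y\Vert_2$ on the grid via the ratio bound of Proposition \ref{prop:rc_prop_misc} (losing a factor $s^{1-q}$) and optimize $s\approx e^{1/(2-2q)}$; an appeal to unspecified Poincar\'e/Markov-cotype estimates for the snowflaked earth mover's distance is not a substitute for that computation. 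The real gap is $q>1$: your bounded-region reduction loses exactly the factor $s^{q-1}$ that the $\sqrt{\log s}$ bound cannot absorb (so it yields a vacuous estimate), and ``extending the Naor--Schechtman argument to $W_q$, $q>1$'' is an unproved claim. The paper instead handles $q>1$ elementarily (Proposition \ref{prop:rn_rc_lb_2}): with two-atom distributions $P_i=\frac{1}{2}\delta_{x_i}+\frac{1}{2}\delta_{x_{i+k}}$ along a path $x_1,\dots,x_{2k}$ traversing $[0..s]^n$ with antipodal pairing, any coupling induces binary labels $Z_i$ that must flip somewhere around the closed loop, forcing $r\ge 1+((n-1)^{q/p}s^q-1)/(ns)$, which diverges as $s\to\infty$ when $q>1$. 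You need an argument of this kind (or some other genuine substitute) for $q>1$; the embedding route as you describe it does not close this case.
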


While $r_{c}^{*}(\mathcal{P}(\mathbb{R}^{n}))=\infty$ for $n\ge2$,
$q\ge1$, we can have a finite $r_{c}^{*}$ if we restrict the space
to a finite set, as shown in the following proposition. The proof
is given in Section \ref{subsec:rn_lp}.
\begin{prop}
\label{prop:s_rc_ub}Let $\mathcal{X}\subseteq\mathbb{R}^{n}$ be
a finite set with $|\mathcal{X}|\ge2$, $n\ge1$, $c(x,y)=\Vert x-y\Vert_{p}^{q}$,
$p\in\mathbb{R}_{\ge1}\cup\{\infty\}$, $q\ge1$. We have
\[
r_{c}^{*}(\mathcal{P}(\mathcal{X}))<28.66n^{\max\{1/p,\,1-1/p\}}\gamma^{q-1}\log\left(\gamma/n^{\max\{1/p,\,1-1/p\}}+1\right),
\]
where $\gamma:=\max\{\Vert x-y\Vert_{p}:\,x,y\in\mathcal{X}\}/\min\{\Vert x-y\Vert_{p}:\,x,y\in\mathcal{X},\,x\neq y\}$.\smallskip{}
\end{prop}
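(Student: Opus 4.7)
The plan is to reduce the case $q \geq 1$ to the already-handled snowflake case via an auxiliary exponent $q_0 \in (0,1)$, exploiting the fact that on a finite set the ratio of largest to smallest nonzero pairwise distance is bounded by $\gamma$. Set $c_0(x,y) := \|x-y\|_p^{q_0}$ and $a := \max\{1/p, 1-1/p\}$. Since $c_0$ is a snowflake cost on $\mathbb{R}^n$ and $\mathcal{X} \subseteq \mathbb{R}^n$, Theorem~\ref{thm:rn_rc_ub} supplies a coupling $\{X_\alpha\}_{\alpha \in \mathcal{P}(\mathcal{X})}$ with $X_\alpha \sim P_\alpha$ whose pairwise coupling ratio for $c_0$ is strictly less than $r_0 := \frac{10.55}{1-q_0} n^{q_0 a}$.

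The bulk of the argument is a two-sided conversion between $c$ and $c_0$ using the extremal distances. Write $D := \max_{x,y\in\mathcal{X}} \|x-y\|_p$ and $d_0 := \min_{x\neq y\in\mathcal{X}} \|x-y\|_p$, so $\gamma = D/d_0$. On the one hand, $\|X_\alpha - X_\beta\|_p \leq D$ gives $\mathbf{E}[c(X_\alpha, X_\beta)] \leq D^{q-q_0}\, \mathbf{E}[c_0(X_\alpha, X_\beta)]$. On the other hand, for every coupling $(Y_\alpha,Y_\beta)$ of $(P_\alpha, P_\beta)$, the pointwise inequality $c_0(Y_\alpha,Y_\beta) \leq d_0^{q_0-q}\, c(Y_\alpha,Y_\beta)$ holds (trivial when $Y_\alpha=Y_\beta$, otherwise $\|Y_\alpha-Y_\beta\|_p \geq d_0$ combined with $q_0 - q \leq 0$); taking expectations and then the infimum yields $C_{c_0}^*(P_\alpha,P_\beta) \leq d_0^{q_0-q}\, C_c^*(P_\alpha,P_\beta)$. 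Chaining the two bounds through $r_0$ gives
\[
\mathbf{E}[c(X_\alpha,X_\beta)] \;\leq\; \gamma^{q-q_0}\, r_0 \cdot C_c^*(P_\alpha,P_\beta),
\]
so $r_c^*(\mathcal{P}(\mathcal{X})) < \gamma^{q-q_0} r_0$.

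It remains to optimize $q_0$. Writing $\epsilon := 1-q_0 \in (0,1)$ and $u := \gamma/n^a$,
\[
\gamma^{q-q_0} r_0 \;=\; 10.55\, \gamma^{q-1} n^a \cdot \frac{u^\epsilon}{\epsilon}.
\]
A short calculus exercise shows $\inf_{\epsilon\in(0,1)} u^\epsilon/\epsilon \leq e\log(u+1)$ for every $u > 0$: when $u > e$, the interior minimizer $\epsilon^* = 1/\log u$ lies in $(0,1)$ and yields $e\log u \leq e\log(u+1)$; when $u \leq e$, the function $\epsilon \mapsto u^\epsilon/\epsilon$ is decreasing on $(0,1)$ so the infimum is $u$, and one verifies directly that $u \leq e\log(u+1)$ on this range (reducing to $e^{u/e} \leq u+1$ on $[0,e]$, which follows from comparing derivatives at $0$). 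Substituting produces the stated bound, with the constant arising as $10.55 \cdot e$ and tightened to $28.66$ by invoking the sharper form \eqref{eq:rn_rc_ub_Bnp} of Theorem~\ref{thm:rn_rc_ub} in place of the simplified $10.55/(1-q_0)$ estimate.

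The main obstacle is the joint optimization: the natural minimizer $\epsilon^* = 1/\log u$ depends on the unknown ratio $\gamma/n^a$, and one must verify that the optimized bound degrades gracefully in both the small-$u$ regime (where $\epsilon^* \geq 1$ forces the infimum to be taken on the boundary) and the large-$u$ regime, so that the single expression $\log(\gamma/n^a+1)$ captures both. Once this uniform bound on $u^\epsilon/\epsilon$ is in hand, the rest is just combining the snowflake result with the elementary distance comparisons.
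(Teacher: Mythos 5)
Your argument is essentially the paper's own proof: the two-sided distance comparison you derive by hand is exactly the ratio bound of Proposition \ref{prop:rc_prop_misc} applied to $\Vert x-y\Vert_p^q$ versus $\Vert x-y\Vert_p^{q_0}$, and the choice $\epsilon^*=1/\log(\gamma/n^a)$ (i.e.\ $\tilde q=1-1/\log(\gamma/n^a)$) for $\gamma/n^a>e$ is the same optimization the paper performs. The only genuine divergence is the small-ratio regime $\gamma\le e\,n^a$: the paper switches to the ratio bound against the discrete metric (giving $r_c^*\le 2\gamma^q$ and a constant $4.14$ there), whereas you stay inside the same family of bounds, let $\epsilon\to1^-$ to get the factor $u=\gamma/n^a$, and check $u< e\log(u+1)$ on $(0,e]$; this works (note only that the limit $\epsilon\to1^-$ turns the strict inequality into $\le$, so you need the strictness of $u<e\log(u+1)$, which does hold, and your ``compare derivatives at $0$'' justification of $e^{u/e}\le u+1$ should also use an endpoint check at $u=e$ together with convexity). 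One quantitative caveat: with the rounded constant $10.55$ from the statement of Theorem \ref{thm:rn_rc_ub} you only get $10.55e\approx28.68>28.66$, so to reach the stated constant you must carry the intermediate constant $10.543$ from the proof of that theorem; and for $p>2$ the exponent $n^{q_0\max\{1/p,1-1/p\}}$ with that constant comes from the $\ell_2$-comparison step inside that proof, not from \eqref{eq:rn_rc_ub_Bnp} alone (whose $n$-dependence for $p>2$ is worse). With that adjustment, $10.543e<28.66$ and your proof matches the paper's.
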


Note that when $\mathcal{X}=[0..s]^{n}$ for $s\ge1$, we have $\gamma=n^{1/p}s$.
Consider the case $\mathcal{X}=[0..s]^{n}$, $s\ge1$, $p=2$, $q=1$.
We have $r_{c}^{*}(\mathcal{P}([0..s]^{n}))=O(\sqrt{n}\log s)$, which
is stronger than $O(n\log s)$ if we apply Theorem \ref{thm:metric_log},
or the previous results in \cite{indyk2003fast,kleinberg2002approximation,charikar2002similarity,fakcharoenphol2004tight}.
This implies that there exists a bi-Lipschitz embedding of $\mathcal{P}([0..s]^{n})$
into the space of random variables over $\mathcal{X}$ (and also into
$L_{1}$) with distortion $O(\sqrt{n}\log s)$ (see Proposition \ref{prop:bilip_L1}
and \ref{prop:ball_bilip}). An algorithm for computing $X_{\alpha}$
with time complexity $O(2^{n}|\mathcal{X}|\log^{2}|\mathcal{X}|)$
is given in Section \ref{subsec:metric_finite} and \ref{subsec:rn_lp}.
We remark that \cite{naor2007planar} also gives an embedding of $\mathcal{P}([0..s]^{2})$
($n=2$) into $L_{1}$ with distortion $O(\log s)$.\footnote{Note that the existence of an embedding into $L_{1}$ does not imply
the existence of an embedding into the space of random variables over
$\mathcal{X}$.} For a lower bound on the order of growth of $r_{c}^{*}(\mathcal{P}([0..s]^{n}))$,
using the result in \cite{naor2007planar} (see \eqref{eq:rn_rc_lb_order}
in Section \ref{sec:geom}), for any fixed $n\ge2$, we have $r_{c}^{*}(\mathcal{P}([0..s]^{n}))=\Omega(\sqrt{\log s})$
as $s\to\infty$.

\smallskip{}

It may also be of interest to find the rate of growth of $r_{c}^{*}(\mathcal{P}(\mathbb{R}^{n}))$
for $c(x,y)=\Vert x-y\Vert_{p}^{q}$ as $n$ increases. Theorem \ref{thm:rn_rc_ub}
gives $r_{c}^{*}(\mathcal{P}(\mathbb{R}^{n}))=O(n^{q\max\{1/p,\,1-1/p\}})$
if $q<1$. The following theorem shows a lower bound on $r_{c}^{*}(\mathcal{P}(\mathbb{R}^{n}))$
that increases with $n$. The proof is given in Appendix \ref{subsec:pf_rn_rc_lb_ball}.
\begin{thm}
\label{thm:rn_rc_lb_ball}When $\mathcal{X}=\mathbb{R}^{n}$, $n\ge2$,
$c(x,y)=\Vert x-y\Vert_{p}^{q}$, $p\in\mathbb{R}_{\ge1}\cup\{\infty\}$,
$0<q<1$, we have
\begin{align*}
 & r_{c}^{*}(\mathcal{P}(\mathbb{R}^{n}))\\
 & \ge\left(1-\frac{1}{n}\right)\left(\frac{n+q}{n+q-nq}\left(\frac{n/q+1}{\mathrm{V}_{n,p}}\right)^{1/(n/q+1)}+\frac{1}{n-1}\right.\\
 & \;\;\;\;\;\left.-\min\left\{ \frac{2^{-q}n^{q/p+1}}{n+q},\,\max\left\{ \left(\frac{n/q+1}{\mathrm{V}_{n,p}}\right)^{1/(n/q+1)}-1,0\right\} +\frac{qn^{1/p+1}}{2(n+1)}\right\} \right),
\end{align*}
where $\mathrm{V}_{n,p}$ is the volume of the unit $\ell_{p}$ ball
given in \eqref{eq:lp_ball_vol}. The above statement is also true
for the cases given in Remark \ref{rem:rc_lb_cases} below. As a result,
as $n\to\infty$, we have
\[
r_{c}^{*}(\mathcal{P}(\mathbb{R}^{n}))=\Omega(n^{q/p}),
\]
i.e., $r_{c}^{*}(\mathcal{P}(\mathbb{R}^{n}))$ grows at least as
fast as $n^{q/p}$.\smallskip{}
\end{thm}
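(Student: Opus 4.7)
The strategy is to exhibit an explicit witness family of probability distributions on $\mathbb{R}^n$ for which no coupling can simultaneously achieve a small pairwise ratio. The shape of the bound is highly suggestive: the radius $R:=((n/q+1)/\mathrm{V}_{n,p})^{1/(n/q+1)}$ is characterized by the normalization $q\mathrm{V}_{n,p}R^{n+q}/(n+q)=1$, and the factor $\tfrac{n+q}{n+q-nq}$ combines the uniform-ball $q$-th moment $\tfrac{nR^q}{n+q}$ with a $\tfrac{1}{1-q}$-type correction from the concavity of $t\mapsto t^q$. I therefore expect the witness family to be built around the uniform distribution on an $\ell_p$-ball of radius $R$, together with a shifted copy and/or a Dirac anchor that activate the coupling constraint; the subtracted $\min\{\,\cdot\,,\,\cdot\,\}$ reflects a two-regime analysis in which two different upper bounds on $C_c^*$ compete (a ``far-shift'' bound of order $\tfrac{2^{-q}n^{q/p+1}}{n+q}$ versus a ``near-shift'' bound of order $\max\{R-1,0\}+\tfrac{qn^{1/p+1}}{2(n+1)}$), while the additive $\tfrac{1}{n-1}$ and outer factor $(1-1/n)$ are boundary corrections.

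The plan has four steps. First, fix the witness family: for instance $P_0=\mathrm{Unif}(\mathcal{B}_{\|\cdot\|_p,R}(0))$, a shifted copy $P_t=\mathrm{Unif}(\mathcal{B}_{\|\cdot\|_p,R}(t\mathrm{e}_1))$ with shift $t$ to be optimized, and possibly a third anchor distribution. Second, upper-bound $C_c^*$ for the critical pair using explicit plans: a translation coupling yields $C_c^*(P_0,P_t)\le t^q$, a ``lazy'' plan that leaves the overlap $\mathcal{B}_R(0)\cap\mathcal{B}_R(t\mathrm{e}_1)$ undisturbed and transports only the symmetric-difference mass (using an estimate of the form \eqref{eq:metric_pow_delta_dist}) produces a finer bound matching the near-shift argument of the $\min$, and the Dirac-to-uniform cost is just the moment $\tfrac{nR^q}{n+q}$. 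Third, for an arbitrary coupling, establish a matching lower bound $\mathbf{E}[c(X_0,X_t)]\ge\tfrac{n+q}{n+q-nq}R^q+\tfrac{1}{n-1}$ by combining the marginal-moment constraint $\mathbf{E}[\|X_0\|_p^q]=\tfrac{nR^q}{n+q}$ with Jensen's inequality for the concave map $t\mapsto t^q$ and a volumetric estimate on $\ell_p$-balls. Fourth, take the ratio to obtain the stated explicit inequality; the $\Omega(n^{q/p})$ asymptotic then follows from Stirling's estimate $\mathrm{V}_{n,p}\sim(2\Gamma(1+1/p))^n/\Gamma(1+n/p)$, which gives $\mathrm{V}_{n,p}^{q/(n+q)}=\Theta(n^{-q/p})$ and hence $R^q=\Theta(n^{q/p})$, while the subtracted correction terms grow at most at the same rate with a strictly smaller leading constant.

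The main obstacle is the coupling lower bound in Step 3. Since $c=d^q$ is a metric for $0<q<1$, the triangle inequality only yields \emph{upper} bounds on $\mathbf{E}[c(X_i,X_j)]$ and so cannot be used directly to force the expected cost to be large. One must instead work through the concavity of $t\mapsto t^q$ applied to the marginal-moment constraint, combined with a geometric inequality ruling out any joint law on $(X_0,X_t)$ that simultaneously concentrates $X_0$ on the overlap and satisfies both uniform marginals. Identifying the correct form of this inequality, so that the bound tightly produces the prefactor $\tfrac{n+q}{n+q-nq}$ rather than merely the uniform-ball moment $\tfrac{n}{n+q}$, and carefully tracking the sub-leading corrections $(1-1/n)$ and $\tfrac{1}{n-1}$ through the computation, is the delicate technical part.
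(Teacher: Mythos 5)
There is a genuine gap, and it is structural rather than technical: a witness family of two or three distributions cannot produce a lower bound that grows with $n$. For a single pair $(P_{0},P_{t})$, the definition of $C_{c}^{*}$ guarantees a coupling of just these two marginals whose expected cost is arbitrarily close to $C_{c}^{*}(P_{0},P_{t})$, so your Step~3 claim that \emph{every} coupling satisfies $\mathbf{E}[c(X_{0},X_{t})]\ge\frac{n+q}{n+q-nq}R^{q}+\frac{1}{n-1}$ (a quantity exceeding the pairwise optimum) is impossible; the obstruction measured by $r_{c}^{*}$ only appears when many distributions must be coupled simultaneously. Worse, adding a Dirac anchor or a handful of further distributions cannot help: since $0<q<1$ makes $c(x,y)=\Vert x-y\Vert_{p}^{q}$ a metric, Proposition \ref{prop:finite_col} of this very paper gives $r_{c}^{*}(\{P_{\alpha}\}_{\alpha\in\mathcal{A}})\le 23.1\log|\mathcal{A}|$ for any finite family, so any witness family of bounded size yields at most a constant lower bound, never $\Omega(n^{q/p})$. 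A dimension-growing bound forces a witness family whose cardinality grows (indeed exponentially) with $n$, which is exactly what the paper uses.

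The paper's actual argument takes the $k^{n}$ ``leave-one-out'' uniform distributions $P_{\alpha}=(|\mathcal{A}|-1)^{-1}\sum_{\beta\in\mathcal{A}\setminus\{\alpha\}}\delta_{\beta}$ on the discrete torus $\mathcal{A}=[-(k-1)/2..(k-1)/2]^{n}$, introduces the coordinatewise median $Z$ of the coupled values, and lower-bounds the total cost over adjacent pairs via the edge-isoperimetric inequality on the torus \cite{bollobas1991edge}; a multi-scale accounting through the function $G_{n,p,q}(\gamma)=\sum_{x\in\mathbb{Z}^{n}}\max\{\gamma-\Vert x\Vert_{p}^{q},0\}$ then yields $r_{c}^{*}(\mathcal{P}(\mathbb{Z}^{n}))\ge 2\cdot\frac{1}{n}(1-\frac{1}{n})\int_{0}^{\infty}G_{n,p,q}^{-1}(t)\,t^{-(1+1/n)}\,\mathrm{d}t$, and the constants you tried to reverse-engineer (the prefactor $\frac{n+q}{n+q-nq}$, the competing terms inside the $\min$, the $\frac{1}{n-1}$) come from bounding $G_{n,p,q}^{-1}$ by continuous volume comparisons with the $\ell_{p}$ ball, not from a shifted-ball transport computation. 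Finally, the extension to $\mathcal{P}(\mathbb{R}^{n})$, $\mathcal{P}_{\ll\lambda_{S}}(\mathbb{R}^{n})$ and manifolds is by discretization/embedding of the torus configuration, so if you want to salvage your plan you would need to replace the two-ball witness by such a large combinatorial family and find a substitute for the isoperimetric step, which is where the real content of the theorem lies.
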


Combining this with Theorem \ref{thm:rn_rc_ub}, for $0<q<1$, $1\le p\le2$,
we have
\[
r_{c}^{*}(\mathcal{P}(\mathbb{R}^{n}))=\Theta(n^{q/p}).
\]
The bounds in Proposition \ref{prop:r_rc}, Theorem \ref{thm:rn_rc_ub},
Proposition \ref{prop:rn_rc_lb} and Theorem \ref{thm:rn_rc_lb_ball}
are plotted in Figures \ref{fig:rn_plot_n12}, \ref{fig:rn_plot_n}
and \ref{fig:rn_plot_3d}.

A consequence of Theorem \ref{thm:rn_rc_lb_ball} is that $r_{c}^{*}(\mathcal{P}(\mathcal{X}))=\infty$
in the following infinite dimensional spaces. The proof is given in
Appendix \ref{subsec:pf_rn_rc_lb_infdim}.
\begin{cor}
\label{cor:rn_rc_lb_infdim}We have $r_{c}^{*}(\mathcal{P}(\mathcal{X}))=\infty$
for the following $\mathcal{X}$ and $c$:
\begin{itemize}
\item $\mathcal{X}=\ell_{p}$, $c(x,y)=\Vert x-y\Vert_{p}^{q}$, where $p\in\mathbb{R}_{\ge1}$,
$q>0$.\footnote{We have $r_{c}^{*}(\mathcal{P}(\mathcal{X}'))=\infty$ even if we
take $\mathcal{X}'\subseteq\mathcal{X}=\ell_{p}$ to be the set of
non-negative, non-increasing sequences that sum to 1. As a result,
if $\mathcal{X}''\subseteq\mathcal{P}(\mathbb{N})$ is a space of
probability distributions over $\mathbb{N}$ (and is also a Polish
space) large enough that every $x\in\mathcal{X}'$, when regarded
as a probability mass function, is in $\mathcal{X}''$, then we have
$r_{d_{\mathrm{TV}}}^{*}(\mathcal{P}(\mathcal{X}''))=\infty$ since
$d_{\mathrm{TV}}(f,g)=(1/2)\Vert x-y\Vert_{1}$. The proof is given
in Appendix \ref{subsec:pf_rn_rc_lb_infdim}.}
\item $\mathcal{X}=\mathrm{C}([0,1],\mathbb{R})$ is the space of all continuous
functions $f:[0,1]\to\mathbb{R}$ (with the topology and $\sigma$-algebra
generated by the $L_{\infty}$ metric), $c(f,g)=\Vert f-g\Vert_{p}^{q}$
is the $L_{p}$ metric to the power $q$, where $p\in\mathbb{R}_{\ge1}\cup\{\infty\}$,
$q>0$ satisfy $p<\infty$ or $q\ge1$.\footnote{We have $r_{c}^{*}(\mathcal{P}(\mathcal{X}'))=\infty$ (if $p<\infty$
or $q\ge1$) even if we take $\mathcal{X}'\subseteq\mathcal{X}=\mathrm{C}([0,1],\mathbb{R})$
to be the set of non-negative, infinitely differentiable, $1$-Lipschitz
functions $f$ with $\int_{0}^{1}f=1$. As a result, if $\mathcal{X}''\subseteq\mathcal{P}([0,1])$
is a space of probability distributions over $[0,1]$ (and is also
a Polish space) large enough that every $f\in\mathcal{X}'$, when
regarded as a probability density function, is in $\mathcal{X}''$,
then we have $r_{d_{\mathrm{TV}}}^{*}(\mathcal{P}(\mathcal{X}''))=\infty$
since $d_{\mathrm{TV}}(f,g)=(1/2)\Vert f-g\Vert_{1}$. The proof is
given in Appendix \ref{subsec:pf_rn_rc_lb_infdim}.}
\item $\mathcal{X}=\{x\in\{0,1\}^{\mathbb{N}}:\,\sum_{i}x_{i}<\infty\}$
is the space of infinite binary sequences with finitely many 1's,
$c(x,y)=\Vert x-y\Vert_{1}^{q}$ is the Hamming distance to the power
$q$, where $q>0$.\smallskip{}
\end{itemize}
\end{cor}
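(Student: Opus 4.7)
All three parts should follow a common reduction to the finite-dimensional lower bounds already in hand---Theorem \ref{thm:rn_rc_lb_ball}, giving $r_c^*(\mathcal{P}(\mathbb{R}^n,\Vert\cdot\Vert_p^q))=\Omega(n^{q/p})$ when $0<q<1$, and Proposition \ref{prop:rn_rc_lb}, giving $r_c^*(\mathcal{P}(\mathbb{R}^n,\Vert\cdot\Vert_p^q))=\infty$ for $q\ge1$, $n\ge2$---by letting $n\to\infty$. The common tool is a scaled-isometry monotonicity principle: if $\phi:(\mathcal{X}',c')\to(\mathcal{X},c)$ is measurable and satisfies $c(\phi(x),\phi(y))=\lambda\,c'(x,y)$ for some $\lambda>0$, then $P\mapsto\phi_*P$ injects $\mathcal{P}(\mathcal{X}')$ into $\mathcal{P}(\mathcal{X})$; both the achievable coupling costs $\mathbf{E}[c(X_\alpha,X_\beta)]$ and the optimal transport costs $C_c^*(P_\alpha,P_\beta)$ scale by the same $\lambda$; and the infimum defining $r_c^*$ over a larger class of distributions can only grow, so $r_c^*(\mathcal{P}(\mathcal{X}))\ge r_{c'}^*(\mathcal{P}(\mathcal{X}'))$.

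For part (a), the obvious inclusion $(\mathbb{R}^n,\Vert\cdot\Vert_p)\hookrightarrow\ell_p$, $x\mapsto(x_1,\ldots,x_n,0,0,\ldots)$, is an isometry, so the principle applies directly. For part (b), take $n$ continuous bump functions $\psi_1,\ldots,\psi_n\in\mathrm{C}([0,1],\mathbb{R})$ with pairwise disjoint supports and identical $L_p$-norms (for instance, equal-height tents on $n$ disjoint subintervals of length $1/n$), and embed $(\mathbb{R}^n,\Vert\cdot\Vert_p)$ by $x\mapsto\sum_i x_i\psi_i$; disjointness of supports gives $\Vert\sum_i(x_i-y_i)\psi_i\Vert_p^p=\Vert\psi_1\Vert_p^p\,\Vert x-y\Vert_p^p$ for $p<\infty$, and with $\Vert\psi_i\Vert_\infty=1$, an exact isometry for $p=\infty$. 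The excluded case $p=\infty$, $q<1$ is precisely the regime in which the finite-dimensional lower bound $n^{q/p}$ fails to diverge, which is why this case is omitted from the statement.

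For part (c), embed $\mathbb{Z}_{\ge0}^n$ into $(\mathcal{X},d_H)$ by the interleaved unary encoding $\phi$ that places $x_i$ ones in the arithmetic progression $\{i,n+i,2n+i,\ldots\}\subset\mathbb{N}$ for each coordinate $i$; position $kn+i$ of $\phi(x)$ and $\phi(y)$ disagrees iff $(k<x_i)\neq(k<y_i)$, so $d_H(\phi(x),\phi(y))=\sum_i|x_i-y_i|=\Vert x-y\Vert_1$, an isometry. It then remains to transfer the continuous lower bound on $r_{\Vert\cdot\Vert_1^q}^*(\mathcal{P}(\mathbb{R}^n))$ to the integer lattice $\mathbb{Z}_{\ge0}^n$. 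For this, take any finite witness family of compactly supported continuous distributions on $\mathbb{R}^n$ coming from the proofs of Theorem \ref{thm:rn_rc_lb_ball} or Proposition \ref{prop:rn_rc_lb}, scale by a large $M\in\mathbb{N}$, and round to the nearest lattice point: each rounded distribution's optimal and achievable transport costs differ from $M^q$ times those of its continuous preimage by $O(n^q)=o(M^q)$ as $M\to\infty$, so the lower bound survives discretization, and $\phi$ transports it into $\mathcal{X}$.

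I expect the main obstacle to be the discretization argument in part (c), where the rounding perturbation must be controlled uniformly over the (finite) witness family and on both the coupling side and the optimal-transport side. The sub-class refinements asserted in the footnotes to parts (a) and (b)---non-negative non-increasing sequences summing to $1$ in $\ell_p$, and smooth non-negative $1$-Lipschitz probability densities in $\mathrm{C}([0,1],\mathbb{R})$---require one further modification: precompose the embedding with a small affine displacement $x\mapsto\phi_0+\varepsilon\,\Phi(x)$, where $\phi_0$ is a fixed reference element of the sub-class and $\Phi$ is a linear scaled isometry whose image lies in tangent directions that keep the sub-class constraints satisfied for sufficiently small $\varepsilon$; the scaled-isometry property is preserved, so the divergence argument applies unchanged.
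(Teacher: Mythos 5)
Your proposal is, in substance, the paper's own argument (Appendix \ref{subsec:pf_rn_rc_lb_infdim}): embed a finite-dimensional $\ell_p^q$ space into each target space by a scaled isometry, transfer the finite-dimensional lower bounds, and let $n\to\infty$, using Theorem \ref{thm:rn_rc_lb_ball} when $0<q<1$, $p<\infty$ and Proposition \ref{prop:rn_rc_lb} (via Proposition \ref{prop:rn_rc_lb_2} and \cite{naor2007planar}) when $q\ge1$; your coordinate embedding for (a), disjoint tents for (b), and interleaved unary encoding for (c) match the paper's constructions up to inessential details, and your monotonicity principle is exactly the transfer the paper uses (its validity rests on injectivity of the embedding, which holds in all three cases). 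The one genuine divergence is the discretization detour you flag as the ``main obstacle'' in part (c), and it is unnecessary: the finite-dimensional lower bounds are already available over the integer lattice --- Remark \ref{rem:rc_lb_cases} states Proposition \ref{prop:rn_rc_lb} and Theorem \ref{thm:rn_rc_lb_ball} for $\mathcal{P}(\mathbb{Z}^n)$, and the witness families in the proofs of Propositions \ref{prop:rn_rc_lb_1}, \ref{prop:rn_rc_lb_2} and Theorem \ref{thm:rn_rc_lb_ball} (and the grid $[0..s]^2$ in the $q=1$ case) are all finitely supported on lattice boxes --- so your unary embedding of $\mathbb{Z}_{\ge0}^n$ can be applied to those witnesses directly, with no rounding. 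If you did insist on rounding, note two defects: for $q>1$ the cost perturbation is not $O(n^q)$ uniformly in $M$ (it is of order $q(MD)^{q-1}n$, still $o(M^q)$ for fixed $n$, so the conclusion would survive but your stated estimate is wrong), and transferring a coupling of the rounded family back to the continuous witnesses requires a randomized lifting through the conditional law given the rounded value, which you do not spell out. Finally, your ``small affine displacement'' treatment of the footnote sub-classes is only a sketch of what the paper does explicitly (alternating $\pm u_i/2$ perturbations of a fixed strictly decreasing sequence in $\ell_p$, and disjoint smooth bumps added to the constant density in $\mathrm{C}([0,1],\mathbb{R})$); to make it rigorous you must restrict the domain to a bounded box such as $[-1,1]^n$ and then invoke the $\mathcal{P}_{\ll\lambda_S}(\mathbb{R}^n)$ case of Remark \ref{rem:rc_lb_cases} rather than the bound for all of $\mathbb{R}^n$.
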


\medskip{}

\begin{rem}
\label{rem:rc_lb_cases}Proposition \ref{prop:rn_rc_lb} and Theorem
\ref{thm:rn_rc_lb_ball} are also true if we replace $\mathcal{P}(\mathbb{R}^{n})$
with any one of the following (for proofs, refer to the proofs of
the respective proposition or theorem):
\begin{itemize}
\item $\mathcal{P}(\mathbb{Z}^{n})$;
\item $\mathcal{P}_{\ll\lambda_{S}}(\mathbb{R}^{n})$ (i.e., continuous
probability distributions over $S$) for any $S\subseteq\mathbb{R}^{n}$
with $\lambda(S)>0$;
\item $\mathcal{P}(\mathcal{M})$ where $\mathcal{M}$ is a connected smooth
complete $n$-dimensional Riemannian manifold (without boundary),
and $c(x,y)=(d_{\mathcal{M}}(x,y))^{q}$, where $d_{\mathcal{M}}$
denotes the intrinsic distance on the manifold $\mathcal{M}$. In
this case, we let $p=2$ in Proposition \ref{prop:rn_rc_lb} and Theorem
\ref{thm:rn_rc_lb_ball}.\footnote{Note that if $\mathcal{M}$ satisfies these requirements then $(\mathcal{M},d_{\mathcal{M}})$
is a complete separable metric space. This is due to the fact that
every metric space is paracompact, and every Hausdorff (implied by
the metric), connected topological space locally homeomorphic to $\mathbb{R}^{n}$
is second-countable.}
\end{itemize}
\end{rem}

\medskip{}

\subsection{Riemannian Manifolds}

In this subsection, we consider the case $\mathcal{X}=\mathcal{M}$,
where $\mathcal{M}$ is a connected smooth complete real $n$-dimensional
Riemannian manifold (without boundary). Let $d_{\mathcal{M}}$ be
the intrinsic distance on the manifold $\mathcal{M}$. We consider
the symmetric cost function of the form $c(x,y)=(d_{\mathcal{M}}(x,y))^{q}$,
$q>0$.

We first consider the simple case where $\mathcal{M}=\{x\in\mathbb{R}^{2}:x_{1}^{2}+x_{2}^{2}=1\}$
is the circle. The proof of the following proposition is given in
Appendix \ref{subsec:pf_circle}.
\begin{prop}
\label{prop:circle}Let $\mathcal{M}=\{x\in\mathbb{R}^{2}:x_{1}^{2}+x_{2}^{2}=1\}$
be the circle, and $c(x,y)=(d_{\mathcal{M}}(x,y))^{q}$, $q>0$. We
have
\[
\begin{array}{ll}
r_{c}^{*}(\mathcal{P}(\mathcal{M}))\in[2,\,20.27/(1-q))\, & \text{for}\;0<q<1,\\
r_{c}^{*}(\mathcal{P}(\mathcal{M}))=2 & \text{for}\;q=1,\\
r_{c}^{*}(\mathcal{P}(\mathcal{M}))=\infty & \text{for}\;q>1.
\end{array}
\]
\end{prop}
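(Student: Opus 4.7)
The plan is to handle the three regimes separately, with the lower bounds at $q=1$ and $q>1$ unified by a single explicit family of distributions. For the lower bound $r_c^*(\mathcal{P}(\mathcal{M})) \geq 2$ at $q=1$ and the divergence $r_c^* = \infty$ at $q > 1$, take $N$ even, set $x_i := 2\pi i/N$ and $P_i := \tfrac{1}{2}(\delta_{x_i} + \delta_{x_i + \pi})$ for $i = 0, \ldots, N-1$. The key feature is $P_i = P_{i + N/2}$, so any coupling of finite pairwise cost ratio forces $X_i = X_{i + N/2}$ a.s.; parameterizing $X_i = x_i + \pi Y_i$ with $Y_i \in \{0,1\}$, this becomes $Y_i \neq Y_{i + N/2}$ a.s. Computing $C_c^*(P_i, P_{i+1}) = s^q$ with $s := 2\pi/N$ and $\mathbf{E}[c(X_i, X_{i+1})] = (1 - p_i) s^q + p_i (\pi - s)^q$ with $p_i := \mathbf{P}(Y_i \neq Y_{i+1})$, the ratio constraint gives $p_i \leq (r-1) s^q / ((\pi - s)^q - s^q)$. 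A union bound along the path $Y_0 \to Y_1 \to \cdots \to Y_{N/2}$ yields
\[
  1 \;=\; \mathbf{P}(Y_0 \neq Y_{N/2}) \;\leq\; \sum_{i=0}^{N/2 - 1} p_i \;\leq\; (r - 1)\cdot \frac{N}{2}\cdot\frac{s^q}{(\pi - s)^q - s^q},
\]
so $r - 1 \geq 2^{1-q} N^{q-1}(1 + o_N(1))$. Sending $N \to \infty$ gives $r \geq 2$ at $q = 1$ and $r = \infty$ for $q > 1$. The remaining lower bound $r_c^* \geq 2$ at $0 < q < 1$ is immediate from Remark~\ref{rem:rc_lb_cases} applied to Proposition~\ref{prop:rn_rc_lb} with $n = 1$, since $\mathcal{M}$ is a connected smooth complete $1$-dimensional Riemannian manifold.

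For the upper bound at $0 < q < 1$, I apply Theorem~\ref{thm:metric_pow} with $\mu$ the arc-length measure on $\mathcal{M}$. Balls are arcs of length $\min(2w, 2\pi)$ depending only on $w$, so positivity and the $\limsup$ condition~\eqref{eq:metric_pow_limsup} are trivial. A direct case analysis (whether $d(x,y) \leq 2w$ and whether the arcs wrap around) yields $\mu(\mathcal{B}_w(x) \setminus \mathcal{B}_w(y))/\mu(\mathcal{B}_w(x)) \leq d(x,y)/(2w)$, so $\Psi = \tfrac{1}{2}$ suffices, and Theorem~\ref{thm:metric_pow} delivers the stated bound.

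For the upper bound $r_c^* \leq 2$ at $q = 1$, I build a random-cut coupling. Sample $U \sim \mathrm{Unif}[0, 2\pi)$ and $V \sim \mathrm{Unif}[0,1]$ independently, shared across all $\alpha$. Identify $\mathcal{M} \setminus \{U\}$ with $[0, 2\pi)$ by unfolding counter-clockwise from $U$, push each $P_\alpha$ forward to $\tilde P_\alpha^U \in \mathcal{P}([0, 2\pi))$, and set $X_\alpha$ to be the preimage of $F^{-1}_{\tilde P_\alpha^U}(V)$. Since $V$ drives the one-dimensional quantile coupling on the interval and $d_{\mathcal{M}}(x,y) \leq |\tilde x^U - \tilde y^U|$ pointwise,
\[
  \mathbf{E}[d_{\mathcal{M}}(X_\alpha, X_\beta)] \;\leq\; \mathbf{E}_U\bigl[C_{|\cdot|}^{*}(\tilde P_\alpha^U, \tilde P_\beta^U)\bigr].
\]
The heart of the argument is the elementary identity that $U$ falls on the geodesic arc between any fixed $x, y$ with probability exactly $d_{\mathcal{M}}(x,y)/(2\pi)$, which yields
\[
  \mathbf{E}_U\bigl[|\tilde x^U - \tilde y^U|\bigr] \;=\; 2 d_{\mathcal{M}}(x,y) - \frac{d_{\mathcal{M}}(x,y)^2}{\pi} \;\leq\; 2\, d_{\mathcal{M}}(x,y).
\]
Pushing this identity through an optimal coupling $\gamma$ of $(P_\alpha, P_\beta)$ on $\mathcal{M}$ gives $\mathbf{E}_U[C_{|\cdot|}^{*}(\tilde P_\alpha^U, \tilde P_\beta^U)] \leq 2 C^{*}_{d_{\mathcal{M}}}(P_\alpha, P_\beta)$, closing the bound. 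The main obstacle is engineering the sharp constant $2$ at $q = 1$: a fixed cut or a crude triangle-inequality bound loses a factor of $\pi$ or more, and the proof rides on the specific averaging identity above being exactly $\leq 2 d_{\mathcal{M}}$, together with the clean simultaneous use of $(U, V)$ across every pair $(\alpha, \beta)$.
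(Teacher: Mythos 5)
Your proposal is correct, and most of it runs along the same lines as the paper's proof: your random-cut-plus-quantile coupling for the $q=1$ upper bound is exactly the paper's construction (rotate by a uniform $Z$, apply the quantile coupling, rotate back), with the same averaging identity $\mathbf{E}_U[|\tilde x^U-\tilde y^U|]=2d_{\mathcal{M}}(x,y)-d_{\mathcal{M}}(x,y)^{2}/\pi\le 2d_{\mathcal{M}}(x,y)$ doing the work; and your antipodal-pair family $P_i=\tfrac12(\delta_{x_i}+\delta_{x_i+\pi})$ with the $\{0,1\}$-parameterization and chain/union bound is a self-contained specialization of the argument behind Proposition \ref{prop:rn_rc_lb_2}, which is precisely what the paper invokes for $q>1$ (and your citation of Proposition \ref{prop:rn_rc_lb} via Remark \ref{rem:rc_lb_cases} for the $q<1$ lower bound is legitimate). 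The one genuinely different component is the $0<q<1$ upper bound: the paper embeds the circle in $\mathbb{R}^{2}$ and uses Corollary \ref{cor:dyadic_manifold} (i.e., Theorem \ref{thm:rn_rc_ub} plus the ratio bound, paying the $\pi/2$ distortion), which is what produces the constant $20.27$, whereas you apply Theorem \ref{thm:metric_pow} directly to the intrinsic metric with arc-length measure; your verification that $\mu(\mathcal{B}_w(x)\setminus\mathcal{B}_w(y))\le d(x,y)$ (so $\Psi=1/2$, with \eqref{eq:metric_pow_limsup} automatic since ball volumes depend only on $w$) is correct and yields $r_c^{*}<7.56\,(1.235)^{q}/(1-q)\le 9.34/(1-q)$, which is sharper than the stated $20.27/(1-q)$ and in particular implies it. So your route buys a better constant for the snowflake regime at the cost of a short direct geometric computation, while the paper's embedding route is the one it reuses uniformly for higher-dimensional spheres and tori.
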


\medskip{}

It is perhaps noteworthy that, compared to the case where $\mathcal{X}=\mathbb{R}$
is the real line, the bounds when $0<q<1$ are similar. However, for
the circle, we have $r_{c}^{*}=2$ when $q=1$, and $r_{c}^{*}=\infty$
when $q>1$ (compared to $r_{c}^{*}=1$ for the real line when $q\ge1$).
This shows that the behavior of $r_{c}^{*}$ for the circle is very
different from that for the real line.

For higher dimensional manifolds, the lower bounds in Proposition
\ref{prop:rn_rc_lb} and Theorem \ref{thm:rn_rc_lb_ball} also hold
(see Remark \ref{rem:rc_lb_cases}). We have the following upper bound
for the $n$-dimensional sphere and torus, proved using Theorem \ref{thm:metric_pow}.
The proof is given in Corollary \ref{cor:dyadic_manifold}.
\begin{prop}
Assume either $\mathcal{M}=\{x\in\mathbb{R}^{n+1}:\,\Vert x\Vert_{2}=1\}$
is the $n$-sphere, or $\mathcal{M}=\{x\in\mathbb{R}^{2n}:\,x_{2i-1}^{2}+x_{2i}^{2}=1\,\forall i\in[1..n]\}$
is the $n$-torus. Let $c(x,y)=(d_{\mathcal{M}}(x,y))^{q}$, $0<q<1$.
We have
\[
r_{c}^{*}(\mathcal{P}(\mathcal{M}))<\frac{20.27}{1-q}n^{q/2}.
\]
\end{prop}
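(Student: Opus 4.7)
The plan is to invoke Theorem \ref{thm:metric_pow} with $\mu$ equal to the (normalised) Riemannian volume measure on $\mathcal{M}$. Both the $n$-sphere and the $n$-torus admit a transitive group of isometries ($O(n+1)$ for the sphere, translations for the torus), so $\mu(\mathcal{B}_{w}(x))$ depends only on $w$. This immediately gives $0<\mu(\mathcal{B}_{w}(x))<\infty$ for every $x$ and $w>0$ (finite since $\mathcal{M}$ is compact, positive since every nontrivial geodesic ball has positive Riemannian volume), and, as noted in the footnote to Theorem \ref{thm:metric_pow}, it makes the limsup condition \eqref{eq:metric_pow_limsup} automatic. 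Since the factor supplied by Theorem \ref{thm:metric_pow} is $7.56\cdot(2.47\Psi)^{q}/(1-q)$ and the target constant $20.27$ satisfies $20.27 = 7.56\cdot 2.68$, it will suffice to exhibit $\Psi\le 1.086\sqrt{n}$ uniformly in $x,y,w$, so that $(2.47\Psi/\sqrt{n})^{q}\le 2.68$ for every $q\in(0,1)$.

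To verify \eqref{eq:metric_pow_delta_dist} I would use the transitive action to write $\mathcal{B}_{w}(y)=\phi(\mathcal{B}_{w}(x))$ for an isometry $\phi$ with $\phi(x)=y$, and parametrise $\mathcal{B}_{w}(x)$ in geodesic polar coordinates about $x$: each $z\in\mathcal{B}_{w}(x)$ is $\exp_{x}(ru)$ with $r\in[0,w]$ and $u$ a unit tangent vector. The Jacobian of $\exp_{x}$ is $(\sin r)^{n-1}$ up to constants for the sphere and $r^{n-1}$ for the torus. Setting $\rho=d(x,y)$, the triangle inequality and a first-order expansion of $d(y,\exp_{x}(ru))$ in $u$ force any $z\in\mathcal{B}_{w}(x)\setminus\mathcal{B}_{w}(y)$ to lie in a thin ``slab'' of width $O(\rho/w)$ around the equator of the tangent unit sphere $S^{n-1}$ orthogonal to $\exp_{x}^{-1}(y)$. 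The crucial point, already used implicitly in the proof of Theorem \ref{thm:rn_rc_ub} for $p=2$, is that by concentration of surface measure on $S^{n-1}$ such a slab of width $t$ has relative surface measure only $O(\sqrt{n}\,t)$, not $O(nt)$; integrating against the radial Jacobian then gives a bound of the desired form $\Psi\le C\sqrt{n}$.

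For the torus the argument below the injectivity radius $\pi$ is essentially identical to the Euclidean $\ell_{2}$ calculation of Theorem \ref{thm:rn_rc_ub}, because the exponential map is an isometry from a Euclidean ball onto $\mathcal{B}_{w}$; for larger $w$ the ball wraps around but this can only decrease the relative symmetric difference, so a crude argument suffices. For the sphere the computation is a spherical-cap calculation that I expect to be the main obstacle: one must handle uniformly both the small-$w$ regime (where the cap is nearly Euclidean and the $\sqrt{n}$ concentration must be extracted cleanly) and the regime where $w$ approaches $\pi$ (where the cap's complement is small and $\mu(\mathcal{B}_{w}(x)\setminus\mathcal{B}_{w}(y))$ must be bounded through the complement rather than through the annulus $\{w-\rho<d(x,\cdot)\le w\}$). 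Keeping the constant down to $1.086$ through both regimes, rather than an unspecified $O(\sqrt{n})$, is the delicate part; once it is achieved, substituting back into Theorem \ref{thm:metric_pow} yields the claimed $r_{c}^{*}(\mathcal{P}(\mathcal{M}))<(20.27/(1-q))n^{q/2}$.
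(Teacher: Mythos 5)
Your route is genuinely different from the paper's. The paper never verifies \eqref{eq:metric_pow_delta_dist} intrinsically for these manifolds: it proves this proposition as Corollary \ref{cor:dyadic_manifold}, taking the identity embedding $f(x)=x$ of $\mathcal{M}$ into $\mathbb{R}^{n+1}$ (sphere) or $\mathbb{R}^{2n}$ (torus), observing $1\le d_{\mathcal{M}}(x,y)/\Vert x-y\Vert_{2}\le\pi/2$, and then combining the Euclidean Theorem \ref{thm:rn_rc_ub} (with $p=2$ in the ambient dimension) with the ratio bound of Proposition \ref{prop:rc_prop_misc} and the volume-ratio estimate \eqref{eq:Bnp_ratio}; the factor $\pi/2$ is exactly what inflates the Euclidean constant to $20.27$. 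That short extrinsic argument handles the sphere and the torus simultaneously and needs no geodesic-ball geometry at all.

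Your intrinsic plan has a genuine gap at the torus beyond the injectivity radius. For $w<\pi$ the exponential map is an isometry onto $\mathcal{B}_{w}$ and the Euclidean $p=2$ computation indeed gives $\Psi\approx\mathrm{V}_{n-1,2}/\mathrm{V}_{n,2}\approx0.4\sqrt{n}$, but for $\pi\le w\le\pi\sqrt{n}=\mathrm{diam}(\mathcal{M})$ the assertion that ``wrapping can only decrease the relative symmetric difference'' is not an argument: wrapping shrinks the numerator $\mu(\mathcal{B}_{w}(x)\backslash\mathcal{B}_{w}(y))$, but it shrinks the denominator far more, since in the fundamental-domain picture $\mu(\mathcal{B}_{w})$ is the volume of $\{v\in[-\pi,\pi]^{n}:\Vert v\Vert_{2}\le w\}$, which for $w=\Theta(\sqrt{n})$ is exponentially smaller than $\mathrm{V}_{n,2}w^{n}$. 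Consequently the containment of the difference in the Euclidean crescent is useless in that range, and even the annulus bound only yields $\Psi=\Theta(n)$ there (the log-derivative $w\,\mu(\mathcal{B}_{w})'/\mu(\mathcal{B}_{w})$ is of order $n$ in the large-deviation range of $w$). Recovering $\Psi=O(\sqrt{n})$ requires a new directional estimate, e.g.\ that the outward flux of a unit translation field through the partial sphere $\{\Vert v\Vert_{2}=w\}\cap[-\pi,\pi]^{n}$ is $O(n^{-1/2})$ times its area uniformly over all directions --- a concentration statement for the area measure on a sphere--cube intersection; note also that for non-coordinate directions the one-dimensional fiber argument breaks, because sections of a ball of radius exceeding $\pi$ along a geodesic need not be single arcs. (The sphere half of your plan is believable via a rotation-flux or slab estimate, but the constant-chasing you defer is where the work lies, and your numerical target is already marginally off: with $\Psi=1.086\sqrt{n}$ one has $2.47\Psi>2.681\sqrt{n}$, so $7.56(2.47\Psi)^{q}$ exceeds $20.27n^{q/2}$ for $q$ sufficiently close to $1$; you need roughly $\Psi\le1.085\sqrt{n}$.) Either close these estimates or switch to the paper's embedding route, which sidesteps them entirely.
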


\medskip{}

The following result applies to manifolds with non-negative Ricci
curvature, and compact manifolds with Ricci curvature bounded below.
The proof is given in Section \ref{subsec:manifold}.
\begin{thm}
\label{thm:ricci}Let $c(x,y)=(d_{\mathcal{M}}(x,y))^{q}$, $0<q<1$.
If the Ricci curvature of $\mathcal{M}$ is bounded below as $\mathrm{Ric}_{\mathcal{M}}\ge(n-1)K$
(i.e., $\mathrm{Ric}_{\mathcal{M}}(\xi,\xi)\ge(n-1)K\left\langle \xi,\xi\right\rangle $
for any $x\in\mathcal{M}$, $\xi\in T_{x}\mathcal{M}$) for some $K\in\mathbb{R}$,
we have:
\begin{enumerate}
\item If $K\ge0$, then
\[
r_{c}^{*}(\mathcal{P}(\mathcal{M}))<\frac{7.56}{1-q}(2.47n)^{q};
\]
\item If $K<0$ and $D:=\mathrm{diam}(\mathcal{M}):=\sup_{x,y\in\mathcal{M}}d_{\mathcal{M}}(x,y)<\infty$,
then
\[
r_{c}^{*}(\mathcal{P}(\mathcal{M}))<\frac{7.56}{1-q}\left(6.72n(D\sqrt{-K}+1)\right)^{q}.
\]
\end{enumerate}
\end{thm}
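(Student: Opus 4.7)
The plan is to invoke Theorem~\ref{thm:metric_pow} with $\mu$ the Riemannian volume measure on $\mathcal{M}$. The first hypothesis of Theorem~\ref{thm:metric_pow}, that $0<\mu(\mathcal{B}_w(x))<\infty$ for all $x$ and $w>0$, is immediate from smoothness and completeness of $\mathcal{M}$. The core work is to exhibit a $\Psi$ of the claimed order that realizes \eqref{eq:metric_pow_delta_dist}, and to verify the limsup condition \eqref{eq:metric_pow_limsup}. In either case, I would estimate the left side of \eqref{eq:metric_pow_delta_dist} by combining the containment $\mathcal{B}_w(x)\setminus\mathcal{B}_w(y)\subseteq\mathcal{B}_w(x)\setminus\mathcal{B}_{w-t}(x)$, valid when $t:=d_{\mathcal{M}}(x,y)\le w$ (if $t>w$ the right side of \eqref{eq:metric_pow_delta_dist} exceeds $1$ as soon as $\Psi\ge1$), with the Bishop--Gromov volume comparison theorem, which states that if $\mathrm{Ric}_{\mathcal{M}}\ge(n-1)K$ then $r\mapsto\mu(\mathcal{B}_r(x))/V_K(r)$ is non-increasing, where $V_K(r)$ is the volume of a geodesic ball of radius $r$ in the $n$-dimensional simply connected space form of constant sectional curvature $K$. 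This yields
\[
\frac{\mu(\mathcal{B}_w(x)\setminus\mathcal{B}_w(y))}{\mu(\mathcal{B}_w(x))}\le 1-\frac{V_K(w-t)}{V_K(w)}.
\]

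For part~1, when $K\ge 0$, I would apply Bishop--Gromov with $K'=0$ (valid since $\mathrm{Ric}_{\mathcal{M}}\ge 0$), so that $V_0(r)=\mathrm{V}_{n,2}\,r^n$ and therefore $1-V_0(w-t)/V_0(w)=1-(1-t/w)^n\le nt/w$ by Bernoulli's inequality, giving $\Psi=n$. The limsup condition follows from the doubling estimate $\mu(\mathcal{B}_{2w}(x))\le 2^n\mu(\mathcal{B}_w(x))$ (an immediate consequence of the same Bishop--Gromov inequality) together with the inclusion $\mathcal{B}_w(y)\subseteq\mathcal{B}_{2w}(x)$ whenever $d_{\mathcal{M}}(x,y)\le w$, which yields $\mu(\mathcal{B}_w(x))/\mu(\mathcal{B}_w(y))\le 2^n$ uniformly.

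For part~2, when $K<0$ and $D:=\mathrm{diam}(\mathcal{M})<\infty$, the hyperbolic comparison gives $V_K(r)=c_n\int_0^r(\sinh(as)/a)^{n-1}\,\mathrm{d}s$ with $a:=\sqrt{-K}$. Convexity of $V_K$ (its derivative is non-decreasing) yields the tangent estimate $V_K(w-t)\ge V_K(w)-tV_K'(w)$, so it suffices to bound $wV_K'(w)/V_K(w)$; under the substitution $x=aw$ this quantity equals $\Phi(x):=x\sinh^{n-1}(x)/\int_0^x\sinh^{n-1}(u)\,\mathrm{d}u$. Combining the identity $\sinh^n(x)=n\int_0^x\sinh^{n-1}(u)\cosh(u)\,\mathrm{d}u$ with $\cosh(u)\le\cosh(x)$ for $u\le x$ gives $\int_0^x\sinh^{n-1}(u)\,\mathrm{d}u\ge\sinh^n(x)/(n\cosh(x))$, and hence $\Phi(x)\le nx\coth(x)$. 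The elementary bound $x\coth(x)\le 1+x$ for $x\ge 0$ (checked by noting that $(1+x)\sinh(x)-x\cosh(x)$ vanishes at $0$ and has non-negative derivative $\sinh(x)+xe^{-x}$) then produces $\Phi(x)\le n(1+aw)\le n(1+aD)$. For $w>D$ the left side of \eqref{eq:metric_pow_delta_dist} is zero since $\mathcal{B}_w(x)=\mathcal{M}$, and \eqref{eq:metric_pow_limsup} is automatic by finiteness of $D$. Plugging a $\Psi$ of order $n(1+D\sqrt{-K})$ into Theorem~\ref{thm:metric_pow} then yields the claimed estimate.

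The main obstacle will be tracking the explicit numerical constants through part~2 to match the stated $6.72$ prefactor: the clean estimate $x\coth(x)\le 1+x$ above gives only $\Psi=n(1+aD)$, so recovering the paper's exact constant requires either a slightly looser intermediate step or a more careful accounting of the doubling/limsup constant in the invocation of Theorem~\ref{thm:metric_pow}. One also has to handle the regimes $t>w$ and $w\approx D$ carefully so that \eqref{eq:metric_pow_delta_dist} is verified uniformly for every $w>0$, not only for those $w$ at which $\mathcal{B}_w(x)$ is a proper subset of $\mathcal{M}$.
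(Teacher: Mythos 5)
Your proposal is correct, and it reaches the stated bounds by a route that differs from the paper's in a worthwhile way. The paper also starts from Bishop--Gromov, but it funnels everything through Corollary \ref{cor:metric_pow_c}: it shows that $w\mapsto w^{-\Psi}\mu(\mathcal{B}_{w}(x))$ is non-increasing, which for $K<0$ requires taking $\Psi:=D\sqrt{-K}\,\breve{\mathrm{V}}_{n}'(D\sqrt{-K})/\breve{\mathrm{V}}_{n}(D\sqrt{-K})$ and invoking a separate appendix lemma that $t\breve{\mathrm{V}}_{n}'(t)/\breve{\mathrm{V}}_{n}(t)$ is non-decreasing, followed by the bound $z\breve{\mathrm{V}}_{n}'(z)/\breve{\mathrm{V}}_{n}(z)\le en(z+1)$, whence the factor $6.72\approx2.47e$. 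You instead verify \eqref{eq:metric_pow_delta_dist} of Theorem \ref{thm:metric_pow} directly: the containment $\mathcal{B}_{w}(x)\backslash\mathcal{B}_{w}(y)\subseteq\mathcal{B}_{w}(x)\backslash\mathcal{B}_{w-t}(x)$ plus Bishop--Gromov reduces the matter to $1-V_{K}(w-t)/V_{K}(w)$, and the convexity (tangent-line) estimate together with $\Phi(x)\le nx\coth x\le n(1+x)$ gives $\Psi=n(D\sqrt{-K}+1)$ for $w\le D$, the regimes $t>w$ and $w>D$ being trivial as you note; for $K\ge0$ your Bernoulli-inequality step is exactly what the proof of Corollary \ref{cor:metric_pow_c} does, so part~1 is essentially identical to the paper's. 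The payoff of your route is twofold: you avoid the appendix monotonicity lemma entirely, and your constant is sharper---plugging $\Psi=n(D\sqrt{-K}+1)$ into Theorem \ref{thm:metric_pow} yields $\frac{7.56}{1-q}\left(2.47n(D\sqrt{-K}+1)\right)^{q}$, which is strictly stronger than the stated bound with $6.72$. So your closing worry about matching the prefactor $6.72$ is moot: you do not need to recover it, since your estimate implies the theorem a fortiori (the paper's larger constant simply reflects its looser bound $en(z+1)$ on the logarithmic derivative of the comparison volume).
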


\medskip{}

For non-compact manifolds with negative Ricci curvature (e.g. the
hyperbolic space) , it is unknown whether $r_{c}^{*}(\mathcal{P}(\mathcal{M}))<\infty$
for $c(x,y)=(d_{\mathcal{M}}(x,y))^{q}$, $0<q<1$.\medskip{}

\subsection{Finite Metric Space\label{subsec:metric_finite_main}}

We consider the case where $\mathcal{X}$ is finite, and the symmetric
cost function $c$ is a metric over $\mathcal{X}$. The problem of
finding the exact value of $r_{c}^{*}(\{P_{\alpha}\}_{\alpha\in\mathcal{A}})$
(or even deciding whether $r_{c}^{*}(\{P_{\alpha}\}_{\alpha\in\mathcal{A}})=1$)
for a finite collection of probability distributions $\{P_{\alpha}\}_{\alpha\in\mathcal{A}}$
is NP-hard, which is shown in Appendix \ref{subsec:hardness}. Nevertheless,
it is possible to give bounds and efficient approximation algorithms,
as described in the following theorem. The proof is given in Section
\ref{subsec:metric_finite}.
\begin{thm}
\label{thm:metric_log}Let $(\mathcal{X},d)$ be a finite metric space,
and $c(x,y)=d(x,y)$. We have
\[
r_{c}^{*}(\mathcal{P}(\mathcal{X}))<55.7\left(1+\log|\mathcal{X}|\right).
\]
\end{thm}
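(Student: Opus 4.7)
The plan is to construct the coupling directly via the sequential Poisson functional representation (sPFR) of Section \ref{sec:spfr}, using the counting measure $\mu$ on $\mathcal{X}$. This bypasses the tree-metric detour of \cite{fakcharoenphol2004tight,kleinberg2002approximation,charikar2002similarity} and keeps the construction adapted to the ambient metric $d$. The sPFR takes as input a decreasing sequence of scales $w_1 > w_2 > \cdots$ and produces, for each $P_\alpha \in \mathcal{P}(\mathcal{X})$, a point $X_\alpha \sim P_\alpha$ selected by a cascade of single-scale PFR-style tournaments keyed on independent Poisson processes on $\mathcal{X} \times (0,\infty)$ with intensity $\mu \otimes \lambda$. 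For a finite metric space I would work with a dyadic sequence $w_i = 2^{-i} D$, where $D = \mathrm{diam}(\mathcal{X})$, descending to the minimum interpoint distance $\delta$.

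The key single-scale fact is the Poisson matching lemma of \cite{li2018unified}, which gives an exact formula for the probability that two PFRs disagree; applied at scale $w$ it yields a disagreement probability on the order of $C_c^*(P_\alpha, P_\beta)/(w \cdot \mu(\mathcal{B}_w(X_\alpha)))$, clipped at $1$, and each disagreement at scale $w$ contributes at most $O(w)$ to $d(X_\alpha, X_\beta)$. Summing across scales I would obtain a bound of the form
\[
\mathbf{E}[d(X_\alpha, X_\beta)] \le O(1) \sum_i \min\!\bigl\{ w_i,\; C_c^*(P_\alpha, P_\beta)/\mu(\mathcal{B}_{w_i}(X_\alpha)) \bigr\},
\]
and the task reduces to showing that this sum is at most $55.7(1 + \log|\mathcal{X}|) \cdot C_c^*(P_\alpha, P_\beta)$ for every pair $(\alpha, \beta)$.

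The main obstacle I expect is that a naive scale-by-scale union bound produces a factor of $\log(D/\delta)$, which is not controlled by $\log|\mathcal{X}|$ when the aspect ratio is large. To eliminate the aspect-ratio dependence I would aggregate consecutive scales over which the ball mass $\mu(\mathcal{B}_{w}(X_\alpha))$ is constant: since $\mu$ is the counting measure, $\mu(\mathcal{B}_w(X_\alpha))$ takes only the integer values $1, 2, \ldots, |\mathcal{X}|$ as $w$ varies, so at most $|\mathcal{X}|$ distinct regimes arise, and within each regime the $\min\{\cdot,\cdot\}$ in the display above telescopes over the geometric scale, contributing only a constant per regime. Summing the constant-per-regime contributions yields the factor $\log|\mathcal{X}|$, and tracking the explicit constants through the Poisson matching lemma together with this telescoping argument should produce the stated constant $55.7$.
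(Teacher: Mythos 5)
Your overall architecture is the one the paper actually uses: a sequential Poisson functional representation driven by the counting measure, with uniform-on-ball kernels at geometrically decreasing scales, per-scale disagreement controlled through the Poisson matching lemma via total-variation distances of the ball kernels (Lemma \ref{lem:spfr_dpcbd}), and a layer-cake integration converting disagreement scales into a bound on $\mathbf{E}[d(X_\alpha,X_\beta)]$. However, the step where the $\log|\mathcal{X}|$ factor is supposed to appear does not work as you describe, and this is the heart of the theorem. First, your per-scale bound $C_c^*(P_\alpha,P_\beta)/(w\,\mu(\mathcal{B}_w(X_\alpha)))$ is not valid for the counting measure on an arbitrary finite metric space: the correct single-scale control is the annulus fraction $1-\mu(\mathcal{B}_{w-\tilde D}(x))/\mu(\mathcal{B}_w(x))$ for a coupling with $d(\tilde X_\alpha,\tilde X_\beta)=\tilde D$, and this can be of constant order even when $\tilde D\ll w$ (many points sitting at distance exactly $w$ from $x$), so no bound of the form $\Psi\tilde D/w$ as in \eqref{eq:metric_pow_delta_dist} is available here. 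Second, and more seriously, your aggregation ``at most $|\mathcal{X}|$ regimes of constant ball mass, constant contribution per regime'' yields $O(|\mathcal{X}|)$, not $O(\log|\mathcal{X}|)$; counting regimes cannot give a logarithm. Even within a single regime the asserted telescoping is unclear: with your clipped bound the scales $w_i$ above the clipping level each contribute the full clipped amount, and their number is governed by the aspect ratio, which is exactly what you set out to avoid.

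The mechanism that actually produces $\log|\mathcal{X}|$ is multiplicative telescoping of the ball mass, not a count of mass levels. Writing $F_x(w)=\mu(\mathcal{B}_w(x))/|\mathcal{X}|$, the scale-aggregated leakage for a pair at distance $\tilde D$ is controlled by $\int_0^\infty\bigl(1-F_x(t-\tilde D)/F_x(t)\bigr)\mathrm{d}t$, and the paper bounds this by $\tilde D\bigl(1+\int_0^\infty F_x(\tau)^{-1}\mathrm{d}F_x(\tau)\bigr)=\tilde D\,(1+\log|\mathcal{X}|)$, using $F_x(0)=1/|\mathcal{X}|$ and $F_x(\mathrm{diam}(\mathcal{X}))=1$; each scale's leakage is the relative increment of the ball mass over a window of width $\tilde D$, and these relative increments sum (telescopically, through the logarithm of the mass) to $\log|\mathcal{X}|$ independently of the aspect ratio. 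Taking expectations over an arbitrary coupling $(\tilde X_\alpha,\tilde X_\beta)$ then brings in $C_c^*(P_\alpha,P_\beta)$. To recover the explicit constant $55.7$ you would additionally need the randomized offset of the scale grid (the $\varTheta$ averaging in the paper's proof) and the specific choice $\eta=1.56$; without the corrected accounting above, your argument establishes at best an $O(|\mathcal{X}|)$ bound, which is weaker than the known tree-metric results.
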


\medskip{}

Theorem \ref{thm:metric_log} shows that $r_{c}^{*}(\mathcal{P}(\mathcal{X}))=O(\log|\mathcal{X}|)$,
which achieves the same order as in \cite{fakcharoenphol2004tight},
and improves upon $r_{c}^{*}(\mathcal{P}(\mathcal{X}))=O(\log|\mathcal{X}|\log\log|\mathcal{X}|)$
in \cite{kleinberg2002approximation,charikar2002similarity}. The
algorithm for computing $X_{\alpha}\sim P_{\alpha}$ in this coupling,
with time complexity $O(|\mathcal{X}|^{3}\log|\mathcal{X}|)$, is
given in Section \ref{subsec:metric_finite}.

Regarding the tightness of Theorem \ref{thm:metric_log}, it is shown
in Proposition \ref{prop:hamming_embed} (using a result in \cite{khot2006nonembeddability})
that when $\mathcal{X}=\{0,1\}^{n}$, $n\ge2$, and $c(x,y)=\Vert x-y\Vert_{1}$,
we have $r_{c}^{*}(\mathcal{P}(\{0,1\}^{n}))=\Omega(n)=\Omega(\log|\mathcal{X}|)$.
Therefore the bound $r_{c}^{*}(\mathcal{P}(\mathcal{X}))=O(\log|\mathcal{X}|)$
is tight.

For the case where the symmetric cost function is a power of a metric
(i.e., $c(x,y)=(d(x,y))^{q}$, $q>0$, where $d$ is a metric over
$\mathcal{X}$), the following bound shows that $r_{c}^{*}(\mathcal{P}(\mathcal{X}))=O(|\mathcal{X}|^{q})$.
The proof is given in Section \ref{subsec:ultra}.
\begin{thm}
\label{thm:metric}Let $(\mathcal{X},d)$ be a finite metric space,
and $c(x,y)=(d(x,y))^{q}$, $q>0$. We have
\[
r_{c}^{*}(\mathcal{P}(\mathcal{X}))<7.56(|\mathcal{X}|-1)^{q}.
\]
\end{thm}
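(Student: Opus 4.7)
The plan is to reduce the general finite-metric case to the ultrametric case, for which the constant $7.56$ is already available (announced in the introduction and to be established in Section \ref{subsec:ultra}). Since the same constant appears in the bound we wish to prove, the factor $(|\mathcal{X}|-1)^q$ should come from embedding the metric $d$ into an ultrametric $d'$ with distortion $|\mathcal{X}|-1$, raising that embedding to the $q$-th power, and then transferring an ultrametric coupling back to one for $c$.

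First, I would construct the \emph{subdominant ultrametric}
\[
d'(x, y) := \min_{x = x_0, x_1, \ldots, x_k = y} \max_{i \in [1..k]} d(x_{i-1}, x_i)
\]
on $\mathcal{X}$, where the minimum is over all finite paths from $x$ to $y$. Three routine verifications are needed: (i) $d'$ is an ultrametric, by concatenating near-optimal $x$-to-$y$ and $y$-to-$z$ paths and noting that the maximum edge on the concatenation is at most $\max(d'(x,y), d'(y,z))$; (ii) $d'(x, y) \le d(x, y)$, by taking the one-edge path; and (iii) $d(x, y) \le (|\mathcal{X}|-1)\, d'(x, y)$, because any optimal path may be assumed simple (removing cycles only decreases the maximum edge), hence has at most $|\mathcal{X}|-1$ edges, each of length at most $d'(x,y)$, and the triangle inequality for $d$ gives the claim. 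Since $\max(a,b)^q = \max(a^q, b^q)$, the cost $c'(x, y) := (d'(x, y))^q$ is again an ultrametric cost, and the distortion bounds yield the pointwise sandwich
\[
c'(x, y) \;\le\; c(x, y) \;\le\; (|\mathcal{X}|-1)^q\, c'(x, y).
\]

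Second, I would invoke the ultrametric bound $r_{c'}^*(\mathcal{P}(\mathcal{X})) < 7.56$ to select a coupling $\{X_\alpha\}_{\alpha \in \mathcal{P}(\mathcal{X})} \in \Gamma_\lambda(\mathcal{P}(\mathcal{X}))$ satisfying $\mathbf{E}[c'(X_\alpha, X_\beta)] \le (7.56 - \varepsilon)\, C_{c'}^*(P_\alpha, P_\beta)$ for some $\varepsilon > 0$ and all $\alpha, \beta$. The upper half of the sandwich transfers this to $c$ as
\[
\mathbf{E}[c(X_\alpha, X_\beta)] \;\le\; (|\mathcal{X}|-1)^q\, \mathbf{E}[c'(X_\alpha, X_\beta)] \;\le\; (7.56 - \varepsilon)(|\mathcal{X}|-1)^q\, C_{c'}^*(P_\alpha, P_\beta),
\]
and the lower half gives $C_{c'}^*(P_\alpha, P_\beta) \le C_c^*(P_\alpha, P_\beta)$, closing the chain and delivering the desired strict inequality $r_c^*(\mathcal{P}(\mathcal{X})) < 7.56 (|\mathcal{X}|-1)^q$.

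The only step requiring real care is the distortion claim $d \le (|\mathcal{X}|-1) d'$; this is a classical (and tight, witnessed by equally spaced points on a line) bound, so I do not foresee substantive difficulty. Everything else is a direct bookkeeping transfer from the already-proved ultrametric result, and the strict inequality propagates because the ultrametric bound is itself strict.
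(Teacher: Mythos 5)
Your proposal is correct and follows essentially the same route as the paper: the paper's proof also defines the subdominant (min-max path) ultrametric raised to the $q$-th power, notes the sandwich $\tilde{d}\le d^{q}\le(|\mathcal{X}|-1)^{q}\tilde{d}$, applies Theorem \ref{thm:ultrametric}, and transfers the bound via the ratio bound of Proposition \ref{prop:rc_prop_misc}. Your only deviation is re-deriving that ratio transfer by hand with an explicit $\varepsilon$, which is immaterial.
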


This bound is weaker compared to Theorem \ref{thm:metric_log} when
$0<q\le1$ (where $c(x,y)=(d(x,y))^{q}$ is also a metric). Nevertheless,
when $q>1$, it is shown in \eqref{eq:circle_order} that there exists
a sequence of finite metric spaces where $r_{c}^{*}(\mathcal{P}(\mathcal{X}))=\Omega(|\mathcal{X}|^{q-1})$.

One may wonder whether it is possible to bound $r_{c}^{*}(\mathcal{P}(\mathcal{X}))$
in terms of $|\mathcal{X}|$ if no conditions are imposed on $c$.
This is impossible if $|\mathcal{X}|\ge4$, since it is shown in Proposition
\ref{prop:rc_prop_misc} that $r_{c}^{*}(\mathcal{P}(\mathcal{X}))$
can be arbitrarily large (or even infinite) when $|\mathcal{X}|=4$.\medskip{}

\subsection{Ultrametric Space}

\medskip{}
We have the following bound for the case where the symmetric cost
function $c$ is an ultrametric. Note that a symmetric cost function
$c$ is an ultrametric if $c(x,y)>0$ for $x\neq y$, and $c(x,z)\le\max\{c(x,y),c(y,z)\}$
for any $x,y,z$. The proof is given in Section \ref{subsec:ultra}.
\begin{thm}
\label{thm:ultrametric}Let $(\mathcal{X},c)$ be a complete separable
metric space. If $c$ is an ultrametric, then
\[
r_{c}^{*}(\mathcal{P}(\mathcal{X}))<7.56.
\]
\end{thm}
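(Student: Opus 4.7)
The plan is to exploit the laminar (tree-like) structure that an ultrametric imposes on $\mathcal{X}$: for any radius $w > 0$, the $c$-balls $\{\mathcal{B}_w(x)\}_{x \in \mathcal{X}}$ partition $\mathcal{X}$, and for $w < w'$ the partition at scale $w$ refines that at scale $w'$. This makes ultrametric spaces behave essentially like tree metrics on which the coupling can be built level by level from coarse to fine, and each level can be handled by the discrete-metric result Theorem \ref{thm:rd_bd}.

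First I would fix a geometric scale $\gamma > 1$ (to be optimized at the end) and consider dyadic levels $w_k := \gamma^k$, $k \in \mathbb{Z}$. Let $\mathcal{Q}_k$ denote the partition of $\mathcal{X}$ into maximal balls of radius $w_k$, which is a refinement of $\mathcal{Q}_{k+1}$; for each $P_\alpha$ let $\pi_k^\alpha$ denote the pushforward of $P_\alpha$ onto the at-most-countable set $\mathcal{Q}_k$. I would then construct the coupling $\{X_\alpha\}$ by descending the scale hierarchy: within each parent cell $Q \in \mathcal{Q}_{k+1}$, apply the universal Poisson coupling from Section \ref{sec:upc} to the conditional distributions $\{\pi_k^\alpha(\,\cdot\mid Q)\}_\alpha$ on $\{Q' \in \mathcal{Q}_k : Q' \subseteq Q\}$, thereby jointly selecting the $\mathcal{Q}_k$-cell of $X_\alpha$ for all $\alpha$ simultaneously. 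Using the universal Poisson coupling (rather than a separate application of Theorem \ref{thm:rd_bd} for each pair) is what allows a single coupling to work for the whole uncountable family $\mathcal{P}(\mathcal{X})$.

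Next, I would bound $\mathbf{E}[c(X_\alpha, X_\beta)]$ level by level. By the ultrametric property, $c(X_\alpha, X_\beta) \le w_k$ whenever the two are in the same cell of $\mathcal{Q}_k$, so $\mathbf{E}[c(X_\alpha, X_\beta)]$ is dominated by a geometric sum $\sum_k (w_k - w_{k-1})\,\mathbf{P}\bigl(Q_\alpha^{(k)} \neq Q_\beta^{(k)}\bigr)$, and Theorem \ref{thm:rd_bd} (applied conditionally) gives $\mathbf{P}(Q_\alpha^{(k)} \neq Q_\beta^{(k)}) \le 2\, d_{\mathrm{TV}}(\pi_k^\alpha, \pi_k^\beta)$. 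On the other side, because two points in distinct $\mathcal{Q}_k$-cells are at $c$-distance strictly greater than $w_k$, any transport plan between $P_\alpha$ and $P_\beta$ incurs cost at least $w_k\, d_{\mathrm{TV}}(\pi_k^\alpha, \pi_k^\beta)$, so
\[
C_c^*(P_\alpha, P_\beta) \ge \sup_{k} w_k\, d_{\mathrm{TV}}(\pi_k^\alpha, \pi_k^\beta),
\]
with a corresponding layer-cake refinement that sums a geometric series over consecutive levels. Dividing the upper bound by this lower bound and optimizing over $\gamma$ produces a universal numerical constant.

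The main obstacle I expect is the bookkeeping that turns this outline into the explicit constant $7.56$: one must optimize the scale ratio $\gamma$ to balance the factor $2$ coming from the per-level Poisson coupling against the $\gamma/(\gamma-1)$ from geometrically summing the level contributions, and match this against the sharpest layer-cake lower bound for $C_c^*$. Secondary technical issues are handling the tails $k \to \pm\infty$ (which matters for distributions with atoms or unbounded support in the ultrametric) and verifying that the universal Poisson coupling can be assembled consistently across all levels on the common standard probability space, both of which should be straightforward once the construction of Section \ref{sec:upc} is in hand.
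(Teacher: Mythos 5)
Your overall multi-scale strategy is in the same family as the machinery the paper actually uses, but note that the paper's own proof of this theorem is a two-line reduction: write $c=(d)^{q}$ with $d:=c^{1/q}$, observe that $d$ is again an ultrametric so that balls of equal radius either coincide or are disjoint, hence \eqref{eq:metric_pow_delta_dist} holds with $\Psi=1$ (and \eqref{eq:metric_pow_limsup} is automatic), apply Theorem \ref{thm:metric_pow} with $\mu=\sum_i 2^{-i}\delta_{z_i}$ built from a countable dense set, and let $q\to0$ to get $7.555\cdot(2.47)^{q}/(1-q)\to 7.555<7.56$. Your plan instead re-derives the hierarchical construction directly, and here there is a genuine gap: the per-level estimate $\mathbf{P}\bigl(Q^{(k)}_{\alpha}\neq Q^{(k)}_{\beta}\bigr)\le 2\,d_{\mathrm{TV}}(\pi^{\alpha}_{k},\pi^{\beta}_{k})$ is not what your coarse-to-fine coupling gives. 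Once the two chains land in different parent cells at some coarse scale they can never re-merge (children of distinct parents are disjoint), so the disagreement events are nested in $k$ and the disagreement probability at scale $w_k$ necessarily accumulates contributions from \emph{all} coarser scales; the correct bound is of the shape $\mathbf{P}(\text{differ at }k)\le 2\sum_{j:\,w_j\ge w_k}\bigl(d_{\mathrm{TV}}(\pi^{\alpha}_{j},\pi^{\beta}_{j})+d_{\mathrm{TV}}(\pi^{\alpha}_{j+1},\pi^{\beta}_{j+1})\bigr)$, exactly as in Lemma \ref{lem:spfr_dpcbd} (using that, by nestedness of the partitions, the joint law of all cells at scales $\ge w_j$ is a function of the scale-$w_j$ cell). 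The clean single-level bound would require applying the universal Poisson coupling to the full projections $\pi^{\alpha}_{k}$ unconditionally at each level, but then the selected cells at different scales need not be nested and no single random point $X_{\alpha}$ is defined, so the bound $c(X_\alpha,X_\beta)\le w_k$ on the agreement event is lost.

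After correcting this, your argument still closes, but with a worse constant: interchanging sums gives $\mathbf{E}[c(X_{\alpha},X_{\beta})]\le 4\gamma\sum_{k}w_{k}\,d_{\mathrm{TV}}(\pi^{\alpha}_{k},\pi^{\beta}_{k})$, while the layer-cake lower bound is $C_{c}^{*}(P_{\alpha},P_{\beta})\ge(1-\gamma^{-1})\sum_{k}w_{k}\,d_{\mathrm{TV}}(\pi^{\alpha}_{k},\pi^{\beta}_{k})$ (your $\sup_k$ version alone is too weak, since infinitely many scales can contribute comparably), so the ratio is $4\gamma^{2}/(\gamma-1)$, equal to $16$ at the optimal $\gamma=2$. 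This proves $r_{c}^{*}(\mathcal{P}(\mathcal{X}))<\infty$ but not the stated $7.56$; to get below $7.56$ you need the extra devices the paper builds into Theorem \ref{thm:metric_pow} — the random shift $\varTheta$ of the scale grid, the optimization of the scale ratio, and the $q\to0$ snowflake trick — or simply the reduction described above. Your "secondary" concerns (the fine tail via completeness and Cantor intersection, the coarse tail when $\mathrm{diam}(\mathcal{X})=\infty$ via the anchoring conditions \eqref{eq:spfr_kinfty_lim}--\eqref{eq:spfr_kinfty_lim2}, and measurability of the limit point) are indeed handled by the SPFR framework and are not the issue.
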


\medskip{}
We remark that it is shown in \cite{kloeckner2015geometric} that
if $(\mathcal{X},c)$ is a compact ultrametric space, then $(\mathcal{P}(\mathcal{X}),C_{c}^{*})$
is affinely isometric to a convex subset of $\ell_{1}$. In contrast,
Theorem \ref{thm:ultrametric} implies that there is a bi-Lipschitz
embedding of $(\mathcal{P}(\mathcal{X}),C_{c}^{*})$ into the space
of $\mathcal{X}$-valued random variables with distortion $7.56$,
where each $P\in\mathcal{P}(\mathcal{X})$ is mapped to a random variable
with distribution $P$ (see Section \ref{sec:geom}). The requirement
that $P$ is mapped to a random variable with distribution $P$ incurs
a penalty that the embedding is only bi-Lipschitz (instead of isometric).
This penalty is necessary, since it is shown in Proposition \ref{prop:rc_prop_misc}
that $r_{c}^{*}(\mathcal{P}(\mathcal{X}))>1$ for metric spaces $(\mathcal{X},c)$
unless $(\mathcal{X},c)$ can be isometrically embedded into $(\mathbb{R},\,(x,y)\mapsto|x-y|)$,
meaning that the only ultrametric space $(\mathcal{X},c)$ where $r_{c}^{*}(\mathcal{P}(\mathcal{X}))=1$
is the trivial example where $|\mathcal{X}|=2$.

\medskip{}

\subsection{Finite Collection of Probability Distributions}

We give a bound on $r_{c}^{*}$ when the symmetric cost function $c$
is a metric and the collection of distributions $\{P_{\alpha}\}_{\alpha\in\mathcal{A}}$
is finite. The proof uses the result on tree metrics in \cite{fakcharoenphol2004tight},
and the strategy in \cite{archer2004approximate}. It is given in
Appendix \ref{subsec:pf_finite_col}.
\begin{prop}
\label{prop:finite_col}Let $(\mathcal{X},c)$ be a complete separable
metric space. Let $\{P_{\alpha}\}_{\alpha\in\mathcal{A}}$ be a finite
collection of probability distributions over $\mathcal{X}$ with $|\mathcal{A}|\ge2$.
We have
\[
r_{c}^{*}(\{P_{\alpha}\}_{\alpha\in\mathcal{A}})<23.1\cdot\log|\mathcal{A}|.
\]
\end{prop}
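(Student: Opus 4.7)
The plan is to reduce the problem to coupling on a random tree via the Fakcharoenphol--Rao--Talwar (FRT) tree embedding applied to the $1$-Wasserstein metric on the finite index set $\mathcal{A}$, and then to construct the on-tree coupling using the hierarchical strategy of \cite{archer2004approximate}.

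First I would observe that because $c$ is a metric on $\mathcal{X}$, the map $d(\alpha,\beta) := C_c^*(P_\alpha,P_\beta)$ is the $1$-Wasserstein distance and hence a metric on the finite set $\mathcal{A}$ with $m := |\mathcal{A}|$ points. Applying FRT \cite{fakcharoenphol2004tight} to $(\mathcal{A},d)$ yields a probability distribution over hierarchically well-separated trees $T$ whose leaves biject onto $\mathcal{A}$, such that the induced tree metric $d_T$ satisfies $d_T(\alpha,\beta) \ge d(\alpha,\beta)$ pointwise and $\mathbf{E}[d_T(\alpha,\beta)] \le O(\log m)\cdot d(\alpha,\beta)$ in expectation.

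For each realization of $T$, I would build a coupling $\{X_\alpha^T\}_{\alpha\in\mathcal{A}}$ of $\{P_\alpha\}$ with $\mathbf{E}[c(X_\alpha^T,X_\beta^T)\mid T] \le O(1)\cdot d_T(\alpha,\beta)$, following a top-down hierarchical construction in the spirit of \cite{archer2004approximate}: one associates to each internal node $v$ of $T$ a distribution $Q_v$ on $\mathcal{X}$ (with $Q_\alpha := P_\alpha$ at the leaves) and propagates a representative sample $Z_v \sim Q_v$ from the root downward by drawing $(Z_u,Z_v)$ from an (approximately) optimal $2$-marginal coupling of $(Q_u,Q_v)$ along each parent--child edge $(u,v)$. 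Setting $X_\alpha^T := Z_\alpha$, the triangle inequality for $c$ along the tree path from $\alpha$ to $\beta$ bounds $\mathbf{E}[c(X_\alpha^T,X_\beta^T)\mid T]$ by the sum of $2$-marginal OT costs along that path, which by construction is at most a constant multiple of $d_T(\alpha,\beta)$. Averaging over $T$ then gives
\[
\mathbf{E}[c(X_\alpha,X_\beta)] \le O(1)\cdot \mathbf{E}_T[d_T(\alpha,\beta)] \le O(\log m)\cdot C_c^*(P_\alpha,P_\beta),
\]
so that $r_c^*(\{P_\alpha\}_{\alpha\in\mathcal{A}}) = O(\log m)$; the explicit constant $23.1$ would come from tracking the known FRT constant together with those produced by the on-tree construction.

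The main obstacle is the on-tree construction, because the Steiner internal nodes of $T$ carry no prescribed distribution: a naive assignment of $Q_v$ (say the uniform mixture of descendant leaf distributions, or a single representative $P_{\alpha_v}$) does not in general satisfy the required edge cost bound $C_c^*(Q_u,Q_v) \le w(u,v)$. Overcoming this requires randomizing the choice of $Q_v$ (or of the inter-level couplings) at each level of the HST, as in \cite{archer2004approximate}, so that the desired edge-cost bounds hold only in expectation over the full randomized construction; after this, the averaging against FRT supplies the final $O(\log m)$ factor.
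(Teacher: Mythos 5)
Your overall framework (FRT embedding of the finite metric space $(\mathcal{A},\,(\alpha,\beta)\mapsto C_{c}^{*}(P_{\alpha},P_{\beta}))$, then a top-down coupling along the tree, then the triangle inequality along leaf-to-leaf paths and averaging over the random tree) is exactly the route the paper takes, but you stop short at what you yourself call the main obstacle, and your diagnosis of that obstacle is off. You insist on the edge bound $C_{c}^{*}(Q_{u},Q_{v})\le w(u,v)$ and conclude that a single representative per internal node cannot work, so that one must randomize the node distributions level by level. That extra randomization is not needed, and the exact bound $C_{c}^{*}(Q_{u},Q_{v})\le w(u,v)$ is stronger than what the argument requires: it suffices to lose a fixed constant factor per edge, since that factor just multiplies the FRT distortion in the final bound.

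Concretely, in the FRT construction the nodes of $T$ are subsets $v\subseteq\mathcal{A}$, each parent--child edge $(v_{1},v_{2})$ has length $\mathrm{diam}(v_{1})/2$ (diameter in the metric $C_{c}^{*}$), and the leaves are the singletons. Assign to each node $v$ the single distribution $P_{\mathrm{cen}(v)}$, where $\mathrm{cen}(v)\in v$ is a center of $v$; at a leaf $\{\alpha\}$ this is $P_{\alpha}$, so the marginals come out right. For a parent--child pair both centers lie in $v_{1}$, hence
\[
C_{c}^{*}\big(P_{\mathrm{cen}(v_{1})},P_{\mathrm{cen}(v_{2})}\big)\le\mathrm{diam}(v_{1})=2\,w(v_{1},v_{2}),
\]
so propagating $(1+\epsilon)$-optimal two-marginal couplings down the tree and summing along the path through the lowest common ancestor gives $\mathbf{E}[c(X_{\alpha},X_{\beta})\mid T]\le2(1+\epsilon)\,d_{T}(\alpha,\beta)$. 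Averaging against the FRT guarantee $\mathbf{E}_{T}[d_{T}(\alpha,\beta)]\le(8\log|\mathcal{A}|/\log2)\,C_{c}^{*}(P_{\alpha},P_{\beta})$ and letting $\epsilon\to0$ yields the constant $16/\log2<23.1$. So the missing step in your write-up is closed by this deterministic center assignment together with the observation that a factor-$2$ per-edge loss is harmless; without carrying out some such construction (or the randomized one you allude to), the proposal does not yet constitute a proof, and as stated it points toward unnecessary machinery.
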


This result (for finite $\{P_{\alpha}\}_{\alpha\in\mathcal{A}}$ and
arbitrary $\mathcal{X}$) appears to be similar to Theorem \ref{thm:metric_log}
(for finite $\mathcal{X}$ and arbitrary $\{P_{\alpha}\}_{\alpha\in\mathcal{A}}$),
with the roles of $\mathcal{X}$ and $\{P_{\alpha}\}_{\alpha\in\mathcal{A}}$
switched. Understanding the connection between the two results is
left for future research.\medskip{}

\begin{table}[H]
\noindent \hspace{-68pt}%
\begin{tabular}{|c|c|cl|}
\hline 
Collection $\{P_{\alpha}\}_{\alpha}$ & Cost $c(x,y)$ & \multicolumn{2}{c|}{Bounds on $r_{c}^{*}(\{P_{\alpha}\}_{\alpha})$}\tabularnewline
\hline 
\hline 
$\mathcal{P}([1..2])$ & \multirow{5}{*}{$\mathbf{1}\{x\neq y\}$} & $r_{c}^{*}=1$ & {\footnotesize{}(Folklore, Thm \ref{thm:rd_bd})}\tabularnewline
\cline{1-1} \cline{3-4} \cline{4-4} 
$\mathcal{P}([1..3])$ &  & $r_{c}^{*}=4/3$ & {\footnotesize{}(\cite{kleinberg2002approximation,iwasa2009approximation},
Thm \ref{thm:rd_bd})}\tabularnewline
\cline{1-1} \cline{3-4} \cline{4-4} 
$\mathcal{P}([1..4])$ &  & $3/2\le r_{c}^{*}\le5/3$ & {\footnotesize{}(Thm \ref{thm:rd_bd})}\tabularnewline
\cline{1-1} \cline{3-4} \cline{4-4} 
$\mathcal{P}([1..s]),\,s\ge5$ &  & $2(1-s^{-1})\le r_{c}^{*}\le2$ & {\footnotesize{}(\cite{kleinberg2002approximation,angel2019pairwise},
Thm \ref{thm:rd_bd})}\tabularnewline
\cline{1-1} \cline{3-4} \cline{4-4} 
$\mathcal{P}(\mathbb{Z})$ or $\mathcal{P}_{\ll\lambda}(\mathbb{R}^{n})$ &  & \multirow{1}{*}{$r_{c}^{*}=2$} & \multirow{1}{*}{{\footnotesize{}(\cite{angel2019pairwise}, Thm \ref{thm:rd_bd})}}\tabularnewline
\hline 
\multirow{2}{*}{$\mathcal{P}(\mathbb{R})$} & $|x-y|^{q}$, $q<1$ & $2\le r_{c}^{*}<9.34/(1-q)$ & {\footnotesize{}(Thm$\!$ \ref{thm:rn_rc_ub}, \eqref{eq:rn_rc_ub_Bnp},
$\!$Prop$\!$ \ref{prop:rn_rc_lb_1})}\tabularnewline
\cline{2-4} \cline{3-4} \cline{4-4} 
 & $|x-y|^{q}$, $q\ge1$ & $r_{c}^{*}=1$ & {\footnotesize{}(\cite{rachev1998mass}, Prop \ref{prop:r_rc})}\tabularnewline
\hline 
\multirow{3}{*}{$\mathcal{P}(\mathbb{R}^{n})$, $n\ge2$} & \multirow{2}{*}{$\Vert x-y\Vert_{p}^{q}$, $q<1$} & $\!\!\max\!\big\{2,\frac{1}{1000\sqrt{1-q}}\!\big\}\!\le\!r_{c}^{*}\!<\!\frac{10.55}{1-q}n^{q\max\{1/p,\,1-1/p\}}\!\!\!\!\!\!$ & {\footnotesize{}(Thm \ref{thm:rn_rc_ub}, Prop \ref{prop:rn_rc_lb_1})}\tabularnewline
\cline{3-4} \cline{4-4} 
 &  & $r_{c}^{*}=\Omega(n^{q/p})$ & {\footnotesize{}(Thm \ref{thm:rn_rc_lb_ball})}\tabularnewline
\cline{2-4} \cline{3-4} \cline{4-4} 
 & $\Vert x-y\Vert_{p}^{q}$, $q\ge1$ & $r_{c}^{*}=\infty$ & {\footnotesize{}(Prop \ref{prop:rn_rc_lb_2}, \cite{naor2007planar})}\tabularnewline
\hline 
$\mathcal{P}([0..s]^{n})$, $s\ge1$ & $\Vert x-y\Vert_{p}^{q}$, $p\le2$, $q\ge1$ & $r_{c}^{*}<28.66n^{q/p}s^{q-1}\log(s+1)$ & {\footnotesize{}(Prop \ref{prop:s_rc_ub})}\tabularnewline
\hline 
$\mathcal{P}(\ell_{p})$, $p<\infty$ & $\Vert x-y\Vert_{p}^{q}$ & $r_{c}^{*}=\infty$ & {\footnotesize{}(Cor \ref{cor:rn_rc_lb_infdim})}\tabularnewline
\hline 
$\mathcal{P}(\mathcal{P}(\mathbb{N}))$ & $d_{\mathrm{TV}}(x,y)$ & $r_{c}^{*}=\infty$ & {\footnotesize{}(Cor \ref{cor:rn_rc_lb_infdim})}\tabularnewline
\hline 
$\mathcal{P}(\mathrm{C}([0,1],\mathbb{R}))$ & $\Vert x-y\Vert_{p}^{q}$, $p\!<\!\infty$ or $q\!\ge\!1$ & $r_{c}^{*}=\infty$ & {\footnotesize{}(Cor \ref{cor:rn_rc_lb_infdim})}\tabularnewline
\hline 
$\mathcal{P}(\{0,1\}^{n})$, $n\ge2$ & \multirow{2}{*}{$\Vert x-y\Vert_{1}^{q}$} & $\!\!\!\max\!\big\{\!\frac{(n-1)^{q}-1}{n}\!+\!1,\,2\!-\!2^{1-n}\!\big\}\!\le\!r_{c}^{*}\!\le\!2n^{q},r_{c}^{*}\!=\!\Omega(n^{1-|1-q|}\!)\!\!\!\!$ & {\footnotesize{}(Prop \ref{prop:rn_rc_lb_1}, \ref{prop:rn_rc_lb_2},
\ref{prop:rc_prop_misc}, \ref{prop:hamming_embed})$\!$$\!$}\tabularnewline
\cline{1-1} \cline{3-4} \cline{4-4} 
$\!\!\mathcal{P}(\{x\!\in\!\{0,1\}^{\mathbb{N}}:\sum_{i}x_{i}\!<\!\infty\})\!\!$ &  & $r_{c}^{*}=\infty$ & {\footnotesize{}(Cor \ref{cor:rn_rc_lb_infdim}, \cite{khot2006nonembeddability})}\tabularnewline
\hline 
\multirow{3}{*}{$\begin{array}{c}
\mathcal{P}(\mathcal{M})\text{ for }\mathcal{M}\\
\text{being the circle}
\end{array}$} & $(d_{\mathcal{M}}(x,y))^{q}$, $q<1$ & $2\le r_{c}^{*}<20.27/(1-q)$ & {\footnotesize{}(Prop \ref{prop:circle})}\tabularnewline
\cline{2-4} \cline{3-4} \cline{4-4} 
 & $d_{\mathcal{M}}(x,y)$ & $r_{c}^{*}=2$ & {\footnotesize{}(Prop \ref{prop:circle})}\tabularnewline
\cline{2-4} \cline{3-4} \cline{4-4} 
 & $(d_{\mathcal{M}}(x,y))^{q}$, $q>1$ & $r_{c}^{*}=\infty$ & {\footnotesize{}(Prop \ref{prop:circle})}\tabularnewline
\hline 
\multirow{2}{*}{$\!\!\begin{array}{c}
\mathcal{P}(\mathcal{M})\text{ for }\mathcal{M}\text{ being the}\\
n\text{-sphere/torus},\,n\ge2
\end{array}\!\!$} & $(d_{\mathcal{M}}(x,y))^{q}$, $q<1$ & $\!\!\max\!\big\{2,\frac{1}{1000\sqrt{1-q}}\!\big\}\!\le\!r_{c}^{*}\!<\!\frac{20.27}{1-q}n^{q/2}$,
$\!r_{c}^{*}\!=\!\Theta(n^{q/2})\!\!\!\!$ & {\footnotesize{}(Cor$\!$ \ref{cor:dyadic_manifold},Prop$\!$ \ref{prop:rn_rc_lb_1},Thm$\!$
\ref{thm:rn_rc_lb_ball})$\!$$\!$$\!$}\tabularnewline
\cline{2-4} \cline{3-4} \cline{4-4} 
 & $(d_{\mathcal{M}}(x,y))^{q}$, $q\ge1$ & $r_{c}^{*}=\infty$ & {\footnotesize{}(Thm \ref{prop:rn_rc_lb_2})}\tabularnewline
\hline 
\multirow{2}{*}{$\mathcal{P}(\mathcal{X}),\,|\mathcal{X}|<\infty$} & $d(x,y)$, $d$ is a metric & $r_{c}^{*}<55.7(1+\log|\mathcal{X}|)$ & {\footnotesize{}(Thm \ref{thm:metric_log}, \cite{fakcharoenphol2004tight})}\tabularnewline
\cline{2-4} \cline{3-4} \cline{4-4} 
 & $(d(x,y))^{q}$, $d$ is a metric & $r_{c}^{*}<7.56(|\mathcal{X}|-1)^{q}$ & {\footnotesize{}(Thm \ref{thm:metric})}\tabularnewline
\hline 
$\mathcal{P}(\mathcal{X})$ & $\!\!$Ultrametric$\!$ that$\!$ metrizes$\!$ $\mathcal{X}$$\!\!$ & $r_{c}^{*}<7.56$ & {\footnotesize{}(Thm \ref{thm:ultrametric})}\tabularnewline
\hline 
$\{P_{\alpha}\}_{\alpha\in\mathcal{A}}$, $|\mathcal{A}|<\infty$ & $d(x,y)$, $d$ is a metric & $r_{c}^{*}<23.1\cdot\log|\mathcal{A}|$ & {\footnotesize{}(Prop \ref{prop:finite_col}, \cite{archer2004approximate})}\tabularnewline
\hline 
$\mathcal{P}([1..4])$ & $\mathbf{1}\{|x-y|=2\}$ & $r_{c}^{*}=\infty$ & {\footnotesize{}(Prop \ref{prop:rc_prop_misc})}\tabularnewline
\hline 
\end{tabular}\vspace{4pt}

\caption{\label{tab:listbd1}List of bounds on $r_{c}^{*}(\{P_{\alpha}\}_{\alpha})$,
where $s\in\mathbb{Z}_{\ge2}$, $p\in\mathbb{R}_{\ge1}\cup\{\infty\}$,
$q>0$.  The example $c(x,y)=\mathbf{1}\{|x-y|=2\}$ over $\mathcal{P}([1..4])$
is the smallest example where $r_{c}^{*}=\infty$.}
\end{table}

\begin{sidewaystable}[H]
\begin{tabular}{|c|c|c|c|c|c|c|l|}
\hline 
$n$ & $\mathcal{X}$ & $q$ & $p$ & Bounds on $r_{c}^{*}(\mathcal{P}(\mathcal{X}))$ & $\!\begin{array}{c}
r_{c}^{*}(\mathcal{P}(\mathcal{X}))\\
=\Omega(\cdots)
\end{array}\!$ & $\begin{array}{c}
r_{c}^{*}(\mathcal{P}(\mathcal{X}))\\
=O(\cdots)
\end{array}$ & $\begin{array}{c}
\text{Theorem /}\\
\text{Proposition}
\end{array}$\tabularnewline
\hline 
\hline 
\multirow{4}{*}{$1$} & \multirow{2}{*}{$[0..s]$} & $\!<\!1\!$ & $1$ & $\!\!\begin{array}{l}
2/(1+s^{q-1})\le r_{c}^{*}\\
\le\min\{\frac{9.34}{1-q},\,55.7(\log(s+1)+1),\,2s^{q},\,s^{1-q}\}
\end{array}\!\!$ & \multicolumn{2}{c|}{$1$} & {\footnotesize{}$\begin{array}{c}
\mbox{(Thm \ref{thm:metric_log}, \ref{thm:rn_rc_ub}, \eqref{eq:rn_rc_ub_Bnp},}\\
\mbox{Prop \ref{prop:rn_rc_lb_1}, \ref{prop:rc_prop_misc})}
\end{array}$$\!\!$}\tabularnewline
\cline{3-8} \cline{4-8} \cline{5-8} \cline{6-8} \cline{7-8} \cline{8-8} 
 &  & $\!\ge\!1\!$ & $1$ & $r_{c}^{*}=1$ & \multicolumn{2}{c|}{$1$} & {\footnotesize{}(\cite{rachev1998mass}, Prop \ref{prop:r_rc})}\tabularnewline
\cline{2-8} \cline{3-8} \cline{4-8} \cline{5-8} \cline{6-8} \cline{7-8} \cline{8-8} 
 & \multirow{2}{*}{$\mathbb{Z}$ or $\mathbb{R}$} & $\!<\!1\!$ & $1$ & $2\le r_{c}^{*}<9.34/(1-q)$ & \multicolumn{2}{c|}{$1$} & {\footnotesize{}(Thm$\!$ \ref{thm:rn_rc_ub}, \eqref{eq:rn_rc_ub_Bnp},
$\!$Prop$\!$ \ref{prop:rn_rc_lb_1})}\tabularnewline
\cline{3-8} \cline{4-8} \cline{5-8} \cline{6-8} \cline{7-8} \cline{8-8} 
 &  & $\!\ge\!1\!$ & $1$ & $r_{c}^{*}=1$ & \multicolumn{2}{c|}{$1$} & {\footnotesize{}(\cite{rachev1998mass}, Prop \ref{prop:r_rc})}\tabularnewline
\hline 
\multirow{9}{*}{$\!\ge\!2\!$} & \multirow{6}{*}{$[0..s]^{n}$} & \multirow{2}{*}{$\!<\!1\!$} & $\!\le\!2\!$ & $2(1-s^{-n})\le r_{c}^{*}<\frac{10.55}{1-q}n^{q/p}$ & \multicolumn{2}{c|}{$n^{q/p}$} & {\footnotesize{}(Thm \ref{thm:rn_rc_ub}, Prop \ref{prop:rn_rc_lb_1},
\ref{prop:hamming_embed})}\tabularnewline
\cline{4-8} \cline{5-8} \cline{6-8} \cline{7-8} \cline{8-8} 
 &  &  & $\!>\!2\!$ & $2(1-s^{-n})\le r_{c}^{*}\le\min\{\frac{10.55}{1-q}n^{q-q/p},\,2n^{q/p}s^{q}\}$ & $n^{q/p}$ & $\min\{n^{q-q/p},n^{q/p}s^{q}\}$ & {\footnotesize{}(Thm \ref{thm:rn_rc_ub}, Prop \ref{prop:rn_rc_lb_1},\ref{prop:rc_prop_misc},\ref{prop:hamming_embed})$\!\!$}\tabularnewline
\cline{3-8} \cline{4-8} \cline{5-8} \cline{6-8} \cline{7-8} \cline{8-8} 
 &  & \multirow{2}{*}{$1$} & $\!\le\!2\!$ & $\frac{1}{285}\sqrt{\log s}\le r_{c}^{*}<28.66n^{1/p}\log(s+1)$ & $n^{1/p}+\sqrt{\log s}$ & $n^{1/p}\log(s+1)$ & {\footnotesize{}(Prop \ref{prop:s_rc_ub}, \ref{prop:hamming_embed},
\eqref{eq:rn_rc_lb_order}, \cite{naor2007planar})$\!\!$}\tabularnewline
\cline{4-8} \cline{5-8} \cline{6-8} \cline{7-8} \cline{8-8} 
 &  &  & $\!>\!2\!$ & $\!\!\frac{1}{285}\sqrt{\log s}\le r_{c}^{*}<28.66n^{1-1/p}\log(s/n^{1-2/p}+1)\!\!$ & $n^{1/p}+\sqrt{\log s}$ & $n^{1-\frac{1}{p}}\log(\frac{s}{n^{1-2/p}}+1)$ & {\footnotesize{}(Prop \ref{prop:s_rc_ub}, \ref{prop:hamming_embed},
\eqref{eq:rn_rc_lb_order}, \cite{naor2007planar})$\!\!$}\tabularnewline
\cline{3-8} \cline{4-8} \cline{5-8} \cline{6-8} \cline{7-8} \cline{8-8} 
 &  & \multirow{2}{*}{$\!>\!1\!$} & $\!\le\!2\!$ & $\!\!\begin{array}{l}
\max\{(n-1)^{\frac{q}{p}}n^{-1},\frac{1}{2}\}s^{q-1}\le r_{c}^{*}\\
\;\;<28.66n^{\frac{q}{p}}s^{q-1}\log(s+1)
\end{array}\!\!$ & $\!\!\!n^{1-|1-\frac{q}{p}|}+n^{\frac{q}{p}-1}s^{q-1}+s^{q-1}\!\!\!$ & $n^{\frac{q}{p}}s^{q-1}\log(s+1)$ & {\footnotesize{}(Prop \ref{prop:s_rc_ub}, Prop \ref{prop:rn_rc_lb_2},
\ref{prop:hamming_embed})}\tabularnewline
\cline{4-8} \cline{5-8} \cline{6-8} \cline{7-8} \cline{8-8} 
 &  &  & $\!>\!2\!$ & $\!\!\begin{array}{l}
\max\{(n-1)^{\frac{q}{p}}n^{-1},\frac{1}{2}\}s^{q-1}\le r_{c}^{*}\\
\;\;<28.66n^{1+q/p-2/p}s^{q-1}\log(s/n^{1-2/p}+1)
\end{array}\!\!$ & $\!\!\!n^{1-|1-\frac{q}{p}|}+n^{\frac{q}{p}-1}s^{q-1}+s^{q-1}\!\!\!$ & $\!\!\!n^{1+\frac{q}{p}-\frac{2}{p}}s^{q-1}\log(\frac{s}{n^{1-2/p}}+1)\!\!\!$ & {\footnotesize{}(Prop \ref{prop:s_rc_ub}, \ref{prop:rn_rc_lb_2},
\ref{prop:hamming_embed})}\tabularnewline
\cline{2-8} \cline{3-8} \cline{4-8} \cline{5-8} \cline{6-8} \cline{7-8} \cline{8-8} 
 & \multirow{3}{*}{$\!\mathbb{Z}^{n}\!$ or $\mathbb{R}^{n}\!$} & \multirow{2}{*}{$\!<\!1\!$} & $\!\le\!2\!$ & $\!\max\big\{2,\frac{1}{1000\sqrt{1-q}}\big\}\le r_{c}^{*}<\frac{10.55}{1-q}n^{q/p}$ & \multicolumn{2}{c|}{$n^{q/p}$} & {\footnotesize{}(Thm$\!$ \ref{thm:rn_rc_ub},\ref{thm:rn_rc_lb_ball}$\!$
Prop$\!$ \ref{prop:rn_rc_lb_1})$\!\!$}\tabularnewline
\cline{4-8} \cline{5-8} \cline{6-8} \cline{7-8} \cline{8-8} 
 &  &  & $\!>\!2\!$ & $\!\max\big\{2,\frac{1}{1000\sqrt{1-q}}\big\}\le r_{c}^{*}<\frac{10.55}{1-q}n^{q-q/p}\!$ & $n^{q/p}$ & $n^{q-q/p}$ & {\footnotesize{}(Thm$\!$ \ref{thm:rn_rc_ub},\ref{thm:rn_rc_lb_ball}$\!$
Prop$\!$ \ref{prop:rn_rc_lb_1})$\!\!$}\tabularnewline
\cline{3-8} \cline{4-8} \cline{5-8} \cline{6-8} \cline{7-8} \cline{8-8} 
 &  & $\!\ge\!1\!$ & $\!$any$\!$ & $r_{c}^{*}=\infty$ & \multicolumn{2}{c|}{$\infty$} & {\footnotesize{}(Prop \ref{prop:rn_rc_lb_2}, \cite{naor2007planar})}\tabularnewline
\hline 
\end{tabular}

\caption{List of bounds on $r_{c}^{*}(\mathcal{P}(\mathcal{X}))$, where $c(x,y)=\Vert x-y\Vert_{p}^{q}$,
$p\in\mathbb{R}_{\ge1}\cup\{\infty\}$, $q>0$, $s\in\mathbb{N}$.
Here we write $r_{c}^{*}(\mathcal{P}(\mathcal{X}))=\Omega(g(n,s))$
(resp. $O(g(n,s))$) if $r_{c}^{*}(\mathcal{P}(\mathcal{X}))\ge\gamma_{p,q}g(n,s)$
(resp. $\le\gamma_{p,q}g(n,s)$) for all $n,s,p,q$, where the coefficient
$\gamma_{p,q}$ only depends on $p,q$. Therefore we write $O(n^{1/p}\log(s+1))$
instead of $O(n^{1/p}\log s)$ since the expression should also be
valid when $s=1$, $n\to\infty$. Note that the last two upper bounds
for the case $\mathcal{X}=[0..s]$ are due to the ratio bound in Proposition
\ref{prop:rc_prop_misc} with respect to the discrete metric and the
$\ell_{1}$ metric, respectively. }

\end{sidewaystable}

\begin{figure}[H]
\begin{centering}
\includegraphics[scale=0.43]{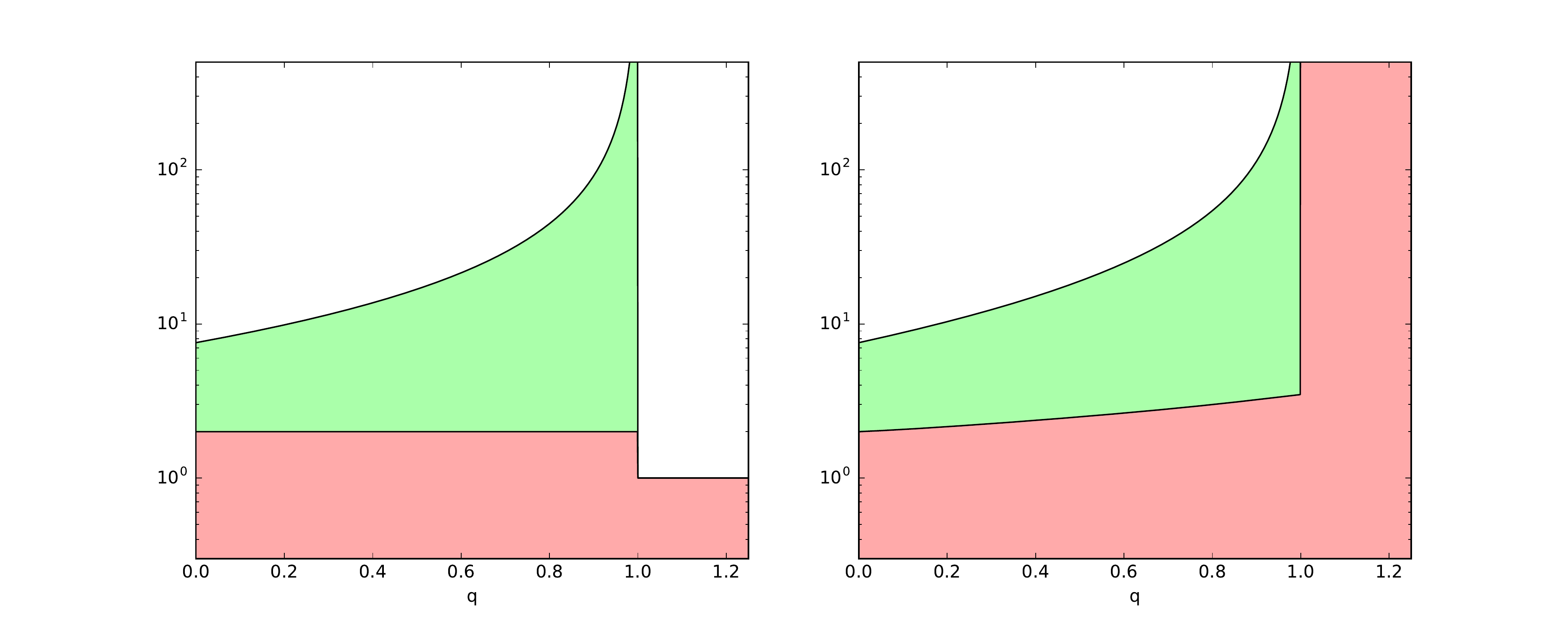}
\par\end{centering}
\caption{\label{fig:rn_plot_n12}Log-scale plot of the upper bound (Theorem
\ref{thm:rn_rc_ub}) and lower bound (Proposition \ref{prop:rn_rc_lb_1}
and Theorem \ref{thm:rn_rc_lb_ball}) on $r_{c}^{*}$ with $c(x,y)=\Vert x-y\Vert_{2}^{q}$
for $\mathbb{R}$ (left) and $\mathbb{R}^{2}$ (right) against $q$.
The green region is the region of possible values of $r_{c}^{*}$.
 The behavior of $r_{c}^{*}(\mathcal{P}(\mathbb{R}))$ is very different
from that of $r_{c}^{*}(\mathcal{P}(\mathbb{R}^{2}))$, as $r_{c}^{*}(\mathcal{P}(\mathbb{R}))$
is smaller when $q\ge1$ compared to $0<q<1$, whereas $r_{c}^{*}(\mathcal{P}(\mathbb{R}^{2}))$
is infinite for $q\ge1$ but finite for $0<q<1$. Note that the lower
bound for $\mathbb{R}^{2}$ tends to $\infty$ continuously as $q\to1^{-}$,
which may not be apparent in the figure.}

\end{figure}

\begin{figure}[H]
\begin{centering}
\includegraphics[scale=0.43]{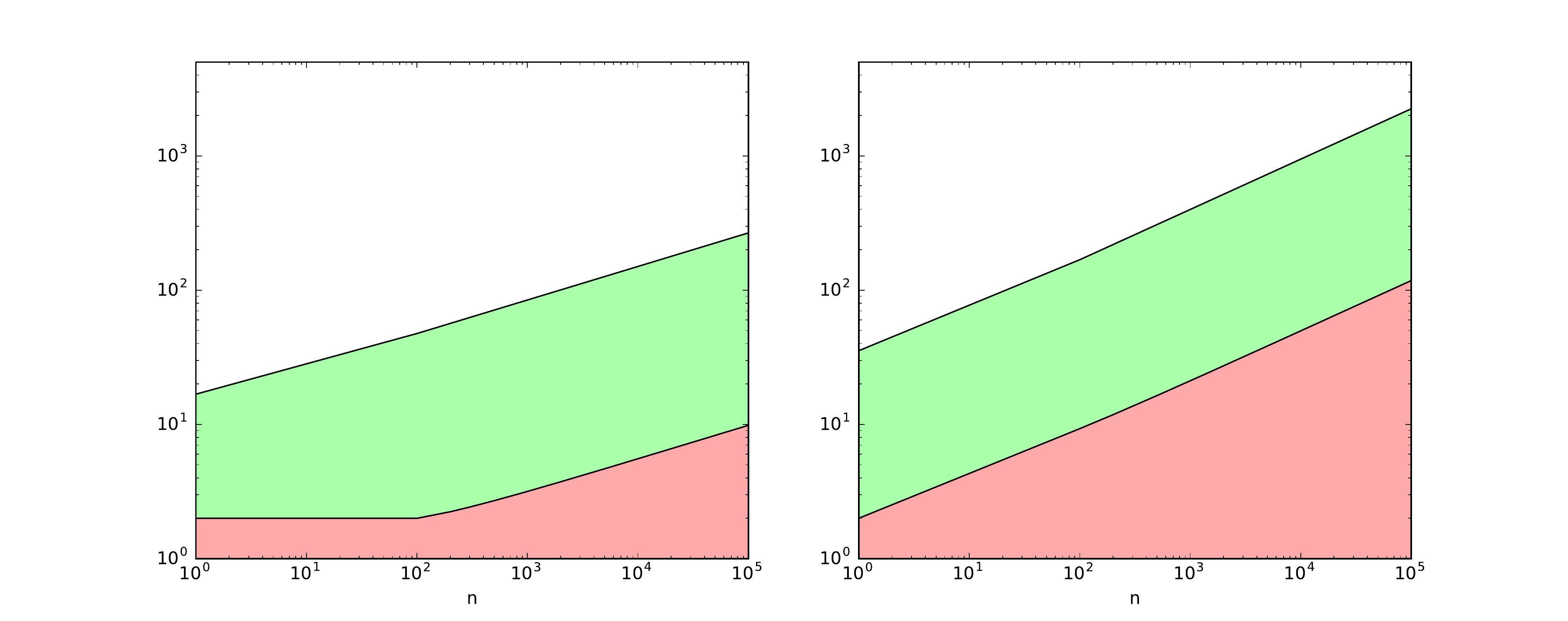}
\par\end{centering}
\caption{\label{fig:rn_plot_n}Log-log plot of the upper bound (Theorem \ref{thm:rn_rc_ub})
and lower bound (Proposition \ref{prop:rn_rc_lb_1} and Theorem \ref{thm:rn_rc_lb_ball})
on $r_{c}^{*}$ with $c(x,y)=\Vert x-y\Vert_{2}^{q}$ on $\mathbb{R}^{n}$
for $q=1/2$ (left) and $q=3/4$ (right) against $n$. The green region
is the region of possible values of $r_{c}^{*}$.}
\end{figure}
\begin{figure}[H]
\vspace{-5pt}

\begin{centering}
\includegraphics[scale=0.38]{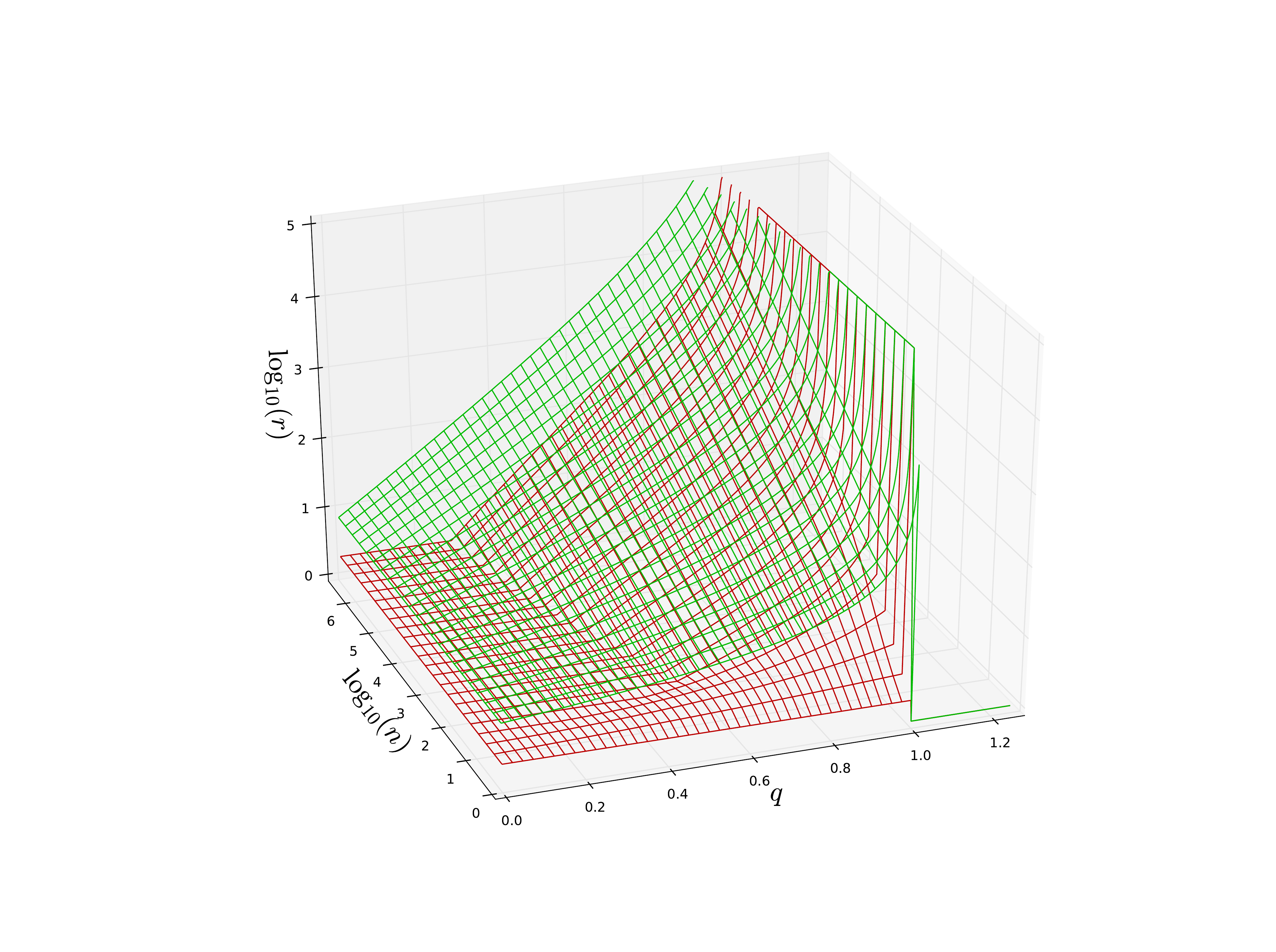}\vspace{-40pt}
\par\end{centering}
\caption{\label{fig:rn_plot_3d}Log-scale plot of the upper bound (Theorem
\ref{thm:rn_rc_ub}, green wireframe) and lower bound (Proposition
\ref{prop:rn_rc_lb_1} and Theorem \ref{thm:rn_rc_lb_ball}, red wireframe)
on $r_{c}^{*}$ with $c(x,y)=\Vert x-y\Vert_{2}^{q}$ on $\mathbb{R}^{n}$
for different $n$'s (in log scale) and $q$'s. Note that the numbers
on the vertical axis denote $\log_{10}(r)$ where $r$ is either the
upper or lower bound on $r_{c}^{*}$.}
\end{figure}

\section{Applications\label{sec:applications}}

In this section, we present several applications of the pairwise multi-marginal
optimal transport problem.

\subsection{Sketching and Locality Sensitive Hashing\label{subsec:lsh_app}}

Locality sensitive hashing for probability distributions has been
studied extensively, e.g., in \cite{charikar2002similarity,indyk2003fast,lv2004image,andoni2009efficient}.
A hash function $h_{Z}$ (indexed by a random variable $Z\sim Q$,
i.e., $h_{Z}$ is a random function) maps a probability distribution
$P_{\alpha}$ (in a collection $\{P_{\alpha}\}_{\alpha}$) into a
low-dimensional data structure $h_{Z}(P_{\alpha})$, such that $h_{Z}(P_{\alpha})$
is close to $h_{Z}(P_{\beta})$ whenever $P_{\alpha}$ is close to
$P_{\beta}$ (e.g. in earth mover's distance $C_{c}^{*}(P_{\alpha},P_{\beta})$
when $c$ is a metric). Such hash functions have various applications
including duplicate detection, nearest neighbor search, and estimation
of earth mover's distance.

A coupling $\{X_{\alpha}\}_{\alpha}$ of $\{P_{\alpha}\}_{\alpha}$
which attains a small $r:=r_{c}(\{X_{\alpha}\}_{\alpha})$ can be
regarded as a locality sensitive hash function for the earth mover's
distance. Assume there is a random variable $Z\sim\mathrm{Unif}[0,1]$
such that $X_{\alpha}=X_{\alpha}(Z)$ is a function of $Z$ for all
$\alpha$ (as in the definition of $\Gamma_{\lambda}(\{P_{\alpha}\}_{\alpha})$).
Let $h_{Z}:\{P_{\alpha}:\alpha\in\mathcal{A}\}\to\mathcal{X}$ be
defined by $h_{Z}(P_{\alpha}):=X_{\alpha}(Z)$, then we have
\[
C_{c}^{*}(P_{\alpha},P_{\beta})\le\mathbf{E}[c(h_{Z}(P_{\alpha}),h_{Z}(P_{\beta}))]\le rC_{c}^{*}(P_{\alpha},P_{\beta}).
\]
If $c$ is a metric, then $\mathbf{E}[c(h_{Z}(P_{\alpha}),h_{Z}(P_{\beta}))]$
is within a multiplicative factor from the earth mover's distance
$C_{c}^{*}(P_{\alpha},P_{\beta})$. Therefore we can estimate $C_{c}^{*}(P_{\alpha},P_{\beta})$
by drawing i.i.d. samples $Z_{1},\ldots,Z_{k}$, and taking the sample
mean of $c(h_{Z_{i}}(P_{\alpha}),h_{Z_{i}}(P_{\beta}))$.

The vector $\{h_{Z_{i}}(P_{\alpha})\}_{i\in[1..k]}\in\mathcal{X}^{k}$
also serves as a sketch of the distribution $P_{\alpha}$. For example,
let $\mathcal{X}=[0..s]^{n}$, $s\ge1$, $c(x,y)=\Vert x-y\Vert_{1}$.
Then $\{h_{Z_{i}}(P_{\alpha})\}_{i}\in\mathbb{R}^{nk}$, and $C_{c}^{*}(P_{\alpha},P_{\beta})$
can be estimated by the sample mean
\[
\frac{1}{k}\sum_{i=1}^{k}c(h_{Z_{i}}(P_{\alpha}),h_{Z_{i}}(P_{\beta}))=\frac{1}{k}\Vert\{h_{Z_{i}}(P_{\alpha})\}_{i}-\{h_{Z_{i}}(P_{\beta})\}_{i}\Vert_{1},
\]
which is the $\ell_{1}$ distance on $\mathbb{R}^{nk}$.  Refer to
Section \ref{subsec:rn_lp} for an algorithm for computing $h_{Z}$
for this example.

\medskip{}

\subsection{The Labeling Problem for Classification\label{subsec:label_prob}}

We describe the labeling problem studied in \cite{kleinberg2002approximation,archer2004approximate,huhnerbein2018image}
(also see the energy minimization problem studied in \cite{boykov1999fast}).
Consider a finite set of objects $\mathcal{A}$ to be classified,
and a finite set of classes or labels $\mathcal{X}$. A labeling is
a function $f:\mathcal{A}\to\mathcal{X}$ mapping each object to its
assigned label. The cost of assigning object $\alpha$ to label $x$
is given by $g(\alpha,x)$, where $g:\mathcal{A}\times\mathcal{X}\to\mathbb{R}_{\ge0}$.
For example, such a cost may come from an estimate of the likelihood
that object $\alpha$ belongs to the class $x$. We also have a set
of pairs of related objects $E\subseteq\mathcal{A}$. We want a related
pair of objects $(\alpha,\beta)\in E$ to be assigned similar labels,
that is, $c(f(\alpha),f(\beta))$ is small, where $c:\mathcal{X}\times\mathcal{X}\to\mathbb{R}_{\ge0}$
measures the difference between two labels (and is assumed to be a
symmetric cost function). The weight of a pair $(\alpha,\beta)\in E$
is given by $w(\alpha,\beta)\ge0$. The total cost of the labeling
$f$ is given by
\[
Q(f):=\sum_{\alpha\in\mathcal{A}}g(\alpha,f(\alpha))+\sum_{(\alpha,\beta)\in E}w(\alpha,\beta)c(f(\alpha),f(\beta)).
\]
The problem of minimizing $Q(f)$ is NP-hard, and can be expressed
as a mixed 0-1 integer program (see \cite{kleinberg2002approximation,boykov2001fast}).
Nevertheless, it is possible to give an approximate algorithm by pairwise
multi-marginal optimal transport, using the earthmover linear program
in \cite{kleinberg2002approximation,archer2004approximate}. We relax
the problem of minimizing $Q(f)$ to allow fractional labeling. A
fractional labeling is a collection of distributions $\{P_{\alpha}\}_{\alpha\in\mathcal{A}}$
over $\mathcal{X}$, where each object $\alpha$ is assigned a distribution
$P_{\alpha}$ over labels rather than only one label. For $(\alpha,\beta)\in E$,
the cost between $\alpha$ and $\beta$ is approximated by $C_{c}^{*}(P_{\alpha},P_{\beta})$.
Let the total approximate cost be
\[
\tilde{Q}(\{P_{\alpha}\}_{\alpha}):=\sum_{\alpha\in\mathcal{A}}\mathbf{E}_{X\sim P_{\alpha}}[g(\alpha,X)]+\sum_{(\alpha,\beta)\in E}w(\alpha,\beta)C_{c}^{*}(P_{\alpha},P_{\beta}).
\]
Since a labeling $f$ is a special case of a fractional labeling (by
letting $P_{\alpha}=\delta_{f(\alpha)}$), we have $\inf_{\{P_{\alpha}\}_{\alpha}}\tilde{Q}(\{P_{\alpha}\}_{\alpha})\le\inf_{f}Q(f)$.
The minimum of $\tilde{Q}(\{P_{\alpha}\}_{\alpha})$ can be found
 using linear programming (since $C_{c}^{*}(P_{\alpha},P_{\beta})$
can be expressed as a linear program).

Let $\{P_{\alpha}\}_{\alpha}$ be the minimizer of $\tilde{Q}(\{P_{\alpha}\}_{\alpha})$,
and $\{X_{\alpha}\}_{\alpha}$ be a coupling achieving $r_{c}(\{X_{\alpha}\}_{\alpha})\le r_{c}^{*}(\{P_{\alpha}\}_{\alpha})+\epsilon$.
We construct $f$ randomly as $f_{\{X_{\alpha}\}_{\alpha}}(\alpha):=X_{\alpha}$
(i.e., $f_{\{X_{\alpha}\}_{\alpha}}$ is a random function that depends
on the values of $\{X_{\alpha}\}_{\alpha}$). This construction gives
an expected cost
\begin{align*}
\mathbf{E}[Q(f_{\{X_{\alpha}\}_{\alpha}})] & =\mathbf{E}\left[\sum_{\alpha\in\mathcal{A}}g(\alpha,X_{\alpha})+\sum_{(\alpha,\beta)\in E}w(\alpha,\beta)c(X_{\alpha},X_{\beta})\right]\\
 & \le\sum_{\alpha\in\mathcal{A}}\mathbf{E}_{X\sim P_{\alpha}}[g(\alpha,X)]+\left(r_{c}^{*}(\{P_{\alpha}\}_{\alpha})+\epsilon\right)\sum_{(\alpha,\beta)\in E}w(\alpha,\beta)C_{c}^{*}(P_{\alpha},P_{\beta}),
\end{align*}
which is at most a factor $r_{c}^{*}(\{P_{\alpha}\}_{\alpha})+\epsilon$
away from $\tilde{Q}(\{P_{\alpha}\}_{\alpha})$, and hence at most
a factor $r_{c}^{*}(\{P_{\alpha}\}_{\alpha})+\epsilon$ away from
the optimum $\inf_{f}Q(f)$. This performance guarantee is universal
in the sense that the approximation factor is bounded by $r_{c}^{*}(\mathcal{P}(\mathcal{X}))+\epsilon$
regardless of $g$, $E$ and $w$. Efficient algorithms for computing
$\{X_{\alpha}\}_{\alpha}$ exist, for example, when $(\mathcal{X},c)$
is a finite metric space (see Section \ref{subsec:metric_finite}).

We remark that this is an example of a dependent randomized rounding
algorithm for converting a solution of a fractional relaxation of
a mixed 0-1 integer program to a solution of the original problem.
See \cite{bertsimas1999dependent} for discussions.

\medskip{}

\subsection{Robust Computation of Transport Plans}

Let $\{P_{\alpha}\}_{\alpha\in\mathcal{A}}$ be a collection of probability
distributions over $\mathcal{X}$. A \emph{transport plan computation
function} is a function $G:\{P_{\alpha}:\alpha\in\mathcal{A}\}\times\{P_{\alpha}:\alpha\in\mathcal{A}\}\to\mathcal{P}(\mathcal{X}^{2})$
which, given any pair of probability distributions $P_{\alpha},P_{\beta}$
in the collection, outputs $G(P_{\alpha},P_{\beta})\in\Gamma(P_{\alpha},P_{\beta})$
($G(P_{\alpha},P_{\beta})$ is a probability distribution over $\mathcal{X}^{2}$;
see \eqref{eq:coupling_def} for the definition of $\Gamma(P_{\alpha},P_{\beta})$)
such that $\mathbf{E}_{(X,Y)\sim G(P_{\alpha},P_{\beta})}[c(X,Y)]$
is close to the optimal value $C_{c}^{*}(P_{\alpha},P_{\beta})$.
The inputs $P_{\alpha},P_{\beta}$ may be perturbed into $P_{\alpha'},P_{\beta'}$
respectively (e.g. due to an imperfect estimation of $P_{\alpha},P_{\beta}$,
or due to an adversary), $\alpha',\beta'\in\mathcal{A}$, where $P_{\alpha'}$
is close to $P_{\alpha}$, and $P_{\beta'}$ is close to $P_{\beta}$,
in the sense that $C_{c}^{*}(P_{\alpha},P_{\alpha'})+C_{c}^{*}(P_{\beta},P_{\beta'})\le\epsilon$
for some $\epsilon>0$. A robust transport plan computation function
should be robust against such perturbations in the sense that $G(P_{\alpha'},P_{\beta'})\in\Gamma(P_{\alpha'},P_{\beta'})$
(the coupling computed using the perturbed distributions) is close
to $G(P_{\alpha},P_{\beta})$. More specifically, we say $G$ is $r$\emph{-robust},
$r>0$, if 
\begin{align*}
 & C_{c\times c}^{*}\left(G(P_{\alpha},P_{\beta}),\,G(P_{\alpha'},P_{\beta'})\right)\\
 & =\inf_{(X,Y)\sim G(P_{\alpha},P_{\beta}),\,(X',Y')\sim G(P_{\alpha'},P_{\beta'})}\mathbf{E}\left[c(X,X')+c(Y,Y')\right]\\
 & \le r\epsilon
\end{align*}
for any $\epsilon>0$ and $P_{\alpha},P_{\alpha'},P_{\beta},P_{\beta'}$
satisfying $C_{c}^{*}(P_{\alpha},P_{\alpha'})+C_{c}^{*}(P_{\beta},P_{\beta'})\le\epsilon$,
where $(c\times c):\mathcal{X}^{2}\times\mathcal{X}^{2}\to\mathbb{R}$
is defined by $(c\times c)((x,y),(x',y')):=c(x,x')+c(y,y')$, which
we call the 1-product cost function.

Note that if $c$ is a metric, then $C_{c}^{*}$ is the 1-Wasserstein
distance over $\mathcal{P}(\mathcal{X})$, and $C_{c\times c}^{*}$
is the 1-Wasserstein distance over $\mathcal{P}(\mathcal{X}^{2})$
with respect to the 1-product metric $c\times c$. Therefore, $G$
is $r$-robust if and only if $G$ is an $r$-Lipschitz function with
respect to $C_{c}^{*}\times C_{c}^{*}$ (the $1$-product of 1-Wasserstein
distances, which is a metric over the space of inputs pairs of probability
distributions) and $C_{c\times c}^{*}$.

Let $G^{*}(P_{\alpha},P_{\beta})$ be the transport plan computation
function that gives the optimal coupling attaining $\mathbf{E}_{(X,Y)\sim G^{*}(P_{\alpha},P_{\beta})}[c(X,Y)]=C_{c}^{*}(P_{\alpha},P_{\beta})$
for any $(P_{\alpha},P_{\beta})$. Then $G^{*}$ may not be robust
against perturbations. For example, consider $\mathcal{X}=\mathbb{R}^{2}$,
$c(x,y)=\sqrt{\Vert x-y\Vert_{2}}$ (which is a metric), and consider
the collection $\mathcal{P}(\mathcal{X})$ of all probability distributions.
Let 
\begin{equation}
P_{\alpha}=(\delta_{(1,0)}+\delta_{(-1,0)})/2,\;P_{\beta}=(\delta_{(2\epsilon^{2},1)}+\delta_{(0,-1)})/2,\;P_{\beta'}=(\delta_{(-2\epsilon^{2},1)}+\delta_{(0,-1)})/2.\label{eq:robust_eg}
\end{equation}
We have $C_{c}^{*}(P_{\beta},P_{\beta'})\le\epsilon$. The optimal
coupling between $(P_{\alpha},P_{\beta})$ is $G^{*}(P_{\alpha},P_{\beta})=(\delta_{((1,0),(2\epsilon^{2},1))}+\delta_{((-1,0),(0,-1))})/2$,
and the optimal coupling between $(P_{\alpha},P_{\beta'})$ is $G^{*}(P_{\alpha},P_{\beta'})=(\delta_{((1,0),(0,-1))}+\delta_{((-1,0),(-2\epsilon^{2},1))})/2$.
It can be checked that $C_{c\times c}^{*}(G^{*}(P_{\alpha},P_{\beta}),G^{*}(P_{\alpha},P_{\beta'}))\ge\sqrt{2}$,
which can be much larger than the perturbation $\epsilon$.

We can design a robust transport plan computation function using pairwise
multi-marginal optimal transport. Let $\{X_{\alpha}\}_{\alpha}\in\Gamma_{\lambda}(\{P_{\alpha}\}_{\alpha})$
and $G_{\mathrm{r}}(P_{\alpha},P_{\beta})$ be the joint distribution
of $(X_{\alpha},X_{\beta})$. We can guarantee that $\mathbf{E}_{(X,Y)\sim G_{\mathrm{r}}(P_{\alpha},P_{\beta})}[c(X,Y)]\le rC_{c}^{*}(P_{\alpha},P_{\beta})$.
If $P_{\alpha},P_{\beta}$ are perturbed into $P_{\alpha'},P_{\beta'}$
respectively, where $C_{c}^{*}(P_{\alpha},P_{\alpha'})+C_{c}^{*}(P_{\beta},P_{\beta'})\le\epsilon$,
then 
\begin{align*}
 & C_{c\times c}^{*}\left(G_{\mathrm{r}}(P_{\alpha},P_{\beta}),\,G_{\mathrm{r}}(P_{\alpha'},P_{\beta'})\right)\\
 & \le\mathbf{E}\left[c(X_{\alpha},X_{\alpha'})+c(X_{\beta},X_{\beta'})\right]\\
 & \le r_{c}(\{X_{\alpha}\}_{\alpha})C_{c}^{*}(P_{\alpha},P_{\alpha'})+r_{c}(\{X_{\alpha}\}_{\alpha})C_{c}^{*}(P_{\beta},P_{\beta'})\\
 & \le r_{c}(\{X_{\alpha}\}_{\alpha})\epsilon.
\end{align*}
Hence $G_{\mathrm{r}}$ is $r$-robust for $r=r_{c}(\{X_{\alpha}\}_{\alpha})$,
which can be made arbitrarily close to $r_{c}^{*}(\{P_{\alpha}\}_{\alpha})$.
In other words, at the expense of having a suboptimal coupling (within
a factor $r$ from optimal), $G_{\mathrm{r}}$ is robust in the sense
that perturbing the input probability distributions by $\epsilon$
(with respect to $C_{c}^{*}$) will only change the output coupling
by $r\epsilon$ (with respect to $C_{c\times c}^{*}$). For the case
$\mathcal{X}=\mathbb{R}^{2}$, $c(x,y)=\sqrt{\Vert x-y\Vert_{2}}$,
we can achieve $r=26$ by Theorem \ref{thm:rn_rc_ub}.

Besides the theoretical elegance of having a Lipschitz continuous
transport plan computation function, the notion of $r$-robustness
also has practical relevance. For instance, if two parties compute
transport plans based on two different perturbed versions of $(P_{\alpha},P_{\beta})$
(e.g. due to imperfect estimations of $P_{\alpha},P_{\beta}$) separately
using the same robust transport plan computation function, then the
two output couplings will be similar. Hence, there will be little
discrepancy when they perform actions based on those couplings.

The usage of a robust transport plan computation function also disincentivizes
the manipulation of the input probability distributions $P_{\alpha},P_{\beta}$
by another party. Assume $c$ is a metric. Suppose that party is capable
of perturbing $(P_{\alpha},P_{\beta})$ into $(P_{\alpha'},P_{\beta'})$
by $\epsilon$ (i.e., $C_{c}^{*}(P_{\alpha},P_{\alpha'})+C_{c}^{*}(P_{\beta},P_{\beta'})\le\epsilon$)
in order to manipulate the output coupling $G(P_{\alpha'},P_{\beta'})$,
so that the gain of that party measured by $\mathbf{E}_{(X,Y)\sim G(P_{\alpha'},P_{\beta'})}[h(X,Y)]$
is increased, where $h:\mathcal{X}^{2}\to\mathbb{R}$ is the gain
function. For example, in the setting where we design a transport
plan from mines to factories, the owner of a large factory that spans
multiple streets may choose to report a street address closer to a
mine he/she secretly prefers, so that the preferred mine would be
assigned to his/her factory. Such a manipulation can be effective
against the optimal transport plan computation function $G^{*}$ (refer
to the example in \eqref{eq:robust_eg}), but not against a robust
function. If $h$ is $\eta$-Lipschitz with respect to $c\times c$
for some $\eta>0$, then by using an $r$-robust transport plan computation
function $G$, we can guarantee that
\[
\left|\mathbf{E}_{(X,Y)\sim G(P_{\alpha'},P_{\beta'})}[h(X,Y)]-\mathbf{E}_{(X,Y)\sim G(P_{\alpha},P_{\beta})}[h(X,Y)]\right|\le\eta r\epsilon,
\]
i.e., that party cannot increase the gain by more than $\eta r\epsilon$
by manipulating $(P_{\alpha},P_{\beta})$.

\medskip{}

\subsection{Distributed Computation of Transport Plans}

Consider the setting where there are $k$ trucks to be assigned to
$k$ mines and $k$ factories, where each truck is assigned to one
mine and one factory, and is responsible for transportation between
them. Let the locations of mines and factories be $u_{1},\ldots,u_{k}\in\mathcal{X}$
and $v_{1},\ldots,v_{k}\in\mathcal{X}$ respectively. Let $P=k^{-1}\sum_{i=1}^{k}\delta_{u_{i}}$
be the distribution of mines (similarly define $Q$ for the distribution
of factories). The transportation cost between locations $x$ and
$y$ is $c(x,y)$. If the truck assignment is performed centrally,
then the optimal assignment can be obtained from solving the original
optimal transport problem.

We consider a variant of this setting where the assignment is performed
in a distributed manner. Assume $P$ belongs to a collection of probability
distributions $\{P_{\alpha}\}_{\alpha\in\mathcal{A}}$, and $Q$ belongs
to a collection of probability distributions $\{Q_{\beta}\}_{\beta\in\mathcal{B}}$.
Suppose the mining administration knows the distribution of mines
$P$, and is responsible for assigning trucks to mines, i.e., designing
a mapping $x:[1..k]\to\mathcal{X}$, where $x(i)$ is the location
of the mine to which the $i$-th truck is assigned. The mining administration
does not know $Q$, so its only information about $Q$ is that $Q$
belongs to $\{Q_{\beta}\}_{\beta\in\mathcal{B}}$. Similarly, the
industrial administration knows the distribution of factories $Q$
and that $P\in\{P_{\alpha}\}_{\alpha\in\mathcal{A}}$ (but not the
exact choice of $P$), and is responsible for assigning trucks to
factories (designing a mapping $y:[1..k]\to\mathcal{X}$). Let $U\sim\mathrm{Unif}[1..k]$,
$X:=x(U)$, $Y:=y(U)$. Then $(X,Y)$ is a coupling of $P,Q$. The
goal is to guarantee that such a distributed computation of the coupling
would give a total cost $k\mathbf{E}[c(X,Y)]$ at most a factor of
$r$ from the lowest possible total cost $kC_{c}^{*}(P,Q)$ for any
$P\in\{P_{\alpha}\}_{\alpha\in\mathcal{A}}$, $Q\in\{Q_{\beta}\}_{\beta\in\mathcal{B}}$,
where $r\ge1$ is as small as possible.

Since the mining administration produces the mapping $x$ using only
$P$, we can let $x_{\alpha}$ be the mapping produced when $P=P_{\alpha}$,
and $X_{\alpha}:=x_{\alpha}(U)$. Define $Y_{\beta}$ similarly. Then
$\{\{X_{\alpha}\}_{\alpha},\{Y_{\beta}\}_{\beta}\}$ is a coupling
of $\{\{P_{\alpha}\}_{\alpha},\{Q_{\beta}\}_{\beta}\}$. The problem
becomes that of minimizing 
\begin{equation}
\sup_{\alpha\in\mathcal{A},\,\beta\in\mathcal{B}}\frac{\mathbf{E}[c(X_{\alpha},Y_{\beta})]}{C_{c}^{*}(P_{\alpha},Q_{\beta})}\label{eq:dist_otc}
\end{equation}
subject to $X_{\alpha}\sim P_{\alpha}$, $Y_{\beta}\sim Q_{\beta}$
(assume $k$ is large enough that we can lift the restriction that
the probabilities in the probability mass function of $X_{\alpha}$
have to be multiples of $k^{-1}$).

This is a more general problem than that in Definition \eqref{def:pcr}
studied in this paper. If we assume that $\mathcal{A}=\mathcal{B}$,
$P_{\alpha}=Q_{\alpha}$ for any $\alpha\in\mathcal{A}$, and $c$
is a symmetric cost function satisfying $c(x,y)>0$ for any $x\neq y$,
then we can assume $X_{\alpha}=Y_{\alpha}$ almost surely for any
$\alpha\in\mathcal{A}$ (or else $\mathbf{E}[c(X_{\alpha},Y_{\alpha})]>0=C_{c}^{*}(P_{\alpha},Q_{\alpha})$,
making \eqref{eq:dist_otc} infinite). In this case, the infimum of
\eqref{eq:dist_otc} is the same as $r_{c}^{*}(\{P_{\alpha}\}_{\alpha})$.

\medskip{}

\subsection{Online Computation of Transport Plans}

Let $\{P_{\alpha}\}_{\alpha\in\mathcal{A}}$ be a collection of probability
distributions over $\mathcal{X}$. At the beginning, there are $k$
agents at locations $z_{0}(1),\ldots,z_{0}(k)$, where $z_{0}:[1..k]\to\mathcal{X}$.
The initial distribution of the agents is $P_{\alpha_{0}}=k^{-1}\sum_{i=1}^{k}\delta_{z_{0}(i)}$.
Let $b_{0}=0$. At time $t\in\mathbb{N}$, we observe a number $b_{t}\in\mathbb{Z}_{\ge-1}$
(the task index) and a probability distribution $P_{\alpha_{t}}$
(the task distribution) in the collection of probability distributions,
which falls in one of the following three cases:
\begin{enumerate}
\item (New tasks). A new batch of tasks is indicated by $b_{t}=t$. In this
case, $P_{\alpha_{t}}$ gives the distribution of the locations of
$k$ tasks in this batch, where each task requires an agent to complete.
We have to assign these new tasks to the agents. That is, we choose
$z_{t}:[1..k]\to\mathcal{X}$ such that $k^{-1}\sum_{i=1}^{k}\delta_{z_{t}(i)}=P_{\alpha_{t}}$.
\item (Existing tasks). If $0\le b_{t}<t$, then it signals each agent to
return to the task assigned to him/her at time $b_{t}$ for some follow-up
work (if $b_{t}=0$, this means that the agents should return to their
initial locations). The agents must return to their assigned tasks,
and cannot exchange tasks among themselves. In this case, $P_{\alpha_{t}}=P_{\alpha_{b_{t}}}$,
and $z_{t}=z_{b_{t}}$. We do not have to make any choice in this
case.
\item (Stop). If $b_{t}=-1$, then this marks the end of the procedure.
There are no further tasks and no need for further traveling.
\end{enumerate}
Note that upon receiving a new batch of tasks, we have to design $z_{t}$
online using only the information $b_{1},\ldots,b_{t}$, $P_{\alpha_{0}},\ldots,P_{\alpha_{t}}$,
and $z_{0},\ldots,z_{t-1}$. We do not know the stopping time until
it comes.

The cost of traveling from location $x$ to $y$ is given by $c(x,y)$.
Our goal is to assign the tasks to the agents such that the average
total traveling cost $k^{-1}\sum_{t=1}^{T}\sum_{i=1}^{k}c(z_{t-1}(i),z_{t}(i))$
is small, where $T$ is the time just before stopping (i.e., $b_{T+1}=-1$).
Let $U\sim\mathrm{Unif}[1..k]$ and $Z_{t}:=z_{t}(U)$. Then designing
$z_{t}$ upon receiving a new batch of tasks is equivalent to designing
a random variable $Z_{t}$ (dependent of $Z_{0},\ldots,Z_{t-1}$)
with marginal distribution $P_{\alpha_{t}}$ (assume $k$ is large
enough that we can lift the restriction that the probabilities in
the probability mass function of $(Z_{0},\ldots,Z_{t})$ have to be
multiples of $k^{-1}$). The average total traveling cost can be given
by $k^{-1}\sum_{t=1}^{T}\sum_{i=1}^{k}c(z_{t-1}(i),z_{t}(i))=\sum_{t=1}^{T}\mathbf{E}[c(Z_{t-1},Z_{t})]$.
We want to guarantee that
\begin{equation}
\sum_{t=1}^{T}\mathbf{E}[c(Z_{t-1},Z_{t})]\le r\inf_{\{\tilde{Z}_{t}\}_{t}:\,\tilde{Z}_{t}\sim P_{\alpha_{t}},\,\tilde{Z}_{t}=\tilde{Z}_{b_{t}}\,\forall t\in[0..T]}\sum_{t=1}^{T}\mathbf{E}[c(\tilde{Z}_{t-1},\tilde{Z}_{t})]\label{eq:online_plan}
\end{equation}
for any $\{b_{t}\}_{t}$ and $\{\alpha_{t}\}_{t}$, where $r\ge1$
is as small as possible. The infimum in the right hand side is the
optimal average total cost for an offline scheme where we decide on
all $\{Z_{t}\}_{t}$ together using the knowledge of all the $\{b_{t}\}_{t}$
and $\{P_{\alpha_{t}}\}_{t}$. In other words, the online scheme gives
an average total cost within a factor of $r$ from that given by the
optimal offline scheme. The requirement that the scheme is online
only results in a multiplicative penalty of $r$.

We consider the following three online schemes:
\begin{enumerate}
\item (Greedy scheme). Upon receiving a new batch of tasks at time $t$,
design $Z_{t}$ such that $\mathbf{E}[c(Z_{t-1},Z_{t})]=C_{c}^{*}(P_{\alpha_{t-1}},P_{\alpha_{t}})$,
i.e., minimize the cost of travelling from time $t-1$ to time $t$.
\item (Reactive minimax scheme). Upon receiving a new batch of tasks, design
$Z_{t}$ such that \linebreak$\max_{0\le i<t}\mathbf{E}[c(Z_{i},Z_{t})]/C_{c}^{*}(P_{\alpha_{i}},P_{\alpha_{t}})$
is minimized. 
\item (Preemptive minimax scheme). Let $\{X_{\alpha}\}_{\alpha}\in\Gamma_{\lambda}(\{P_{\alpha}\}_{\alpha})$,
where $r_{c}(\{X_{\alpha}\}_{\alpha})\le r_{c}^{*}(\{P_{\alpha}\}_{\alpha})+\epsilon$
for some small $\epsilon>0$. Upon receiving a new batch of tasks,
let $Z_{t}=X_{\alpha_{t}}$.
\end{enumerate}
For the preemptive minimax scheme, we have $\mathbf{E}[c(Z_{t-1},Z_{t})]\le r_{c}(\{X_{\alpha}\}_{\alpha})C_{c}^{*}(P_{\alpha_{t-1}},P_{\alpha_{t}})$,
and hence
\begin{align*}
 & \sum_{t=1}^{T}\mathbf{E}[c(Z_{t-1},Z_{t})]\\
 & \le r_{c}(\{X_{\alpha}\}_{\alpha})\sum_{t=1}^{T}C_{c}^{*}(P_{\alpha_{t-1}},P_{\alpha_{t}})\\
 & \le r_{c}(\{X_{\alpha}\}_{\alpha})\inf_{\{\tilde{Z}_{t}\}_{t}:\,\tilde{Z}_{t}\sim P_{\alpha_{t}},\,\tilde{Z}_{t}=\tilde{Z}_{b_{t}}\,\forall t\in[0..T]}\sum_{t=1}^{T}\mathbf{E}[c(\tilde{Z}_{t-1},\tilde{Z}_{t})],
\end{align*}
and we can achieve $r=r_{c}(\{X_{\alpha}\}_{\alpha})\le r_{c}^{*}(\{P_{\alpha}\}_{\alpha})+\epsilon$
in \eqref{eq:online_plan} if $r_{c}^{*}(\{P_{\alpha}\}_{\alpha})<\infty$.

On the other hand, the greedy scheme  fails to satisfy \eqref{eq:online_plan}.
Consider $\mathcal{X}=\mathbb{R}^{2}$, $c(x,y)=\sqrt{\Vert x-y\Vert_{2}}$
(which is a metric), and consider the collection $\mathcal{P}(\mathcal{X})$
of all probability distributions. The preemptive minimax scheme achieves
$r=26$ by Theorem \ref{thm:rn_rc_ub}. We now give an example where
the greedy scheme fails. Let $l\in\mathbb{N}$, $4\le l<T$,
\[
(b_{t},P_{\alpha_{t}})=\begin{cases}
\left(t,\,\frac{1}{2}(\delta_{(\cos(\pi t/l),\sin(\pi t/l))}+\delta_{(-\cos(\pi t/l),-\sin(\pi t/l))})\right) & \mathrm{if}\;t\le l-2\\
(0,\,P_{\alpha_{0}}) & \mathrm{if}\;t\ge l-1\;\mathrm{and}\;t-l\;\mathrm{is}\,\mathrm{odd}\\
(l-2,\,P_{\alpha_{l-2}}) & \mathrm{if}\;t\ge l-1\;\mathrm{and}\;t-l\;\mathrm{is}\,\mathrm{even}.
\end{cases}
\]
It can be checked that the greedy scheme  selects $Z_{t}=(U\cos(\pi t/l),U\sin(\pi t/l))$
for $t=0,\ldots,l-2$, where $U\sim\mathrm{Unif}\{1,-1\}$. The average
total cost is 
\[
(l-2)\sqrt{2\sin\left(\frac{\pi}{2l}\right)}+(T-l+2)\sqrt{2\sin\left(\frac{\pi(l-2)}{2l}\right)}\approx\sqrt{\pi l}+(T-l)\sqrt{2}
\]
for large $l,T$, which can be much larger than the average total
cost given by the preemptive minimax scheme, which is upper-bounded
by
\begin{align*}
26\sum_{t=1}^{T}C_{c}^{*}(P_{\alpha_{t-1}},P_{\alpha_{t}}) & =26\left((l-2)\sqrt{2\sin\left(\frac{\pi}{2l}\right)}+(T-l+2)\sqrt{2\sin\left(\frac{2\pi}{2l}\right)}\right)\\
 & \approx26\left(\sqrt{\pi l}+(T-l)\sqrt{2\pi/l}\right).
\end{align*}
If $T=l^{2}$, then the average total cost of the greedy scheme grows
like $l^{2}$, whereas that of the preemptive minimax scheme grows
like $l^{3/2}$, which implies that the average total cost of the
optimal offline scheme grows at most like $l^{3/2}$.

It is uncertain whether the reactive minimax scheme achieves the same
$r$ (or within a multiplicative factor) as the preemptive minimax
scheme. Nevertheless, the preemptive minimax scheme does not involve
solving an optimization problem for each time step, and thus is simpler
to implement.

If $\mathcal{A}$ is finite and $\sup_{\alpha,\beta\in\mathcal{A}}\mathbf{E}_{(X,Y)\sim P_{\alpha}\times P_{\beta}}[c(X,Y)]$
is finite, we can actually show that finding the smallest $r$ in
\eqref{eq:online_plan} among online schemes is equivalent to the
pairwise multi-marginal optimal transport problem of finding $r_{c}^{*}(\{P_{\alpha}\}_{\alpha})$.
The achievability of $r=r_{c}^{*}(\{P_{\alpha}\}_{\alpha})+\epsilon$
in \eqref{eq:online_plan} for any $\epsilon>0$ follows from the
preemptive minimax scheme. For the converse, fix any online scheme
achieving $r$ in \eqref{eq:online_plan}, and let $T\ge|\mathcal{A}|-1$,
$b_{t}=t$, and $P_{\alpha_{t}}$ be distinct for $0\le t\le|\mathcal{A}|-1$.
Let $X_{\beta}:=Z_{i}$, where $i\in[0..|\mathcal{A}|-1]$ such that
$\alpha_{i}=\beta$. Since the scheme is online, we can choose $b_{t},P_{\alpha_{t}}$
for $t\ge|\mathcal{A}|$ without affecting $\{Z_{i}\}_{i\le|\mathcal{A}|-1}$.
For any two probability distributions $P_{\alpha_{i}},P_{\alpha_{j}}$
in $\{P_{\alpha}\}_{\alpha}$, $i,j\in[0..|\mathcal{A}|-1]$, take
$b_{t}=i$ for $|\mathcal{A}|\le t\le T$ where $t$ is odd, and $b_{t}=j$
for $|\mathcal{A}|\le t\le T$ where $t$ is even. We have
\begin{align*}
 & (T-|\mathcal{A}|)\mathbf{E}[c(X_{\alpha_{i}},X_{\alpha_{j}})]\\
 & =(T-|\mathcal{A}|)\mathbf{E}[c(Z_{i},Z_{j})]\\
 & \le\sum_{t=1}^{T}\mathbf{E}[c(Z_{t-1},Z_{t})]\\
 & \stackrel{(a)}{\le}r\inf_{\{\tilde{Z}_{t}\}_{t}:\,\tilde{Z}_{t}\sim P_{\alpha_{t}},\,\tilde{Z}_{t}=\tilde{Z}_{b_{t}}\,\forall t\in[0..T]}\sum_{t=1}^{T}\mathbf{E}[c(\tilde{Z}_{t-1},\tilde{Z}_{t})]\\
 & \stackrel{(b)}{\le}r\left(|\mathcal{A}|\sup_{\alpha,\beta\in\mathcal{A}}\mathbf{E}_{(X,Y)\sim P_{\alpha}\times P_{\beta}}[c(X,Y)]+(T-|\mathcal{A}|+2)C_{c}^{*}(P_{\alpha_{i}},P_{\alpha_{j}})\right),
\end{align*}
where (a) is by \eqref{eq:online_plan}, and (b) is because we can
take $(\tilde{Z}_{i},\tilde{Z}_{j})$ to approach the optimal value
of $C_{c}^{*}(P_{\alpha_{i}},P_{\alpha_{j}})$, and $\tilde{Z}_{i'}\sim P_{\alpha_{i'}}$
for $i'\in[0..|\mathcal{A}|-1]\backslash\{i,j\}$ to be independent
of $(\tilde{Z}_{i},\tilde{Z}_{j})$ and independent among themselves.
The number of times $(i,j)$ or $(j,i)$ appears consecutively in
$b_{0},b_{1},\ldots,b_{T}$ is at least $T-|\mathcal{A}|$, and at
most $T-|\mathcal{A}|+2$. Letting $T\to\infty$, we have $\mathbf{E}[c(X_{\alpha_{i}},X_{\alpha_{j}})]\le rC_{c}^{*}(P_{\alpha_{i}},P_{\alpha_{j}})$.
Therefore, $r_{c}^{*}(\{P_{\alpha}\}_{\alpha})\le r_{c}(\{X_{\alpha}\}_{\alpha})\le r$.

\medskip{}

\subsection{Image Registration}

Consider the following simplified setting of image registration. There
are $m$ images represented as probability measures $P_{1},\ldots,P_{m}$
on $\mathcal{X}=\mathbb{R}^{2}$, where $P_{i}$ has a probability
density function proportional to the grayscale data of the $i$-th
image. The goal is to transform these images onto a common coordinate
system, i.e., find invertible maps $f_{i}:\mathbb{R}^{2}\to\mathbb{R}^{2}$
such that $f_{i*}P_{i}$ are the same across $i=1,\ldots,m$. Letting
$U\sim f_{1*}P_{1}$, $X_{i}:=f_{i}^{-1}(U)$, we have $X_{i}\sim P_{i}$,
and hence $\{X_{i}\}_{i}$ is a coupling of $\{P_{i}\}_{i}$.

We also want these transformations to distort the images as little
as possible. The case $m=2$ has been studied extensively (e.g. in
\cite{peleg1989unified,haker2004optimal}), where we minimize $\mathbf{E}[c(X_{1},X_{2})]$
among couplings $(X_{1},X_{2})\in\Gamma_{\lambda}(P_{1},P_{2})$,
where $c$ is a cost function. The squared distance cost $c(x,y)=\Vert x-y\Vert_{2}^{2}$
is usually considered, where the minimum value of the optimization
problem is given by the squared 2-Wasserstein distance between $P_{1}$
and $P_{2}$. This can be readily generalized to $m>2$ if there is
a natural ordering of the images (e.g. in optical flow where the images
are frames of a video), where we minimize $\mathbf{E}[\sum_{i=1}^{m-1}c(X_{i},X_{i+1})]$
among $\{X_{i}\}_{i}\in\Gamma_{\lambda}(\{P_{i}\}_{i})$. This can
be solved by minimizing $\mathbf{E}[c(X_{i},X_{i+1})]$ separately,
and then putting the optimal couplings in $\Gamma_{\lambda}(P_{i},P_{i+1})$,
$i=1,\ldots,m-1$ together to form a coupling in $\Gamma_{\lambda}(\{P_{i}\}_{i})$.
Another generalization to $m>2$ is when there is a target image $P_{1}$,
and we are only concerned with the cost between $P_{1}$ and $P_{i}$,
$i\ge2$, i.e., we minimize $\mathbf{E}[\sum_{i=2}^{m}c(X_{1},X_{i})]$
among $\{X_{i}\}_{i}\in\Gamma_{\lambda}(\{P_{i}\}_{i})$. This optimization
problem can be reduced to the case $m=2$ similarly.

Nevertheless, there are settings where there is no natural ordering
of the images, and there is no special target image, e.g. when the
images are photos of an object taken at different angles or under
different lighting conditions. In this case, it is perhaps natural
to minimize $r\ge1$ such that $\mathbf{E}[c(X_{i},X_{j})]\le rC_{c}^{*}(P_{i},P_{j})$
for all $i,j\in[1..m]$. This guarantees that any pair $(X_{i},X_{j})$
gives an expected cost at most a factor of $r$ from the minimum possible
expected cost. The minimum $r$ is given by the optimal pairwise coupling
ratio $r_{c}^{*}(\{P_{i}\}_{i})$. \medskip{}
\footnote{An unfortunate fact is that $r_{c}^{*}(\mathcal{P}(\mathbb{R}^{2}))=\infty$
when $c(x,y)=\Vert x-y\Vert_{2}^{2}$ by Proposition \ref{prop:rn_rc_lb},
meaning that there is no uniform upper bound on $r_{c}^{*}(\{P_{i}\}_{i})$.
Nevertheless, $r_{c}^{*}(\{P_{i}\}_{i})$ can still be finite (since
we are not considering the set $\mathcal{P}(\mathbb{R}^{2})$ of all
distributions). A modification of $r_{c}^{*}$ which might allow us
to consider $\mathcal{P}(\mathbb{R}^{2})$ is given in Section \ref{sec:trunc}.}

\subsection{Multi-agent Matching with Fairness Requirement}

We consider a variant of the multi-agent matching problem \cite{carlier2010matching,chiappori2010hedonic,pass2012multi}.
Suppose there are $m$ categories of agents, each consisting of $k$
agents. Let $P_{i}$ be the distribution of agents in category $i$,
where the space $\mathcal{X}$ represents the locations of the agents
(it can also represent preferences, skill sets, etc). The goal is
to form teams, where each team consists of one agent from each category.
Let $Q_{X_{1},\ldots,X_{m}}$ be the distribution of locations of
team members of a team (i.e., the distribution of the location vector
$(X_{1},\ldots,X_{m})$ of a random team, where $X_{i}$ is the location
of the member of category $i$). Then $Q_{X_{1},\ldots,X_{m}}$ is
a coupling of $P_{1},\ldots,P_{m}$. The cost of placing two agents
at locations $x,y$ in the same team is given by the pairwise cost
function $c(x,y)$ (e.g. traveling cost).

While we can form teams by minimizing the total cost $k\mathbf{E}_{Q}[\sum_{i<j}c(X_{i},X_{j})]$
over couplings $Q_{X_{1},\ldots,X_{m}}$ (which falls in the scope
of the original multi-marginal optimal transport problem), such a
team assignment may not be satisfactory to some categories, since
each category (or the association of agents in a category) is only
interested in the costs relevant to their agents. In order to ensure
fairness, $k\mathbf{E}_{Q}[c(X_{i},X_{j})]$ should be small for all
pairs $i<j$. If the bound in \eqref{eq:ecr_bd} is attainable, then
we can guarantee that each $k\mathbf{E}_{Q}[c(X_{i},X_{j})]$ is within
a factor of $r$ from its lowest possible value $kC_{c}^{*}(P_{i},P_{j})$.\medskip{}

\section{Universal Poisson Coupling\label{sec:upc}}

In this section, we review the Poisson functional representation \cite{sfrl_trans,li2018unified},
with a different notation. We also remark that the same construction
(but only for discrete distributions) was given later in \cite{angel2019pairwise}.
\begin{defn}
[Poisson functional representation]\label{def:pfr}Let $\phi\in[\mathcal{X}\times\mathbb{R}_{\ge0}]^{\le\aleph_{0}}$,
where $\mathcal{X}$ is a Polish space with its Borel $\sigma$-algebra.
For $\mu$ a $\sigma$-finite measure over $\mathcal{X}$, and $P\ll\mu$
a probability measure on $\mathcal{X}$, define
\[
\varrho_{P\Vert\mu}(\phi):=\bigg(\underset{(x,t)\in\phi:\,\frac{\mathrm{d}P}{\mathrm{d}\mu}(x)>0}{\arg\min}t\Big(\frac{\mathrm{d}P}{\mathrm{d}\mu}(x)\Big)^{-1}\bigg)_{1}
\]
with arbitrary tie-breaking, where $(\cdot)_{1}$ denotes the $x$-component
of the pair. We omit $\mu$ and only write $\varrho_{P}(\phi)$ if
$\mu$ is clear from the context.

Let $\Phi\sim\mathrm{PP}(\mu\times\lambda_{\mathbb{R}_{\ge0}})$.
Since a Poisson process is a proper point process \cite[Corollary 6.5]{last2017lectures},
there exist random variables $(X_{1},T_{1}),(X_{2},T_{2}),\ldots\in\mathcal{X}\times\mathbb{R}_{\ge0}$
and a random variable $K\in\mathbb{Z}_{\ge0}\cup\{\infty\}$ such
that $\sum_{i=1}^{K}\delta_{(X_{i},T_{i})}=\Phi$ almost surely. We
define
\begin{equation}
\varrho_{P\Vert\mu}(\Phi):=\varrho_{P\Vert\mu}\bigg(\bigcup_{i=1}^{K}\{(X_{i},T_{i})\}\bigg).\label{eq:pfr_pois}
\end{equation}

\medskip{}
\end{defn}

By the mapping theorem \cite{kingman1992poisson,last2017lectures}
(also see Appendix A of \cite{sfrl_trans}), $\varrho_{P}(\Phi)$
is a random variable and $\varrho_{P}(\Phi)\sim P$. For a collection
of probability distributions $\{P_{\alpha}\}_{\alpha\in\mathcal{A}}$,
we can define a coupling $X_{\alpha}:=\varrho_{P_{\alpha}}(\Phi)\sim P_{\alpha}$.
We call this the \emph{universal Poisson coupling}. It is universal
in the sense that we construct each $X_{\alpha}\sim P_{\alpha}$ on
the probability space of the Poisson process individually, without
taking the other probability distributions in the collection into
account. Hence, we can couple the collection of all probability distributions
dominated by $\mu$.

The following result is proved in \cite[Appendix A]{li2018unified}
regarding the probability that $\varrho_{Q}(\Phi)=\varrho_{P}(\Phi)$
for probability measures $P,Q$ on $\mathcal{X}$.
\begin{lem}
[Poisson matching lemma \cite{li2018unified}]\label{lem:pml}\textup{}
Let $\Phi\sim\mathrm{PP}(\mu\times\lambda_{\mathbb{R}_{\ge0}})$,
and $P,Q$ be probability measures on $\mathcal{X}$ with $P,Q\ll\mu$.
Then we have the following almost surely:
\begin{align*}
 & \mathbf{P}\big(\varrho_{Q}(\Phi)=\varrho_{P}(\Phi)\,\big|\,\varrho_{P}(\Phi)\big)\\
 & =\left(f(\varrho_{P}(\Phi))\int\max\left\{ \frac{f(y)}{f(\varrho_{P}(\Phi))},\,\frac{g(y)}{g(\varrho_{P}(\Phi))}\right\} \mu(\mathrm{d}y)\right)^{-1},
\end{align*}
where $f:=\mathrm{d}P/\mathrm{d}\mu$, $g:=\mathrm{d}Q/\mathrm{d}\mu$.
\end{lem}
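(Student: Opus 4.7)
The plan is to reduce the claim to a void-probability calculation on $\Phi$, after a coordinate change that normalises the ``rate'' under $P$. First, I would apply the mapping theorem to $\psi(x,t):=(x,t/f(x))$, defined on $\{f>0\}\times\mathbb{R}_{\ge 0}$: it turns the restriction of $\Phi$ to $\{f>0\}\times\mathbb{R}_{\ge 0}$ into a Poisson process $\tilde\Phi$ on $\mathcal{X}\times\mathbb{R}_{\ge 0}$ with intensity $P\otimes\lambda$. By construction, $\varrho_P(\Phi)$ equals the $\mathcal{X}$-coordinate $X_*$ of the point $(X_*,\tilde T_*)\in\tilde\Phi$ minimising the second coordinate. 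Viewing $\tilde\Phi$ as a rate-$1$ Poisson process on $\mathbb{R}_{\ge 0}$ with independent $P$-distributed marks gives $X_*\sim P$ and $\tilde T_*\sim\mathrm{Exp}(1)$, independent of each other; moreover, conditional on $(X_*,\tilde T_*)=(x_*,t_*)$, the remaining points of $\tilde\Phi$ form an independent Poisson process on $\mathcal{X}\times(t_*,\infty)$ with the same intensity. Pulling back through $\psi^{-1}$, conditional on $\varrho_P(\Phi)=x_*$ and $\tilde T_*=t_*$, the rest of the restriction of $\Phi$ to $\{f>0\}$ is an independent Poisson process of intensity $\mu\otimes\lambda$ supported on $\{(y,t):f(y)>0,\,t>t_*f(y)\}$, while the restriction of $\Phi$ to $\{f=0\}$ is unaffected and has intensity $\mu$ on $\{f=0\}$ times $\lambda$.

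Next, I would compute the matching probability. The selected point of $\Phi$ is $(x_*,t_*f(x_*))$. For $\varrho_Q(\Phi)=x_*$ I need $g(x_*)>0$ (the case $g(x_*)=0$ is treated separately: both sides vanish once one unpacks the convention $g(y)/g(x_*)\in[0,\infty]$) and every other point $(y,t)\in\Phi$ with $g(y)>0$ must satisfy $t>t_*f(x_*)g(y)/g(x_*)$. Combining the two independent conditional pieces above, the ``bad'' set
\[
B_{x_*,t_*}:=\{(y,t):\,g(y)>0,\;t_*f(y)\le t<t_*f(x_*)g(y)/g(x_*)\}
\]
has $\mu\otimes\lambda$-measure equal to $t_*A(x_*)$, where
\[
A(x_*):=\int\bigl[f(x_*)g(y)/g(x_*)-f(y)\bigr]_+\,\mu(\mathrm{d}y),
\]
the integrand vanishing on $\{g(y)=0\}$ so that integrating over all of $\mathcal{X}$ changes nothing. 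The void-probability formula for Poisson processes then gives $\mathbf{P}(\varrho_Q(\Phi)=x_*\mid X_*=x_*,\tilde T_*=t_*)=e^{-t_*A(x_*)}$.

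Finally, I would integrate over $\tilde T_*\sim\mathrm{Exp}(1)$ and simplify. The Laplace transform yields
\[
\mathbf{P}\bigl(\varrho_Q(\Phi)=\varrho_P(\Phi)\,\bigm|\,\varrho_P(\Phi)=x_*\bigr)=\int_0^\infty e^{-t_*(1+A(x_*))}\,\mathrm{d}t_* = \frac{1}{1+A(x_*)},
\]
and the elementary identity $\max(a,b)=a+[b-a]_+$ applied pointwise with $a=f(y)$ and $b=f(x_*)g(y)/g(x_*)$ gives $f(x_*)\int\max\{f(y)/f(x_*),g(y)/g(x_*)\}\,\mu(\mathrm{d}y)=\int f\,\mathrm{d}\mu+A(x_*)=1+A(x_*)$, matching the stated denominator. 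The main obstacle I anticipate is bookkeeping: tracking the conditional distribution of the \emph{whole} process $\Phi$ (not just its image $\tilde\Phi$) after conditioning on the argmin, and cleanly merging the three regimes $\{f>0,g>0\}$, $\{f=0,g>0\}$, $\{g=0\}$ so that the bad-region measure collapses to the single $[\cdot]_+$-integral above and the convention $g(y)/g(x_*)\in[0,\infty]$ is handled correctly in the degenerate case $g(x_*)=0$.
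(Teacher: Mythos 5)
Your argument is correct, and it is essentially the argument behind the cited result in \cite{li2018unified} (the paper itself only quotes the lemma): normalize the time coordinate by $f$, condition on the first point $(X_*,\tilde T_*)$ so that the remaining points form an independent Poisson process of intensity $\mu\otimes\lambda$ above the curve $t=t_*f(y)$, identify the matching event with the void event of the region between that curve and $t=t_*f(x_*)g(y)/g(x_*)$, and integrate out $\tilde T_*\sim\mathrm{Exp}(1)$, after which $\max\{a,b\}=a+[b-a]_+$ gives the stated denominator. The one bookkeeping point worth writing out (it does go through) is that when $\mu$ has atoms the event $\{\varrho_Q(\Phi)=\varrho_P(\Phi)\}$ could a priori be realized by a \emph{different} point of $\Phi$ at the same location, but any $Q$-argmin located at $x_*$ must be the a.s. unique smallest-$t$ point of $\Phi$ at $x_*$, which is exactly the $P$-argmin point, so your identification of the event with "no remaining point beats $(x_*,t_*f(x_*))$ under $Q$" is exact up to null sets.
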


For two probability distributions $P$ and $Q$, the probability that
$\varrho_{P}(\Phi)\neq\varrho_{Q}(\Phi)$ can be regarded as a distance
between $P$ and $Q$. Using Lemma \ref{lem:pml}, this probability
can be evaluated to be the following expression. The formula for the
special case where the distributions are discrete is also given in
\cite{angel2019pairwise}.
\begin{defn}
For two probability distributions $P,Q\in\mathcal{P}(\mathcal{X})$,
the \emph{Poisson coupling distance} is defined as
\[
d_{\mathrm{PC}}(P,Q)=1-\int\left(\int\max\left\{ \frac{f(y)}{f(x)},\,\frac{g(y)}{g(x)}\right\} \mu(\mathrm{d}y)\right)^{-1}\mu(\mathrm{d}x),
\]
where $\mu$ is a $\sigma$-finite measure over $\mathcal{X}$ with
$P,Q\ll\mu$ (we can take $\mu=P+Q$), $f:=\mathrm{d}P/\mathrm{d}\mu$,
$g:=\mathrm{d}Q/\mathrm{d}\mu$. We consider $(\int\max\{\frac{f(y)}{f(x)},\,\frac{g(y)}{g(x)}\}\mu(\mathrm{d}y))^{-1}=0$
if $f(x)=0$ or $g(x)=0$.
\end{defn}

It can be checked that for $P,Q\ll\mu$,
\begin{equation}
d_{\mathrm{PC}}(P,Q)=\mathbf{P}(\varrho_{P}(\Phi)\neq\varrho_{Q}(\Phi)),\label{eq:dpc_pneq}
\end{equation}
where $\Phi\sim\mathrm{PP}(\mu\times\lambda_{\mathbb{R}_{\ge0}})$.
\begin{prop}
\label{prop:dpc_metric}$d_{\mathrm{PC}}$ is a metric over $\mathcal{P}(\mathcal{X})$.
\end{prop}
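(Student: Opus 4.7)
My plan is to establish the metric axioms via the probabilistic interpretation \eqref{eq:dpc_pneq}, which exhibits $d_{\mathrm{PC}}(P,Q)$ as the probability that two marginals of a common Poisson-based coupling disagree. All four axioms will then reduce either to the identity $\varrho_P(\Phi)\sim P$ or to a union bound over the single probability space carrying $\Phi$.

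\emph{Well-definedness and the easy axioms.} First I would verify that the formula for $d_{\mathrm{PC}}(P,Q)$ does not depend on the choice of dominating $\sigma$-finite measure $\mu$. A short calculation shows that replacing $\mu$ by $h\mu$ for any strictly positive measurable $h$ leaves the value unchanged: the Radon--Nikodym derivatives rescale to $f/h$, $g/h$, the ratios $f(y)/f(x)$ and $g(y)/g(x)$ pick up a factor $h(x)/h(y)$, the extra $h(y)$ from $(h\mu)(\mathrm{d}y)$ cancels the $h(y)^{-1}$, and the resulting factor $h(x)$ in the inner integral is undone by the change of measure in the outer integral. For arbitrary dominating $\mu_1,\mu_2$, both match the formula computed with $\mu_1+\mu_2$, so the value is unambiguous. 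Non-negativity and the bound $d_{\mathrm{PC}}\le 1$ are immediate from \eqref{eq:dpc_pneq}, and symmetry is manifest from the formula. When $P=Q$, the inner integral equals $\int (f(y)/f(x))\,\mu(\mathrm{d}y)=1/f(x)$ wherever $f(x)>0$, so the outer integrand is $f(x)$ and integrates to $1$, giving $d_{\mathrm{PC}}(P,P)=0$.

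\emph{Identity of indiscernibles and triangle inequality.} If $d_{\mathrm{PC}}(P,Q)=0$, then by \eqref{eq:dpc_pneq} we have $\varrho_P(\Phi)=\varrho_Q(\Phi)$ almost surely; since these random variables are distributed as $P$ and $Q$ respectively, this forces $P=Q$. For the triangle inequality, given $P,Q,R$ I take $\mu:=P+Q+R$, which is $\sigma$-finite and dominates all three, and let $\Phi\sim\mathrm{PP}(\mu\times\lambda_{\mathbb{R}_{\ge 0}})$. The three random variables $\varrho_P(\Phi),\varrho_Q(\Phi),\varrho_R(\Phi)$ are then coupled on a single probability space with the correct marginals, and the trivial inclusion
\[
\{\varrho_P(\Phi)\ne\varrho_R(\Phi)\}\subseteq\{\varrho_P(\Phi)\ne\varrho_Q(\Phi)\}\cup\{\varrho_Q(\Phi)\ne\varrho_R(\Phi)\}
\]
combined with a union bound and the now $\mu$-invariant identity \eqref{eq:dpc_pneq} gives $d_{\mathrm{PC}}(P,R)\le d_{\mathrm{PC}}(P,Q)+d_{\mathrm{PC}}(Q,R)$.

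The only substantive obstacle is the $\mu$-independence of the defining formula, since the triangle inequality argument must compute all three pairwise distances with respect to the common $\mu=P+Q+R$ rather than the canonical pairwise choice $\mu=P+Q$. Once that invariance is established by the rescaling calculation sketched above, the whole proposition reduces to the observation that the Poisson functional representation couples arbitrarily many distributions at once, so the triangle inequality is literally the union bound.
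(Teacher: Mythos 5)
Your proof is correct and takes essentially the same route as the paper's: identity of indiscernibles follows from $\varrho_{P}(\Phi)\sim P$, $\varrho_{Q}(\Phi)\sim Q$ via \eqref{eq:dpc_pneq}, and the triangle inequality is exactly the union bound applied to $\varrho_{P_{1}}(\Phi),\varrho_{P_{2}}(\Phi),\varrho_{P_{3}}(\Phi)$ coupled through a single Poisson process with a common dominating measure. Your explicit verification that the defining formula does not depend on the choice of $\mu$ is a welcome addition that the paper leaves implicit in \eqref{eq:dpc_pneq}, but it does not change the substance of the argument.
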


\begin{proof}
[Proof of Proposition \ref{prop:dpc_metric}] We have
\begin{align*}
 & d_{\mathrm{PC}}(P,Q)=0\\
\Leftrightarrow & \mathbf{P}(\varrho_{P}(\Phi)\neq\varrho_{Q}(\Phi))=0\\
\Leftrightarrow & P=Q
\end{align*}
since $\varrho_{P}(\Phi)\sim P$, $\varrho_{Q}(\Phi)\sim Q$. For
the triangle inequality, let $P_{1},P_{2},P_{3}\ll\mu$,
\begin{align*}
 & d_{\mathrm{PC}}(P_{1},P_{2})+d_{\mathrm{PC}}(P_{2},P_{3})\\
 & =\mathbf{P}(\varrho_{P_{1}}(\Phi)\neq\varrho_{P_{2}}(\Phi))+\mathbf{P}(\varrho_{P_{2}}(\Phi)\neq\varrho_{P_{3}}(\Phi))\\
 & \ge\mathbf{P}(\varrho_{P_{1}}(\Phi)\neq\varrho_{P_{2}}(\Phi)\;\mathrm{or}\;\varrho_{P_{2}}(\Phi)\neq\varrho_{P_{3}}(\Phi))\\
 & \ge\mathbf{P}(\varrho_{P_{1}}(\Phi)\neq\varrho_{P_{3}}(\Phi))\\
 & =d_{\mathrm{PC}}(P_{1},P_{3}).
\end{align*}
\end{proof}
Moreover, we can bound $d_{\mathrm{PC}}(P,Q)$ by $d_{\mathrm{TV}}(P,Q)$.
\begin{prop}
\label{prop:dpc_prop}For probability measures $P,Q$ on $\mathcal{X}$,
\[
d_{\mathrm{TV}}(P,Q)\le d_{\mathrm{PC}}(P,Q)\le\frac{2d_{\mathrm{TV}}(P,Q)}{1+d_{\mathrm{TV}}(P,Q)}.
\]
Moreover, if $|\mathrm{supp}(P)\cup\mathrm{supp}(Q)|\le2$, then $d_{\mathrm{PC}}(P,Q)=d_{\mathrm{TV}}(P,Q)$.
\end{prop}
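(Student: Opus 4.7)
The plan is to work directly from the integral formula defining $d_{\mathrm{PC}}$, together with the identification~\eqref{eq:dpc_pneq}, $d_{\mathrm{PC}}(P,Q) = \mathbf{P}(\varrho_P(\Phi) \neq \varrho_Q(\Phi))$, which comes out of the Poisson matching lemma. Throughout, I fix a $\sigma$-finite $\mu$ with $P,Q\ll\mu$ and write $f := dP/d\mu$, $g := dQ/d\mu$, $h := \min\{f,g\}$, $M := \max\{f,g\}$, so that the classical identities give $\int h\,d\mu = 1 - d_{\mathrm{TV}}(P,Q)$ and $\int M\,d\mu = 1 + d_{\mathrm{TV}}(P,Q)$.

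For the lower bound, the cleanest route is the coupling argument: since $\varrho_P(\Phi) \sim P$ and $\varrho_Q(\Phi) \sim Q$, the pair $(\varrho_P(\Phi), \varrho_Q(\Phi))$ is a coupling of $(P,Q)$, and the coupling characterization of total variation gives immediately
\[
d_{\mathrm{TV}}(P,Q) \le \mathbf{P}(\varrho_P(\Phi) \neq \varrho_Q(\Phi)) = d_{\mathrm{PC}}(P,Q).
\]

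For the upper bound, the one idea I need is an elementary pointwise estimate on the inner integrand of the formula for $d_{\mathrm{PC}}$. Writing $I(x) := \int \max\{f(y)/f(x), g(y)/g(x)\}\,\mu(dy)$, I would observe that on $\{fg>0\}$, each numerator is at most $M(y)$ while each denominator is at least $h(x)$, so
\[
\max\{f(y)/f(x),\, g(y)/g(x)\} \le M(y)/h(x).
\]
Integrating in $y$ gives $I(x) \le (1+d_{\mathrm{TV}})/h(x)$, hence $1/I(x) \ge h(x)/(1+d_{\mathrm{TV}})$, with both sides vanishing on $\{fg=0\}$ by the convention stated in the definition. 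Integrating in $x$ and substituting into the defining formula yields
\[
d_{\mathrm{PC}}(P,Q) = 1 - \int \frac{1}{I(x)}\,\mu(dx) \le 1 - \frac{1-d_{\mathrm{TV}}}{1+d_{\mathrm{TV}}} = \frac{2\,d_{\mathrm{TV}}(P,Q)}{1+d_{\mathrm{TV}}(P,Q)}.
\]

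For the equality statement under $|\mathrm{supp}(P)\cup\mathrm{supp}(Q)|\le 2$, the singleton case is trivial. In the two-point case, write $\mathrm{supp}(P)\cup\mathrm{supp}(Q)=\{a,b\}$, $P=p\delta_a+(1-p)\delta_b$, $Q=q\delta_a+(1-q)\delta_b$, with $p\ge q$ WLOG, and take $\mu$ to be the counting measure. A direct computation gives $I(a)=1+(1-q)/q=1/q$ and $I(b)=1+p/(1-p)=1/(1-p)$, so $\int 1/I\,d\mu = q+(1-p)$ and $d_{\mathrm{PC}}(P,Q) = p-q = d_{\mathrm{TV}}(P,Q)$; the boundary cases $q=0$ or $p=1$ are absorbed by the $1/I = 0$ convention. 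The only real \emph{idea} in the whole proof is the pointwise bound $\max\{f(y)/f(x),g(y)/g(x)\}\le M(y)/h(x)$; once one sees it, every estimate is sharp, as one can verify on the three-point example $P=(2/3,1/3,0)$, $Q=(0,1/3,2/3)$, where both the pointwise bound and the final inequality hold with equality.
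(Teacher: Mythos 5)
Your proposal is correct and follows essentially the same route as the paper: the coupling bound $d_{\mathrm{TV}}\le\mathbf{P}(\varrho_P(\Phi)\neq\varrho_Q(\Phi))$ for the lower bound, the pointwise estimate $\max\{f(y)/f(x),g(y)/g(x)\}\le\max\{f(y),g(y)\}/\min\{f(x),g(x)\}$ integrated in $y$ then $x$ for the upper bound, and a direct two-point computation for the equality case. The only differences are cosmetic (your explicit treatment of the boundary cases via the stated convention, and the sharpness example), so nothing further is needed.
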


\begin{proof}
[Proof of Proposition \ref{prop:dpc_prop}] For the lower bound,
$d_{\mathrm{PC}}(P,Q)=\mathbf{P}(\varrho_{P}(\Phi)\neq\varrho_{Q}(\Phi))\ge d_{\mathrm{TV}}(P,Q)$
since $\varrho_{P}(\Phi)\sim P$, $\varrho_{Q}(\Phi)\sim Q$. For
the upper bound,
\begin{align*}
d_{\mathrm{PC}}(P,Q) & =1-\int\left(\int\max\left\{ \frac{f(y)}{f(x)},\,\frac{g(y)}{g(x)}\right\} \mu(\mathrm{d}y)\right)^{-1}\mu(\mathrm{d}x)\\
 & \le1-\int\left(\int\frac{\max\{f(y),g(y)\}}{\min\{f(x),g(x)\}}\mu(\mathrm{d}y)\right)^{-1}\mu(\mathrm{d}x)\\
 & =1-\int\left(\frac{\int\max\{f(y),g(y)\}\mu(\mathrm{d}y)}{\min\{f(x),g(x)\}}\right)^{-1}\mu(\mathrm{d}x)\\
 & =1-\frac{\int\min\{f(x),g(x)\}\mu(\mathrm{d}x)}{\int\max\{f(y),g(y)\}\mu(\mathrm{d}y)}\\
 & =1-\frac{1-d_{\mathrm{TV}}(P,Q)}{1+d_{\mathrm{TV}}(P,Q)}\\
 & =\frac{2d_{\mathrm{TV}}(P,Q)}{1+d_{\mathrm{TV}}(P,Q)}.
\end{align*}
If $|\mathrm{supp}(P)\cup\mathrm{supp}(Q)|\le2$, let $\mathrm{supp}(P)\cup\mathrm{supp}(Q)\subseteq\{x,y\}$.
Assume $P(x)\le Q(x)$ without loss of generality (treat $a/0=\infty$
for $a>0$), then
\begin{align*}
d_{\mathrm{PC}}(P,Q) & =1-\left(\max\left\{ \frac{P(x)}{P(x)},\,\frac{Q(x)}{Q(x)}\right\} +\max\left\{ \frac{P(y)}{P(x)},\,\frac{Q(y)}{Q(x)}\right\} \right)^{-1}\\
 & \;\;\;\;\;\;-\left(\max\left\{ \frac{P(x)}{P(y)},\,\frac{Q(x)}{Q(y)}\right\} +\max\left\{ \frac{P(y)}{P(y)},\,\frac{Q(y)}{Q(y)}\right\} \right)^{-1}\\
 & =1-\left(1+\frac{P(y)}{P(x)}\right)^{-1}-\left(1+\frac{Q(x)}{Q(y)}\right)^{-1}\\
 & =1-P(x)-Q(y)\\
 & =Q(x)-P(x)\\
 & =d_{\mathrm{TV}}(P,Q).
\end{align*}
\end{proof}
We are ready to give the pairwise coupling ratio for the discrete
metric.
\begin{proof}
[Proof of Theorem \ref{thm:rd_bd}] We first show the upper bound.
By Proposition \ref{prop:dpc_prop}, for all $\alpha,\beta$
\begin{align*}
\mathbf{P}(\varrho_{P_{\alpha}}(\Phi)\neq\varrho_{P_{\beta}}(\Phi)) & \le\frac{2d_{\mathrm{TV}}(P_{\alpha},P_{\beta})}{1+d_{\mathrm{TV}}(P_{\alpha},P_{\beta})}\\
 & \le2d_{\mathrm{TV}}(P_{\alpha},P_{\beta}),
\end{align*}
where $\Phi\sim\mathrm{PP}(\mu\times\lambda_{\mathbb{R}_{\ge0}})$.
Hence $r_{\mathbf{1}_{\neq}}(\{\varrho_{P_{\alpha}}\}_{\alpha})\le2$.

We then prove the bound $r_{\mathbf{1}_{\neq}}^{*}(\mathcal{P}_{\ll\mu}(\mathcal{X}))\le(s+1)/3$
when $s:=|\mathrm{supp}(\mu)|<\infty$.\footnote{It is mentioned in \cite{kleinberg2002approximation} that it is shown
by Sanjeev Arora and Julia Chuzhoy that the approximation ratios $4/3$
and $11/6$ are achievable for $s=3$ and $s=4$ respectively for
the metric labelling problem (which, depending on the construction
they use, may mean that $r_{\mathbf{1}_{\neq}}^{*}(\mathcal{P}_{\ll\mu}(\mathcal{X}))$
is upper-bounded by those numbers, respectively). Note that the bound
$r_{\mathbf{1}_{\neq}}^{*}(\mathcal{P}_{\ll\mu}(\mathcal{X}))\le5/3$
for $s=4$ here is better than $11/6$. A construction very similar
to the one described here has appeared in \cite{iwasa2009approximation}
(which considers the best $\varsigma$ instead of a random one).} Without loss of generality, assume $\mathrm{supp}(\mu)=[1..s]$,
and thus $\mathcal{P}_{\ll\mu}(\mathcal{X})=\mathcal{P}([1..s])$.
We construct a coupling $\{X_{\alpha}\}_{\alpha}$ of $\{P_{\alpha}\}_{\alpha}=\mathcal{P}([1..s])$
by letting $\varsigma:[1..s]\to[1..s]$ be a uniform random permutation
of $[1..s]$, $U\sim\mathrm{Unif}[0,1]$ independent of $\varsigma$,
and $X_{\alpha}:=F_{\varsigma_{*}P_{\alpha}}^{-1}(U)$ (i.e., the
quantile coupling on $\{\varsigma_{*}P_{\alpha}\}_{\alpha}$). For
any $P_{\alpha},P_{\beta}\in\mathcal{P}([1..s])$, and $(\tilde{X}_{\alpha},\tilde{X}_{\beta})\in\Gamma_{\lambda}(P_{\alpha},P_{\beta})$
(assume $(\tilde{X}_{\alpha},\tilde{X}_{\beta})$ is independent of
$(\varsigma,U)$),
\begin{align*}
 & \mathbf{P}(X_{\alpha}\neq X_{\beta})\\
 & \le\mathbf{E}\left[\mathbf{E}\left[|\varsigma(X_{\alpha})-\varsigma(X_{\beta})|\,\big|\,\varsigma\right]\right]\\
 & \stackrel{(a)}{\le}\mathbf{E}\left[\mathbf{E}\left[|\varsigma(\tilde{X}_{\alpha})-\varsigma(\tilde{X}_{\beta})|\,\big|\,\varsigma\right]\right]\\
 & =\mathbf{E}\left[\mathbf{E}\left[|\varsigma(\tilde{X}_{\alpha})-\varsigma(\tilde{X}_{\beta})|\,\big|\,\tilde{X}_{\alpha},\tilde{X}_{\beta}\right]\right]\\
 & =\mathbf{E}\left[\mathbf{1}\{\tilde{X}_{\alpha}\neq\tilde{X}_{\beta}\}\sum_{i=1}^{s-1}i\mathbf{P}\left(|\varsigma(\tilde{X}_{\alpha})-\varsigma(\tilde{X}_{\beta})|=i\,\big|\,\tilde{X}_{\alpha}\neq\tilde{X}_{\beta}\right)\right]\\
 & =\mathbf{E}\left[\mathbf{1}\{\tilde{X}_{\alpha}\neq\tilde{X}_{\beta}\}\sum_{i=1}^{s-1}i\frac{s-i}{s(s-1)/2}\right]\\
 & =\mathbf{E}\left[\mathbf{1}\{\tilde{X}_{\alpha}\neq\tilde{X}_{\beta}\}\frac{s(s-1)(s+1)/6}{s(s-1)/2}\right]\\
 & =\frac{s+1}{3}\mathbf{P}(\tilde{X}_{\alpha}\neq\tilde{X}_{\beta}),
\end{align*}
where (a) is by the optimality of the quantile coupling for the cost
function $|x-y|$. Hence
\begin{align*}
\mathbf{P}(X_{\alpha}\neq X_{\beta}) & \le\inf_{(\tilde{X}_{\alpha},\tilde{X}_{\beta})\in\Gamma_{\lambda}(P_{\alpha},P_{\beta})}\frac{s+1}{3}\mathbf{P}(\tilde{X}_{\alpha}\neq\tilde{X}_{\beta})\\
 & =\frac{s+1}{3}d_{\mathrm{TV}}(P_{\alpha},P_{\beta}).
\end{align*}

We then show the lower bound. Let $m\le|\mathrm{supp}(\mu)|$. By
the definition of $\mathrm{supp}(\mu)$, we can find disjoint open
sets $E_{i}$, $i=1,\ldots,m$, where $\mu(E_{i})>0$. Let $Q_{i}\ll\mu_{E_{i}}$
be probability measures. Let $\mathcal{A}:=\{1,\ldots,m\}$, $P_{\alpha}:=\sum_{i\in\mathcal{A}\backslash\{\alpha\}}Q_{i}/(m-1)$.
Then $d_{\mathrm{TV}}(P_{\alpha},P_{\beta})=1/(m-1)$ for any $\alpha\neq\beta$.
For any coupling $\{X_{\alpha}\}_{\alpha\in\mathcal{A}}$, let $X_{m+1}:=X_{1}$,
then
\begin{align*}
 & \sum_{\alpha=1}^{m}\mathbf{P}(X_{\alpha}\neq X_{\alpha+1})\\
 & =\mathbf{E}\Big[\sum_{\alpha=1}^{m}\mathbf{1}\{X_{\alpha}\neq X_{\alpha+1}\}\Big]\\
 & \ge2,
\end{align*}
where the last inequality is because the $X_{\alpha}$'s cannot be
all equal at a time (for $x\in E_{i}$, there is a probability distribution
$P_{i}$ not supported at $x$), so at least 2 pairs among the cycle
$(X_{1},X_{2}),\ldots,(X_{m-1},X_{m}),(X_{m},X_{1})$ are not equal.
Hence there exists $\alpha$ such that $\mathbf{P}(X_{\alpha}\neq X_{\alpha+1})\ge2/m$.
Therefore $r_{\mathbf{1}_{\neq}}^{*}(\{P_{\alpha}\}_{\alpha})\ge2(m-1)/m$.
The case $|\mathrm{supp}(\mu)|=\infty$ follows from letting $m\to\infty$.

We remark that a similar technique is also used in \cite[Proposition 6]{angel2019pairwise}.
Nevertheless, by considering cycles (instead of all pairs in \cite{angel2019pairwise}),
we are able to give sharper bounds for general symmetric cost functions
in Proposition \ref{prop:rn_rc_lb_1}.
\end{proof}
\begin{rem}
It is possible to remove the condition about $\mu$ and prove Theorem
\ref{thm:rd_bd} on $\mathcal{P}(\mathcal{X})$ instead of $\mathcal{P}_{\ll\mu}(\mathcal{X})$,
if we remove the restriction that the coupling has to be defined on
the standard probability space. The proof is given in Appendix \ref{subsec:pf_rd_bd_nonst}.

\end{rem}

\section{Sequential Poisson Functional Representation\label{sec:spfr}}

The Poisson coupling $(X,Y)$, $X\sim P$, $Y\sim Q$ of probability
distributions $P,Q$ tends to have a high probability of matching
(i.e., $\mathbf{P}(X=Y)$) when $d_{\mathrm{TV}}(P,Q)$ is small,
as shown in Proposition \ref{prop:dpc_prop}. However, it is not concerned
with the distance between $X$ and $Y$, only whether they are equal.
In order to construct a coupling $(X,Y)$ where $X$ and $Y$ are
close (but not necessarily equal), instead of applying the Poisson
functional representation to $X$ directly, we can apply the Poisson
functional representation to a noisy version $Z$ of $X$ instead.

For example, let $\gamma>0$, and $\kappa$ be the probability kernel
from $\mathbb{R}^{n}$ to $\mathbb{R}^{n}$, where $\kappa(x,\cdot)=\mathrm{Unif}(\mathcal{B}_{\Vert\cdot\Vert_{2},\gamma/2}(x))$
(i.e., it is the conditional distribution of $X+W$ given $X$, where
$W\sim\mathrm{Unif}(\mathcal{B}_{\Vert\cdot\Vert_{2},\gamma/2}(0))$
independent of $X$). For a collection of continuous distributions
$\{P_{\alpha}\}_{\alpha\in\mathcal{A}}$ over $\mathbb{R}^{n}$, we
construct the coupling $\{X_{\alpha}\}_{\alpha\in\mathcal{A}}$ as
$(\Phi_{1},\Phi_{2})\stackrel{iid}{\sim}\mathrm{PP}(\lambda_{\mathbb{R}^{n}}\times\lambda_{\mathbb{R}_{\ge0}})$,
$Z_{\alpha}:=\varrho_{\kappa\circ P_{\alpha}}(\Phi_{1})$, $X_{\alpha}:=\varrho_{P_{\alpha}(\cdot|Z_{\alpha})}(\Phi_{2})$,
where $P(\cdot|Z)$ denotes the conditional distribution of $X$ given
$Z$ when $(X,Z)\sim P\kappa$, i.e., given $P_{\alpha}$, we first
use the the Poisson functional representation on $\Phi_{1}$ to find
$Z_{\alpha}$, and then identify the exact value of $X_{\alpha}$
conditioned on $Z_{\alpha}$ using $\Phi_{2}$. Since $Z_{\alpha}\sim\kappa\circ P_{\alpha}$
and $X_{\alpha}|Z_{\alpha}\sim P_{\alpha}(\cdot|Z_{\alpha})$, we
have $X_{\alpha}\sim P_{\alpha}$. Also,
\begin{align*}
 & \mathbf{P}\left(\Vert X_{\alpha}-X_{\beta}\Vert_{2}\le\gamma\right)\\
 & \ge\mathbf{P}(Z_{\alpha}=Z_{\beta})\\
 & =1-d_{\mathrm{PC}}(\kappa\circ P_{\alpha},\,\kappa\circ P_{\beta}),
\end{align*}
which may be larger than $1-d_{\mathrm{PC}}(P_{\alpha},\,P_{\beta})$
(the bound we would obtain if we apply the Poisson functional representation
directly to $P_{\alpha}$).

Nevertheless, this construction can only give a bound on $\mathbf{P}(\Vert X_{\alpha}-X_{\beta}\Vert_{2}\le\gamma)$
at a specific value of $\gamma$ which is fixed as a parameter of
the construction. It is not strong enough to bound $\mathbf{E}[\Vert X_{\alpha}-X_{\beta}\Vert_{2}^{q}]$
as required in Theorem \ref{thm:rn_rc_ub}. Therefore, instead of
using only one auxiliary random variable $Z_{\alpha}$, we use a sequence
of random variables $\{Z_{\alpha,i}\}_{i\in\mathbb{Z}}$.\medskip{}

\begin{defn}
[Sequential Poisson functional representation]\label{def:spfr} Let
$\mathcal{X}$ be a Polish space with its Borel $\sigma$-algebra
$\mathcal{F}$. Let $I\subseteq\mathbb{Z}$ be a nonempty interval
(i.e., $I=[k..l]$, $[k..\infty)$, $(-\infty..l]$ or $(-\infty..\infty)$
for some $k\le l\in\mathbb{Z}$). Write $I_{<i}:=\{j\in I:\,j<i\}$,
and define $I_{\le i}$, $I_{>i}$, $I_{\ge i}$ similarly. Let $\mu_{i}$
be a $\sigma$-finite measure over $\mathcal{X}$ for $i\in I$. Let
$\bar{P}:\mathcal{F}^{\otimes I}\to[0,1]$ be a probability distribution
on $\mathcal{X}^{I}$ (note that $\mathcal{F}^{\otimes I}$ is the
Borel $\sigma$-algebra of the Polish space $\mathcal{X}^{I}$, since
a Polish space is second-countable). Let $\phi_{i}\in[\mathcal{X}\times\mathbb{R}_{\ge0}]^{\le\aleph_{0}}$,
$i\in I$. For $J\subseteq I$, write $\phi_{J}:=\{\phi_{i}\}_{i\in J}$,
$\mu_{J}:=\{\mu_{i}\}_{i\in J}$, and $\bar{P}_{J}:\mathcal{F}^{\otimes J}\to[0,1]$,
defined by $\bar{P}_{J}(E):=\bar{P}(\{z\in\mathcal{X}^{I}:\,\{z_{j}\}_{j\in J}\in E\})$
(i.e., $\bar{P}_{J}$ is the marginal distribution of the random variables
with indices in $J$). For $i\in I$, let $\bar{P}_{i|I_{<i}}:\mathcal{X}^{I_{<i}}\times\mathcal{F}\to[0,1]$
be a regular conditional distribution of $Z_{i}$ given $\{Z_{j}\}_{j\in I_{<i}}$,
where $\{Z_{j}\}_{j\in I}\sim\bar{P}$ (write $\bar{P}_{i|I_{<i}}(E|\{z_{j}\}_{j\in I_{<i}}):=\bar{P}_{i|I_{<i}}(\{z_{j}\}_{j\in I_{<i}},E)$).
  If $\inf I=-\infty$, we require an additional parameter $\nu_{I}=\{\nu_{i}\}_{i\in I}$,
where $\nu_{i}$ is a probability distribution.

We require the following conditions on $(\bar{P}_{I},\{\bar{P}_{i|I_{<i}}\}_{i\in I},\mu_{I},\nu_{I})$
(we refer to these two conditions together as the \emph{SPFR condition}):
\begin{enumerate}
\item For all $\{z_{j}\}_{j\in I_{<i}}\in\mathcal{X}^{I_{<i}}$, we have
\begin{equation}
\bar{P}_{i|I_{<i}}(\cdot|\{z_{j}\}_{j\in I_{<i}})\ll\mu_{i}.\label{eq:spfr_abscont_kernel}
\end{equation}
\item If $\inf I=-\infty$, we require that the given $\nu_{I}$ satisfy
that $\nu_{i}\ll\mu_{i}$ for $i\in I$,
\begin{equation}
\sum_{i\in I_{\le0}}d_{\mathrm{TV}}\bigg(\bigg(\prod_{j\in I_{<i}}\nu_{j}\bigg)\bar{P}_{i|I_{<i}},\,\prod_{j\in I_{\le i}}\nu_{j}\bigg)<\infty,\label{eq:spfr_kinfty_lim}
\end{equation}
and
\begin{equation}
\lim_{i\to-\infty}d_{\mathrm{TV}}\bigg(\bar{P}_{I_{\le i}},\,\prod_{j\in I_{\le i}}\nu_{j}\bigg)=0.\label{eq:spfr_kinfty_lim2}
\end{equation}
\end{enumerate}
We define the \emph{sequential Poisson functional representation (SPFR)}
$\bar{\varrho}_{\{\bar{P}_{i|I_{<i}}\}_{i\in I}\Vert\mu_{I}}(\phi_{I})\in\mathcal{X}^{I}$
(write $\bar{\varrho}_{\{\bar{P}_{i|I_{<i}}\}_{i\in I}\Vert\mu_{I},j}(\phi_{I})$
for its $j$-th component) recursively:
\begin{casenv}
\item $I=[k..l]$: Define
\begin{align}
\bar{\varrho}_{\{\bar{P}_{i|I_{<i}}\}_{i\in I}\Vert\mu_{I}}(\phi_{I}) & :=\Big(\bar{\varrho}_{\{\bar{P}_{i|I_{<i}}\}_{i\in I_{<l}}\Vert\mu_{I_{<l}}}(\phi_{I_{<l}}),\,\varrho_{\bar{P}_{l|I_{<l}}(\cdot\,|\,\bar{\varrho}_{\bar{P}_{I_{<l}}\Vert\mu_{I_{<l}}}(\phi_{I_{<l}}))\,\Vert\,\mu_{l}}(\phi_{l})\Big),\label{eq:spfr}
\end{align}
where the outermost $(\cdot,\cdot)$ denotes concatenation, and $\varrho$
denotes the original Poisson functional representation in Definition
\ref{def:pfr}. If $k=l$, then $\bar{\varrho}_{\{\bar{P}_{i|I_{<i}}\}_{i\in I}\Vert\mu_{I}}(\phi_{I}):=\varrho_{\bar{P}_{l}\Vert\mu_{l}}(\phi_{l})$.
\item $I=(-\infty..l]$: Define 
\begin{equation}
\bar{\varrho}_{\{\bar{P}_{i|I_{<i}}\}_{i\in I}\Vert\mu_{I},l}(\phi_{I}):=\lim_{k\to-\infty}\bar{\varrho}_{\{\bar{P}_{i|I_{<i}}(\cdot|(\{\varrho_{\nu_{j}\Vert\mu_{j}}(\phi_{j})\}_{j<k},\cdot))\}_{i\in[k..l]}\Vert\mu_{[k..l]},l}(\phi_{[k..l]}),\label{eq:spfr_kinfty}
\end{equation}
where $\bar{P}_{i|I_{<i}}(\cdot|(\{\varrho_{\nu_{j}\Vert\mu_{j}}(\phi_{j})\}_{j<k},\cdot)):\,(\{z_{j}\}_{j\in[k..i-1]},E)\mapsto\bar{P}_{i|I_{<i}}(E|(\{\varrho_{\nu_{j}\Vert\mu_{j}}(\phi_{j})\}_{j<k},\{z_{j}\}_{j\in[k..i-1]}))$
is a probability kernel from $\mathcal{X}^{[k..i-1]}$ to $\mathcal{X}$,
the $\bar{\varrho}$ in the right hand side is defined using \eqref{eq:spfr},
and the limit is with respect to the discrete metric (not the topology
of $\mathcal{X}$), i.e., $\lim_{k\to-\infty}a_{k}=b$ $\Leftrightarrow$
$\inf\{k:\,a_{k}\neq b\}>-\infty$. We will show later that the limit
exists. We then define
\[
\bar{\varrho}_{\{\bar{P}_{i|I_{<i}}\}_{i\in I}\Vert\mu_{I}}(\phi_{I}):=\big\{\bar{\varrho}_{\{\bar{P}_{i|I_{<i}}\}_{i\in I_{\le j}}\Vert\mu_{I_{\le j}},j}(\phi_{I_{\le j}})\big\}_{j\in I},
\]
where each term is given by \eqref{eq:spfr_kinfty}. We will show
later that this also satisfies the equality in \eqref{eq:spfr}.
\item $I=[k..\infty)$ or $(-\infty..\infty)$: Define
\begin{equation}
\bar{\varrho}_{\{\bar{P}_{i|I_{<i}}\}_{i\in I}\Vert\mu_{I}}(\phi_{I}):=\big\{\bar{\varrho}_{\{\bar{P}_{i|I_{<i}}\}_{i\in I_{\le j}}\Vert\mu_{I_{\le j}},j}(\phi_{I_{\le j}})\big\}_{j\in I}.\label{eq:spfr_linfty}
\end{equation}
\end{casenv}
For notational simplicity, we write $\bar{\varrho}_{\bar{P}_{I}\Vert\mu_{I}}(\phi_{I}):=\bar{\varrho}_{\{\bar{P}_{i|I_{<i}}\}_{i\in I}\Vert\mu_{I}}(\phi_{I})$,
where $\{\bar{P}_{i|I_{<i}}\}_{i\in I}$ are regular conditional distributions
of $\bar{P}$ such that $(\bar{P}_{I},\{\bar{P}_{i|I_{<i}}\}_{i\in I},\mu_{I},\nu_{I})$
satisfies the SPFR condition. We say $(\bar{P}_{I},\mu_{I},\nu_{I})$
satisfies the SPFR condition if there exist regular conditional distributions
$\{\bar{P}_{i|I_{<i}}\}_{i\in I}$ such that $(\bar{P}_{I},\{\bar{P}_{i|I_{<i}}\}_{i\in I},\mu_{I},\nu_{I})$
satisfies the SPFR condition. This simplified notation is justified
by part 1 of Lemma \ref{lem:spfr_dist} below (the choice of $\{\bar{P}_{i|I_{<i}}\}_{i\in I}$
does not matter). We omit $\nu_{I}$ from the notation $\bar{\varrho}_{\bar{P}_{I}\Vert\mu_{I}}$
since the choice of $\nu_{I}$ is clear from the context. We sometimes
omit $\mu_{I}$ and write $\bar{\varrho}_{\bar{P}_{I}}:=\bar{\varrho}_{\bar{P}_{I}\Vert\mu_{I}}$,
$\bar{\varrho}_{\bar{P}_{I},i}:=\bar{\varrho}_{\bar{P}_{I}\Vert\mu_{I},i}$
if $\mu_{I}$ is clear from the context.
\end{defn}

\medskip{}

We then show that if the SPFR condition is satisfied, and we have
$\Phi_{i}\sim\mathrm{PP}(\mu_{i}\times\lambda_{\mathbb{R}_{\ge0}})$
independent across $i\in I$, then $\bar{\varrho}_{\bar{P}_{I}}(\Phi_{I})\sim\bar{P}$
(the definition of $\bar{\varrho}_{\bar{P}_{I}}$ is extended to Poisson
processes similarly as in \eqref{eq:pfr_pois}). The proof is given
in Appendix \ref{subsec:pf_hpfr_dist}.
\begin{lem}
\label{lem:spfr_dist}Let $I\subseteq\mathbb{Z}$ be a nonempty interval,
and $(\bar{P}_{I},\mu_{I},\nu_{I})$ satisfy the SPFR condition. Let
$\Phi_{i}\sim\mathrm{PP}(\mu_{i}\times\lambda_{\mathbb{R}_{\ge0}})$
be independent across $i\in I$. Then we have:
\begin{enumerate}
\item \label{enu:spfr_dist_rcd_welldef}If $\{\bar{P}_{i|I_{<i}}\}_{i\in I}$
and $\{\bar{P}'_{i|I_{<i}}\}_{i\in I}$ are both regular conditional
distributions of $\bar{P}$ such that $(\bar{P}_{I},\{\bar{P}_{i|I_{<i}}\}_{i\in I},\mu_{I},\nu_{I})$
and $(\bar{P}_{I},\{\bar{P}'_{i|I_{<i}}\}_{i\in I},\mu_{I},\nu_{I})$
satisfy the SPFR condition, then $\bar{\varrho}_{\{\bar{P}_{i|I_{<i}}\}_{i\in I}\Vert\mu_{I}}(\Phi_{I})=\bar{\varrho}_{\{\bar{P}'_{i|I_{<i}}\}_{i\in I}\Vert\mu_{I}}(\Phi_{I})$
almost surely. In other words, $\bar{\varrho}_{\bar{P}_{I}}(\Phi_{I})$
does not depend on the choice of $\{\bar{P}_{i|I_{<i}}\}_{i\in I}$.
\item \label{enu:spfr_dist_dist}$\bar{\varrho}_{\bar{P}_{I}}(\Phi_{I})\sim\bar{P}$.
\item \label{enu:spfr_dist_subint}For any $i\in I$, we have $\bar{\varrho}_{\bar{P}_{I_{\le i}}}(\Phi_{I_{\le i}})=\{\bar{\varrho}_{\bar{P}_{I},j}(\Phi_{I})\}_{j\in I_{\le i}}$
almost surely.
\item \label{enu:spfr_dist_recur}If $\sup I=l<\infty$, then \eqref{eq:spfr}
is satisfied almost surely (even if $\inf I=-\infty$).
\item \label{enu:spfr_dist_kinfty_match}If $\inf I=-\infty$, then with
probability 1,
\begin{equation}
\inf\left\{ i\in I:\,\bar{\varrho}_{\bar{P}_{I}\Vert\mu_{I},i}(\Phi_{I})\neq\varrho_{\nu_{i}\Vert\mu_{i}}(\Phi_{i})\right\} >-\infty.\label{eq:spfr_dist_agree}
\end{equation}
\end{enumerate}
\end{lem}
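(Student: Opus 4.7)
The plan is to prove parts 1--5 in order of increasing complexity of the index set: first finite $I=[k..l]$, then $I=(-\infty..l]$, then the cases with $\sup I=\infty$.

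For finite $I=[k..l]$ I induct on $l-k$. The base $k=l$ reduces to the basic property that $\varrho_{P\Vert\mu}(\Phi)\sim P$ when $\Phi\sim\mathrm{PP}(\mu\times\lambda_{\mathbb{R}_{\geq 0}})$ and $P\ll\mu$. For the induction step, by the inductive hypothesis $\bar{\varrho}_{\bar{P}_{I_{<l}}}(\Phi_{I_{<l}})\sim\bar{P}_{I_{<l}}$; moreover $\Phi_l$, being independent of $\Phi_{I_{<l}}$, remains a $\mathrm{PP}(\mu_l\times\lambda_{\mathbb{R}_{\geq 0}})$ after conditioning on $\bar{\varrho}_{\bar{P}_{I_{<l}}}(\Phi_{I_{<l}})$. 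Combined with the PFR property applied to the kernel $\bar{P}_{l|I_{<l}}(\cdot|\cdot)$ (which is $\ll\mu_l$ by \eqref{eq:spfr_abscont_kernel}) and the tower property, the joint has distribution $\bar{P}$, giving part 2. Parts 3 and 4 are then immediate from \eqref{eq:spfr}; part 1 follows because any two versions of the regular conditional distribution agree outside a $\bar{P}_{I_{<i}}$-null set, which by the inductive hypothesis carries no mass for the input to the $i$-th PFR step.

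The heart of the proof is the case $I=(-\infty..l]$. For each $k\leq l$ let $Z^{(k)}\in\mathcal{X}^{I_{\leq l}}$ be the tuple used in \eqref{eq:spfr_kinfty}: $Z^{(k)}_j:=\varrho_{\nu_j\Vert\mu_j}(\Phi_j)$ for $j<k$, and $Z^{(k)}_{[k..l]}$ the finite SPFR on $[k..l]$ with the anchored conditionals. By the finite case, $Z^{(k)}\sim Q^{(k)}:=(\prod_{j<k}\nu_j)\bar{P}_{[k..l]|I_{<k}}$, and the data-processing inequality together with \eqref{eq:spfr_kinfty_lim2} gives
\[
d_{\mathrm{TV}}(Q^{(k)},\bar{P}_{I_{\leq l}})\leq d_{\mathrm{TV}}\Bigl(\prod_{j<k}\nu_j,\,\bar{P}_{I_{<k}}\Bigr)\xrightarrow{k\to-\infty}0.
\]
The crucial structural observation is that whenever $Z^{(k)}_{k-1}=Z^{(k-1)}_{k-1}$, the inputs to every subsequent PFR step of the $Z^{(k)}$- and $Z^{(k-1)}$-constructions agree (same $\Phi_j$ and same kernel), so $Z^{(k)}=Z^{(k-1)}$ as full tuples. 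Proposition \ref{prop:dpc_prop}, applied conditionally on $\{\varrho_{\nu_j}(\Phi_j)\}_{j<k-1}$ and then integrated, yields
\[
\mathbf{P}(Z^{(k)}\neq Z^{(k-1)})\leq 2\,d_{\mathrm{TV}}\Bigl(\bigl(\prod_{j<k-1}\nu_j\bigr)\bar{P}_{k-1|I_{<k-1}},\,\prod_{j\leq k-1}\nu_j\Bigr),
\]
whose sum over $k\leq 0$ is finite by \eqref{eq:spfr_kinfty_lim}. Borel--Cantelli produces a random $K$ with $Z^{(k)}=Z^{(K)}$ for all $k\leq K$, so the discrete-metric limit $Y$ in \eqref{eq:spfr_kinfty} exists almost surely, and the TV convergence of $Q^{(k)}$ identifies its law as $\bar{P}_{I_{\leq l}}$. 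Part 5 follows because on the Borel--Cantelli event $Y_{k-1}=Z^{(k)}_{k-1}=\varrho_{\nu_{k-1}}(\Phi_{k-1})$ for all $k\leq K$; part 4 is inherited from the identity \eqref{eq:spfr} for each $Z^{(k)}$ by passing to the a.s. stabilized limit; part 3 is automatic from \eqref{eq:spfr_linfty}; and part 1 follows as in the finite case.

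The remaining cases $I=[k..\infty)$ and $I=(-\infty..\infty)$ reduce to the previous ones via \eqref{eq:spfr_linfty}: parts 1, 3, 4, 5 are immediate component-wise corollaries, and for part 2 the correct distribution on every cylinder $\mathcal{F}^{\otimes I_{\leq l}}$ determines the distribution on the full product $\sigma$-algebra $\mathcal{F}^{\otimes I}$, which is generated by such cylinders. The main obstacle, and the only nontrivial content, is the Borel--Cantelli argument in the $(-\infty..l]$ case: one must verify the propagation observation (a single agreement at index $k-1$ forces full tuple agreement because the subsequent steps share the same $\Phi_j$ and the same kernel inputs) and correctly match the conditional Poisson-coupling bound of Proposition \ref{prop:dpc_prop}, after integration against the base product measure, to the summability assumption \eqref{eq:spfr_kinfty_lim}.
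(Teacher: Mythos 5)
Your treatment of claims 2--5 is, in substance, the paper's own argument: the finite case by induction along the interval, and the case $\inf I=-\infty$ by comparison with the $\nu$-anchored finite constructions, bounding the single-index mismatch probability via $d_{\mathrm{PC}}\le 2d_{\mathrm{TV}}$ (Proposition \ref{prop:dpc_prop}) against the summability hypothesis \eqref{eq:spfr_kinfty_lim}, and identifying the limiting law through \eqref{eq:spfr_kinfty_lim2}. Your Borel--Cantelli packaging of the family $Z^{(k)}$ is only a reorganization of the paper's $\epsilon$/$i_{\epsilon}$ device, and the propagation observation you highlight (one agreement at the junction index forces agreement of the whole anchored tuples) is exactly the step the paper also uses; your bound $\mathbf{P}(Z^{(k)}\neq Z^{(k-1)})\le 2\,d_{\mathrm{TV}}\big((\prod_{j<k-1}\nu_j)\bar P_{k-1|I_{<k-1}},\,\prod_{j\le k-1}\nu_j\big)$ matches the terms of \eqref{eq:spfr_kinfty_lim} correctly.

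The gap is claim 1 in the case $\inf I=-\infty$, which you dispose of with ``follows as in the finite case.'' It does not: two families $\{\bar P_{i|I_{<i}}\}$ and $\{\bar P'_{i|I_{<i}}\}$ of regular conditional distributions of $\bar P$ agree only for $\bar P_{I_{<i}}$-almost every history, whereas the history fed into the $i$-th PFR step of the anchored chain $Z^{(k)}$ has law $Q^{(k)}_{I_{<i}}$, built from the anchor law $\prod_{j<k}\nu_j$; this law need not be absolutely continuous with respect to $\bar P_{I_{<i}}$, so a $\bar P$-null disagreement set of the kernels can in principle carry positive mass for the anchored chain, and the finite-case reasoning has nothing to act on. The repair is short but must be stated (the paper devotes a separate displayed chain to it): for each fixed $k$, the two anchored semidirect products $\prod_{i=k}^{l}\bar P_{i|I_{<i}}(\cdot|(\mathrm{anchor},\cdot))$ and $\prod_{i=k}^{l}\bar P'_{i|I_{<i}}(\cdot|(\mathrm{anchor},\cdot))$ are both versions of the conditional law of the coordinates $[k..l]$ under $\bar P$ given the coordinates below $k$, hence coincide for $\bar P_{I_{<k}}$-a.e.\ anchor value; since the actual anchor has law $\prod_{j<k}\nu_j$, they coincide outside an event of probability at most $d_{\mathrm{TV}}\big(\prod_{j<k}\nu_j,\bar P_{I_{<k}}\big)$, and on its complement the finite-case claim 1, applied conditionally on the anchor to two regular conditional systems of the same law $Q^{(k)}$, gives $Z^{(k)}=Z'^{(k)}$ almost surely. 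Combining this with your stabilization estimates $\mathbf{P}(Y\neq Z^{(k)})\to 0$, $\mathbf{P}(Y'\neq Z'^{(k)})\to 0$ and \eqref{eq:spfr_kinfty_lim2} yields $\mathbf{P}(Y\neq Y')=0$; without some version of this transfer, claim 1 for $\inf I=-\infty$ (and hence for $I=(-\infty..\infty)$) remains unproved. A smaller omission of the same kind: you nowhere address measurability of the limit objects (the paper does so before starting), but that part is routine.
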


\medskip{}

We then give a bound on the probability that $\bar{\varrho}_{\bar{P}_{I}}(\Phi_{I})\neq\bar{\varrho}_{\bar{Q}_{I}}(\Phi_{I})$
for two probability distributions $\bar{P},\bar{Q}$.
\begin{lem}
\label{lem:spfr_dpcbd}Let $I\subseteq\mathbb{Z}$ be a nonempty interval
with $\sup I<\infty$. Let $\bar{P},\bar{Q}$ be probability distributions
over $\mathcal{X}^{I}$ such that $(\bar{P}_{I},\mu_{I},\nu_{I})$
and $(\bar{Q}_{I},\mu_{I},\nu_{I})$ satisfy the SPFR condition. Let
$\Phi_{i}\sim\mathrm{PP}(\mu_{i}\times\lambda_{\mathbb{R}_{\ge0}})$
independent across $i\in I$. Then we have
\begin{align*}
 & \mathbf{P}\Big(\bar{\varrho}_{\bar{P}_{I}}(\Phi_{I})\neq\bar{\varrho}_{\bar{Q}_{I}}(\Phi_{I})\Big)\\
 & \le2\sum_{i\in I}(1+\mathbf{1}\{i+1\in I\})d_{\mathrm{TV}}\big(\bar{P}_{I_{\le i}},\,\bar{Q}_{I_{\le i}}\big).
\end{align*}
\end{lem}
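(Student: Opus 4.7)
The plan is a first-disagreement decomposition over coordinates of $I$. Write $Z^{P}:=\bar{\varrho}_{\bar{P}_{I}}(\Phi_{I})$, $Z^{Q}:=\bar{\varrho}_{\bar{Q}_{I}}(\Phi_{I})$, and set $T:=\inf\{i\in I:\,Z^{P}_{i}\neq Z^{Q}_{i}\}$, with $T:=+\infty$ if this set is empty, so that $\{Z^{P}\neq Z^{Q}\}=\bigsqcup_{i\in I}\{T=i\}$. When $\inf I=-\infty$, I would invoke part \ref{enu:spfr_dist_kinfty_match} of Lemma \ref{lem:spfr_dist} applied to both $\bar{P}$ and $\bar{Q}$: almost surely $Z^{P}_{j}=Z^{Q}_{j}=\varrho_{\nu_{j}\Vert\mu_{j}}(\Phi_{j})$ for all sufficiently negative $j$, so on $\{Z^{P}\neq Z^{Q}\}$ the infimum defining $T$ is attained, legitimizing the decomposition $\mathbf{P}(Z^{P}\neq Z^{Q})=\sum_{i\in I}\mathbf{P}(T=i)$.

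Next, fix $i\in I$ and bound $\mathbf{P}(T=i)$. Because $\sup I_{\le i}=i<\infty$, part \ref{enu:spfr_dist_recur} of Lemma \ref{lem:spfr_dist} applied to the interval $I_{\le i}$ (using parts \ref{enu:spfr_dist_subint} and \ref{enu:spfr_dist_dist} to identify the SPFR on $I_{\le i}$ with the restriction of $Z^{P}$, respectively $Z^{Q}$) yields
\[
Z^{P}_{i}=\varrho_{\bar{P}_{i|I_{<i}}(\cdot\,|\,Z^{P}_{I_{<i}})\,\Vert\,\mu_{i}}(\Phi_{i}),\qquad Z^{Q}_{i}=\varrho_{\bar{Q}_{i|I_{<i}}(\cdot\,|\,Z^{Q}_{I_{<i}})\,\Vert\,\mu_{i}}(\Phi_{i}),
\]
the common $\Phi_{i}$ being independent of $\Phi_{I_{<i}}$ and hence of $(Z^{P}_{I_{<i}},Z^{Q}_{I_{<i}})$. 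On $\{T=i\}$ both representations share the conditioning argument $z=Z^{P}_{I_{<i}}=Z^{Q}_{I_{<i}}$. Applying \eqref{eq:dpc_pneq} conditionally on $Z^{P}_{I_{<i}}$, dropping the indicator $\mathbf{1}\{Z^{P}_{I_{<i}}=Z^{Q}_{I_{<i}}\}$, and using $d_{\mathrm{PC}}\le 2 d_{\mathrm{TV}}$ from Proposition \ref{prop:dpc_prop} gives
\[
\mathbf{P}(T=i)\le 2\int d_{\mathrm{TV}}\bigl(\bar{P}_{i|I_{<i}}(\cdot|z),\,\bar{Q}_{i|I_{<i}}(\cdot|z)\bigr)\,\bar{P}_{I_{<i}}(\mathrm{d}z).
\]
A standard disintegration estimate then converts this conditional average back into joint total variations: decomposing
\[
\bar{P}_{I_{\le i}}-\bar{Q}_{I_{\le i}}=\bar{P}_{I_{<i}}\bigl[\bar{P}_{i|I_{<i}}-\bar{Q}_{i|I_{<i}}\bigr]+\bigl[\bar{P}_{I_{<i}}-\bar{Q}_{I_{<i}}\bigr]\bar{Q}_{i|I_{<i}}
\]
and taking total variation yields
\[
\int d_{\mathrm{TV}}\bigl(\bar{P}_{i|I_{<i}}(\cdot|z),\bar{Q}_{i|I_{<i}}(\cdot|z)\bigr)\,\bar{P}_{I_{<i}}(\mathrm{d}z)\le d_{\mathrm{TV}}(\bar{P}_{I_{\le i}},\bar{Q}_{I_{\le i}})+d_{\mathrm{TV}}(\bar{P}_{I_{<i}},\bar{Q}_{I_{<i}}),
\]
so $\mathbf{P}(T=i)\le 2 d_{\mathrm{TV}}(\bar{P}_{I_{\le i}},\bar{Q}_{I_{\le i}})+2 d_{\mathrm{TV}}(\bar{P}_{I_{\le i-1}},\bar{Q}_{I_{\le i-1}})$, using $I_{<i}=I_{\le i-1}$ since $I\subseteq\mathbb{Z}$.

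Finally, summing over $i\in I$ and reindexing $j:=i-1$ in the second sum, the term $d_{\mathrm{TV}}(\bar{P}_{I_{\le j}},\bar{Q}_{I_{\le j}})$ appears exactly when $j+1\in I$; any boundary contribution with $j=\inf I-1$ vanishes because $\bar{P}_{I_{\le j}}=\bar{Q}_{I_{\le j}}$ is then the unit mass at the empty tuple. Merging the two sums produces the coefficient $2(1+\mathbf{1}\{i+1\in I\})$ and gives the claimed bound. The main technical obstacle is justifying the first-disagreement decomposition when $\inf I=-\infty$, which rests squarely on the almost-sure stabilization of both SPFRs to the common reference process $\{\varrho_{\nu_{j}\Vert\mu_{j}}(\Phi_{j})\}_{j}$ supplied by part \ref{enu:spfr_dist_kinfty_match} of Lemma \ref{lem:spfr_dist}; the remainder is the Poisson matching lemma applied conditionally together with elementary bookkeeping.
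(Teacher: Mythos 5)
Your proposal is correct and takes essentially the same route as the paper's proof: the same first-disagreement decomposition over coordinates (justified, when $\inf I=-\infty$, by the almost-sure stabilization in part \ref{enu:spfr_dist_kinfty_match} of Lemma \ref{lem:spfr_dist}), the same conditional use of the recursion \eqref{eq:spfr} and of \eqref{eq:dpc_pneq} together with $d_{\mathrm{PC}}\le2d_{\mathrm{TV}}$ from Proposition \ref{prop:dpc_prop}, and the same triangle-inequality conversion of the averaged conditional total variation into $d_{\mathrm{TV}}(\bar{P}_{I_{\le i}},\bar{Q}_{I_{\le i}})+d_{\mathrm{TV}}(\bar{P}_{I_{<i}},\bar{Q}_{I_{<i}})$ followed by the reindexing that yields the coefficient $2(1+\mathbf{1}\{i+1\in I\})$.
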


\begin{proof}
[Proof of Lemma \ref{lem:spfr_dpcbd}] Let $\sup I=l$. For brevity,
write $\bar{\varrho}_{\bar{P}}^{i}=\bar{\varrho}_{\bar{P}_{I_{\le i}}}(\Phi_{I_{\le i}})$,
and let $\bar{\varrho}_{\bar{P}}^{i}=\emptyset$ if $I_{\le i}=\emptyset$.
We have
\begin{align*}
 & \mathbf{P}\Big(\bar{\varrho}_{\bar{P}}^{l}\neq\bar{\varrho}_{\bar{Q}}^{l}\Big)\\
 & \stackrel{(a)}{=}\sum_{i\in I}\mathbf{P}\Big(\bar{\varrho}_{\bar{P}}^{i-1}=\bar{\varrho}_{\bar{Q}}^{i-1}\;\mathrm{and}\;\bar{\varrho}_{\bar{P}}^{i}\neq\bar{\varrho}_{\bar{Q}}^{i}\Big)\\
 & \stackrel{(b)}{=}\sum_{i\in I}\mathbf{P}\Big(\bar{\varrho}_{\bar{P}}^{i-1}=\bar{\varrho}_{\bar{Q}}^{i-1}\;\mathrm{and}\;\varrho_{\bar{P}_{i|I_{<i}}\big(\cdot\,|\,\bar{\varrho}_{\bar{P}}^{i-1}\big)\,\Vert\,\mu_{i}}(\Phi_{i})\neq\varrho_{\bar{Q}_{i|I_{<i}}\big(\cdot\,|\,\bar{\varrho}_{\bar{Q}}^{i-1}\big)\,\Vert\,\mu_{i}}(\Phi_{i})\Big)\\
 & \le\sum_{i\in I}\mathbf{P}\Big(\varrho_{\bar{P}_{i|I_{<i}}\big(\cdot\,|\,\bar{\varrho}_{\bar{P}}^{i-1}\big)\,\Vert\,\mu_{i}}(\Phi_{i})\neq\varrho_{\bar{Q}_{i|I_{<i}}\big(\cdot\,|\,\bar{\varrho}_{\bar{P}}^{i-1}\big)\,\Vert\,\mu_{i}}(\Phi_{i})\Big)\\
 & \stackrel{(c)}{=}\sum_{i\in I}\int d_{\mathrm{PC}}\left(\bar{P}_{i|I_{<i}}\big(\cdot\,|\,\{z_{j}\}_{j\in I_{<i}}\big),\,\bar{Q}_{i|I_{<i}}\big(\cdot\,|\,\{z_{j}\}_{j\in I_{<i}}\big)\right)\bar{P}_{I_{<i}}(\mathrm{d}\{z_{j}\}_{j\in I_{<i}})\\
 & \stackrel{(d)}{\le}2\sum_{i\in I}\int d_{\mathrm{TV}}\left(\bar{P}_{i|I_{<i}}\big(\cdot\,|\,\{z_{j}\}_{j\in I_{<i}}\big),\,\bar{Q}_{i|I_{<i}}\big(\cdot\,|\,\{z_{j}\}_{j\in I_{<i}}\big)\right)\bar{P}_{I_{<i}}(\mathrm{d}\{z_{j}\}_{j\in I_{<i}})\\
 & =2\sum_{i\in I}d_{\mathrm{TV}}\big(\bar{P}_{I_{<i}}\bar{P}_{i|I_{<i}},\,\bar{P}_{I_{<i}}\bar{Q}_{i|I_{<i}}\big)\\
 & \le2\sum_{i\in I}\left(d_{\mathrm{TV}}\big(\bar{P}_{I_{<i}}\bar{P}_{i|I_{<i}},\,\bar{Q}_{I_{<i}}\bar{Q}_{i|I_{<i}}\big)+d_{\mathrm{TV}}\big(\bar{P}_{I_{<i}}\bar{Q}_{i|I_{<i}},\,\bar{Q}_{I_{<i}}\bar{Q}_{i|I_{<i}}\big)\right)\\
 & =2\sum_{i\in I}\left(d_{\mathrm{TV}}\big(\bar{P}_{I_{\le i}},\,\bar{Q}_{I_{\le i}}\big)+d_{\mathrm{TV}}\big(\bar{P}_{I_{<i}},\,\bar{Q}_{I_{<i}}\big)\right)\\
 & =2\sum_{i\in I}(1+\mathbf{1}\{i+1\in I\})d_{\mathrm{TV}}\big(\bar{P}_{I_{\le i}},\,\bar{Q}_{I_{\le i}}\big),
\end{align*}
where (a) is because there almost surely exists $i\in I$ such that
$\bar{\varrho}_{\bar{P}}^{i-1}=\bar{\varrho}_{\bar{Q}}^{i-1}$ (either
$I=[k..l]$, where $\bar{\varrho}_{\bar{P}}^{k-1}=\bar{\varrho}_{\bar{Q}}^{k-1}=\emptyset$,
or $I=(-\infty..l]$, where such an $i$ exists almost surely by \eqref{eq:spfr_dist_agree}),
(b) is by \eqref{eq:spfr} (which holds for all cases by Lemma \ref{lem:spfr_dist}),
(c) is by \eqref{eq:dpc_pneq} and $\bar{\varrho}_{\bar{P}}^{i-1}\sim\bar{P}_{I_{<i}}$
by Lemma \ref{lem:spfr_dist}, and (d) is by Proposition \ref{prop:dpc_prop}.
\end{proof}
\medskip{}

\subsection{Finite Metric Space\label{subsec:metric_finite}}

We prove Theorem \ref{thm:metric_log} about finite metric spaces.
\begin{proof}
[Proof of Theorem \ref{thm:metric_log}] Let $\mu$ be the counting
measure over $\mathcal{X}$. Let $\eta:=1.56$. Let $\mathcal{B}_{w}(x):=\{y\in\mathcal{X}:\,d(x,y)\le w\}$.
Let $\mathrm{U}\mathcal{B}_{w}$ be a probability kernel from $\mathcal{X}$
to $\mathcal{X}$ defined by 
\begin{equation}
\mathrm{U}\mathcal{B}_{w}(x,E):=\frac{\mu(\mathcal{B}_{w}(x)\cap E)}{\mu(\mathcal{B}_{w}(x))}.\label{eq:metric_pow_pf_unifball-1}
\end{equation}
Let $I:=[i_{0}..i_{1}]$, $i_{0}:=\lfloor-\eta^{-1}\log\max\{d(x,y):\,x\neq y\}\rfloor-1$,
$i_{1}:=\lfloor-\eta^{-1}\log\min\{d(x,y):\,x\neq y\}\rfloor+1$.
Let $\theta\in[0,1]$. For a collection of probability distributions
$\{P_{\alpha}\}_{\alpha}$, let 
\begin{equation}
\bar{P}_{\alpha,\theta}:=\Big(\prod_{i\in I}\mathrm{U}\mathcal{B}_{e^{-\eta(i+\theta)}}\Big)\circ P_{\alpha},\label{eq:metric_pow_patheta-1}
\end{equation}
i.e., $\bar{P}_{\alpha,\theta}$ is the distribution of $\{Z_{i}\}_{i\in I}$,
where $X\sim P_{\alpha}$, and $Z_{i}|X\sim\mathrm{U}\mathcal{B}_{e^{-\eta(i+\theta)}}$
are conditionally independent across $i$ given $X$. We then apply
the sequential Poisson functional representation on $\bar{P}=\bar{P}_{\alpha,\theta}$,
$\mu_{i}=\mu$ to obtain $\bar{\varrho}_{\bar{P}_{\alpha,\theta}}$.
Define
\[
X_{\alpha,\theta}:=\bar{\varrho}_{\bar{P}_{\alpha,\theta},i_{1}}(\Phi_{I}),\;X_{\alpha}:=X_{\alpha,\varTheta},
\]
where $\varTheta\sim\mathrm{Unif}[0,1]$ independent of $\Phi_{i}\stackrel{iid}{\sim}\mathrm{PP}(\mu\times\lambda_{\mathbb{R}_{\ge0}})$.
It is clear that the SPFR condition is satisfied. Since $\mathrm{U}\mathcal{B}_{e^{-\eta(i_{1}+\theta)}}(x,\cdot)=\delta_{x}$
for any $x$, the $i_{1}$-th marginal of $\bar{P}_{\alpha,\theta}$
is $P_{\alpha}$, and hence $X_{\alpha,\theta}\sim P_{\alpha}$. Note
that if $\theta_{1},\theta_{2}$ satisfy ``$e^{-\eta(i+\theta_{1})}\ge d(x,y)$
$\Leftrightarrow$ $e^{-\eta(i+\theta_{2})}\ge d(x,y)$'' for $i\in I$,
$x,y\in\mathcal{X}$ (which induces a finite partition of $[0,1]$,
the range of $\theta$), then $\bar{P}_{\alpha,\theta_{1}}=\bar{P}_{\alpha,\theta_{2}}$
and $X_{\alpha,\theta_{1}}=X_{\alpha,\theta_{2}}$. Hence $X_{\alpha,\theta}$
depends on $\theta$ only through which set in the finite partion
$\theta$ lies in, and thus $X_{\alpha}=X_{\alpha,\Theta}$ is a random
variable. Therefore, $X_{\alpha}\sim P_{\alpha}$.

Let $F_{x}(w):=\mu(\mathcal{B}_{w}(x))/\mu(\mathcal{X})$ (let $F_{x}(w)=0$
if $w<0$). Consider two probability distributions $P_{\alpha},P_{\beta}$.
Fix any coupling $(\tilde{X}_{\alpha},\tilde{X}_{\beta})\in\Gamma_{\lambda}(P_{\alpha},P_{\beta})$.
Let $\tilde{D}:=d(\tilde{X}_{\alpha},\tilde{X}_{\beta})$. For any
$i\ge i_{0}$ and $\theta\in[0,1]$,
\begin{align}
 & \mathbf{P}\Big(d(X_{\alpha,\theta},X_{\beta,\theta})>2e^{-\eta(i+\theta)}\Big)\nonumber \\
 & \le\mathbf{P}\Big(\bar{\varrho}_{\bar{P}_{\alpha,\theta},\min\{i,i_{1}\}}(\Phi_{I})\neq\bar{\varrho}_{\bar{P}_{\beta,\theta},\min\{i,i_{1}\}}(\Phi_{I})\Big)\nonumber \\
 & \stackrel{(a)}{\le}2\sum_{j=i_{0}}^{\min\{i,i_{1}\}}(1+\mathbf{1}\{j<i\})d_{\mathrm{TV}}\big(\bar{P}_{\alpha,\theta,I_{\le j}},\,\bar{P}_{\beta,\theta,I_{\le j}}\big)\nonumber \\
 & =2\sum_{j=i_{0}}^{\min\{i,i_{1}\}}(1+\mathbf{1}\{j<i\})d_{\mathrm{TV}}\left(\Big(\prod_{k=i_{0}}^{j}\mathrm{U}\mathcal{B}_{e^{-\eta(k+\theta)}}\Big)\circ P_{\alpha},\,\Big(\prod_{k=i_{0}}^{j}\mathrm{U}\mathcal{B}_{e^{-\eta(k+\theta)}}\Big)\circ P_{\beta}\right)\nonumber \\
 & \stackrel{(b)}{\le}2\sum_{j=i_{0}}^{\min\{i,i_{1}\}}(1+\mathbf{1}\{j<i\})\mathbf{E}\left[d_{\mathrm{TV}}\left(\Big(\prod_{k=i_{0}}^{j}\mathrm{U}\mathcal{B}_{e^{-\eta(k+\theta)}}\Big)(\tilde{X}_{\alpha},\cdot),\,\Big(\prod_{k=i_{0}}^{j}\mathrm{U}\mathcal{B}_{e^{-\eta(k+\theta)}}\Big)(\tilde{X}_{\beta},\cdot)\right)\right]\nonumber \\
 & \stackrel{(c)}{\le}2\sum_{j=i_{0}}^{\min\{i,i_{1}\}}(1+\mathbf{1}\{j<i\})\mathbf{E}\left[\sum_{k=i_{0}}^{j}d_{\mathrm{TV}}\left(\mathrm{U}\mathcal{B}_{e^{-\eta(k+\theta)}}(\tilde{X}_{\alpha},\cdot),\,\mathrm{U}\mathcal{B}_{e^{-\eta(k+\theta)}}(\tilde{X}_{\beta},\cdot)\right)\right]\nonumber \\
 & \le2\sum_{j=i_{0}}^{i}(1+\mathbf{1}\{j<i\})\mathbf{E}\left[\sum_{k=i_{0}}^{j}\left(\frac{\mu(\mathcal{B}_{e^{-\eta(k+\theta)}}(\tilde{X}_{\alpha})\backslash\mathcal{B}_{e^{-\eta(k+\theta)}}(\tilde{X}_{\beta}))}{\mu(\mathcal{B}_{e^{-\eta(k+\theta)}}(\tilde{X}_{\alpha}))}+\frac{\mu(\mathcal{B}_{e^{-\eta(k+\theta)}}(\tilde{X}_{\beta})\backslash\mathcal{B}_{e^{-\eta(k+\theta)}}(\tilde{X}_{\alpha}))}{\mu(\mathcal{B}_{e^{-\eta(k+\theta)}}(\tilde{X}_{\beta}))}\right)\right]\nonumber \\
 & \stackrel{(d)}{\le}2\sum_{j=i_{0}}^{i}(1+\mathbf{1}\{j<i\})\mathbf{E}\left[\sum_{k=i_{0}}^{j}\left(1-\frac{F_{\tilde{X}_{\alpha}}(e^{-\eta(k+\theta)}-\tilde{D})}{F_{\tilde{X}_{\alpha}}(e^{-\eta(k+\theta)})}+1-\frac{F_{\tilde{X}_{\beta}}(e^{-\eta(k+\theta)}-\tilde{D})}{F_{\tilde{X}_{\beta}}(e^{-\eta(k+\theta)})}\right)\right]\nonumber \\
 & =2\sum_{j=i_{0}}^{i}(2i-2j+1)\mathbf{E}\left[2-\frac{F_{\tilde{X}_{\alpha}}(e^{-\eta(j+\theta)}-\tilde{D})}{F_{\tilde{X}_{\alpha}}(e^{-\eta(j+\theta)})}-\frac{F_{\tilde{X}_{\beta}}(e^{-\eta(j+\theta)}-\tilde{D})}{F_{\tilde{X}_{\beta}}(e^{-\eta(j+\theta)})}\right],\label{eq:metrc_finite_dbd}
\end{align}
where (a) is by Lemma \ref{lem:spfr_dpcbd}, (b) is by $\tilde{X}_{\alpha}\sim P_{\alpha}$,
$\tilde{X}_{\beta}\sim P_{\beta}$, and the convexity of $d_{\mathrm{TV}}$,
(c) is due to $d_{\mathrm{TV}}(\prod_{k=i_{0}}^{j}Q_{k},\,\prod_{k=i_{0}}^{j}\tilde{Q}_{k})\le\sum_{k=i_{0}}^{j}d_{\mathrm{TV}}(Q_{k},\tilde{Q}_{k})$
for probability distributions $Q_{k}$, $\tilde{Q}_{k}$ over $\mathcal{X}$
(where $\prod_{k=i_{0}}^{j}Q_{k}$ denotes the product measure), and
(d) is because if $y\in\mathcal{B}_{e^{-\eta(k+\theta)}}(\tilde{X}_{\alpha})\backslash\mathcal{B}_{e^{-\eta(k+\theta)}}(\tilde{X}_{\beta})$,
then $d(y,\tilde{X}_{\alpha})>e^{-\eta(k+\theta)}-d(\tilde{X}_{\alpha},\tilde{X}_{\beta})$.
The above inequality is also true for $i<i_{0}$ since $\mathbf{P}(d(X_{\alpha,\theta},X_{\beta,\theta})>2e^{-\eta(i_{0}+\theta)})=0$.
Hence,
\begin{align*}
 & \mathbf{E}\left[d(X_{\alpha,\varTheta},X_{\beta,\varTheta})\right]\\
 & =\int_{0}^{1}\mathbf{E}\left[d(X_{\alpha,\theta},X_{\beta,\theta})\right]\mathrm{d}\theta\\
 & =\int_{0}^{1}\int_{0}^{\infty}\mathbf{P}\left(d(X_{\alpha,\theta},X_{\beta,\theta})>t\right)\mathrm{d}t\mathrm{d}\theta\\
 & \le\int_{0}^{1}\int_{0}^{\infty}\mathbf{P}\left(d(X_{\alpha,\theta},X_{\beta,\theta})>2e^{-\eta\left(\left\lceil -\eta^{-1}\log(t/2)-\theta\right\rceil +\theta\right)}\right)\mathrm{d}t\mathrm{d}\theta\\
 & \le\int_{0}^{1}\int_{0}^{\infty}\bigg(2\sum_{j=i_{0}}^{\left\lceil -\eta^{-1}\log(t/2)-\theta\right\rceil }\big(2\left\lceil -\eta^{-1}\log(t/2)-\theta\right\rceil -2j+1\big)\\
 & \;\;\;\;\;\;\;\cdot\mathbf{E}\left[2-\frac{F_{\tilde{X}_{\alpha}}(e^{-\eta(j+\theta)}-\tilde{D})}{F_{\tilde{X}_{\alpha}}(e^{-\eta(j+\theta)})}-\frac{F_{\tilde{X}_{\beta}}(e^{-\eta(j+\theta)}-\tilde{D})}{F_{\tilde{X}_{\beta}}(e^{-\eta(j+\theta)})}\right]\bigg)\mathrm{d}t\mathrm{d}\theta\\
 & \le\int_{0}^{1}\int_{0}^{\infty}\bigg(2\sum_{j=-\infty}^{\left\lceil -\eta^{-1}\log(t/2)-\theta\right\rceil }\big(-2\eta^{-1}\log(t/2)-2(j+\theta)+3\big)\\
 & \;\;\;\;\;\;\;\cdot\mathbf{E}\left[2-\frac{F_{\tilde{X}_{\alpha}}(e^{-\eta(j+\theta)}-\tilde{D})}{F_{\tilde{X}_{\alpha}}(e^{-\eta(j+\theta)})}-\frac{F_{\tilde{X}_{\beta}}(e^{-\eta(j+\theta)}-\tilde{D})}{F_{\tilde{X}_{\beta}}(e^{-\eta(j+\theta)})}\right]\bigg)\mathrm{d}t\mathrm{d}\theta\\
 & =2\int_{0}^{\infty}\int_{-\infty}^{-\eta^{-1}\log(t/2)+1}\big(-2\eta^{-1}\log(t/2)-2\gamma+3\big)\mathbf{E}\left[2-\frac{F_{\tilde{X}_{\alpha}}(e^{-\eta\gamma}-\tilde{D})}{F_{\tilde{X}_{\alpha}}(e^{-\eta\gamma})}-\frac{F_{\tilde{X}_{\beta}}(e^{-\eta\gamma}-\tilde{D})}{F_{\tilde{X}_{\beta}}(e^{-\eta\gamma})}\right]\mathrm{d}\gamma\mathrm{d}t\\
 & =2\int_{-\infty}^{\infty}\left(\int_{0}^{2e^{-\eta(\gamma-1)}}\big(-2\eta^{-1}\log(t/2)-2\gamma+3\big)\mathrm{d}t\right)\mathbf{E}\left[2-\frac{F_{\tilde{X}_{\alpha}}(e^{-\eta\gamma}-\tilde{D})}{F_{\tilde{X}_{\alpha}}(e^{-\eta\gamma})}-\frac{F_{\tilde{X}_{\beta}}(e^{-\eta\gamma}-\tilde{D})}{F_{\tilde{X}_{\beta}}(e^{-\eta\gamma})}\right]\mathrm{d}\gamma\\
 & =2\int_{-\infty}^{\infty}2e^{-\eta(\gamma-1)}\left(2\eta^{-1}\log2-2\gamma+3-2\eta^{-1}\left(\log\left(2e^{-\eta(\gamma-1)}\right)-1\right)\right)\\
 & \;\;\;\;\;\;\cdot\mathbf{E}\left[2-\frac{F_{\tilde{X}_{\alpha}}(e^{-\eta\gamma}-\tilde{D})}{F_{\tilde{X}_{\alpha}}(e^{-\eta\gamma})}-\frac{F_{\tilde{X}_{\beta}}(e^{-\eta\gamma}-\tilde{D})}{F_{\tilde{X}_{\beta}}(e^{-\eta\gamma})}\right]\mathrm{d}\gamma\\
 & =8\int_{-\infty}^{\infty}e^{-\eta(\gamma-1)}\left(\eta^{-1}\log2-\gamma+3/2-\eta^{-1}\left(\log2-\eta(\gamma-1)-1\right)\right)\\
 & \;\;\;\;\;\;\cdot\mathbf{E}\left[2-\frac{F_{\tilde{X}_{\alpha}}(e^{-\eta\gamma}-\tilde{D})}{F_{\tilde{X}_{\alpha}}(e^{-\eta\gamma})}-\frac{F_{\tilde{X}_{\beta}}(e^{-\eta\gamma}-\tilde{D})}{F_{\tilde{X}_{\beta}}(e^{-\eta\gamma})}\right]\mathrm{d}\gamma\\
 & =8\left(\frac{1}{\eta}+\frac{1}{2}\right)\int_{-\infty}^{\infty}e^{-\eta(\gamma-1)}\mathbf{E}\left[2-\frac{F_{\tilde{X}_{\alpha}}(e^{-\eta\gamma}-\tilde{D})}{F_{\tilde{X}_{\alpha}}(e^{-\eta\gamma})}-\frac{F_{\tilde{X}_{\beta}}(e^{-\eta\gamma}-\tilde{D})}{F_{\tilde{X}_{\beta}}(e^{-\eta\gamma})}\right]\mathrm{d}\gamma\\
 & =8\left(\frac{1}{\eta}+\frac{1}{2}\right)\int_{0}^{\infty}e^{\eta}t\mathbf{E}\left[2-\frac{F_{\tilde{X}_{\alpha}}(t-\tilde{D})}{F_{\tilde{X}_{\alpha}}(t)}-\frac{F_{\tilde{X}_{\beta}}(t-\tilde{D})}{F_{\tilde{X}_{\beta}}(t)}\right]\frac{1}{t\eta}\mathrm{d}t\\
 & =8\left(\frac{1}{\eta}+\frac{1}{2}\right)\frac{e^{\eta}}{\eta}\int_{0}^{\infty}\mathbf{E}\left[2-\frac{F_{\tilde{X}_{\alpha}}(t-\tilde{D})}{F_{\tilde{X}_{\alpha}}(t)}-\frac{F_{\tilde{X}_{\beta}}(t-\tilde{D})}{F_{\tilde{X}_{\beta}}(t)}\right]\mathrm{d}t,
\end{align*}
where
\begin{align*}
 & \int_{0}^{\infty}\left(1-\frac{F_{x}(t-\gamma)}{F_{x}(t)}\right)\mathrm{d}t\\
 & =\int_{0}^{\infty}\left(\mathbf{1}\{t-\gamma<0\}\frac{F_{x}(0)}{F_{x}(t)}+\int_{\max\{t-\gamma,\,0\}}^{t}\frac{1}{F_{x}(t)}\mathrm{d}F_{x}(\tau)\right)\mathrm{d}t\\
 & \le\int_{0}^{\infty}\left(\mathbf{1}\{t-\gamma<0\}+\int_{\max\{t-\gamma,\,0\}}^{t}\frac{1}{F_{x}(\tau)}\mathrm{d}F_{x}(\tau)\right)\mathrm{d}t\\
 & =\gamma+\gamma\int_{0}^{\infty}\frac{1}{F_{x}(\tau)}\mathrm{d}F_{x}(\tau)\\
 & =\gamma+\gamma\int_{0}^{\infty}\mathrm{d}\log(F_{x}(\tau))\\
 & =\gamma\left(1+\log|\mathcal{X}|\right),
\end{align*}
where the last equality is because $F_{x}(0)=1/|\mathcal{X}|$ and
$F_{x}(\tau)=1$ for $\tau\ge\max\{d(x,y):\,x\neq y\}$. Therefore,
\begin{align*}
 & \mathbf{E}\left[d(X_{\alpha,\varTheta},X_{\beta,\varTheta})\right]\\
 & \le8\left(\frac{1}{\eta}+\frac{1}{2}\right)\frac{e^{\eta}}{\eta}\mathbf{E}\left[2\tilde{D}\left(1+\log|\mathcal{X}|\right)\right]\\
 & =16\left(\frac{1}{\eta}+\frac{1}{2}\right)\frac{e^{\eta}}{\eta}\left(1+\log|\mathcal{X}|\right)\mathbf{E}[\tilde{D}]\\
 & \le55.692\left(1+\log|\mathcal{X}|\right)\mathbf{E}[\tilde{D}]
\end{align*}
by substituting $\eta=1.56$. Hence,
\begin{align*}
 & \mathbf{E}\left[d(X_{\alpha,\varTheta},X_{\beta,\varTheta})\right]\\
 & \le\inf_{(\tilde{X}_{\alpha},\tilde{X}_{\beta})\in\Gamma_{\lambda}(P_{\alpha},P_{\beta})}55.692\left(1+\log|\mathcal{X}|\right)\mathbf{E}\left[d(\tilde{X}_{\alpha},\tilde{X}_{\beta})\right]\\
 & =55.692\left(1+\log|\mathcal{X}|\right)C_{c}^{*}(P_{\alpha},P_{\beta}).
\end{align*}
\end{proof}
\medskip{}
We now describe the algorithm for computing $X_{\alpha}$ given $P_{\alpha}$.
Let $\eta:=1.56$, $\varTheta\sim\mathrm{Unif}[0,1]$, and $V_{i,x}\stackrel{iid}{\sim}\mathrm{Exp}(1)$
for $x\in\mathcal{X}$, $i\in[i_{0}..i_{1}]$, where $i_{0}:=\lfloor-\eta^{-1}\log\max\{d(x,y):\,x\neq y\}\rfloor-1$,
$i_{1}:=\lfloor-\eta^{-1}\log\min\{d(x,y):\,x\neq y\}\rfloor+1$.\footnote{Since $\mathcal{X}$ is finite, we do not require all points in $\Phi_{i}$
to compute the Poisson functional representation. We require only
the time of the first point with first coordinate $x$ for each $x\in\mathcal{X}$,
i.e., $Z_{i,x}=\inf\{t:\,(x,t)\in\Phi_{i}\}$. It can be checked that
$Z_{i,x}\stackrel{iid}{\sim}\mathrm{Exp}(1)$ when $\mu$ is the counting
measure.} Let $h_{\{V_{i,x}\}_{i,x},\varTheta}:\mathcal{P}(\mathcal{X})\to\mathcal{X}$
such that $X_{\alpha}=h_{\{V_{i,x}\}_{i,x},\varTheta}(P_{\alpha})$,
i.e., it is the function that computes the coupling. In practice,
the pseudorandom numbers $\varTheta$ and $\{V_{i,x}\}_{i,x}$ can
be generated on the fly using a random seed, and thus only the random
seed is needed to be passed into the algorithm. We give an algorithm
for computing $h_{\{V_{i,x}\}_{i,x},\varTheta}$ as follows:

\begin{algorithm}[H]
\textbf{$\;\;\;\;$Input:} $\varTheta\sim\mathrm{Unif}[0,1]$, $\{V_{i,x}\}_{i\in[i_{0}..i_{1}],x\in\mathcal{X}}\stackrel{iid}{\sim}\mathrm{Exp}(1)$,
$P\in\mathcal{P}(\mathcal{X})$

\textbf{$\;\;\;\;$Output:} $h_{\{V_{x}\}_{x},\varTheta}(P)\in\mathcal{X}$

\smallskip{}

\begin{algorithmic}

\State{$S\leftarrow\mathcal{X}$}

\State{$\tilde{p}_{x}\leftarrow P(x)$ for $x\in\mathcal{X}$}

\State{$i\leftarrow i_{0}$}

\While{$|S|>1$}

\State{$w\leftarrow e^{-\eta(i+\varTheta)}$}

\State{$s_{x}\leftarrow|\mathcal{B}_{w}(x)|$ for $x\in S$}

\State{$\hat{p}_{x}\leftarrow\sum_{y\in S:\,d(x,y)\le w}\tilde{p}_{y}/s_{y}$
for $x\in\mathcal{X}$}

\State{$z\leftarrow\arg\min_{x}V_{i,x}/\hat{p}_{x}$}

\State{$S\leftarrow\{x\in S:\,d(x,z)\le w\}$}

\State{$\tilde{p}_{x}\leftarrow\tilde{p}_{x}/s_{x}$ for $x\in S$}

\State{$\{\tilde{p}_{x}\}_{x\in S}\leftarrow\{\tilde{p}_{x}/\sum_{y\in S}\tilde{p}_{y}\}_{x\in S}$}

\State{$i\leftarrow i+1$}

\EndWhile

\Return{$x\in S$ (the only element)}

\end{algorithmic}

\caption{\label{alg:lsh}Locality sensitive hash function for finite metric
space}
\end{algorithm}

In the algorithm, $\tilde{p}_{x}$ represents the posterior distribution
$\mathbf{P}_{X|\{Z_{j}\}_{j<i}}$, where $X\sim P$, and $Z_{i}|X\sim\mathrm{U}\mathcal{B}_{e^{-\eta(i+\theta)}}$
are conditionally independent across $i$ given $X$, and $S$ is
the support of $\mathbf{P}_{X|\{Z_{j}\}_{j<i}}$. Also, $\hat{p}_{x}$
represents the distribution $\mathbf{P}_{Z_{i}|\{Z_{j}\}_{j<i}}=\mathrm{U}\mathcal{B}_{e^{-\eta(i+\theta)}}\circ\mathbf{P}_{X|\{Z_{j}\}_{j<i}}$.
After generating $z=Z_{i}$ using the Poisson functional representation
on the distribution $\mathbf{P}_{Z_{i}|\{Z_{j}\}_{j<i}}$, the posterior
distribution is updated as $\mathbf{P}_{X|\{Z_{j}\}_{j\le i}}(x)\propto\mathbf{P}_{X|\{Z_{j}\}_{j<i}}(x)\mathrm{U}\mathcal{B}_{e^{-\eta(i+\theta)}}(x,z)$.
While we normalize $\tilde{p}_{x}$ in the algorithm to make them
sum to $1$, this step is not necessary.

The time complexity of the algorithm is
\[
O\left(|\mathcal{X}|^{2}\log\frac{\max\{d(x,y):\,x\neq y\}}{\min\{d(x,y):\,x\neq y\}}\right).
\]
It is possible to make the time complexity independent of the values
of $d(x,y)$ by noting that the performance guarantee in Theorem \ref{thm:metric_log}
comes from \eqref{eq:metrc_finite_dbd}. Hence, instead of considering
all $i\in[i_{0}..i_{1}]$, we can only consider $i$'s in the set
$\{\lfloor-\eta^{-1}\log(e^{\eta\theta}d(x,y)/2)\rfloor+1:\,x\neq y\}$
(i.e., the $i$'s so that $2e^{-\eta(i+\theta)}$ is just smaller
than some $d(x,y)$), while retaining the same guarantee in Theorem
\ref{thm:metric_log}. This set has size at most $O(|\mathcal{X}|\log|\mathcal{X}|)$
(see Proposition \ref{prop:metric_dist_set} below). Therefore the
time complexity becomes $O(|\mathcal{X}|^{3}\log|\mathcal{X}|)$.

\medskip{}

\begin{prop}
\label{prop:metric_dist_set}Let $(\mathcal{X},d)$ be a finite metric
space, and $\eta>0$. We have
\[
\left|\left\{ \lfloor-\eta^{-1}\log d(x,y)\rfloor:\,x\neq y\right\} \right|\le\frac{|\mathcal{X}|\log|\mathcal{X}|}{\eta}+2|\mathcal{X}|-2.
\]
\end{prop}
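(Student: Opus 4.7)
The plan is to reduce this counting problem to a scale-count via the minimum spanning tree (MST). Write $n := |\mathcal{X}|$ and let $T$ be any MST of the complete graph on $\mathcal{X}$ with edge weights $d(\cdot,\cdot)$, with MST edge weights $w_1 \le \cdots \le w_{n-1}$ (listed with multiplicity). For each pair $x \neq y$, define $b(x,y) := \max_{e \in P_T(x,y)} w(e)$, the bottleneck weight along the unique path from $x$ to $y$ in $T$. The whole argument will hinge on this single quantity.

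The first step is to establish the sandwich
\[
b(x,y) \;\le\; d(x,y) \;\le\; (n-1)\, b(x,y).
\]
The lower inequality uses the classical fact that any MST is simultaneously a minimum-bottleneck spanning tree (provable via the MST cycle-exchange property: if some path in the complete graph beat $b(x,y)$, we could swap a heavy MST edge on $P_T(x,y)$ for a lighter off-tree edge and reduce the MST weight). Hence $b(x,y)$ is the minimax edge weight over all paths from $x$ to $y$, and is bounded above by the direct-edge weight $d(x,y)$. The upper inequality follows by iterating the triangle inequality along $P_T(x,y)$, which has at most $n-1$ edges each of weight at most $b(x,y)$.

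The second step is to partition the pairs by the value $b(x,y) \in \{w_1,\ldots,w_{n-1}\}$. Within the class $b(x,y) = w_k$, the quantity $-\eta^{-1}\log d(x,y)$ is confined to an interval of length $\eta^{-1}\log(n-1)$, which contains at most $\lfloor \eta^{-1}\log(n-1)\rfloor + 1$ integers. Since there are at most $n-1$ such classes,
\[
\bigl|\{\lfloor -\eta^{-1}\log d(x,y)\rfloor : x\neq y\}\bigr| \;\le\; (n-1)\!\left(\frac{\log(n-1)}{\eta} + 1\right) \;\le\; \frac{n\log n}{\eta} + 2n - 2,
\]
using $(n-1)\log(n-1) \le n\log n$ and $n-1\le 2n-2$. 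The only substantive ingredient is the MST-as-bottleneck-tree property invoked in the sandwich; this is entirely standard, and I anticipate no genuine obstacle in the execution.
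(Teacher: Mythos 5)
Your route is sound and genuinely different in structure from the paper's. The paper also works with a minimum spanning tree, but argues by induction on $|\mathcal{X}|$: it removes a longest MST edge $(x_1,x_2)$, proves the sandwich $d(x_1,x_2)\le d(y_1,y_2)\le(k-1)d(x_1,x_2)$ only for the cross pairs between the two resulting components, bounds the cross-pair contribution by $\log(k-1)/\eta+2$, and combines with the induction hypothesis using convexity of $t\log t$. You instead make the scale comparison global via the bottleneck value $b(x,y)$ (using the standard MST-exchange fact that $b(x,y)\le d(x,y)$, plus the triangle inequality along the at most $n-1$ tree edges for $d(x,y)\le(n-1)b(x,y)$), partition all pairs into at most $n-1$ classes by $b$, and count scales per class — no induction, no convexity, and in fact a slightly sharper intermediate bound. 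Both proofs hinge on the same two MST ingredients; yours packages them more directly.

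One slip to fix: what you need per class is not the number of integers \emph{contained} in the interval of values of $-\eta^{-1}\log d(x,y)$, but the number of distinct values of $\lfloor-\eta^{-1}\log d(x,y)\rfloor$, i.e.\ the number of integers $m$ whose interval $[m,m+1)$ meets it. For an interval of length $L$ this can be $\lfloor L\rfloor+2$ rather than $\lfloor L\rfloor+1$ (e.g.\ $[0.9,1.1]$ with $L=0.2$ yields the two floor values $0$ and $1$). With the corrected per-class count $\le\eta^{-1}\log(n-1)+2$ your total becomes
\[
(n-1)\left(\frac{\log(n-1)}{\eta}+2\right)=\frac{(n-1)\log(n-1)}{\eta}+2n-2\le\frac{n\log n}{\eta}+2n-2,
\]
so the stated bound still follows; only your intermediate claim $(n-1)(\log(n-1)/\eta+1)$ needs to be relaxed. (This is also exactly the ``$+2$'' that appears in the paper's cross-pair count.)
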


\begin{proof}
[Proof of Proposition \ref{prop:metric_dist_set}] We prove the
claim by induction on $|\mathcal{X}|$. The claim is clearly true
when $|\mathcal{X}|=1$. Assume the claim is true for any metric space
$(\mathcal{X},d)$ with $|\mathcal{X}|<k$. Consider a metric space
$(\mathcal{X},d)$ with $|\mathcal{X}|=k$. Treat $(\mathcal{X},d)$
as a complete graph with edge weights given by $d$, and consider
its minimum spanning tree. Let $(x_{1},x_{2})$ be one of the longest
edges in the minimum spanning tree. Removing $(x_{1},x_{2})$ from
the minimum spanning tree breaks the tree into two connected components.
Let the vertex sets of the two components be $S_{1}$ and $S_{2}$.
Consider any $y_{1}\in S_{1}$, $y_{2}\in S_{2}$. We have $d(y_{1},y_{2})\ge d(x_{1},x_{2})$,
or else replacing the edge $(x_{1},x_{2})$ by $(y_{1},y_{2})$ decreases
the total weight of the minimum spanning tree. Also we have $d(y_{1},y_{2})\le(k-1)d(x_{1},x_{2})$,
since there is a path from $y_{1}$ to $y_{2}$ along the minimum
spanning tree where the weight of each edge is at most $d(x_{1},x_{2})$.
Hence,
\begin{align*}
 & \left|\left\{ \lfloor-\eta^{-1}\log d(x,y)\rfloor:\,x,y\in\mathcal{X},\,x\neq y\right\} \right|\\
 & \le\sum_{i=1}^{2}\left|\left\{ \lfloor-\eta^{-1}\log d(x,y)\rfloor:\,x,y\in S_{i},\,x\neq y\right\} \right|+\left|\left\{ \lfloor-\eta^{-1}\log d(y_{1},y_{2})\rfloor:\,y_{1}\in S_{1},y_{2}\in S_{2}\right\} \right|\\
 & \stackrel{(a)}{\le}\sum_{i=1}^{2}\left(\frac{|S_{i}|\log|S_{i}|}{\eta}+2|S_{i}|-2\right)+\left(\frac{-\log d(x_{1},x_{2})}{\eta}-\frac{-\log(k-1)d(x_{1},x_{2})}{\eta}+2\right)\\
 & \stackrel{(b)}{\le}\left(\frac{(k-1)\log(k-1)}{\eta}+2(k-1)-2\right)+\left(\frac{\log(k-1)}{\eta}+2\right)\\
 & \le\frac{k\log k}{\eta}+2k-2,
\end{align*}
where (a) is by applying the induction hypothesis on $S_{1},S_{2}$,
and $d(x_{1},x_{2})\le d(y_{1},y_{2})\le(k-1)d(x_{1},x_{2})$, and
(b) is by the convexity of $t\log t$.
\end{proof}
\medskip{}

For the case $\mathcal{X}=[0..s]^{n}$, $d(x,y)=\Vert x-y\Vert_{2}$,
a slight modification of this algorithm attains the $O(\sqrt{n}\log s)$
result in Proposition \ref{prop:s_rc_ub}, with a time complexity
$O(2^{n}|\mathcal{X}|\log^{2}|\mathcal{X}|)$. See Remark \ref{rem:grid_alg}
for a discussion.

\medskip{}

\subsection{Snowflake Metric Cost\label{subsec:metric_pow}}

We can apply the sequential Poisson functional representation to prove
Theorem \ref{thm:metric_pow} concerning the case where the symmetric
cost function $c(x,y)=(d(x,y))^{q}$ is a power of a metric $d$ over
a complete separable metric space $(\mathcal{X},d)$, and $0<q<1$.
\begin{proof}
[Proof of Theorem \ref{thm:metric_pow}] Let $\mu$ be a $\sigma$-finite
measure over $\mathcal{X}$, and $\Psi>0$, satisfying $0<\mu(\mathcal{B}_{w}(x))<\infty$
for any $x\in\mathcal{X}$, $w>0$, and
\[
\frac{\mu(\mathcal{B}_{w}(x)\backslash\mathcal{B}_{w}(y))}{\mu(\mathcal{B}_{w}(x))}\le\frac{\Psi d(x,y)}{w}
\]
for any $x,y\in\mathcal{X}$ and $w>0$. Let $\mathcal{F}$ be the
Borel $\sigma$-algebra of $\mathcal{X}$. Let $l\in\mathbb{N}$ and
$\eta>0$ be fixed numbers which will be chosen later. Fix any point
$x_{0}\in\mathcal{X}$. Let $\mathrm{U}\mathcal{B}_{w}$ be a probability
kernel from $\mathcal{X}$ to $\mathcal{X}$ defined by 
\begin{equation}
\mathrm{U}\mathcal{B}_{w}(x,E):=\frac{\mu(\mathcal{B}_{w}(x)\cap E)}{\mu(\mathcal{B}_{w}(x))}.\label{eq:metric_pow_pf_unifball}
\end{equation}
Let $\theta\in[0,1]$. For a collection of probability distributions
$\{P_{\alpha}\}_{\alpha}$, let 
\begin{equation}
\bar{P}_{\alpha,\theta}:=\Big(\prod_{i\in\mathbb{Z}}\mathrm{U}\mathcal{B}_{e^{-\eta(i+\theta)}}\Big)\circ P_{\alpha},\label{eq:metric_pow_patheta}
\end{equation}
i.e., $\bar{P}_{\alpha,\theta}$ is the distribution of $\{Z_{i}\}_{i\in\mathbb{Z}}$,
where $X\sim P_{\alpha}$, and $Z_{i}|X\sim\mathrm{U}\mathcal{B}_{e^{-\eta(i+\theta)}}$
are conditionally independent across $i$ given $X$. We then apply
the sequential Poisson functional representation on $I=\mathbb{Z}$,
$\bar{P}=\bar{P}_{\alpha,\theta}$, $\mu_{i}=\mu$, $\nu_{i}=\mathrm{U}\mathcal{B}_{e^{-\eta(i+\theta)}}(x_{0},\cdot)$
to obtain $\bar{\varrho}_{\bar{P}_{\alpha,\theta}}$. Define
\begin{equation}
X_{\alpha,\theta}:=\lim_{i\to\infty}\bar{\varrho}_{\bar{P}_{\alpha,\theta},i}(\Phi_{I}),\label{eq:metric_pow_limdef}
\end{equation}
\[
X_{\alpha}:=X_{\alpha,\varTheta},
\]
where $\varTheta\sim\mathrm{Unif}(\{j/l:\,j\in[0..l-1]\})$ independent
of $\Phi_{i}\stackrel{iid}{\sim}\mathrm{PP}(\mu\times\lambda_{\mathbb{R}_{\ge0}})$,
and the limit in \eqref{eq:metric_pow_limdef} is taken with respect
to the metric $d$.

Refer to Appendix \ref{subsec:pf_metric_pow_spfr} for the proof that
the SPFR condition is satisfied. We now show that $X_{\alpha,\theta}$
is defined (the limit exists) and $X_{\alpha,\theta}\sim P_{\alpha}$.
Consider the function $g:\mathcal{X}^{\mathbb{Z}}\to\mathcal{X}$,
$g(\{z_{i}\}_{i})=\lim_{i\to\infty}z_{i}$ if the limit exists, $g(\{z_{i}\}_{i})=x_{0}$
otherwise. Since the pointwise limit of a sequence of measurable functions
is measurable, $g$ is measurable over the $\sigma$-algebra $\mathcal{F}^{\otimes\mathbb{Z}}$.
Let $X\sim P_{\alpha}$, and $Z_{i}|X\sim\mathrm{U}\mathcal{B}_{e^{-\eta(i+\theta)}}$
are conditionally independent across $i$ given $X$. Since $d(Z_{i},Z_{j})\le e^{-\eta i}+e^{-\eta j}$
almost surely (in the probability space containing $X,\{Z_{i}\}_{i}$),
$\{Z_{i}\}_{i}$ is a Cauchy sequence almost surely, and $X=g(\{Z_{i}\}_{i})$
almost surely. Since $\{Z_{i}\}_{i}$ has the same distribution as
$\{\bar{\varrho}_{\bar{P}_{\alpha,\theta},i}(\Phi_{I})\}_{i}$ over
$\mathcal{F}^{\otimes\mathbb{Z}}$, $X=g(\{Z_{i}\}_{i})$ has the
same distribution as $g(\{\bar{\varrho}_{\bar{P}_{\alpha,\theta},i}(\Phi_{I})\}_{i})$.
Also $\{\bar{\varrho}_{\bar{P}_{\alpha,\theta},i}(\Phi_{I})\}_{i}$
is a Cauchy sequence almost surely (in the probability space containing
$\Phi_{I}$). Hence, $X_{\alpha,\theta}=g(\{\bar{\varrho}_{\bar{P}_{\alpha,\theta},i}(\Phi_{I})\}_{i})\sim P_{\alpha}$
for any $\theta\in[0,1]$, and hence $X_{\alpha}\sim P_{\alpha}$.

Consider two probability distributions $P_{\alpha},P_{\beta}$. Fix
any coupling $(\tilde{X}_{\alpha},\tilde{X}_{\beta})\in\Gamma_{\lambda}(P_{\alpha},P_{\beta})$.
For any $i\in\mathbb{Z}$ and $\theta\in[0,1]$,
\begin{align}
 & \mathbf{P}\Big(d(X_{\alpha,\theta},X_{\beta,\theta})>2e^{-\eta(i+\theta)}\Big)\nonumber \\
 & \le\mathbf{P}\Big(\bar{\varrho}_{\bar{P}_{\alpha,\theta},i}(\Phi_{I})\neq\bar{\varrho}_{\bar{P}_{\alpha,\beta},i}(\Phi_{I})\Big)\nonumber \\
 & \stackrel{(a)}{\le}2\sum_{j\le i}(1+\mathbf{1}\{j<i\})d_{\mathrm{TV}}\big(\bar{P}_{\alpha,\theta,I_{\le j}},\,\bar{P}_{\beta,\theta,I_{\le j}}\big)\nonumber \\
 & =2\sum_{j\le i}(1+\mathbf{1}\{j<i\})d_{\mathrm{TV}}\left(\Big(\prod_{k\le j}\mathrm{U}\mathcal{B}_{e^{-\eta(k+\theta)}}\Big)\circ P_{\alpha},\,\Big(\prod_{k\le j}\mathrm{U}\mathcal{B}_{e^{-\eta(k+\theta)}}\Big)\circ P_{\beta}\right)\nonumber \\
 & \stackrel{(b)}{\le}2\sum_{j\le i}(1+\mathbf{1}\{j<i\})\mathbf{E}\left[d_{\mathrm{TV}}\left(\Big(\prod_{k\le j}\mathrm{U}\mathcal{B}_{e^{-\eta(k+\theta)}}\Big)(\tilde{X}_{\alpha},\cdot),\,\Big(\prod_{k\le j}\mathrm{U}\mathcal{B}_{e^{-\eta(k+\theta)}}\Big)(\tilde{X}_{\beta},\cdot)\right)\right]\nonumber \\
 & \stackrel{(c)}{\le}2\sum_{j\le i}(1+\mathbf{1}\{j<i\})\mathbf{E}\left[\min\left\{ \sum_{k\le j}d_{\mathrm{TV}}\left(\mathrm{U}\mathcal{B}_{e^{-\eta(k+\theta)}}(\tilde{X}_{\alpha},\cdot),\,\mathrm{U}\mathcal{B}_{e^{-\eta(k+\theta)}}(\tilde{X}_{\beta},\cdot)\right),\,1\right\} \right]\nonumber \\
 & \stackrel{(d)}{\le}2\sum_{j\le i}(1+\mathbf{1}\{j<i\})\mathbf{E}\left[\min\left\{ \sum_{k\le j}\Psi e^{\eta(k+\theta)}d(\tilde{X}_{\alpha},\tilde{X}_{\beta}),\,1\right\} \right]\nonumber \\
 & =2\sum_{j\le i}(1+\mathbf{1}\{j<i\})\mathbf{E}\left[\min\left\{ \frac{\Psi e^{\eta(j+\theta)}}{1-e^{-\eta}}d(\tilde{X}_{\alpha},\tilde{X}_{\beta}),\,1\right\} \right],\label{eq:metric_pow_pdbd}
\end{align}
where (a) is by Lemma \ref{lem:spfr_dpcbd}, (b) is by $\tilde{X}_{\alpha}\sim P_{\alpha}$,
$\tilde{X}_{\beta}\sim P_{\beta}$, and the convexity of $d_{\mathrm{TV}}$,
(c) is due to $d_{\mathrm{TV}}(\prod_{k=1}^{\infty}Q_{k},\,\prod_{k=1}^{\infty}\tilde{Q}_{k})\le\sum_{k=1}^{\infty}d_{\mathrm{TV}}(Q_{k},\tilde{Q}_{k})$
for probability distributions $Q_{k}$, $\tilde{Q}_{k}$ over $\mathcal{X}$
(where $\prod_{k=1}^{\infty}Q_{k}$ denotes the product measure) \footnote{This can be shown by letting $(W_{k},\tilde{W}_{k})$ be independent
across $k$ such that $W_{k}\sim Q_{k}$, $\tilde{W}_{k}\sim\tilde{Q}_{k}$
and $\mathbf{P}(W_{k}\neq\tilde{W}_{k})\le d_{\mathrm{TV}}(Q_{k},\tilde{Q}_{k})+2^{-k}\epsilon$.
We have $d_{\mathrm{TV}}(\prod_{k=1}^{\infty}Q_{k},\,\prod_{k=1}^{\infty}\tilde{Q}_{k})\le\mathbf{P}(\{W_{k}\}_{k}\neq\{\tilde{W}_{k}\}_{k})\le\sum_{k=1}^{\infty}\mathbf{P}(W_{k}\neq\tilde{W}_{k})\le\sum_{k=1}^{\infty}d_{\mathrm{TV}}(Q_{k},\tilde{Q}_{k})+\epsilon$
for any $\epsilon>0$.}, and (d) is because
\begin{align}
 & d_{\mathrm{TV}}\left(\mathrm{U}\mathcal{B}_{w}(x,\cdot),\,\mathrm{U}\mathcal{B}_{w}(y,\cdot)\right)\nonumber \\
 & =\max\left\{ \mathrm{U}\mathcal{B}_{w}(x,\mathcal{B}_{w}(x)\backslash\mathcal{B}_{w}(y)),\,\mathrm{U}\mathcal{B}_{w}(y,\mathcal{B}_{w}(y)\backslash\mathcal{B}_{w}(x))\right\} \nonumber \\
 & =\max\left\{ \frac{\mu(\mathcal{B}_{w}(x)\backslash\mathcal{B}_{w}(y))}{\mu(\mathcal{B}_{w}(x))},\,\frac{\mu(\mathcal{B}_{w}(y)\backslash\mathcal{B}_{w}(x))}{\mu(\mathcal{B}_{w}(y))}\right\} \nonumber \\
 & \le\frac{\Psi d(x,y)}{w}\label{eq:metric_pow_pf_dtv}
\end{align}
by \eqref{eq:metric_pow_delta_dist}. Hence,
\begin{align*}
 & \mathbf{E}\left[(d(X_{\alpha,\varTheta},X_{\beta,\varTheta}))^{q}\right]\\
 & =l^{-1}\sum_{k=0}^{l-1}\mathbf{E}\left[(d(X_{\alpha,k/l},X_{\beta,k/l}))^{q}\right]\\
 & =l^{-1}\sum_{k=0}^{l-1}\int_{0}^{\infty}\mathbf{P}\left((d(X_{\alpha,k/l},X_{\beta,k/l}))^{q}>t\right)\mathrm{d}t\\
 & \le l^{-1}\sum_{k=0}^{l-1}\int_{0}^{\infty}\mathbf{P}\left(d(X_{\alpha,k/l},X_{\beta,k/l})>2e^{-\eta\left(\left\lceil -\eta^{-1}\log(t^{1/q}/2)-k/l\right\rceil +k/l\right)}\right)\mathrm{d}t\\
 & \le l^{-1}\sum_{k=0}^{l-1}\int_{0}^{\infty}\bigg(2\sum_{j\le\left\lceil -\eta^{-1}\log(t^{1/q}/2)-k/l\right\rceil }\big(1+\mathbf{1}\{j<\left\lceil -\eta^{-1}\log(t^{1/q}/2)-k/l\right\rceil \}\big)\\
 & \;\;\;\;\;\;\;\cdot\mathbf{E}\left[\min\left\{ \frac{\Psi e^{\eta(j+k/l)}}{1-e^{-\eta}}d(\tilde{X}_{\alpha},\tilde{X}_{\beta}),\,1\right\} \right]\bigg)\mathrm{d}t\\
 & \stackrel{(a)}{=}2l^{-1}\int_{0}^{\infty}\bigg(\sum_{k<\left\lceil l(-\eta^{-1}\log(t^{1/q}/2)+1)\right\rceil }\big(1+\mathbf{1}\{k<\left\lceil -l\eta^{-1}\log(t^{1/q}/2)\right\rceil \}\big)\mathbf{E}\left[\min\left\{ \frac{\Psi e^{\eta k/l}}{1-e^{-\eta}}d(\tilde{X}_{\alpha},\tilde{X}_{\beta}),\,1\right\} \right]\bigg)\mathrm{d}t\\
 & \le2\int_{0}^{\infty}\int_{-\infty}^{-\eta^{-1}\log(t^{1/q}/2)+1+2l^{-1}}(1+\mathbf{1}\{\gamma<-\eta^{-1}\log(t^{1/q}/2)+2l^{-1}\})\mathbf{E}\left[\min\left\{ \frac{\Psi e^{\eta\gamma}}{1-e^{-\eta}}d(\tilde{X}_{\alpha},\tilde{X}_{\beta}),\,1\right\} \right]\mathrm{d}\gamma\mathrm{d}t\\
 & =2\int_{-\infty}^{\infty}\left(\int_{0}^{(2e^{-\eta(\gamma-1-2l^{-1})})^{q}}(1+\mathbf{1}\{\gamma<-\eta^{-1}\log(t^{1/q}/2)+2l^{-1}\})\mathrm{d}t\right)\mathbf{E}\left[\min\left\{ \frac{\Psi e^{\eta\gamma}}{1-e^{-\eta}}d(\tilde{X}_{\alpha},\tilde{X}_{\beta}),\,1\right\} \right]\mathrm{d}\gamma\\
 & =2\int_{-\infty}^{\infty}\left((2e^{-\eta(\gamma-1-2l^{-1})})^{q}+(2e^{-\eta(\gamma-2l^{-1})})^{q}\right)\mathbf{E}\left[\min\left\{ \frac{\Psi e^{\eta\gamma}}{1-e^{-\eta}}d(\tilde{X}_{\alpha},\tilde{X}_{\beta}),\,1\right\} \right]\mathrm{d}\gamma\\
 & =2^{1+q}\int_{0}^{\infty}e^{2q\eta l^{-1}}(e^{q\eta}+1)\left(\frac{t\Psi}{1-e^{-\eta}}\right)^{q}\mathbf{E}\left[\min\left\{ t^{-1}d(\tilde{X}_{\alpha},\tilde{X}_{\beta}),\,1\right\} \right]\frac{1}{\eta t}\mathrm{d}t\\
 & =2^{1+q}\eta^{-1}e^{2q\eta l^{-1}}(e^{q\eta}+1)\left(\frac{\Psi}{1-e^{-\eta}}\right)^{q}\int_{0}^{\infty}t^{q-2}\mathbf{E}\left[\min\left\{ d(\tilde{X}_{\alpha},\tilde{X}_{\beta}),\,t\right\} \right]\mathrm{d}t\\
 & =2^{1+q}e^{2q\eta l^{-1}}\frac{e^{q\eta}+1}{q\eta(1-q)}\left(\frac{\Psi}{1-e^{-\eta}}\right)^{q}\int_{0}^{\infty}q(1-q)t^{q-2}\mathbf{E}\left[\min\left\{ d(\tilde{X}_{\alpha},\tilde{X}_{\beta}),\,t\right\} \right]\mathrm{d}t\\
 & \stackrel{(b)}{=}2^{1+q}e^{2q\eta l^{-1}}\frac{e^{q\eta}+1}{q\eta(1-q)}\left(\frac{\Psi}{1-e^{-\eta}}\right)^{q}\mathbf{E}\left[(d(\tilde{X}_{\alpha},\tilde{X}_{\beta}))^{q}\right],
\end{align*}
where (a) is by substituting $k\leftarrow jl+k$, and (b) is due to
$\mathbf{E}[Z^{q}]=\int_{0}^{\infty}\mathbf{E}[\min\{Z,t\}]q(1-q)t^{q-2}\mathrm{d}t$
for any random variable $Z\ge0$ (see \eqref{eq:ezpow}). Substituting
$\eta:=5/(3q)$, we have
\begin{align*}
 & \mathbf{E}\left[(d(X_{\alpha,\varTheta},X_{\beta,\varTheta}))^{q}\right]\\
 & \le2^{1+q}e^{2(5/3)l^{-1}}\frac{e^{5/3}+1}{(5/3)(1-q)}\left(\frac{\Psi}{1-e^{-5/(3q)}}\right)^{q}\mathbf{E}\left[(d(\tilde{X}_{\alpha},\tilde{X}_{\beta}))^{q}\right]\\
 & \le2^{1+q}e^{2(5/3)l^{-1}}\frac{e^{5/3}+1}{(5/3)(1-q)}\left(\frac{\Psi}{1-e^{-5/3}}\right)^{q}\mathbf{E}\left[(d(\tilde{X}_{\alpha},\tilde{X}_{\beta}))^{q}\right]\\
 & \le7.554e^{2(5/3)l^{-1}}\frac{(2.47\Psi)^{q}}{1-q}\mathbf{E}\left[(d(\tilde{X}_{\alpha},\tilde{X}_{\beta}))^{q}\right]\\
 & \le7.555\frac{(2.47\Psi)^{q}}{1-q}\mathbf{E}\left[(d(\tilde{X}_{\alpha},\tilde{X}_{\beta}))^{q}\right]
\end{align*}
by substituting $l=\lceil2(5/3)/\log(7.555/7.554)\rceil$. Hence,
\begin{align}
 & \mathbf{E}\left[(d(X_{\alpha,\varTheta},X_{\beta,\varTheta}))^{q}\right]\nonumber \\
 & \le\inf_{(\tilde{X}_{\alpha},\tilde{X}_{\beta})\in\Gamma_{\lambda}(P_{\alpha},P_{\beta})}7.555\cdot\frac{(2.47\Psi)^{q}}{1-q}\mathbf{E}\left[(d(\tilde{X}_{\alpha},\tilde{X}_{\beta}))^{q}\right]\nonumber \\
 & \le7.555\cdot\frac{(2.47\Psi)^{q}}{1-q}C_{c}^{*}(P_{\alpha},P_{\beta}).\label{eq:metric_pow_betterconst}
\end{align}
\end{proof}
\begin{rem}
The reason for letting $\varTheta\sim\mathrm{Unif}(\{j/l:\,j\in[0..l-1]\})$
instead of fixing $\varTheta=0$ (i.e., taking $l=1$) is to make
the construction less ``scale-dependent'', i.e., treating the balls
$\mathcal{B}_{e^{-\eta k}}$ for integer $k$ less differently compared
to balls of other radii. This can lead to a slightly better constant.
While we take a large value of $l$ so $\varTheta$ is close to being
distributed as $\mathrm{Unif}[0,1]$, we do not directly let $\varTheta\sim\mathrm{Unif}[0,1]$
to avoid having to prove that $X_{\alpha,\varTheta}$ is a random
variable (note that if $X_{\alpha,j/l}$ are random variables for
$j\in[0..l-1]$, and $\varTheta\sim\mathrm{Unif}(\{j/l:\,j\in[0..l-1]\})$
independent of $\{X_{\alpha,j/l}\}_{j}$, then $X_{\alpha,\varTheta}$
is a random variable).
\end{rem}

\medskip{}

\begin{rem}
\label{rem:lsh}The sequential Poisson functional representation can
be considered as a locality sensitive hash function for the estimation
of $C_{c}^{*}$ in the setting in \cite{charikar2002similarity}.
Let $h_{\phi_{I},\theta}:\mathcal{P}(\mathcal{X})\to\mathcal{X}$
be defined by $h_{\phi_{I},\theta}(P):=\lim_{i\to\infty}\bar{\varrho}_{\bar{P}_{\theta},i}(\phi_{I})$,
where $\bar{P}_{\theta}$ is defined in \eqref{eq:metric_pow_patheta}
with $P_{\alpha}\leftarrow P$. Let $\varTheta\sim\mathrm{Unif}(\{j/l:\,j\in[0..l-1]\})$
be independent of $\Phi_{i}\stackrel{iid}{\sim}\mathrm{PP}(\mu\times\lambda_{\mathbb{R}_{\ge0}})$.
Then by \eqref{eq:metric_pow_betterconst}, for any $P,Q\in\mathcal{P}(\mathcal{X})$,
\[
C_{c}^{*}(P,Q)\le\mathbf{E}\left[(d(h_{\Phi_{I},\varTheta}(P),h_{\Phi_{I},\varTheta}(Q)))^{q}\right]\le7.56\cdot\frac{(2.47\Psi)^{q}}{1-q}C_{c}^{*}(P,Q).
\]
Therefore, we can estimate $C_{c}^{*}(P,Q)$ by drawing i.i.d. samples
of $(\varTheta,\Phi_{I})$ and computing the sample mean of $(d(h_{\Phi_{I},\varTheta}(P),h_{\Phi_{I},\varTheta}(Q)))^{q}$,
which is asymptotically accurate up to a multiplicative factor.\medskip{}
\end{rem}

\medskip{}

The following weaker form of Theorem \ref{thm:metric_pow} is sometimes
easier to check.
\begin{cor}
\label{cor:metric_pow_c}Let $(\mathcal{X},d)$ be complete separable
metric space. Consider the symmetric cost function $c(x,y)=(d(x,y))^{q}$,
$0<q<1$. Let $\mathcal{B}_{w}(x):=\{y\in\mathcal{X}:\,d(x,y)\le w\}$.
Let $\mu$ be a $\sigma$-finite measure over $\mathcal{X}$, and
$\Psi\ge1$ satisfying that for any $x\in\mathcal{X}$,
\begin{equation}
w\mapsto w^{-\Psi}\mu(\mathcal{B}_{w}(x))\label{eq:metric_pow_c_noninc}
\end{equation}
is a non-increasing function of $w\in\mathbb{R}_{>0}$ that takes
values in $\mathbb{R}_{>0}$ (i.e., always positive and finite). Then
we have
\[
r_{c}^{*}(\mathcal{P}(\mathcal{X}))<7.56\cdot\frac{(2.47\Psi)^{q}}{1-q}.
\]
\end{cor}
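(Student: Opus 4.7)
The plan is to deduce Corollary \ref{cor:metric_pow_c} directly from Theorem \ref{thm:metric_pow} by verifying its three hypotheses under the weaker monotonicity assumption \eqref{eq:metric_pow_c_noninc}. The finiteness/positivity hypothesis $0<\mu(\mathcal{B}_w(x))<\infty$ is immediate from the assumption that $w^{-\Psi}\mu(\mathcal{B}_w(x))\in\mathbb{R}_{>0}$ for every $w>0$.

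For the key delta-distance bound \eqref{eq:metric_pow_delta_dist}, I would first handle the trivial range $w<\Psi d(x,y)$: here $\Psi d(x,y)/w>1$, so the ratio bound holds automatically since $\mu(\mathcal{B}_w(x)\setminus\mathcal{B}_w(y))/\mu(\mathcal{B}_w(x))\le 1$ (using $\Psi\ge 1$ to also cover $D\le w$ vs $D>w$ transition). In the remaining case $w\ge d(x,y)=:D$, the triangle inequality gives $\mathcal{B}_{w-D}(x)\subseteq\mathcal{B}_w(y)$, whence
\[
\mu(\mathcal{B}_w(x)\setminus\mathcal{B}_w(y))\le\mu(\mathcal{B}_w(x))-\mu(\mathcal{B}_{w-D}(x)).
\]
Applying \eqref{eq:metric_pow_c_noninc} at radii $w-D$ and $w$ yields $\mu(\mathcal{B}_{w-D}(x))\ge (1-D/w)^{\Psi}\mu(\mathcal{B}_w(x))$, so the ratio is bounded by $1-(1-D/w)^{\Psi}$. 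Bernoulli's inequality (valid because $\Psi\ge 1$) gives $(1-D/w)^{\Psi}\ge 1-\Psi D/w$, which produces exactly \eqref{eq:metric_pow_delta_dist}.

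For the limsup condition \eqref{eq:metric_pow_limsup}, I would note that whenever $d(x,y)\le w$, we have $\mathcal{B}_w(x)\subseteq\mathcal{B}_{2w}(y)$, and \eqref{eq:metric_pow_c_noninc} applied at radii $w$ and $2w$ about $y$ gives $\mu(\mathcal{B}_{2w}(y))\le 2^{\Psi}\mu(\mathcal{B}_w(y))$. Combining,
\[
\frac{\mu(\mathcal{B}_w(x))}{\mu(\mathcal{B}_w(y))}\le 2^{\Psi},
\]
a bound independent of $w$, so the limsup in \eqref{eq:metric_pow_limsup} is at most $2^{\Psi}<\infty$.

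With all three hypotheses verified, Theorem \ref{thm:metric_pow} applies and yields the claimed bound $r_c^*(\mathcal{P}(\mathcal{X}))<7.56\cdot(2.47\Psi)^q/(1-q)$. The only mildly subtle step is the application of Bernoulli's inequality in the range $D\le w$; the rest is essentially unwinding definitions. I do not anticipate any real obstacle, since the monotonicity hypothesis \eqref{eq:metric_pow_c_noninc} is tailor-made to produce exactly the volume-comparison estimates that Theorem \ref{thm:metric_pow} requires.
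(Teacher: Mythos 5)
Your proposal is correct and follows essentially the same route as the paper: verify the three hypotheses of Theorem \ref{thm:metric_pow}, using $\mathcal{B}_{w-d(x,y)}(x)\subseteq\mathcal{B}_{w}(y)$ plus the monotonicity of $w\mapsto w^{-\Psi}\mu(\mathcal{B}_{w}(x))$ and Bernoulli's inequality (with $\Psi\ge 1$) for \eqref{eq:metric_pow_delta_dist}, and the comparison $\mu(\mathcal{B}_{w}(x))\le\mu(\mathcal{B}_{2w}(y))\le 2^{\Psi}\mu(\mathcal{B}_{w}(y))$ for \eqref{eq:metric_pow_limsup}. The only cosmetic difference is that you dispose of the small-$w$ regime via the trivial bound $\Psi d(x,y)/w>1$ while the paper splits on $d(x,y)<w$ versus $d(x,y)\ge w$, which amounts to the same thing.
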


\begin{proof}
[Proof of Corollary \ref{cor:metric_pow_c}] We check \eqref{eq:metric_pow_delta_dist}.
For any $x,y\in\mathcal{X}$, $w>0$, if $d(x,y)<w$, then
\begin{align*}
 & \frac{\mu(\mathcal{B}_{w}(x)\backslash\mathcal{B}_{w}(y))}{\mu(\mathcal{B}_{w}(x))}\\
 & \le\frac{\mu(\mathcal{B}_{w}(x)\backslash\mathcal{B}_{w-d(x,y)}(x))}{\mu(\mathcal{B}_{w}(x))}\\
 & =1-\frac{\mu(\mathcal{B}_{w-d(x,y)}(x))}{\mu(\mathcal{B}_{w}(x))}\\
 & =1-\frac{(w-d(x,y))^{-\Psi}\mu(\mathcal{B}_{w-d(x,y)}(x))}{w^{-\Psi}\mu(\mathcal{B}_{w}(x))}\left(\frac{w-d(x,y)}{w}\right)^{\Psi}\\
 & \stackrel{(a)}{\le}1-\left(\frac{w-d(x,y)}{w}\right)^{\Psi}\\
 & \stackrel{(b)}{\le}\frac{\Psi d(x,y)}{w},
\end{align*}
where (a) is by \eqref{eq:metric_pow_c_noninc}, and (b) is because
$\Psi\ge1$. The case $d(x,y)\ge w$ follows directly from $\Psi\ge1$.

We then check \eqref{eq:metric_pow_limsup}. For any $x,y\in\mathcal{X}$,
$w>0$ such that $d(x,y)\le w$, by \eqref{eq:metric_pow_c_noninc},
\begin{align*}
\frac{\mu(\mathcal{B}_{w}(x))}{\mu(\mathcal{B}_{w}(y))} & \le\frac{\mu(\mathcal{B}_{2w}(y))}{\mu(\mathcal{B}_{w}(y))}\le2^{\Psi}.
\end{align*}
 The result follows from Theorem \ref{thm:metric_pow}.
\end{proof}
\medskip{}

\subsection{$\ell_{p}$ Metric over $\mathbb{R}^{n}$\label{subsec:rn_lp}}

We now prove Theorem \ref{thm:rn_rc_ub} by applying Theorem \ref{thm:metric_pow}
to find $r_{c}^{*}(\mathcal{P}(\mathbb{R}^{n}))$ for $c(x,y)=\Vert x-y\Vert_{p}^{q}$,
$0<q<1$, $p\in\mathbb{R}_{\ge1}\cup\{\infty\}$. The proof of Proposition
\ref{prop:s_rc_ub} is also given.
\begin{proof}
[Proof of Theorem \ref{thm:rn_rc_ub} and Proposition \ref{prop:s_rc_ub}]Let
$d(x,y)=\Vert x-y\Vert_{p}$, $\mu=\lambda$.  The condition \eqref{eq:metric_pow_limsup}
is clearly satisfied. To find $\Psi$ for \eqref{eq:metric_pow_delta_dist},
we fix any $x,z\in\mathbb{R}^{n}$, $w>0$. For $p\le2$, 
\begin{align*}
 & \lambda(\mathcal{B}_{w}(x)\backslash\mathcal{B}_{w}(x+z))\\
 & \stackrel{(a)}{\le}\Vert z\Vert_{2}\lambda_{n-1}\big(\mathrm{P}_{z^{\perp}}(\mathcal{B}_{w}(x))\big)\\
 & \stackrel{(b)}{\le}\Vert z\Vert_{2}\lambda_{n-1}\big(\mathrm{P}_{\mathrm{e}_{1}^{\perp}}(\mathcal{B}_{w}(x))\big)\\
 & =\Vert z\Vert_{2}\mathrm{V}_{n-1,p}w^{n-1}\\
 & \le\Vert z\Vert_{p}\mathrm{V}_{n-1,p}w^{n-1},
\end{align*}
where (a) is because $\mathcal{B}_{w}(x)$ is convex, so $\mathcal{B}_{w}(x)\cap\{y+tz:\,t\in\mathbb{R}\}$
is a line segment for any $y\in\mathbb{R}^{n}$, and hence $(\mathcal{B}_{w}(x)\backslash\mathcal{B}_{w}(x+z))\cap\{y+tz:\,t\in\mathbb{R}\}$
is a line segment with length at most $\Vert z\Vert_{2}$, so by Fubini's
theorem, $\lambda(\mathcal{B}_{w}(x)\backslash\mathcal{B}_{w}(x+z))$
can be bounded by the product of $\Vert z\Vert_{2}$ and the area
of the projection of $\mathcal{B}_{w}(x)$ to the $(n-1)$-dimensional
hyperplane orthogonal to $z$ (such a projection is denoted as $\mathrm{P}_{z^{\perp}}(x):=x-\left\langle x,z\right\rangle z/\Vert z\Vert_{2}^{2}$,
and $\lambda_{n-1}$ denotes the Lebesgue measure over that hyperplane),
(b) is by \cite[Theorem 12]{barthe2002hyperplane}, and $\mathrm{V}_{n,p}$
is the volume of the unit $\ell_{p}$ ball given in \eqref{eq:lp_ball_vol}.

For $p>2$,
\begin{align*}
 & \lambda(\mathcal{B}_{w}(x)\backslash\mathcal{B}_{w}(x+z))\\
 & \le\sum_{i=1}^{n}\lambda\bigg(\mathcal{B}_{w}\bigg(x+\sum_{j=1}^{i-1}z_{j}\mathrm{e}_{j}\bigg)\,\backslash\,\mathcal{B}_{w}\bigg(x+\sum_{j=1}^{i}z_{j}\mathrm{e}_{j}\bigg)\bigg)\\
 & =\sum_{i=1}^{n}\lambda\left(\mathcal{B}_{w}(x)\backslash\mathcal{B}_{w}(x+z_{i}\mathrm{e}_{i})\right)\\
 & \le\sum_{i=1}^{n}|z_{i}|\lambda_{n-1}\big(\mathrm{P}_{\mathrm{e}_{i}^{\perp}}(\mathcal{B}_{w}(x))\big)\\
 & =\Vert z\Vert_{1}\mathrm{V}_{n-1,p}w^{n-1}\\
 & \le n^{1-1/p}\Vert z\Vert_{p}\mathrm{V}_{n-1,p}w^{n-1}.
\end{align*}
Hence,
\begin{align*}
 & \frac{\lambda(\mathcal{B}_{w}(x)\backslash\mathcal{B}_{w}(x+z))}{\lambda(\mathcal{B}_{w}(x))}\\
 & \le\frac{\Vert z\Vert_{p}n^{\mathbf{1}\{p>2\}(1-1/p)}\mathrm{V}_{n-1,p}}{w\mathrm{V}_{n,p}}.
\end{align*}
Let $\Psi=n^{\mathbf{1}\{p>2\}(1-1/p)}\mathrm{V}_{n-1,p}/\mathrm{V}_{n,p}$.
 By Theorem \ref{thm:metric_pow},
\begin{align*}
 & r_{c}^{*}(\mathcal{P}(\mathcal{X}))\\
 & <\frac{7.56}{1-q}\left(\frac{2.47n^{\mathbf{1}\{p>2\}(1-1/p)}\mathrm{V}_{n-1,p}}{\mathrm{V}_{n,p}}\right)^{q},
\end{align*}
where
\begin{align}
\frac{\mathrm{V}_{n-1,p}}{\mathrm{V}_{n,p}} & =\frac{\mathit{\Gamma}(1+n/p)}{2\mathit{\Gamma}(1+1/p)\mathit{\Gamma}(1+(n-1)/p)}\nonumber \\
 & \stackrel{(a)}{\le}\frac{(\mathit{\Gamma}(1+(n-1)/p))^{1-1/p}(\mathit{\Gamma}(2+(n-1)/p))^{1/p}}{2\mathit{\Gamma}(1+1/p)\mathit{\Gamma}(1+(n-1)/p)}\nonumber \\
 & =\frac{(1+(n-1)/p)^{1/p}}{2\mathit{\Gamma}(1+1/p)},\label{eq:Bnp_ratio}
\end{align}
where (a) is because $\log\mathit{\Gamma}(t)$ is convex. Therefore,
\begin{align*}
 & r_{c}^{*}(\mathcal{P}(\mathcal{X}))\\
 & <\frac{7.56}{1-q}\left(\frac{1.235n^{\mathbf{1}\{p>2\}(1-1/p)}(1+(n-1)/p)^{1/p}}{\mathit{\Gamma}(1+1/p)}\right)^{q}\\
 & \le\frac{7.56}{1-q}\left(1.3946n^{1/p+\mathbf{1}\{p>2\}(1-1/p)}\right)^{q}\\
 & \le\frac{10.543}{1-q}n^{q/p+\mathbf{1}\{p>2\}(q-q/p)}.
\end{align*}
To improve the bound for $p>2$, by the ratio bound in Proposition
\ref{prop:rc_prop_misc},
\begin{align*}
r_{c}^{*}(\mathcal{P}(\mathbb{R}^{n})) & \le n^{q(1/2-1/p)}r_{\Vert\cdot\Vert_{2}^{q}}^{*}(\mathcal{P}(\mathbb{R}^{n}))\\
 & <n^{q(1/2-1/p)}\frac{10.543}{1-q}n^{q/2}\\
 & =\frac{10.55}{1-q}n^{q(1-1/p)}.
\end{align*}
Hence, for any $p\in\mathbb{R}_{\ge1}\cup\{\infty\}$,
\[
r_{c}^{*}(\mathcal{P}(\mathbb{R}^{n}))<\frac{10.55}{1-q}n^{q\max\{1/p,\,1-1/p\}}.
\]

For Proposition \ref{prop:s_rc_ub} concerning the case $\mathcal{X}\subseteq\mathbb{R}^{n}$
is finite, $c(x,y)=\Vert x-y\Vert_{p}^{q}$, $q\ge1$, by the ratio
bound in Proposition \ref{prop:rc_prop_misc}, for any $0<\tilde{q}<1$,
\begin{align*}
r_{c}^{*}(\mathcal{P}(\mathcal{X})) & \le\gamma^{q-\tilde{q}}r_{\Vert x-y\Vert_{p}^{\tilde{q}}}^{*}(\mathcal{P}(\mathcal{X}))\\
 & <\gamma^{q-\tilde{q}}\frac{10.543}{1-\tilde{q}}n^{\tilde{q}\max\{1/p,\,1-1/p\}}\\
 & =10.543\gamma^{q}\frac{1}{1-\tilde{q}}(n^{\max\{1/p,\,1-1/p\}}/\gamma)^{\tilde{q}}.
\end{align*}
If $\gamma>e\cdot n^{\max\{1/p,\,1-1/p\}}$, substituting $\tilde{q}=1+1/\log(n^{\max\{1/p,\,1-1/p\}}/\gamma)$,
we have
\begin{align*}
r_{c}^{*}(\mathcal{P}(\mathcal{X})) & <10.543\gamma^{q}e(n^{\max\{1/p,\,1-1/p\}}/\gamma)\log\left(\gamma/n^{\max\{1/p,\,1-1/p\}}\right)\\
 & <28.659n^{\max\{1/p,\,1-1/p\}}\gamma^{q-1}\log\left(\gamma/n^{\max\{1/p,\,1-1/p\}}\right)\\
 & \le28.659n^{\max\{1/p,\,1-1/p\}}\gamma^{q-1}\log\left(\gamma/n^{\max\{1/p,\,1-1/p\}}+1\right).
\end{align*}
If $\gamma\le e\cdot n^{\max\{1/p,\,1-1/p\}}$, by the ratio bound
in Proposition \ref{prop:rc_prop_misc} (with respect to the discrete
metric), 
\begin{align*}
r_{c}^{*}(\mathcal{P}(\mathcal{X})) & \le2\gamma^{q}\\
 & \stackrel{(a)}{\le}2\gamma^{q}\frac{n^{\max\{1/p,\,1-1/p\}}\gamma^{-1}\log\left(\gamma/n^{\max\{1/p,\,1-1/p\}}+1\right)}{e^{-1}\log(e+1)}\\
 & <4.14n^{\max\{1/p,\,1-1/p\}}\gamma^{q-1}\log\left(\gamma/n^{\max\{1/p,\,1-1/p\}}+1\right),
\end{align*}
where (a) is because $t\log(1/t+1)$ is an increasing function.

\end{proof}
\[
\]

\begin{rem}
\label{rem:grid_alg}For the sake of applying Algorithm \ref{alg:lsh}
on the grid $[0..s]^{n}$, we first embed the grid into a discrete
torus $[0..2s]^{n}$. The $\ell_{p}$ distance between two points
$x,y\in[0..2s]^{n}$ on the discrete torus is given by $d_{\mathrm{T},p}(x,y):=\min_{z\in\mathbb{Z}^{n}}\Vert x-y+(2s+1)z\Vert_{p}$.
\footnote{Note that the embedding of the grid $[0..s]^{n}$ (with the normal
$\ell_{p}$ distance over $\mathbb{R}^{n}$) into the torus $[0..2s]^{n}$
(by the identity map) is isometric. The reason for using the discrete
torus is twofold: it allows the use of fast Fourier transform (which
performs convolution over the discrete torus); and a ball over the
discrete torus has size that only depends on its radius (not its center).} We want the probability distributions $\mathrm{U}\mathcal{B}_{w}(x,\cdot)$
in \ref{eq:metric_pow_pf_unifball} to be defined over $[0..2s]^{n}$,
not $\mathbb{R}^{n}$, so that the probability distributions are discrete
and can be handled by the algorithm. To achieve this, we take $\mu$
to be the counting measure over $[0..2s]^{n}$ (instead of $\lambda$).
Denote  $\mathcal{B}_{\mathbb{R}^{n},w}(x)\subseteq\mathbb{R}^{n}$
for the $\ell_{p}$ ball over $\mathbb{R}^{n}$. Instead of using
$\mathrm{U}\mathcal{B}_{w}(x,\cdot)$ in \eqref{eq:metric_pow_patheta}
(which is a probability kernel from $\mathbb{R}^{n}$ to $\mathbb{R}^{n}$),
we use probability kernel from $[0..2s]^{n}$ to $[0..2s]^{n}$ given
by
\begin{align*}
\mathrm{round}_{*}\mathrm{U}\mathcal{B}_{w}(x,E) & =\mathrm{U}\mathcal{B}_{w}(x,\mathrm{round}^{-1}(E))\\
 & =\frac{\lambda(\mathcal{B}_{\mathbb{R}^{n},w}(x)\cap\mathrm{round}^{-1}(E))}{\lambda(\mathcal{B}_{\mathbb{R}^{n},w}(x))}
\end{align*}
for $x\in[0..2s]^{n}$, $E\subseteq[0..2s]^{n}$, where $\mathrm{round}:\mathbb{R}^{n}\to[0..2s]^{n}$
is defined by $(\mathrm{round}(x))_{i}\equiv\lfloor x_{i}+1/2\rfloor$
($\mathrm{mod}\;2s+1$). The SPFR condition is clearly satisfied due
to the finiteness of $[0..2s]^{n}$ (and hence we only need to consider
a finite interval $I$). We still have \eqref{eq:metric_pow_pf_dtv}
since
\begin{align*}
 & d_{\mathrm{TV}}\left(\mathrm{round}_{*}\mathrm{U}\mathcal{B}_{w}(x,\cdot),\,\mathrm{round}_{*}\mathrm{U}\mathcal{B}_{w}(y,\cdot)\right)\\
 & =\min_{z\in\mathbb{Z}^{n}}d_{\mathrm{TV}}\left(\mathrm{round}_{*}\mathrm{U}\mathcal{B}_{w}(x+(2s+1)z,\cdot),\,\mathrm{round}_{*}\mathrm{U}\mathcal{B}_{w}(y,\cdot)\right)\\
 & \le\min_{z\in\mathbb{Z}^{n}}d_{\mathrm{TV}}\left(\mathrm{U}\mathcal{B}_{w}(x+(2s+1)z,\cdot),\,\mathrm{U}\mathcal{B}_{w}(y,\cdot)\right)\\
 & \le\frac{\Psi d_{\mathrm{T},p}(x,y)}{w}.
\end{align*}
The only difference is that instead of $\mathrm{supp}(\mathrm{U}\mathcal{B}_{w}(x,\cdot))\cap\mathrm{supp}(\mathrm{U}\mathcal{B}_{w}(y,\cdot))\neq\emptyset$
$\Rightarrow$ $\Vert x-y\Vert_{p}\le2w$ used in \eqref{eq:metric_pow_pdbd},
we have $\mathrm{supp}(\mathrm{round}_{*}\mathrm{U}\mathcal{B}_{w}(x,\cdot))\cap\mathrm{supp}(\mathrm{round}_{*}\mathrm{U}\mathcal{B}_{w}(y,\cdot))\neq\emptyset$
$\Rightarrow$ $d_{\mathrm{T},p}(x,y)\le4w$ for any $x,y\in[0..2s]^{n}$,
$w>0$. This is due to the fact that if $x,z\in[0..2s]^{n}$,
\begin{align*}
 & z\in\mathrm{supp}(\mathrm{round}_{*}\mathrm{U}\mathcal{B}_{w}(x,\cdot))\\
 & \Rightarrow\mathrm{round}^{-1}(\{z\})\cap\mathcal{B}_{\mathbb{R}^{n},w}(x)\neq\emptyset\\
 & \Rightarrow\exists\tilde{z}\in\mathbb{Z}^{n}:\,\tilde{z}_{i}\equiv z_{i}\;(\mathrm{mod}\;2s+1)\;\mathrm{and}\;\{v\in\mathbb{R}^{n}:\Vert v-\tilde{z}\Vert_{\infty}\le\frac{1}{2}\}\cap\mathcal{B}_{\mathbb{R}^{n},w}(x)\neq\emptyset\\
 & \stackrel{(a)}{\Rightarrow}\exists\tilde{z}\in\mathbb{Z}^{n}:\,\tilde{z}_{i}\equiv z_{i}\;(\mathrm{mod}\;2s+1)\;\mathrm{and}\;\mathcal{B}_{\mathbb{R}^{n},w}(\tilde{z})\cap\mathcal{B}_{\mathbb{R}^{n},w}(x)\neq\emptyset\\
 & \Rightarrow d_{\mathrm{T},p}(x,z)\le2w,
\end{align*}
where (a) is because if $\Vert v-\tilde{z}\Vert_{\infty}\le1/2$,
then $\Vert v-\tilde{z}\Vert_{p}\le\Vert v-\tilde{x}\Vert_{p}$ for
any $\tilde{x}\in\mathbb{Z}^{n}$. Therefore, using $\mathrm{round}_{*}\mathrm{U}\mathcal{B}_{w}(x,\cdot)$
instead of $\mathrm{U}\mathcal{B}_{w}(x,\cdot)$ results in at most
a multiplicative penalty of $2$ on $r_{c}$, i.e., when $\mathcal{X}=[0..2s]^{n}$,
$c(x,y)=(d_{\mathrm{T},p}(x,y))^{q}$, $q\ge1$, it achieves
\begin{align*}
r_{c}(\{X_{\alpha}\}_{\alpha}) & <57.32n^{\max\{1/p,\,1-1/p\}}\gamma^{q-1}\log\left(\gamma/n^{\max\{1/p,\,1-1/p\}}+1\right)\\
 & =57.32n^{(q-1)/p+\max\{1/p,\,1-1/p\}}s^{q-1}\log\left(s/n^{\max\{0,\,1-2/p\}}+1\right),
\end{align*}
where $\gamma=n^{1/p}s$. This coupling $\{X_{\alpha}\}_{\alpha}$
can be computed using a modified version of Algorithm \ref{alg:lsh}
given below, with $\mathcal{X}=[0..2s]^{n}$ (the discrete torus),
$\eta:=5/(3(1-1/\log s))$, $i_{0}:=\lfloor-\eta^{-1}\log(n^{1/p}s)\rfloor-1$,
$i_{1}:=1$.

\begin{algorithm}[H]
\textbf{$\;\;\;\;$Input:} $\varTheta\sim\mathrm{Unif}[0,1]$, $\{V_{i,x}\}_{i\in[i_{0}..i_{1}],x\in\mathcal{X}}\stackrel{iid}{\sim}\mathrm{Exp}(1)$,
$P\in\mathcal{P}(\mathcal{X})$

\textbf{$\;\;\;\;$Output:} $h_{\{V_{x}\}_{x},\varTheta}(P)\in\mathcal{X}$

\smallskip{}

\begin{algorithmic}

\State{$\tilde{p}_{x}\leftarrow P(x)$ for $x\in\mathcal{X}$}

\State{$i\leftarrow i_{0}$}

\While{$|\{x\in\mathcal{X}:\,\tilde{p}_{x}>0\}|>1$}

\State{$w\leftarrow e^{-\eta(i+\varTheta)}$}

\State{$\hat{p}_{x}\leftarrow\sum_{y\in\mathcal{X}}\mathrm{round}_{*}\mathrm{U}\mathcal{B}_{w}(y,\{x\})\tilde{p}_{y}$
for $x\in\mathcal{X}$}

\State{$z\leftarrow\arg\min_{x}V_{i,x}/\hat{p}_{x}$}

\State{$\tilde{p}_{x}\leftarrow\mathrm{round}_{*}\mathrm{U}\mathcal{B}_{w}(x,\{z\})\tilde{p}_{x}$
for $x\in\mathcal{X}$}

\State{$\{\tilde{p}_{x}\}_{x\in\mathcal{X}}\leftarrow\{\tilde{p}_{x}/\sum_{y\in\mathcal{X}}\tilde{p}_{y}\}_{x\in\mathcal{X}}$}

\State{$i\leftarrow i+1$}

\EndWhile

\Return{$x$ such that $\tilde{p}_{x}>0$}

\end{algorithmic}

\caption{\label{alg:lsh_grid}Locality sensitive hash function for the discrete
torus.}
\end{algorithm}

The computation of $\hat{p}_{x}$ in the algorithm is a convolution,
and thus can be performed using fast Fourier transform ($O(2^{n}ns^{n}\log s)$
time for each computation).\footnote{The convolution kernel $\{\mathrm{round}_{*}\mathrm{U}\mathcal{B}_{w}(0,\{x\})\}_{x\in\mathcal{X}}$
involves the volume $\lambda(\mathcal{B}_{\mathbb{R}^{n},w}(0)\cap\mathrm{round}^{-1}(\{x\}))$
of the intersection of a ball and a hypercube, which can be computed
or estimated using various methods, for example, using Fourier series
as in \cite{rousseau1997intersection}.} The number of iterations is $i_{1}-i_{0}+1=O(\log(n^{1/p}s))=O(\log n+\log s)$.
Hence, the time complexity of the algorithm is 
\[
O\left((2^{n}ns^{n}\log s)(\log n+\log s)\right)\le O(2^{n}|\mathcal{X}|\log^{2}|\mathcal{X}|).
\]
\end{rem}

\[
\]

\medskip{}

\subsection{Ultrametric Cost\label{subsec:ultra}}

We then prove Theorem \ref{thm:ultrametric} by applying Theorem \ref{thm:metric_pow}
to the case where $c$ is an ultrametric (i.e., $c(x,z)\le\max\{c(x,y),c(y,z)\}$
for any $x,y,z$).
\begin{proof}
[Proof of Theorem \ref{thm:ultrametric}]Since $(\mathcal{X},c)$
is a separable metric space, let $\{z_{i}\}_{i\in\mathbb{N}}$, $z_{i}\in\mathcal{X}$
be a countable dense subset of $\mathcal{X}$. Define $\mu:=\sum_{i=1}^{\infty}2^{-i}\delta_{z_{i}}$.
Let $q>0$, and $d(x,y)=(c(x,y))^{1/q}$, $\mu=\lambda$. It is straightforward
to check that $d$ is also an ultrametric that metrizes $\mathcal{X}$.
We have $\mathcal{B}_{d,w}(x)\cap\{z_{i}:i\in\mathbb{N}\}\neq\emptyset$,
and hence $0<\mu(\mathcal{B}_{d,w}(x))<\infty$, for any $x\in\mathcal{X}$,
$w>0$. Since $d$ is an ultrametric, $\mathcal{B}_{d,w}(x)=\mathcal{B}_{d,w}(y)$
if $d(x,y)\le w$, and hence \eqref{eq:metric_pow_delta_dist} is
satisfied for $\Psi=1$. The condition \eqref{eq:metric_pow_limsup}
is satisfied since $d(x,y)\le w$ implies $\mathcal{B}_{d,w}(x)=\mathcal{B}_{d,w}(y)$.
Applying Theorem \ref{thm:metric_pow} (using the constant $7.555$
in \eqref{eq:metric_pow_betterconst}) on $d$, $q$, $\mu$, we have
\[
r_{c}^{*}(\mathcal{P}(\mathcal{X}))\le7.555\cdot\frac{(2.47)^{q}}{1-q}.
\]
Letting $q\to0$, we have $r_{c}^{*}(\mathcal{P}(\mathcal{X}))<7.56$.
\end{proof}
\medskip{}
We then prove Theorem \ref{thm:metric}, which concerns the case where
$\mathcal{X}$ is finite, $c(x,y)=(d(x,y))^{q}$ is a power of a metric
$d$, and $q>0$.
\begin{proof}
[Proof of Theorem \ref{thm:metric}]Let $q>0$. Define an ultrametric
$\tilde{d}$ over $\mathcal{X}$ by
\begin{align*}
\tilde{d}(x,y) & :=\left(\inf\left\{ \max_{j\in[0..k-1]}d(z_{j},z_{j+1}):\,k\in\mathbb{N},\,z_{0},\ldots,z_{k}\in\mathcal{X},\,z_{0}=x,\,z_{k}=y\right\} \right)^{q}.
\end{align*}
It is straightforward to check that $\tilde{d}$ is an ultrametric,
and $\tilde{d}(x,y)\le(d(x,y))^{q}\le(|\mathcal{X}|-1)^{q}\tilde{d}(x,y)$.
Applying Theorem \ref{thm:ultrametric} on $\tilde{d}$, we have $r_{\tilde{d}}^{*}(\mathcal{P}(\mathcal{X}))<7.56$.
The result follows from the ratio bound in \eqref{prop:rc_prop_misc}.
\end{proof}
\medskip{}

\subsection{Riemannian Manifolds\label{subsec:manifold}}

In this subsection, we consider the case $\mathcal{X}=\mathcal{M}$,
where $\mathcal{M}$ is a connected smooth complete real $n$-dimensional
Riemannian manifold. Let $d_{\mathcal{M}}$ be the intrinsic distance
on the manifold $\mathcal{M}$, and write $\mathcal{B}_{w}(x)=\mathcal{B}_{d_{\mathcal{M}},w}(x)$.
Let $\mu$ be the measure over $\mathcal{M}$ induced by the Riemannian
volume form.

We now prove Theorem \ref{thm:ricci} for the case where the Ricci
curvature is bounded below.
\begin{proof}
[Proof of Theorem \ref{thm:ricci}] We first consider the case where
$\mathrm{Ric}\ge0$. By the Bishop-Gromov inequality \cite{petersen2016riemannian},
for any $x\in\mathcal{M}$,
\[
w\mapsto\frac{\mu(\mathcal{B}_{w}(x))}{w^{n}\mathrm{V}_{n,2}}
\]
is a non-increasing function of $w\in\mathbb{R}_{>0}$. The result
follows from Corollary \ref{cor:metric_pow_c}.

We then consider the case where $\mathrm{Ric}_{\mathcal{M}}\ge(n-1)K$,
$K<0$. Let $D:=\mathrm{diam}(\mathcal{M})$. By the Bishop-Gromov
inequality, for any $x\in\mathcal{M}$,
\begin{equation}
w\mapsto\frac{\mu(\mathcal{B}_{w}(x))}{(-K)^{-n/2}\breve{\mathrm{V}}_{n}(w\sqrt{-K})}\label{eq:ricci_bg}
\end{equation}
is a non-increasing function of $w\in\mathbb{R}_{>0}$, where
\[
\breve{\mathrm{V}}_{n}(z):=n\mathrm{V}_{n,2}\int_{0}^{z}(\sinh t)^{n-1}\mathrm{d}t
\]
is the volume of the geodesic ball of radius $z$ in the $n$-dimensional
hyperbolic space of Ricci curvature $-(n-1)$ (denote its derivative
as $\breve{\mathrm{V}}_{n}'(z)$). Note that the denominator in \eqref{eq:ricci_bg}
is the volume of the ball of radius $w$ in the hyperbolic space of
Ricci curvature $(n-1)K$. Let 
\[
\Psi:=D\sqrt{-K}\frac{\breve{\mathrm{V}}_{n}'(D\sqrt{-K})}{\breve{\mathrm{V}}_{n}(D\sqrt{-K})}.
\]
When $0<w\le\mathrm{diam}(\mathcal{M})$,
\begin{align*}
 & \frac{\mathrm{d}}{\mathrm{d}w}\log\left(w^{-\Psi}\breve{\mathrm{V}}_{n}(w\sqrt{-K})\right)\\
 & =\frac{\sqrt{-K}\breve{\mathrm{V}}_{n}'(w\sqrt{-K})}{\breve{\mathrm{V}}_{n}(w\sqrt{-K})}-\frac{\Psi}{w}\\
 & \stackrel{(a)}{\le}\frac{\sqrt{-K}\breve{\mathrm{V}}_{n}'(w\sqrt{-K})}{\breve{\mathrm{V}}_{n}(w\sqrt{-K})}-\frac{1}{w}\frac{w\sqrt{-K}\breve{\mathrm{V}}_{n}'(w\sqrt{-K})}{\breve{\mathrm{V}}_{n}(w\sqrt{-K})}\\
 & \le0,
\end{align*}
where (a) is because $t\breve{\mathrm{V}}_{n}'(t)/\breve{\mathrm{V}}_{n}(t)$
is non-decreasing (see Appendix \ref{sec:hyperbolic_inc} for the
proof). Hence $w^{-\Psi}\breve{\mathrm{V}}_{n}(w)$ is non-increasing
for $0<w\le D$. Multiplying this with \eqref{eq:ricci_bg}, $w^{-\Psi}\mu(\mathcal{B}_{w}(x))$
is non-increasing for $0<w\le D$. It is obviously non-increasing
when $w\ge D$ (since $\mu(\mathcal{B}_{w}(x))=\mu(\mathcal{M})$).
By Corollary \ref{cor:metric_pow_c},
\[
r_{c}^{*}(\mathcal{P}(\mathcal{M}))<\frac{7.56}{1-q}\left(2.47D\sqrt{-K}\frac{\breve{\mathrm{V}}_{n}'(D\sqrt{-K})}{\breve{\mathrm{V}}_{n}(D\sqrt{-K})}\right)^{q}.
\]
To further bound the right hand side, if $n\ge3$, for any $z>0$,
\begin{align*}
 & \frac{z\breve{\mathrm{V}}_{n}'(z)}{\breve{\mathrm{V}}_{n}(z)}\\
 & =\frac{z(\sinh z)^{n-1}}{\int_{0}^{z}(\sinh t)^{n-1}\mathrm{d}t}\\
 & \le\frac{z((e^{z}-e^{-z})/2)^{n-1}}{\int_{z(1-(n-1)^{-1})}^{z}((e^{t}-e^{t-2z(1-(n-1)^{-1})})/2)^{n-1}\mathrm{d}t}\\
 & =\frac{ze^{z(n-1)}(1-e^{-2z})^{n-1}}{(n-1)^{-1}(e^{z(n-1)}-e^{z(1-(n-1)^{-1})(n-1)})(1-e^{-2z(1-(n-1)^{-1})})^{n-1}}\\
 & =\frac{(n-1)z}{1-e^{-z}}\left(\frac{1-e^{-2z}}{1-e^{-2z(1-(n-1)^{-1})}}\right)^{n-1}\\
 & \stackrel{(a)}{\le}(n-1)(z+1)\left(\frac{1}{1-(n-1)^{-1}}\right)^{n-1}\\
 & \stackrel{(b)}{\le}en(z+1),
\end{align*}
where (a) is because $z/(1-e^{-z})\le z+1$, and $(1-y)/(1-y^{a})\le1/a$
for any $0\le y<1$, $a\le1$ (which can be shown by $1-y^{a}-a(1-y)\ge0$
by convexity), and (b) is because $(y/(y+1))(1-1/y)^{-y}\le e$ for
$y\ge2$. If $n=2$, then
\begin{align*}
\frac{z\breve{\mathrm{V}}_{2}'(z)}{\breve{\mathrm{V}}_{2}(z)} & =\frac{z\sinh z}{\int_{0}^{z}\sinh t\mathrm{d}t}\\
 & =\frac{z\sinh z}{\cosh z-1}\\
 & \le z+2\\
 & \le en(z+1).
\end{align*}
Therefore $z\breve{\mathrm{V}}_{n}'(z)/\breve{\mathrm{V}}_{n}(z)\le en(z+1)$
for any $n\ge1$, $z>0$ (the case $n=1$ is obvious). Hence,
\[
r_{c}^{*}(\mathcal{P}(\mathcal{M}))<\frac{7.56}{1-q}\left(6.72n(D\sqrt{-K}+1)\right)^{q}.
\]
\end{proof}
\medskip{}

Another approach is to embed $\mathcal{M}$ into a Euclidean space.
\begin{cor}
\label{cor:dyadic_manifold}Let $c(x,y)=(d_{\mathcal{M}}(x,y))^{q}$,
$0<q<1$. If $\mathcal{M}$ can be smoothly (not necessarily isometrically)
embedded in $\mathbb{R}^{m}$ by the embedding function $f:\mathcal{M}\to\mathbb{R}^{m}$,
then we have
\begin{equation}
r_{c}^{*}(\mathcal{P}(\mathcal{M}))<\frac{7.56}{1-q}\left(\frac{2.47\mathrm{V}_{m-1,2}}{\mathrm{V}_{m,2}}\cdot\frac{\sup_{x\neq y\in\mathcal{M}}(d_{\mathcal{M}}(x,y)/\Vert f(x)-f(y)\Vert_{2})}{\inf_{x\neq y\in\mathcal{M}}(d_{\mathcal{M}}(x,y)/\Vert f(x)-f(y)\Vert_{2})}\right)^{q}.\label{eq:dyadic_manifold_bd}
\end{equation}
This has the following consequences:
\begin{itemize}
\item For the $n$-sphere $\mathcal{M}=\{x\in\mathbb{R}^{n+1}:\,\Vert x\Vert_{2}=1\}$,
\[
r_{c}^{*}(\mathcal{P}(\mathcal{M}))<\frac{20.27}{1-q}n^{q/2}.
\]
\item For the $n$-torus $\mathcal{M}=\{x\in\mathbb{R}^{2n}:\,x_{2i-1}^{2}+x_{2i}^{2}=1\,\forall i\in[1..n]\}$,
\[
r_{c}^{*}(\mathcal{P}(\mathcal{M}))<\frac{20.27}{1-q}n^{q/2}.
\]
\end{itemize}
\end{cor}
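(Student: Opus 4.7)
The plan is to transport the problem from $\mathcal{M}$ to the ambient Euclidean space via $f$, incurring a bi-Lipschitz penalty equal to the ratio in the statement, and then invoke Theorem \ref{thm:rn_rc_ub} in $\mathbb{R}^m$ with $p=2$. Concretely, I would define the pulled-back metric $\tilde{d}(x,y):=\|f(x)-f(y)\|_2$ on $\mathcal{M}$; since the embedding $f$ is injective this is a genuine metric, and writing $L:=\sup_{x\neq y} d_{\mathcal{M}}(x,y)/\tilde{d}(x,y)$ and $\ell:=\inf_{x\neq y} d_{\mathcal{M}}(x,y)/\tilde{d}(x,y)$ gives the two-sided inequality $\ell\tilde{d}\le d_{\mathcal{M}}\le L\tilde{d}$. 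The ratio bound in Proposition \ref{prop:rc_prop_misc} applied to the pair of costs $(d_{\mathcal{M}}^q,\tilde{d}^q)$ then yields
\[
r_{d_{\mathcal{M}}^q}^{*}(\mathcal{P}(\mathcal{M})) \le (L/\ell)^q\, r_{\tilde{d}^q}^{*}(\mathcal{P}(\mathcal{M})).
\]

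Next, because $f$ is a smooth embedding, it is a homeomorphism (hence a Borel isomorphism) from $\mathcal{M}$ onto the Polish subspace $f(\mathcal{M})\subseteq\mathbb{R}^m$. Given any collection $\{P_\alpha\}\subseteq\mathcal{P}(\mathcal{M})$, I would apply Theorem \ref{thm:rn_rc_ub} (specifically the bound \eqref{eq:rn_rc_ub_Bnp} with $p=2$, so the factor $n^{\mathbf{1}\{p>2\}(1-1/p)}$ equals $1$) to the pushforwards $\{f_{*}P_\alpha\}\subseteq\mathcal{P}(\mathbb{R}^m)$, obtaining a coupling $\{Y_\alpha\}$ in $\mathbb{R}^m$. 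Since $Y_\alpha\sim f_{*}P_\alpha$ is supported in $f(\mathcal{M})$, setting $X_\alpha:=f^{-1}(Y_\alpha)$ produces a coupling of $\{P_\alpha\}$ on $\mathcal{M}$ that satisfies $\tilde{d}(X_\alpha,X_\beta)=\|Y_\alpha-Y_\beta\|_2$ and $C_{\tilde{d}^q}^{*}(P_\alpha,P_\beta)=C_{\|\cdot\|_2^q}^{*}(f_{*}P_\alpha,f_{*}P_\beta)$. Hence $r_{\tilde{d}^q}^{*}(\mathcal{P}(\mathcal{M})) \le r_{\|\cdot\|_2^q}^{*}(\mathcal{P}(\mathbb{R}^m)) < \tfrac{7.56}{1-q}(2.47\,\mathrm{V}_{m-1,2}/\mathrm{V}_{m,2})^q$, and chaining this with the previous display gives the general formula \eqref{eq:dyadic_manifold_bd}.

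For the sphere and torus cases, I would check $L/\ell=\pi/2$ by elementary trigonometry. On the unit $n$-sphere, $\|x-y\|_2=2\sin(d_{\mathcal{M}}(x,y)/2)$, so with $u:=d_{\mathcal{M}}(x,y)/2\in(0,\pi/2]$ the ratio $d_{\mathcal{M}}/\tilde{d}=u/\sin u$ is monotonically increasing from $1$ to $\pi/2$. On the $n$-torus, writing $\alpha_i\in[0,\pi]$ for the geodesic distance in the $i$-th circle factor, $d_{\mathcal{M}}^2=\sum_i\alpha_i^2$ and $\tilde{d}^2=4\sum_i\sin^2(\alpha_i/2)$, so the squared ratio is a weighted average of terms $\alpha_i^2/(4\sin^2(\alpha_i/2))\in[1,(\pi/2)^2]$ and hence also lies in $[1,(\pi/2)^2]$. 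Substituting $L/\ell=\pi/2$ together with $m=n+1$ for the sphere and $m=2n$ for the torus, and using \eqref{eq:Bnp_ratio} to bound $\mathrm{V}_{m-1,2}/\mathrm{V}_{m,2}=O(\sqrt{m})$, produces the advertised $O(n^{q/2})$ growth rate after consolidating the constants.

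The main obstacle is not conceptual but bookkeeping: one must verify that the coupling in $\mathbb{R}^m$ furnished by Theorem \ref{thm:rn_rc_ub} genuinely has its marginals supported in $f(\mathcal{M})$ almost surely (so that $f^{-1}$ is defined where needed), and that the pulled-back coupling is still realized on the standard probability space, i.e.\ lies in $\Gamma_\lambda(\{P_\alpha\})$. Both properties follow from $f$ being a Borel isomorphism onto its image together with the marginal constraint, but this is the one place where care is required to ensure measurability when passing between $\mathcal{M}$ and $\mathbb{R}^m$.
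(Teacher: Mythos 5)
Your proposal is correct and follows essentially the same route as the paper: the general bound \eqref{eq:dyadic_manifold_bd} is obtained exactly as in the paper by combining Theorem \ref{thm:rn_rc_ub} (with $p=2$) with the ratio bound of Proposition \ref{prop:rc_prop_misc}, and your $L/\ell=\pi/2$ computations for the sphere and torus with $f(x)=x$ match the paper's. Your spelled-out transfer of the coupling through $f$ (Borel isomorphism onto its image, marginals supported in $f(\mathcal{M})$) and the final use of \eqref{eq:Bnp_ratio} are precisely the steps the paper compresses into ``direct consequence,'' with the explicit constant $20.27$ emerging from the same consolidation $7.56\cdot\bigl(2.47\,(\pi/2)\,(3n/2)^{1/2}/(2\mathit{\Gamma}(3/2))\bigr)^{q}$ you indicate.
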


\begin{proof}
[Proof of Corollary \ref{cor:dyadic_manifold}] Note that \eqref{eq:dyadic_manifold_bd}
is a direct consequence of Theorem \ref{thm:rn_rc_ub} and the ratio
bound in Proposition \ref{prop:rc_prop_misc}. When $\mathcal{M}=\{x\in\mathbb{R}^{n+1}:\,\Vert x\Vert_{2}=1\}$,
letting $f(x)=x$, we have $\sup_{x\neq y}(d_{\mathcal{M}}(x,y)/\Vert x-y\Vert_{2})=\pi/2$,
and $\inf_{x\neq y}(d_{\mathcal{M}}(x,y)/\Vert x-y\Vert_{2})=1$.
Hence,
\begin{align*}
r_{c}^{*}(\mathcal{P}(\mathcal{M})) & <\frac{7.56}{1-q}\left(\frac{2.47(\pi/2)\mathrm{V}_{n,2}}{\mathrm{V}_{n+1,2}}\right)^{q}\\
 & \stackrel{(a)}{\le}\frac{7.56}{1-q}\left(2.47(\pi/4)\frac{(1+n/2)^{1/2}}{\mathit{\Gamma}(1+1/2)}\right)^{q}\\
 & \le\frac{7.56}{1-q}\left(2.47(\pi/4)\frac{(3n/2)^{1/2}}{\mathit{\Gamma}(1+1/2)}\right)^{q}\\
 & \le\frac{20.27n^{q/2}}{1-q},
\end{align*}
where (a) is by \eqref{eq:Bnp_ratio}. 

When $\mathcal{M}=\{x\in\mathbb{R}^{2n}:\,x_{2i-1}^{2}+x_{2i}^{2}=1\,\forall i\in[1..n]\}$,
letting $f(x)=x$, we have $\sup_{x\neq y}(d_{\mathcal{M}}(x,y)/\Vert x-y\Vert_{2})=\pi/2$,
and $\inf_{x\neq y}(d_{\mathcal{M}}(x,y)/\Vert x-y\Vert_{2})=1$.
Hence,
\begin{align*}
r_{c}^{*}(\mathcal{P}(\mathcal{M})) & <\frac{7.56}{1-q}\left(\frac{2.47(\pi/2)\mathrm{V}_{2n-1,2}}{\mathrm{V}_{2n,2}}\right)^{q}\\
 & \stackrel{(a)}{\le}\frac{7.56}{1-q}\left(2.47(\pi/4)\frac{(1+(2n-1)/2)^{1/2}}{\mathit{\Gamma}(1+1/2)}\right)^{q}\\
 & \le\frac{7.56}{1-q}\left(2.47(\pi/4)\frac{(3n/2)^{1/2}}{\mathit{\Gamma}(1+1/2)}\right)^{q}\\
 & \le\frac{20.27n^{q/2}}{1-q},
\end{align*}
where (a) is by \eqref{eq:Bnp_ratio}.
\end{proof}
\medskip{}

\section{Lower Bounds on $r_{c}^{*}$\label{sec:lbs}}

In this section, we prove Proposition \ref{prop:rn_rc_lb}, which
is separated into Proposition \ref{prop:rn_rc_lb_1} and \ref{prop:rn_rc_lb_2}.
\begin{prop}
\label{prop:rn_rc_lb_1}For any Polish space $\mathcal{X}$ and symmetric
cost function $c$,
\[
r_{c}^{*}(\mathcal{P}(\mathcal{X}))\ge\sup_{x_{1},\ldots,x_{k}\in\mathcal{X}}\frac{2(k-1)\min_{1\le i<j\le k}c(x_{i},x_{j})}{\sum_{i=1}^{k}c(x_{i},x_{i+1})},
\]
where the supremum is over $k\in\mathbb{N}$ and sequences $x_{1},\ldots,x_{k}\in\mathcal{X}$,
and we let $x_{k+1}:=x_{1}$. As a result, for any $p\in\mathbb{R}_{\ge1}\cup\{\infty\}$,
$q>0$, $n\in\mathbb{N}$, $s,s_{1},\ldots,s_{n}\in\mathbb{N}$, $S\subseteq\mathbb{R}^{n}$
with $\lambda(S)>0$,

\[
\begin{array}{rll}
r_{c}^{*}(\mathcal{P}([0..s])) & \!\!\!\!\!\ge\frac{2}{1+s^{q-1}} & \text{for}\;c(x,y)=|x-y|^{q},\;q<1,\\
r_{c}^{*}(\mathcal{P}(\mathbb{Z})) & \!\!\!\!\!\ge2 & \text{for}\;c(x,y)=|x-y|^{q},\;q<1,\;\dagger\\
r_{c}^{*}(\mathcal{P}(\mathcal{X})) & \!\!\!\!\!\ge2(1-(2\lfloor|\mathcal{X}|/2\rfloor)^{-1}) & \text{for}\;\mathcal{X}=[0..s_{1}]\times\cdots\times[0..s_{n}],\,n\ge2,\,c(x,y)=\Vert x-y\Vert_{p}^{q},\\
r_{c}^{*}(\mathcal{P}(\mathbb{Z}^{n})) & \!\!\!\!\!\ge2 & \text{for}\;n\ge2,\,c(x,y)=\Vert x-y\Vert_{p}^{q},\;\dagger\\
r_{c}^{*}(\mathcal{P}(\mathcal{X})) & \!\!\!\!\!\ge2(1-|\mathcal{X}|^{-1}) & \text{for}\;\mathcal{X}=[0..s_{1}]\times\cdots\times[0..s_{n}],\,n\ge1,\,c(x,y)=(d_{\mathrm{H}}(x,y))^{q},\\
r_{c}^{*}(\mathcal{P}(\mathbb{Z}^{n})) & \!\!\!\!\!\ge2 & \text{for}\;n\ge1,\,c(x,y)=(d_{\mathrm{H}}(x,y))^{q}.
\end{array}
\]
The lines marked with $\dagger$ are also true for the cases given
in Remark \ref{rem:rc_lb_cases}.

\smallskip{}
\end{prop}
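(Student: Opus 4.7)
My plan is to extend the cycle technique used in the proof of Theorem \ref{thm:rd_bd} (total variation cost) to a general symmetric cost $c$. Given arbitrary points $x_1, \ldots, x_k \in \mathcal{X}$, I will consider the $k$ distributions
\[
P_\alpha := \frac{1}{k-1} \sum_{i \neq \alpha} \delta_{x_i}, \qquad \alpha \in [1..k].
\]
A direct transport plan---keep the common mass of $P_\alpha$ and $P_\beta$ fixed, and move the extra $1/(k-1)$ mass of $P_\alpha$ at $x_\beta$ onto the deficit of $P_\beta$ at $x_\alpha$---shows $C_c^*(P_\alpha, P_\beta) \leq c(x_\alpha, x_\beta)/(k-1)$. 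Since $\{P_\alpha\}_\alpha \subseteq \mathcal{P}(\mathcal{X})$, any coupling achieving the infimum in $r_c^*(\mathcal{P}(\mathcal{X}))$ restricts to a coupling of $\{P_\alpha\}_\alpha$.

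Next I will run the cycle argument. Take any coupling $\{X_\alpha\}_{\alpha \in [1..k]}$ of $\{P_\alpha\}_\alpha$ and set $X_{k+1} := X_1$. Two observations rule out the ``too good'' configurations: first, the $X_\alpha$ cannot all coincide, since a common value $x$ would lie in $\bigcap_\alpha \mathrm{supp}(P_\alpha) = \bigcap_\alpha \{x_i : i \neq \alpha\} = \emptyset$; second, the number of cyclic indices with $X_\alpha \neq X_{\alpha+1}$ cannot equal exactly $1$, because a unique transition would force $X_1 = \cdots = X_\alpha$ and $X_{\alpha+1} = \cdots = X_k = X_1 = X_\alpha$. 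Hence $\sum_\alpha \mathbf{1}\{X_\alpha \neq X_{\alpha+1}\} \geq 2$ almost surely. Since $X_\alpha \in \{x_1, \ldots, x_k\}$, every transition contributes cost at least $\min_{i < j} c(x_i, x_j)$, so $\sum_\alpha \mathbf{E}[c(X_\alpha, X_{\alpha+1})] \geq 2 \min_{i<j} c(x_i, x_j)$. Combining with $\mathbf{E}[c(X_\alpha, X_{\alpha+1})] \leq r_c^*(\mathcal{P}(\mathcal{X})) \cdot c(x_\alpha, x_{\alpha+1})/(k-1)$ and taking $\sup$ over $x_1, \ldots, x_k$ yields the main inequality.

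For the consequences I specialize the point configuration. On $[0..s]$ with $c(x,y) = |x-y|^q$ and $q<1$, taking $x_i = i-1$ for $i \in [1..s+1]$ makes the minimum pairwise cost $1$ (as $t^q$ is increasing), the $s$ consecutive costs along the path each equal $1$, and the closing cost equals $s^q$, yielding ratio $2s/(s+s^q) = 2/(1+s^{q-1})$; letting $k \to \infty$ on $\mathbb{Z}$ then yields $2$. For $\mathcal{X} = [0..s_1] \times \cdots \times [0..s_n]$ with $n \geq 2$ under $\ell_p^q$, I plan to take $x_1, \ldots, x_k$ along a cycle of length $k = 2\lfloor |\mathcal{X}|/2 \rfloor$ in the unit-$\ell_p$-neighbor graph, supplied by a mixed-radix Gray code traversal; minimum and consecutive pairwise $\ell_p$-distances both equal $1$, giving ratio $2(k-1)/k$. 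Sending $s_i \to \infty$ gives $2$ on $\mathbb{Z}^n$. The Hamming-distance cases are analogous, using a Hamilton cycle in the Hamming graph $K_{s_1+1} \mathbin{\square} \cdots \mathbin{\square} K_{s_n+1}$, which admits one for $|\mathcal{X}| \geq 3$, so $k = |\mathcal{X}|$ and the ratio becomes $2(1 - |\mathcal{X}|^{-1})$.

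The main technical hurdle will be establishing the existence of the required near-Hamilton cycle for $\ell_p$ on a lattice with $n \geq 2$. The unit-distance graph there is bipartite, so a Hamilton cycle exists only when the two color classes are balanced (equivalently, when $|\mathcal{X}|$ is even); when $|\mathcal{X}|$ is odd I drop a vertex of the majority class and exhibit a Hamilton cycle on the remaining balanced sub-lattice, of length $2\lfloor |\mathcal{X}|/2 \rfloor$. This is classical for $n = 2$ and extends to higher $n$ by appending a Gray-code traversal along an additional coordinate. The Hamming-graph case is easier because that graph is non-bipartite and much denser, with Hamiltonicity following from standard results on Cartesian products of complete graphs.
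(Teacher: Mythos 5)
Your core argument is exactly the paper's: the same distributions $P_{\alpha}=(k-1)^{-1}\sum_{i\neq\alpha}\delta_{x_{i}}$, the same bound $C_{c}^{*}(P_{\alpha},P_{\alpha+1})\le c(x_{\alpha},x_{\alpha+1})/(k-1)$, and the same cycle-counting step (at least two transitions around the cycle, each costing at least $\min_{i<j}c(x_{i},x_{j})$), which is also how the paper generalizes the argument from Theorem \ref{thm:rd_bd}. The specializations on $[0..s]$, on the grid via a cycle of length $2\lfloor|\mathcal{X}|/2\rfloor$ in the grid graph, and on the Hamming graph via a Hamiltonian cycle all match; your handling of odd $|\mathcal{X}|$ (delete a majority-colour vertex and use Hamiltonicity of the punctured odd grid) is a legitimate variant of the paper's appeal to the longest-vertex-disjoint-path bound in $G_{a,b}$, and your Gray-code dimension reduction is the same device the paper uses (a Hamiltonian path in $G_{a_{1},a_{2}}$ collapses two coordinates). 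One phrasing slip: the infimum defining $r_{c}^{*}$ need not be attained, so argue with an arbitrary coupling (or $r_{c}^{*}(\mathcal{P}(\mathcal{X}))\ge r_{c}^{*}(\{P_{\alpha}\}_{\alpha})$) rather than with "a coupling achieving the infimum"; and the Hamming graph is in fact bipartite when all $s_{i}=1$ (it is the hypercube $Q_{n}$), so the "non-bipartite" justification is wrong there, though Hamiltonicity of $Q_{n}$ for $n\ge2$ rescues the claim.

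The genuine gap is the last sentence of the proposition: the lines marked $\dagger$ must also hold for the cases of Remark \ref{rem:rc_lb_cases}, i.e.\ for $\mathcal{P}_{\ll\lambda_{S}}(\mathbb{R}^{n})$ with $\lambda(S)>0$ and for $\mathcal{P}(\mathcal{M})$ with the intrinsic metric of a Riemannian manifold, and your proposal never addresses these. The cycle construction as you state it uses point masses, which are simply not members of $\mathcal{P}_{\ll\lambda_{S}}(\mathbb{R}^{n})$, and a lattice configuration is not literally available on a general manifold; the paper handles both by the approximation argument of Appendix \ref{subsec:bd_ball_cont}: one locates a hypercube $R$ with $\lambda(R\setminus S)$ a small fraction of $\lambda(R)$, replaces each $\delta_{x_{i}}$ by the normalized restriction of Lebesgue measure to $S$ intersected with a small ball around a scaled lattice point, and checks that both $\mathbf{E}[c(X_{\alpha},X_{\beta})]$ and $C_{c}^{*}$ are distorted only by factors tending to $1$; for manifolds one places the lattice in a normal coordinate chart where $d_{\mathcal{M}}$ is $(1\pm\epsilon)$-comparable to the Euclidean distance and again passes to the limit. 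Without some such step your proof establishes only the $\mathbb{Z}$ and $\mathbb{Z}^{n}$ versions of the $\dagger$ lines, not the full statement.
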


\begin{proof}
[Proof of Proposition \ref{prop:rn_rc_lb_1}] Fix a sequence $x_{1},\ldots,x_{k}\in\mathcal{X}$
and let $x_{k+1}:=x_{1}$. Let $P_{i}:=(k-1)^{-1}\sum_{j\in\{1,\ldots,k\}\backslash\{i\}}\delta_{x_{i}}$
for $i=1,\ldots,k$, and let $P_{k+1}:=P_{1}$. We have 
\[
C_{c}^{*}(P_{i+1},P_{i})\le(k-1)^{-1}c(x_{i},x_{i+1}).
\]
Fix any coupling $\{X_{i}\}_{i\in[1..k]}$ of $\{P_{i}\}_{i\in[1..k]}$
and let $r:=r_{c}(\{X_{i}\}_{i})$. Let $X_{k+1}:=X_{1}$, then
\begin{align*}
 & \sum_{i=1}^{k}\mathbf{P}(X_{i+1}\neq X_{i})\\
 & =\mathbf{E}\Big[\sum_{i=1}^{k}\mathbf{1}\{X_{i+1}\neq X_{i}\}\Big]\\
 & \ge2,
\end{align*}
where the last inequality is because, with probability 1, $X_{1},\ldots,X_{k}$
are not all equal (for any $x$, there is a distribution $P_{i}$
not supported at $x$), so there are at least two pairs among the
cycle of pairs $(X_{1},X_{2}),\ldots,(X_{k-1},X_{k}),(X_{k},X_{1})$
with $X_{i+1}\neq X_{i}$. Also,
\begin{align*}
 & \mathbf{P}(X_{i+1}\neq X_{i})\min_{1\le i<j\le k}c(x_{i},x_{j})\\
 & \le\mathbf{E}\left[c(X_{i+1},X_{i})\right]\\
 & \le rC_{c}^{*}(P_{i+1},P_{i})\\
 & \le r(k-1)^{-1}c(x_{i},x_{i+1}).
\end{align*}
Hence,
\[
r\ge\frac{2(k-1)\min_{1\le i<j\le k}c(x_{i},x_{j})}{\sum_{i=1}^{k}c(x_{i},x_{i+1})}.
\]

For $c(x,y)=|x-y|^{q}$, $q<1$ over $[0..s]$, let $k:=s+1$, $x_{i}:=i-1$
for $1\le i\le s+1$. Then
\begin{align*}
r_{c}^{*}(\mathcal{P}([0..s])) & \ge\frac{2(k-1)\min_{1\le i<j\le k}c(x_{i},x_{j})}{\sum_{i=1}^{k}c(x_{i},x_{i+1})}\\
 & =\frac{2s}{s+s^{q}}\\
 & =\frac{2}{1+s^{q-1}}.
\end{align*}
The bound for $r_{c}^{*}(\mathcal{P}(\mathbb{Z}))$ follows from letting
$s\to\infty$. The other cases in Remark \ref{rem:rc_lb_cases} follow
from the same arguments as in Appendix \ref{subsec:bd_ball_cont}.

For $c(x,y)=\Vert x-y\Vert_{p}^{q}$ over $\mathcal{X}:=[1..s_{1}]\times\cdots\times[1..s_{n}]$,
$n\ge2$, $s_{i}\ge2$ (we consider this instead of $\mathcal{X}=[0..s_{1}]\times\cdots\times[0..s_{n}]$
due to notational simplicity), we will construct $x_{1},\ldots,x_{k}$
as a vertex-disjoint cycle of the grid graph $G_{s_{1},\ldots,s_{n}}$
(the grid graph $G_{a_{1},\ldots,a_{n}}$ is the graph with vertex
set $[1..a_{1}]\times\cdots\times[1..a_{n}]$, and two vertices $x,y$
connected if $\Vert x-y\Vert_{1}=1$). Note that $G_{a_{1},\ldots,a_{n}}$
has a subgraph that is isomorphic to $G_{a_{1}\cdot a_{2},a_{3},a_{4},\ldots,a_{n}}$
(consider a Hamiltonian path of $G_{a_{1},a_{2}}$, and the Cartesian
graph product of that path with $G_{a_{3},a_{4},\ldots,a_{n}}$).
Applying this fact repeatedly, $G_{s_{1},\ldots,s_{n}}$ has a subgraph
isomorphic to $G_{s_{1},\prod_{i=2}^{n}s_{i}}$. If $\prod_{i=1}^{n}s_{i}$
is even, there exists a vertex-disjoint cycle of length $\prod_{i=1}^{n}s_{i}$
in $G_{s_{1},\prod_{i=2}^{n}s_{i}}$, and hence in $G_{s_{1},\ldots,s_{n}}$
(since $G_{a,b}$ is Hamiltonian if $a$ or $b$ is even \cite[p. 148]{skiena1991graph}).
If $\prod_{i=1}^{n}s_{i}$ is odd, since any longest vertex-disjoint
path between any two vertices of $G_{a,b}$ contains at least $ab-2$
vertices \cite{keshavarz2012linear}, there exists a vertex-disjoint
cycle in $G_{s_{1},\prod_{i=2}^{n}s_{i}}$ with length at least $\prod_{i=1}^{n}s_{i}-2$
(by considering the longest vertex-disjoint path between $(0,0)$
and $(0,1)$). Since $G_{s_{1},\prod_{i=2}^{n}s_{i}}$ is bipartite
\cite[p. 148]{skiena1991graph}, the cycle has even length, and hence
has length $\prod_{i=1}^{n}s_{i}-1$. Therefore we can find a vertex-disjoint
cycle $x_{1},\ldots,x_{k}$ in $G_{s_{1},\ldots,s_{n}}$ with $k=2\lfloor(1/2)\prod_{i=1}^{n}s_{i}\rfloor$,
and
\begin{align*}
r_{c}^{*}(\mathcal{P}(\mathcal{X})) & \ge\frac{2(k-1)\min_{1\le i<j\le k}c(x_{i},x_{j})}{\sum_{i=1}^{k}c(x_{i},x_{i+1})}\\
 & =2\left(1-\left(2\left\lfloor \frac{1}{2}\prod_{i=1}^{n}s_{i}\right\rfloor \right)^{-1}\right).
\end{align*}
The bound for $r_{c}^{*}(\mathcal{P}(\mathbb{Z}^{n}))$, $n\ge2$
follows from considering $[1..s]^{n}$ and letting $s\to\infty$.
The other cases in Remark \ref{rem:rc_lb_cases} follow from the same
arguments as in Appendix \ref{subsec:bd_ball_cont}.

For $c(x,y)=(d_{\mathrm{H}}(x,y))^{q}$ over $\mathcal{X}:=[1..s_{1}]\times\cdots\times[1..s_{n}]$,
$s_{i}\ge2$ (we consider this instead of $\mathcal{X}=[0..s_{1}]\times\cdots\times[0..s_{n}]$
due to notational simplicity), we will construct $x_{1},\ldots,x_{k}$
as a vertex-disjoint cycle of the rook graph $K_{s_{1},\ldots,s_{n}}$
(the rook graph $K_{a_{1},\ldots,a_{n}}$ is the graph with vertex
set $[1..a_{1}]\times\cdots\times[1..a_{n}]$, and two vertices $x,y$
connected if $d_{\mathrm{H}}(x,y)=1$). We will show $K_{s_{1},\ldots,s_{n}}$
is Hamiltonian. Assume $n\ge2$ (otherwise $K_{s_{1}}$ is the complete
graph, which is clearly Hamiltonian). Since $G_{a_{1},\ldots,a_{n}}$
is a subgraph of $K_{a_{1},\ldots,a_{n}}$, $K_{s_{1},\ldots,s_{n}}$
has a subgraph isomorphic to $K_{s_{1}}\square G_{s_{2},\ldots,s_{n}}$
(where $\square$ denotes the Cartesian graph product), and hence
$K_{s_{1},\ldots,s_{n}}$ has a subgraph isomorphic to $K_{s_{1}}\square G_{\prod_{i=2}^{n}s_{i}}$.
If $\prod_{i=2}^{n}s_{i}$ is even, then $K_{s_{1}}\square G_{\prod_{i=2}^{n}s_{i}}$
(and hence $K_{s_{1},\ldots,s_{n}}$) is Hamiltonian since $G_{s_{1},\prod_{i=2}^{n}s_{i}}$
is Hamiltonian. If $\prod_{i=2}^{n}s_{i}$ is odd, let $\tilde{x}_{1},\ldots,\tilde{x}_{\prod_{i=1}^{n}s_{i}-s_{1}}$
be a Hamiltonian cycle of $G_{s_{1},\prod_{i=2}^{n}s_{i}-1}$. The
cycle must contain the edge $((1,\prod_{i=2}^{n}s_{i}-1),\,(2,\prod_{i=2}^{n}s_{i}-1))$
(since the degree of the node $(1,\prod_{i=2}^{n}s_{i}-1)$ in $G_{s_{1},\prod_{i=2}^{n}s_{i}-1}$
is 2). Without loss of generality, assume $\tilde{x}_{\prod_{i=1}^{n}s_{i}-s_{1}}=(1,\prod_{i=2}^{n}s_{i}-1)$,
$\tilde{x}_{1}=(2,\prod_{i=2}^{n}s_{i}-1)$. Let $\tilde{x}_{\prod_{i=1}^{n}s_{i}-s_{1}+1}=(1,\prod_{i=2}^{n}s_{i})$,
$\tilde{x}_{\prod_{i=1}^{n}s_{i}-s_{1}+1+i}=(s+1-i,\prod_{i=2}^{n}s_{i})$
for $i=1,\ldots,s_{1}-1$, then $\tilde{x}_{1},\ldots,\tilde{x}_{\prod_{i=1}^{n}s_{i}}$
is a Hamiltonian cycle of $K_{s_{1}}\square G_{\prod_{i=2}^{n}s_{i}}$,
and hence $K_{s_{1},\ldots,s_{n}}$ is Hamiltonian. Let $x_{1},\ldots,x_{k}$
be a Hamiltonian cycle of $K_{s_{1},\ldots,s_{n}}$,
\begin{align*}
r_{c}^{*}(\mathcal{P}(\mathcal{X})) & \ge\frac{2(k-1)\min_{1\le i<j\le k}c(x_{i},x_{j})}{\sum_{i=1}^{k}c(x_{i},x_{i+1})}\\
 & =2\left(1-\left(\prod_{i=1}^{n}s_{i}\right)^{-1}\right).
\end{align*}
The bound for $r_{c}^{*}(\mathcal{P}(\mathbb{Z}^{n}))$ follows from
considering $[1..s]^{n}$ and letting $s\to\infty$.
\end{proof}
\smallskip{}

\begin{prop}
\label{prop:rn_rc_lb_2}For any Polish space $\mathcal{X}$ and symmetric
cost function $c$,
\[
r_{c}^{*}(\mathcal{P}(\mathcal{X}))\ge\sup\left\{ \left(\sum_{i=1}^{k}\frac{c(x_{i},x_{i+1})+c(x_{i+k},x_{i+k+1})}{c(x_{i},x_{i+k+1})+c(x_{i+k},x_{i+1})-c(x_{i},x_{i+1})-c(x_{i+k},x_{i+k+1})}\right)^{-1}\right\} +1,
\]
where the supremum is over $k\in\mathbb{N}$ and sequences $x_{1},\ldots,x_{2k}\in\mathcal{X}$
satisfying
\begin{equation}
c(x_{i},x_{i+k+1})+c(x_{i+k},x_{i+1})>c(x_{i},x_{i+1})+c(x_{i+k},x_{i+k+1})\label{eq:rn_rc_lb_2_cond}
\end{equation}
for $i=1,\ldots,k$, where we let $x_{2k+1}=x_{1}$. This has the
following consequences:
\begin{enumerate}
\item 
\begin{align}
r_{c}^{*}(\mathcal{P}(\mathcal{X})) & \ge\sup\bigg\{\Big(\frac{c(x_{1},x_{2})}{c(x_{2},x_{3})+c(x_{3},x_{1})-c(x_{1},x_{2})}+\frac{c(x_{2},x_{3})}{c(x_{3},x_{1})+c(x_{1},x_{2})-c(x_{2},x_{3})}\nonumber \\
 & \;\;+\frac{c(x_{3},x_{1})}{c(x_{1},x_{2})+c(x_{2},x_{3})-c(x_{3},x_{1})}\Big)^{-1}\bigg\}+1,\label{eq:rn_rc_lb_tri}
\end{align}
where the supremum is over $x_{1},x_{2},x_{3}\in\mathcal{X}$ where
the denominators in the three fractions above are positive.
\item For any $p\in\mathbb{R}_{\ge1}\cup\{\infty\}$, $q>0$, $n\in\mathbb{Z}_{\ge2}$,
$s\in\mathbb{N}$, for $c(x,y)=\Vert x-y\Vert_{p}^{q}$,
\begin{align*}
r_{c}^{*}(\mathcal{P}([0..s]^{n})) & \ge\frac{(n-1)^{q/p}s^{q}-1}{ns}+1\\
 & \ge(n-1)^{q/p}n^{-1}s^{q-1}.
\end{align*}
As a result, if $q>1$,
\[
r_{c}^{*}(\mathcal{P}(\mathbb{Z}^{n}))=\infty.
\]
This is also true for the other two cases given in Remark \ref{rem:rc_lb_cases}.
\item For any $q>0$, $n\in\mathbb{Z}_{\ge2}$, for $c(x,y)=(d_{\mathrm{H}}(x,y))^{q}$
over $[1..2]^{n}$,
\[
r_{c}^{*}(\mathcal{P}([1..2]^{n}))\ge\frac{(n-1)^{q}-1}{n}+1.
\]
\end{enumerate}
\smallskip{}
\end{prop}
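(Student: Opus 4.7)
The plan is to construct $k$ distributions $P_1,\dots,P_k$, each supported on an opposite pair of the $2k$-cycle, and use the cyclic structure to extract a lower bound on $r_c^*$. Set $P_i := \tfrac12(\delta_{x_i} + \delta_{x_{i+k}})$ for $i\in[1..k]$. For each adjacent pair $(P_i,P_{i+1})$ in the $k$-cycle of distributions (with $P_{k+1}:=P_1$), the two natural couplings are the ``cycle-parallel'' matching $x_i\mapsto x_{i+1}$, $x_{i+k}\mapsto x_{i+k+1}$ (total cost $A_i$) and the ``cycle-crossed'' matching $x_i\mapsto x_{i+k+1}$, $x_{i+k}\mapsto x_{i+1}$ (total cost $B_i$). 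The hypothesis $B_i>A_i$ ensures $C_c^*(P_i,P_{i+1})=A_i/2$. Given any coupling $\{X_\alpha\}$, let $Y_i := \mathbf{1}\{X_i=x_i\}\in\{0,1\}$ and $p_i := \mathbf{P}(Y_i\neq Y_{i+1})$. Since the marginals force $\mathbf{P}(Y_i=1)=1/2$, the four joint probabilities of $(Y_i,Y_{i+1})$ satisfy $\mathbf{P}(Y_i=Y_{i+1}=1)=\mathbf{P}(Y_i=Y_{i+1}=0)=(1-p_i)/2$, and an immediate expansion yields, for $i\in[1..k-1]$, the identity $\mathbf{E}[c(X_i,X_{i+1})] = \tfrac{1-p_i}{2}A_i + \tfrac{p_i}{2}B_i$, so the ratio constraint $\mathbf{E}[c(X_i,X_{i+1})]\le r\cdot A_i/2$ translates to $p_i \le (r-1)A_i/(B_i-A_i)$.

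The decisive point is that the wrap-around pair $(P_k,P_1)$ behaves differently. The cycle-parallel matching $x_k\mapsto x_{k+1}$, $x_{2k}\mapsto x_1$ corresponds in the indicator parametrization to $(Y_k,Y_1)\in\{(1,0),(0,1)\}$---that is, to $Y_k\neq Y_1$---rather than to agreement; thus the roles of $A_k$ and $B_k$ swap, giving $\mathbf{E}[c(X_k,X_1)] = \tfrac{p_k}{2}A_k + \tfrac{1-p_k}{2}B_k$. The ratio constraint for the wrap pair therefore forces $1-p_k\le (r-1)A_k/(B_k-A_k)$, an upper bound on the \emph{complement} of $p_k$ rather than on $p_k$ itself.

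To close the loop, I would use the cyclic triangle inequality on the indicators: the event $\{Y_k\neq Y_1\}$ is contained in $\bigcup_{i=1}^{k-1}\{Y_i\neq Y_{i+1}\}$, so a union bound gives $p_k\le\sum_{i=1}^{k-1}p_i$. Combining the $k-1$ upper bounds on $p_i$ with the lower bound on $p_k$,
\[
1 = p_k + (1-p_k) \le \sum_{i=1}^{k-1}p_i + (r-1)\frac{A_k}{B_k-A_k} \le (r-1)\sum_{i=1}^{k}\frac{A_i}{B_i-A_i},
\]
which rearranges to $r\ge 1+\bigl(\sum_{i=1}^k A_i/(B_i-A_i)\bigr)^{-1}$, the main inequality. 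The three consequences follow by specializing the cycle: for part 3 I would take the Gray-code-type cycle of length $2n$ in $\{1,2\}^n$ where each consecutive pair has Hamming distance $1$ and each opposite pair is antipodal, yielding $A_i=2$ and $B_i=2(n-1)^q$; for part 2 I would refine that cycle by subdividing each axis-step into $s$ unit steps, producing a cycle of length $\sim 2ns$ in $[0..s]^n$ with $A_i$ of order $1$ and $B_i$ of order $(n-1)^{q/p}s^q$; part 1 is the $k=3$ specialization obtained by an appropriate assignment of the six indices $x_1,\dots,x_6$ to three base points so that only the three pairwise distances among them survive.

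The main obstacle is precisely the wrap-around asymmetry: without it, the trivial coupling $Y_1=\cdots=Y_k$ would already achieve ratio $1$ on every adjacent pair, and no nontrivial lower bound on $r$ could emerge. Recognizing that the cycle-parallel matching for the single wrap-around pair corresponds to indicator \emph{disagreement} rather than agreement is what inverts one of the $k$ constraints and, together with the cyclic triangle inequality, welds the $k$ pieces into the global bound. The remaining routine work is to verify that in each of the consequences the chosen cycle satisfies $B_i>A_i$ uniformly in $i$---in particular at the ``corner'' indices where the cycle changes direction, which is the only place this can fail.
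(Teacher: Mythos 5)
Your argument is correct and is essentially the paper's own proof: the same two-point distributions $P_i=\tfrac12(\delta_{x_i}+\delta_{x_{i+k}})$, the same reduction to constraints on the indicator disagreement probabilities, and the same exploitation of the wrap-around twist --- your separate treatment of the pair $(P_k,P_1)$ with the roles of $A_k,B_k$ swapped, followed by the union bound $p_k\le\sum_{i<k}p_i$, is just a relabeling of the paper's device of setting $Z_{k+1}:=1-Z_1$ and using $\sum_{i=1}^{k}\mathbf{P}(Z_i\neq Z_{i+1})\ge\mathbf{P}(Z_1\neq Z_{k+1})=1$. The constructions you sketch for the consequences (duplicating each point for the $k=3$ bound, the monotone staircase cycle in $[0..s]^n$, the antipodal Hamming cycle of length $2n$) are also the ones the paper uses, and the deferred verifications are indeed routine (in the degenerate cases where $B_i=A_i$ the claimed bound is at most $1$ and hence trivial).
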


\begin{proof}
[Proof of Proposition \ref{prop:rn_rc_lb_2}] Fix a sequence $x_{1},\ldots,x_{2k}\in\mathcal{X}$
satisfying \eqref{eq:rn_rc_lb_2_cond}, and let $x_{2k+1}:=x_{1}$.
Let $P_{i}:=\frac{1}{2}\delta_{x_{i}}+\frac{1}{2}\delta_{x_{i+k}}$
for $i=1,\ldots,k+1$ (note that $P_{k+1}=P_{1}$). We have
\[
C_{c}^{*}(P_{i},P_{i+1})\le\frac{1}{2}\left(c(x_{i},x_{i+1})+c(x_{i+k},x_{i+k+1})\right).
\]
Fix any coupling $\{X_{i}\}_{i\in[1..k]}$ of $\{P_{i}\}_{i\in[1..k]}$
and let $r:=r_{c}(\{X_{i}\}_{i})$. Since $X_{i}\sim P_{i}=\frac{1}{2}\delta_{x_{i}}+\frac{1}{2}\delta_{x_{i+k}}$,
we can find random variables $Z_{i}\in\{0,1\}$ with uniform marginals
$Z_{i}\sim\mathrm{Unif}\{0,1\}$ such that $X_{i}=x_{i+kZ_{i}}$.
Let $X_{k+1}:=X_{1}$, and $Z_{k+1}:=1-Z_{1}$ (so that $X_{i}=x_{i+kZ_{i}}$
is also satisfied for $i=k+1$). Then
\begin{align*}
 & rC_{c}^{*}(P_{i},P_{i+1})\\
 & \ge\mathbf{E}\left[c(X_{i},X_{i+1})\right]\\
 & =\mathbf{E}\left[c(x_{i+kZ_{i}},\,x_{i+1+kZ_{i+1}})\right]\\
 & =\left(1-\mathbf{P}(Z_{i+1}\neq Z_{i})\right)\frac{1}{2}\left(c(x_{i},x_{i+1})+c(x_{i+k},x_{i+k+1})\right)\\
 & \;\;\;\;+\mathbf{P}(Z_{i+1}\neq Z_{i})\frac{1}{2}\left(c(x_{i},x_{i+k+1})+c(x_{i+k},x_{i+1})\right).
\end{align*}
Hence,
\begin{align*}
\mathbf{P}(Z_{i}\neq Z_{i+1}) & \le\frac{rC_{c}^{*}(P_{i},P_{i+1})-\frac{1}{2}\left(c(x_{i},x_{i+1})+c(x_{i+k},x_{i+k+1})\right)}{\frac{1}{2}\left(c(x_{i},x_{i+k+1})+c(x_{i+k},x_{i+1})-c(x_{i},x_{i+1})-c(x_{i+k},x_{i+k+1})\right)}\\
 & \le\frac{(r-1)\left(c(x_{i},x_{i+1})+c(x_{i+k},x_{i+k+1})\right)}{c(x_{i},x_{i+k+1})+c(x_{i+k},x_{i+1})-c(x_{i},x_{i+1})-c(x_{i+k},x_{i+k+1})}.
\end{align*}
Therefore,
\begin{align*}
 & (r-1)\sum_{i=1}^{k}\frac{c(x_{i},x_{i+1})+c(x_{i+k},x_{i+k+1})}{c(x_{i},x_{i+k+1})+c(x_{i+k},x_{i+1})-c(x_{i},x_{i+1})-c(x_{i+k},x_{i+k+1})}\\
 & \ge\sum_{i=1}^{k}\mathbf{P}(Z_{i}\neq Z_{i+1})\\
 & \ge\mathbf{P}(Z_{k+1}\neq Z_{1})\\
 & =1.
\end{align*}
The result follows.

To show \eqref{eq:rn_rc_lb_tri}, apply the proposition on the sequence
$x_{1},x_{1},x_{2},x_{2},x_{3},x_{3}$.

For $c(x,y)=\Vert x-y\Vert_{p}^{q}$ over $\mathcal{X}:=[0..s_{1}]\times\cdots\times[0..s_{n}]$,
$n\ge2$, $s_{i}\ge1$, let $k:=\sum_{i=1}^{n}s_{i}$. Define $x_{1},\ldots,x_{2k}$
recursively as $x_{1}:=(0,\ldots,0)$, $x_{i}:=\max\{y:\,\Vert x_{i-1}-y\Vert_{1}=1\}$
for $2\le i\le k$ where the maximum is with respect to lexicographical
order, $x_{i}:=(s_{1},\ldots,s_{n})-x_{i-k}$ for $k+1\le i\le2k$.
\begin{align*}
 & r_{c}^{*}(\mathcal{P}(\mathcal{X}))\\
 & \ge\left(\sum_{i=1}^{k}\frac{c(x_{i},x_{i+1})+c(x_{i+k},x_{i+k+1})}{c(x_{i},x_{i+k+1})+c(x_{i+k},x_{i+1})-c(x_{i},x_{i+1})-c(x_{i+k},x_{i+k+1})}\right)^{-1}+1\\
 & =\left(\sum_{i=1}^{k}\frac{2}{c(x_{i},x_{i+k+1})+c(x_{i+k},x_{i+1})-2}\right)^{-1}+1\\
 & \stackrel{(a)}{\ge}\left(k\frac{2}{2(\sum_{i=1}^{n}s_{i}^{p}-\max_{i\in[1:n]}s_{i}^{p})^{q/p}-2}\right)^{-1}+1\\
 & =\Big(\sum_{i=1}^{n}s_{i}\Big)^{-1}\left(\Big(\sum_{i=1}^{n}s_{i}^{p}-\max_{i\in[1:n]}s_{i}^{p}\Big)^{q/p}-1\right)+1,
\end{align*}
where (a) is because by construction, $(x_{i})_{j}\in\{0,s_{j}\}$
for all $j\in[1:n]$ where $(x_{i})_{j}=(x_{i+1})_{j}$, and hence
$|(x_{i+k})_{j}-(x_{i})_{j}|=|s_{j}-2(x_{i})_{j}|=s_{j}$ and $|(x_{i+k+1})_{j}-(x_{i})_{j}|=|s_{j}-(x_{i+1})_{j}-(x_{i})_{j}|=s_{j}$
for all $j\in[1:n]$ where $(x_{i})_{j}=(x_{i+1})_{j}$ (there are
$n-1$ such $j$'s). In particular, when $\mathcal{X}=[0..s]^{n}$,
\[
r_{c}^{*}(\mathcal{P}(\mathcal{X}))\ge\frac{(n-1)^{q/p}s^{q}-1}{ns}+1.
\]
The case for $\mathcal{X}=\mathbb{Z}^{n}$, $q>1$ follows from letting
$s\to\infty$. The other cases in Remark \ref{rem:rc_lb_cases} follow
from the same arguments as in Appendix \ref{subsec:bd_ball_cont}.

For $c(x,y)=(d_{\mathrm{H}}(x,y))^{q}$ over $[1..2]^{n}$, since
$d_{\mathrm{H}}(x,y)=\Vert x-y\Vert_{1}$ over $[1..2]^{n}$, we have
\[
r_{c}^{*}([1..2]^{n})\ge\frac{(n-1)^{q}-1}{n}+1.
\]
\end{proof}
\medskip{}

\section{Miscellaneous Properties of $r_{c}^{*}$\label{sec:props}}

In this section, we list some properties of $r_{c}^{*}$.
\begin{prop}
\label{prop:rc_prop_misc}Let $\mathcal{X}$ be a Polish space, $\{P_{\alpha}\}_{\alpha\in\mathcal{A}}$
be a collection of probability distributions over $\mathcal{X}$,
and $c,c_{1},c_{2}$ be symmetric cost functions. For brevity, we
write
\[
\sup\frac{c_{1}}{c_{2}}:=\sup_{x,y\in\mathcal{X}:\,c_{1}(x,y)>0}\frac{c_{1}(x,y)}{c_{2}(x,y)},
\]
where we treat $a/0=\infty$ for $a>0$, then we have the following
properties:
\begin{enumerate}
\item (Ratio bound) 
\begin{equation}
r_{c_{1}}^{*}(\{P_{\alpha}\}_{\alpha})\le r_{c_{2}}^{*}(\{P_{\alpha}\}_{\alpha})\left(\sup\frac{c_{1}}{c_{2}}\right)\left(\sup\frac{c_{2}}{c_{1}}\right)\label{eq:rc_prop_misc_ratio}
\end{equation}
if $\sup c_{1}/c_{2},\sup c_{2}/c_{1}<\infty$. As a result, if $\mathcal{X}$
is finite or countably infinite, by considering $c_{2}(x,y)=\mathbf{1}\{x\neq y\}$,
\[
r_{c}^{*}(\{P_{\alpha}\}_{\alpha})\le\frac{2\sup_{x,y\in\mathcal{X}}c(x,y)}{\inf_{x,y\in\mathcal{X},\,x\neq y}c(x,y)},
\]
and if $|\mathcal{X}|<\infty$ and $c(x,y)>0$ for all $x\neq y$,
then $r_{c}^{*}(\{P_{\alpha}\}_{\alpha})<\infty$.\medskip{}
\item (Continuity) If $|\mathcal{X}|<\infty$, then $r_{c}^{*}(\{P_{\alpha}\}_{\alpha})$
is a continuous function of $c$ over the $|\mathcal{X}|(|\mathcal{X}|-1)/2$-dimensional
space of symmetric cost functions $c$ satisfying $c(x,y)>0$ for
all $x\neq y$. \footnote{We require $c(x,y)>0$ for all $x\neq y$ here since it is possible
that $r_{c}^{*}(\{P_{\alpha}\})=\infty$ if this condition is not
satisfied (e.g. for $\mathcal{X}=[1..4]$, $c(1,3)=c(2,4)=1$, $c(x,y)=0$
otherwise, we have $r_{c}^{*}(\mathcal{P}([1..4]))=\infty$).}\medskip{}
\item (Bound by cardinality) Consider $\sup_{c}r_{c}^{*}(\mathcal{P}([1..s]))$
for $s\in\mathbb{Z}_{\ge2}$, where the supremum is over symmetric
cost functions $c$. We have
\begin{align*}
\sup_{c}r_{c}^{*}(\mathcal{P}([1..2])) & =1,\\
\sup_{c}r_{c}^{*}(\mathcal{P}([1..3])) & =4/3,\\
\sup_{c}r_{c}^{*}(\mathcal{P}([1..4])) & =\infty.
\end{align*}
This shows that it is impossible to bound $r_{c}^{*}(\mathcal{P}(\mathcal{X}))$
by $|\mathcal{X}|$ alone (as long as $|\mathcal{X}|\ge4$) without
taking the properties of $c$ into account (e.g. as in Theorem \ref{thm:metric_log}
and Theorem \ref{thm:metric}). An example where $r_{c}^{*}(\mathcal{P}([1..4]))=\infty$
is for the symmetric cost function $c(1,3)=c(2,4)=1$, $c(x,y)=0$
otherwise.\medskip{}
\item (Range) If $|\mathcal{X}|\ge4$, then for any $r\in\mathbb{R}_{\ge1}\cup\{\infty\}$,
there exists a symmetric cost function $c$ where $r_{c}^{*}(\mathcal{P}(\mathcal{X}))=r$.\medskip{}
\item (Necessary condition for $r_{c}^{*}=1$) If $r_{c}^{*}(\mathcal{P}(\mathcal{X}))=1$,
then for any $x_{1},x_{2},x_{3}\in\mathcal{X}$, the following ``anti-triangle
inequality'' is satisfied:
\begin{align*}
c(x_{1},x_{2}) & \ge c(x_{2},x_{3})+c(x_{3},x_{1})\\
\mathrm{or}\;c(x_{2},x_{3}) & \ge c(x_{3},x_{1})+c(x_{1},x_{2})\\
\mathrm{or}\;c(x_{3},x_{1}) & \ge c(x_{1},x_{2})+c(x_{2},x_{3}).
\end{align*}
Also, for any $x_{0},x_{1},x_{2},x_{3}\in\mathcal{X}$,
\begin{align}
c(x_{1},x_{2}) & \ge c(x_{0},x_{1})+c(x_{0},x_{2})\nonumber \\
\mathrm{or}\;c(x_{2},x_{3}) & \ge c(x_{0},x_{2})+c(x_{0},x_{3})\nonumber \\
\mathrm{or}\;c(x_{0},x_{1}) & \ge c(x_{0},x_{3})+c(x_{1},x_{3})\nonumber \\
\mathrm{or}\;c(x_{0},x_{3}) & \ge c(x_{0},x_{1})+c(x_{1},x_{3}).\label{eq:ati4}
\end{align}
\item (Metric cost with $r_{c}^{*}=1$) If $(\mathcal{X},c)$ is a metric
space, then $r_{c}^{*}(\mathcal{P}(\mathcal{X}))=1$ if and only if
$(\mathcal{X},c)$ can be isometrically embedded into the metric space
$(\mathbb{R},\,(x,y)\mapsto|x-y|)$ (i.e., there exists $g:\mathcal{X}\to\mathbb{R}$
such that $c(x,y)=|g(x)-g(y)|$ for any $x,y\in\mathcal{X}$).\medskip{}
\end{enumerate}
\end{prop}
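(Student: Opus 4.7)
The plan is to handle the six parts largely independently, leaning on Proposition~\ref{prop:rn_rc_lb_2} and Theorem~\ref{thm:rd_bd} as the principal external inputs. For Part~1, fix $\varepsilon>0$ and a coupling $\{X_\alpha\}$ with $r_{c_2}(\{X_\alpha\}) \le r_{c_2}^*(\{P_\alpha\})+\varepsilon$; the two one-line estimates
\[
\mathbf{E}[c_1(X_\alpha,X_\beta)] \le (\sup c_1/c_2)\,\mathbf{E}[c_2(X_\alpha,X_\beta)]
\]
and, by pushing an optimal $c_1$-coupling through $c_2$, $C_{c_2}^*(P_\alpha,P_\beta) \le (\sup c_2/c_1)C_{c_1}^*(P_\alpha,P_\beta)$, combine to give \eqref{eq:rc_prop_misc_ratio}. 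The finite/discrete corollary follows by taking $c_2=\mathbf{1}_{\ne}$ and invoking Theorem~\ref{thm:rd_bd}. Part~2 then drops out of Part~1 applied in both directions to nearby costs $c,c'$ on a finite $\mathcal{X}$ that are strictly positive off the diagonal: $\sup c/c'$ and $\sup c'/c$ are continuous in $(c,c')$ and equal $1$ when $c=c'$, so $r_c^*$ varies continuously with $c$.

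For Part~3, the case $|\mathcal{X}|=2$ is immediate from Theorem~\ref{thm:rd_bd}. For $|\mathcal{X}|=3$ the lower bound $4/3$ is the discrete-metric bound from Theorem~\ref{thm:rd_bd}; the matching upper bound for arbitrary symmetric $c$ on three points is obtained by analysing the uniform-random-permutation-plus-quantile coupling from the proof of Theorem~\ref{thm:rd_bd}, checking case-by-case on the permutation that the pairwise expected cost is at most $4/3$ times the optimum. For $|\mathcal{X}|=4$, I exhibit the advertised cost $c(1,3)=c(2,4)=1$, $c=0$ elsewhere, with the four distributions $P_{ab}:=\tfrac12(\delta_a+\delta_b)$ indexed by $\{a,b\}\in\{\{1,2\},\{1,4\},\{2,3\},\{3,4\}\}$. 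Each pair admits a coupling avoiding the forbidden edges $\{1,3\},\{2,4\}$, so all $C_c^*$ vanish, whereas an exhaustive check of the $2^4$ deterministic joint assignments in $\{1,2\}\times\{1,4\}\times\{2,3\}\times\{3,4\}$ shows every such assignment uses at least one forbidden edge; hence no joint coupling can realize every pairwise expected cost as $0$, forcing $r_c^*=\infty$. Part~4 then follows by combining Parts~2, 3 and 6: $r=1$ is attained by $c(x,y)=|f(x)-f(y)|$ for any non-constant $f:\mathcal{X}\to\mathbb{R}$; $r=\infty$ is attained by a padded copy of the Part~3 example; and intermediate values are obtained by interpolating $c_\varepsilon:=c_\infty+\varepsilon\mathbf{1}_{\ne}$, using the ratio bound to see that $r_{c_\varepsilon}^*$ is finite for every $\varepsilon>0$, a direct calculation on the four-distribution example to see $r_{c_\varepsilon}^*\to\infty$ as $\varepsilon\to 0^+$, and Part~2's continuity to sweep out every value in $(1,\infty)$.

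For Part~5, the 3-point condition is immediate from \eqref{eq:rn_rc_lb_tri}: if all three strict triangle inequalities held, the three denominators would be positive, the cyclic sum finite, and \eqref{eq:rn_rc_lb_tri} would force $r_c^*>1$. For the 4-point inequality \eqref{eq:ati4} I apply Proposition~\ref{prop:rn_rc_lb_2} to the length-$8$ sequence $(x_0,x_0,x_0,x_1,x_1,x_2,x_3,x_3)$ with $k=4$; a routine calculation shows the four hypotheses in \eqref{eq:rn_rc_lb_2_cond} for $i=1,\ldots,4$ reduce precisely to the four strict triangle inequalities whose negations appear in \eqref{eq:ati4}, so if all four negations hold strictly the proposition gives $r_c^*>1$.

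The forward direction of Part~6 is Proposition~\ref{prop:r_rc} applied to the image measures under the isometry $f$ and then pulled back via $f^{-1}$. For the reverse, fix any $x_0\in\mathcal{X}$; Part~5's 3-point condition combined with the metric triangle inequality forces every triple $(x_0,y,z)$ to satisfy either $c(y,z)=|c(x_0,y)-c(x_0,z)|$ (``same side'') or $c(y,z)=c(x_0,y)+c(x_0,z)$ (``opposite sides''). The central difficulty, and the main obstacle of the whole proof, is to verify that ``same side'' is transitive: 3-point betweenness alone does not suffice, as witnessed by the $4$-cycle metric, which satisfies the 3-point but not the 4-point condition. Here the 4-point inequality \eqref{eq:ati4} applied to $(x_0,y,z,w)$ under the assumptions $y\sim z$ and $z\sim w$ rules out the first two alternatives (they would contradict the strict triangle inequalities implied by ``same side''), leaving one of the remaining two alternatives, each of which is precisely the statement $y\sim w$. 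A third equivalence class would contradict the 3-point condition applied to one point from each class, so $\sim$ partitions $\mathcal{X}\setminus\{x_0\}$ into at most two classes; assigning opposite signs and setting $f(y):=\pm c(x_0,y)$, the two cases above give $c(y,z)=|f(y)-f(z)|$ and complete the embedding.
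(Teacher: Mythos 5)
The one genuine gap is in Part~3, in your upper bound $\sup_{c}r_{c}^{*}(\mathcal{P}([1..3]))\le 4/3$. The uniform-random-permutation-plus-quantile coupling from the proof of Theorem~\ref{thm:rd_bd} does \emph{not} give a bounded ratio for general symmetric costs on three points: take $c(1,2)=\epsilon$, $c(1,3)=c(2,3)=1$ (a metric for small $\epsilon>0$), and $P_{\alpha}=\tfrac12(\delta_{1}+\delta_{3})$, $P_{\beta}=\tfrac12(\delta_{2}+\delta_{3})$. Then $C_{c}^{*}(P_{\alpha},P_{\beta})=\epsilon/2$, but with probability $1/3$ the uniform permutation places $3$ between $1$ and $2$, and the quantile coupling then pairs $1$ with $3$ and $3$ with $2$, so the coupling's expected cost is at least $1/3$ and the ratio grows like $2/(3\epsilon)$. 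The discrete-metric computation in Theorem~\ref{thm:rd_bd} works because all ``wrong'' matches cost the same; for general $c$ the permutation must be biased against splitting cheap pairs. The paper's proof handles this with a case analysis: after sorting $c(1,2)\le c(2,3)\le c(1,3)$, if $c(1,2)=0$ or $c(1,3)\ge c(1,2)+c(2,3)$ the three points embed into $\mathbb{R}$ with $c$ a convex function of $|x-y|$, so Proposition~\ref{prop:r_rc} gives ratio $1$; in the remaining strict-metric case it uses a \emph{weighted} random permutation that makes $\{i,j\}$ the split pair with probability proportional to $c(i,j)/(\gamma-2c(i,j))$, where $\gamma$ is the perimeter, and the convexity of $t\mapsto t/(\gamma-2t)$ yields the factor $4/3$. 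Your Part~3 upper bound needs an argument of this kind; as written it fails.

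The rest of the proposal is sound and essentially follows the paper's route: Parts~1, 2 and 5 coincide with the paper's arguments, and Part~6 is the paper's construction with a slightly different (and clean) justification that there are at most two equivalence classes, via the three-point condition rather than the paper's ``$x\nsim y$, $y\nsim z$ $\Rightarrow$ $x\sim z$'' lemma. Your direct combinatorial proof that $r_{c}^{*}=\infty$ on the four-point example (all six pairwise $C_{c}^{*}$ vanish, yet every assignment in $\{1,2\}\times\{1,4\}\times\{2,3\}\times\{3,4\}$ hits a diagonal $\{1,3\}$ or $\{2,4\}$) is a valid alternative to the paper's route through Proposition~\ref{prop:rn_rc_lb_2} with $\epsilon$-costs. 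One small repair is needed in Part~4: the family $c_{\infty}+\varepsilon\mathbf{1}_{\neq}$, after rescaling, tends to the discrete metric as $\varepsilon\to\infty$, whose $r^{*}$ on $\mathcal{P}([1..4])$ is at least $3/2$, so this one-parameter path alone cannot sweep out $(1,3/2)$; you additionally need a continuous path down to a cost with $r^{*}=1$ (e.g.\ linear interpolation with $|x-y|$ inside the convex set of costs positive off the diagonal), which is also how the paper's equally terse continuity argument should be read.
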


\begin{proof}
[Proof of Proposition \ref{prop:rc_prop_misc}] $\,$
\begin{enumerate}
\item Assume $\sup c_{1}/c_{2},\sup c_{2}/c_{1}<\infty$. Fix any $\{X_{\alpha}\}_{\alpha}\in\Gamma_{\lambda}(\{P_{\alpha}\}_{\alpha})$.
For any $\alpha,\beta\in\mathcal{A}$,
\[
\mathbf{E}[c_{1}(X_{\alpha},X_{\beta})]\le\mathbf{E}[c_{2}(X_{\alpha},X_{\beta})]\sup\frac{c_{1}}{c_{2}},
\]
\[
C_{c_{1}}^{*}(P_{\alpha},P_{\beta})\sup\frac{c_{2}}{c_{1}}\ge C_{c_{2}}^{*}(P_{\alpha},P_{\beta}),
\]
and hence
\begin{align*}
 & r_{c_{1}}(\{X_{\alpha}\}_{\alpha})\\
 & =\inf\left\{ r\ge1:\,\mathbf{E}[c_{1}(X_{\alpha},X_{\beta})]\le rC_{c_{1}}^{*}(P_{\alpha},P_{\beta})\,\forall\alpha,\beta\in\mathcal{A}\right\} \\
 & \le\inf\left\{ r\ge1:\,\mathbf{E}[c_{2}(X_{\alpha},X_{\beta})]\sup\frac{c_{1}}{c_{2}}\le rC_{c_{2}}^{*}(P_{\alpha},P_{\beta})/\sup\frac{c_{2}}{c_{1}}\,\forall\alpha,\beta\in\mathcal{A}\right\} \\
 & =r_{c_{2}}(\{X_{\alpha}\}_{\alpha})\left(\sup\frac{c_{1}}{c_{2}}\right)\left(\sup\frac{c_{2}}{c_{1}}\right).
\end{align*}
The result follows.\medskip{}
\item This is a consequence of the ratio bound, since if $c_{i}\to c$,
then $\sup\frac{c_{i}}{c}\to1$ and $\sup\frac{c}{c_{i}}\to1$.\medskip{}
\item By Proposition \eqref{prop:dpc_prop}, $\sup_{c}r_{c}^{*}(\mathcal{P}([1..2]))=1$
since $c$ is a constant multiple of $\mathbf{1}_{\neq}$. For $\mathcal{X}=[1..3]$,
let $\{P_{\alpha}\}_{\alpha}=\mathcal{P}(\mathcal{X})$. We have $\sup_{c}r_{c}^{*}(\mathcal{P}([1..3]))\ge r_{\mathbf{1}_{\neq}}^{*}(\mathcal{P}([1..3]))\ge4/3$
by Theorem \eqref{thm:rd_bd}. For the upper bound, consider any $c$
and assume $c(1,2)\le c(2,3)\le c(1,3)$ without loss of generality.
If $c(1,2)=0$, then let $g:[1..3]\to\mathbb{R}$, $g(1)=0$, $g(2)=c(1,3)-c(2,3)$,
$g(3)=2c(1,3)-c(2,3)$, and let $c':\mathbb{R}^{2}\to\mathbb{R}$,
$c'(x,y)=\max\{|x-y|-g(2),0\}$ (a convex function of $|x-y|$), then
$c(x,y)=c'(g(x),g(y))$, and hence $r_{c}^{*}(\mathcal{P}([1..3]))\le r_{c'}^{*}(\mathcal{P}(\mathbb{R}))=1$
by Proposition \eqref{prop:r_rc}. If $c(1,2)>0$ and $c(1,3)\ge c(1,2)+c(2,3)$,
then let $g:[1..3]\to\mathbb{R}$, $g(1)=-c(1,2)$, $g(2)=0$, $g(3)=c(2,3)$,
and let $c':\mathbb{R}^{2}\to\mathbb{R}$ be defined by 
\[
c'(x,y)=|x-y|+\max\{|x-y|-c(2,3),0\}\frac{c(1,3)-c(1,2)-c(2,3)}{c(1,2)},
\]
which is a convex function of $|x-y|$. Note that $c(x,y)=c'(g(x),g(y))$.
Hence $r_{c}^{*}(\mathcal{P}([1..3]))\le r_{c'}^{*}(\mathcal{P}(\mathbb{R}))=1$
by Proposition \eqref{prop:r_rc}. If $c(1,2)>0$ and $c(1,3)<c(1,2)+c(2,3)$
(and thus $c$ is a metric), then define a coupling $\{X_{\alpha}\}_{\alpha}$
of $\{P_{\alpha}\}_{\alpha}$ by letting $\varsigma:[1..3]\to[1..3]$
be a random permutation defined by
\[
(\varsigma(1),\varsigma(2),\varsigma(3))=\begin{cases}
(1,2,3) & \text{with prob.}\;\frac{\frac{c(1,3)}{\gamma-2c(1,3)}}{\frac{c(1,2)}{\gamma-2c(1,2)}+\frac{c(2,3)}{\gamma-2c(2,3)}+\frac{c(1,3)}{\gamma-2c(1,3)}}\\
(2,3,1) & \text{with prob.}\;\frac{\frac{c(2,3)}{\gamma-2c(2,3)}}{\frac{c(1,2)}{\gamma-2c(1,2)}+\frac{c(2,3)}{\gamma-2c(2,3)}+\frac{c(1,3)}{\gamma-2c(1,3)}}\\
(3,1,2) & \text{with prob.}\;\frac{\frac{c(1,2)}{\gamma-2c(1,2)}}{\frac{c(1,2)}{\gamma-2c(1,2)}+\frac{c(2,3)}{\gamma-2c(2,3)}+\frac{c(1,3)}{\gamma-2c(1,3)}},
\end{cases}
\]
where $\gamma:=c(1,2)+c(2,3)+c(1,3)$, and $X_{\alpha}:=F_{\varsigma_{*}P_{\alpha}}^{-1}(U)$
(i.e., the quantile coupling on $\{\varsigma_{*}P_{\alpha}\}_{\alpha}$).
Let $g_{\varsigma}:[1..3]\to\mathbb{R}$ be defined by $g_{\varsigma}(x):=-\mathbf{1}\{\varsigma(x)=1\}c(\varsigma^{-1}(1),\varsigma^{-1}(2))+\mathbf{1}\{\varsigma(x)=3\}c(\varsigma^{-1}(2),\varsigma^{-1}(3))$.
Note that $c(x,y)\le|g_{\varsigma}(x)-g_{\varsigma}(y)|$. For any
$P_{\alpha},P_{\beta}\in\mathcal{P}([1..3])$, and $(\tilde{X}_{\alpha},\tilde{X}_{\beta})\in\Gamma_{\lambda}(P_{\alpha},P_{\beta})$
(assume $(\tilde{X}_{\alpha},\tilde{X}_{\beta})$ is independent of
$(\varsigma,U)$), we have
\begin{align*}
 & \mathbf{E}[c(X_{\alpha},X_{\beta})]\\
 & \le\mathbf{E}\left[\mathbf{E}\left[|g_{\varsigma}(X_{\alpha})-g_{\varsigma}(X_{\beta})|\,\big|\,\varsigma\right]\right]\\
 & \stackrel{(a)}{\le}\mathbf{E}\left[\mathbf{E}\left[|g_{\varsigma}(\tilde{X}_{\alpha})-g_{\varsigma}(\tilde{X}_{\beta})|\,\big|\,\varsigma\right]\right]\\
 & =\mathbf{E}\left[\mathbf{E}\left[|g_{\varsigma}(\tilde{X}_{\alpha})-g_{\varsigma}(\tilde{X}_{\beta})|\,\big|\,\tilde{X}_{\alpha},\tilde{X}_{\beta}\right]\right]\\
 & \stackrel{(b)}{=}\mathbf{E}\Bigg[\mathbf{1}\{\tilde{X}_{\alpha}\neq\tilde{X}_{\beta}\}\Bigg(\Bigg(1-\frac{\frac{c(\tilde{X}_{\alpha},\tilde{X}_{\beta})}{\gamma-2c(\tilde{X}_{\alpha},\tilde{X}_{\beta})}}{\frac{c(1,2)}{\gamma-2c(1,2)}+\frac{c(2,3)}{\gamma-2c(2,3)}+\frac{c(1,3)}{\gamma-2c(1,3)}}\Bigg)c(\tilde{X}_{\alpha},\tilde{X}_{\beta})\\
 & \;\;\;\;\;\;\;\;\;+\frac{\frac{c(\tilde{X}_{\alpha},\tilde{X}_{\beta})}{\gamma-2c(\tilde{X}_{\alpha},\tilde{X}_{\beta})}}{\frac{c(1,2)}{\gamma-2c(1,2)}+\frac{c(2,3)}{\gamma-2c(2,3)}+\frac{c(1,3)}{\gamma-2c(1,3)}}(\gamma-c(\tilde{X}_{\alpha},\tilde{X}_{\beta}))\Bigg)\Bigg]\\
 & =\mathbf{E}\left[\mathbf{1}\{\tilde{X}_{\alpha}\neq\tilde{X}_{\beta}\}c(\tilde{X}_{\alpha},\tilde{X}_{\beta})\left(1+\frac{1}{\frac{c(1,2)}{\gamma-2c(1,2)}+\frac{c(2,3)}{\gamma-2c(2,3)}+\frac{c(1,3)}{\gamma-2c(1,3)}}\right)\right]\\
 & =\left(1+\frac{1}{\frac{c(1,2)}{\gamma-2c(1,2)}+\frac{c(2,3)}{\gamma-2c(2,3)}+\frac{c(1,3)}{\gamma-2c(1,3)}}\right)\mathbf{E}[c(\tilde{X}_{\alpha},\tilde{X}_{\beta})]\\
 & \stackrel{(c)}{\le}\left(1+\frac{1}{3\frac{\gamma/3}{\gamma-2\gamma/3}}\right)\mathbf{E}[c(\tilde{X}_{\alpha},\tilde{X}_{\beta})]\\
 & =\frac{4}{3}\mathbf{E}[c(\tilde{X}_{\alpha},\tilde{X}_{\beta})],
\end{align*}
where (a) is by the optimality of the quantile coupling for the cost
function $|x-y|$, (b) is due to the definition of $\varsigma$ and
that $|g_{\varsigma}(\tilde{X}_{\alpha})-g_{\varsigma}(\tilde{X}_{\beta})|=\gamma-c(\tilde{X}_{\alpha},\tilde{X}_{\beta})$
if $\{\varsigma(\tilde{X}_{\alpha}),\varsigma(\tilde{X}_{\beta})\}=\{1,3\}$,
and (c) is due to the convexity of $t\mapsto t/(\gamma-2t)$.\\
$ $\\
For $\mathcal{X}=[1..4]$, consider the symmetric cost function $c(1,3)=c(2,4)=1$,
$c(x,y)=\epsilon$ otherwise, $0\le\epsilon<1$. Applying Proposition
\ref{prop:rn_rc_lb_2} on the sequence $(1,1,1,2,2,3,4,4)$, 
\begin{align*}
 & r_{c}^{*}(\mathcal{P}([1..4]))\\
 & \ge\bigg(\frac{c(2,3)}{c(1,2)+c(1,3)-c(2,3)}+\frac{c(3,4)}{c(1,3)+c(1,4)-c(3,4)}\\
 & \;\;+\frac{c(1,2)}{c(1,4)+c(2,4)-c(1,2)}+\frac{c(1,4)}{c(1,2)+c(2,4)-c(1,4)}\bigg)^{-1}+1\\
 & =\frac{1}{4\epsilon}+1.
\end{align*}
Letting $\epsilon\to0$ (or directly setting $\epsilon=0$) gives
$\sup_{c}r_{c}^{*}(\mathcal{P}([1..4]))=\infty$.\medskip{}
\item Without loss of generality, assume $\mathcal{X}=[1..4]$ (we can partition
$\mathcal{X}$ into 4 sets, and let $c(x,y)$ only depend on which
set $x$ and $y$ lie in). In the previous example, $c(x,y)>0$ for
$x\neq y$ and $r_{c}^{*}(\mathcal{P}([1..4]))$ can be arbitrarily
large. For $c(x,y)=|x-y|$, $r_{c}^{*}(\mathcal{P}([1..4]))=1$. The
result follows from the continuity of $r_{c}^{*}(\mathcal{P}([1..4]))$,
and that $r_{c}^{*}(\mathcal{P}([1..4]))$ can be infinite as in the
previous example.
\item The anti-triangle inequality is a direct consequence of \eqref{eq:rn_rc_lb_tri}
in Proposition \ref{prop:rn_rc_lb_2}. We can deduce \eqref{eq:ati4}
by applying Proposition \ref{prop:rn_rc_lb_2} on the sequence $(x_{0},x_{0},x_{0},x_{1},x_{1},x_{2},x_{3},x_{3})$.
\item If $c$ is a metric and $r_{c}^{*}(\mathcal{P}(\mathcal{X}))=1$,
then by the triangle and anti-triangle inequality, for any $x,y,z\in\mathcal{X}$,
we have 
\begin{equation}
c(x,z)\in\{|c(x,y)-c(y,z)|,\,c(x,y)+c(y,z)\},\label{eq:eti3}
\end{equation}
which can be deduced by considering which of the 3 cases in the anti-triangle
inequality holds. Fix any $w\in\mathcal{X}$. Define a relation ``$\sim$''
over $\mathcal{X}\backslash\{w\}$ by $x\sim y$ if $c(x,y)=|c(x,w)-c(y,w)|$.
We first show that it is an equivalence relation. If $x,y,z\in\mathcal{X}\backslash\{w\}$,
$x\sim y$ and $y\sim z$, then by applying \eqref{eq:ati4} on $(w,x,y,z)$,
either $c(w,x)=c(w,z)+c(x,z)$ or $c(w,z)=c(w,x)+c(x,z)$ (the first
two cases in \eqref{eq:ati4} does not hold due to $x\sim y$, $y\sim z$),
and thus $c(x,z)=|c(w,x)-c(w,z)|$, and $x\sim z$. Hence ``$\sim$''
is an equivalence relation. \\
\\
We then show that if $x\nsim y$ and $y\nsim z$, then $x\sim z$.
Assume $x,y,z\in\mathcal{X}\backslash\{w\}$, $x\nsim y$ and $y\nsim z$.
Applying \eqref{eq:eti3} on $(x,y,z)$ and $(x,w,z)$,
\begin{align*}
c(x,z) & \in\left\{ |c(x,y)-c(z,y)|,\,c(x,y)+c(z,y)\right\} \cap\left\{ |c(x,w)-c(z,w)|,\,c(x,w)+c(z,w)\right\} \\
 & \stackrel{(a)}{=}\left\{ \left|c(x,w)-c(z,w)\right|,\,c(x,w)+c(z,w)+2c(y,w)\right\} \cap\left\{ |c(x,w)-c(z,w)|,\,c(x,w)+c(z,w)\right\} \\
 & \stackrel{(b)}{=}\left\{ |c(x,w)-c(z,w)|\right\} ,
\end{align*}
where (a) is because $c(x,y)=c(x,w)+c(y,w)$ (by \eqref{eq:eti3}
and $x\nsim y$) and $c(z,y)=c(z,w)+c(y,w)$, and (b) is because $c(y,w)>0$
since $y\neq w$. Hence $x\sim z$. This implies that there are at
most 2 equivalence classes in $\mathcal{X}\backslash\{w\}$.\\
\\
Let $S\subseteq\mathcal{X}\backslash\{w\}$ be a nonempty equivalence
class, and $S':=\mathcal{X}\backslash\{w\}\backslash S$. Since there
are at most 2 equivalence classes, $S'$ is an equivalence class (or
is empty). Define an embedding function $g:\mathcal{X}\to\mathbb{R}$
by $g(w):=0$, $g(x):=c(x,w)$ if $x\in S$, $g(x):=-c(x,w)$ if $x\in S'$.
We now check that $c(x,y)=|g(x)-g(y)|$ for any $x,y\in\mathcal{X}$
(which implies that $g$ is injective). This is obvious when $x=w$
or $y=w$. If $x,y\in S$, we have $c(x,y)=|g(x)-g(y)|$ by the definition
of ``$\sim$''. The same holds if $x,y\in S'$. If $x\in S$, $y\in S'$,
then $c(x,y)=c(x,w)+c(y,w)=|g(x)-g(y)|$.
\end{enumerate}
\end{proof}
\medskip{}

\section{Truncated Pairwise Coupling Ratio\label{sec:trunc}}

One shortcoming of the definition of the pairwise multi-marginal coupling
setting is that it is unsuitable for convex costs. For instance, when
$\mathcal{X}=\mathbb{R}^{n}$, $n\ge2$, $c(x,y)=\Vert x-y\Vert_{2}^{2}$,
where $C_{c}^{*}$ is the widely-used squared 2-Wasserstein distance,
we have $r_{c}^{*}=\infty$ by Proposition \ref{prop:rn_rc_lb}. This
motivates us to modify Definition \ref{def:pcr} to accomodate convex
costs.
\begin{defn}
\label{def:trunc}For a symmetric cost function $c$ over the Polish
space $\mathcal{X}$, a collection of probability distributions $\{P_{\alpha}\}_{\alpha\in\mathcal{A}}$,
a coupling $\{X_{\alpha}\}_{\alpha\in\mathcal{A}}\in\Gamma_{\lambda}(\{P_{\alpha}\}_{\alpha\in\mathcal{A}})$,
and $\eta\ge0$, the \emph{$\eta$-truncated pairwise coupling ratio}
is defined as
\begin{align*}
 & \tilde{r}_{c,\eta}(\{X_{\alpha}\}_{\alpha\in\mathcal{A}})\\
 & :=\sup_{\alpha,\beta\in\mathcal{A}:\,C_{c}^{*}(P_{\alpha},P_{\beta})<\infty}\mathbf{E}\left[\min\left\{ \frac{c(X_{\alpha},X_{\beta})}{C_{c}^{*}(P_{\alpha},P_{\beta})},\,\eta\right\} \right],
\end{align*}
where we treat $0/0=1$ and $t/0=\infty$ for $t>0$ here. Intuitively,
this definition allows us to ignore some large values of $c(X_{\alpha},X_{\beta})/C_{c}^{*}(P_{\alpha},P_{\beta})$
by paying a penalty $\eta$. The \emph{optimal $\eta$-truncated pairwise
coupling ratio} of the collection $\{P_{\alpha}\}_{\alpha\in\mathcal{A}}$
is defined as 
\[
\tilde{r}_{c,\eta}^{*}(\{P_{\alpha}\}_{\alpha\in\mathcal{A}}):=\inf_{\{X_{\alpha}\}_{\alpha\in\mathcal{A}}\in\Gamma_{\lambda}(\{P_{\alpha}\}_{\alpha\in\mathcal{A}})}\tilde{r}_{c,\eta}(\{X_{\alpha}\}_{\alpha\in\mathcal{A}}).
\]
\end{defn}

Note that $\tilde{r}_{c,\eta}^{*}(\{P_{\alpha}\}_{\alpha\in\mathcal{A}})\le\eta$
and is increasing with $\eta$, which follows directly from the definition.

We give a bound on $\tilde{r}_{c,\eta}^{*}$ for the case where $\mathcal{X}$
is the circle, which shows that $\tilde{r}_{c,\eta}^{*}$ grows sublinearly
with $\eta$ in this case. The proof is given in Appendix \ref{subsec:pf_circle}.
\begin{prop}
\label{prop:circle_trunc}Let $\mathcal{M}=\{x\in\mathbb{R}^{2}:x_{1}^{2}+x_{2}^{2}=1\}$
be the circle, and $c(x,y)=(d_{\mathcal{M}}(x,y))^{q}$, $q\ge1$.
We have, for $\eta\ge0$,
\[
\tilde{r}_{c,\eta}^{*}(\mathcal{P}(\mathcal{M}))\le q\eta^{1-1/q}+1.
\]
\end{prop}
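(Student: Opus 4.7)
The plan is to use the random-cut and quantile coupling on the circle, the same construction underlying the proof of Proposition~\ref{prop:circle} for $q=1$. Fix $\Theta\sim\mathrm{Unif}[0,2\pi)$ and $U\sim\mathrm{Unif}[0,1]$ independent; identify $\mathcal{M}$ with $[0,2\pi)$; let $\sigma_\Theta(x)=(x-\Theta)\bmod 2\pi$, $Q_{\alpha,\Theta}:=\sigma_{\Theta*}P_\alpha$, $Y_\alpha:=F_{Q_{\alpha,\Theta}}^{-1}(U)$, and $X_\alpha:=(Y_\alpha+\Theta)\bmod 2\pi$. Then $X_\alpha\sim P_\alpha$, so $\{X_\alpha\}$ is a valid coupling, and the key comparison $d_{\mathcal{M}}(X_\alpha,X_\beta)\le W:=|Y_\alpha-Y_\beta|$ always holds because the circle distance never exceeds the unfolded distance.

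For a fixed pair $(\alpha,\beta)$, write $C:=C_c^*(P_\alpha,P_\beta)$ and let $(\tilde X_\alpha,\tilde X_\beta)$ be an optimal coupling with $D:=d_{\mathcal{M}}(\tilde X_\alpha,\tilde X_\beta)$, so $\mathbf{E}[D^q]=C$. The first half of the proof is a pointwise scalar inequality: for $q\ge1$, $\eta\ge1$, and $t\ge0$,
\[
\min\{t^q,\eta C\}\;\le\;C+q(\eta C)^{1-1/q}\,(t-C^{1/q})^+.
\]
I would verify this by splitting into the three regions $t\le C^{1/q}$ (LHS $\le C$, RHS $=C$), $C^{1/q}<t\le(\eta C)^{1/q}$ (use that $f(t)=t^q-q(\eta C)^{1-1/q}t$ is decreasing on $[0,(\eta C)^{1/q}]$, so $f(t)\le f(C^{1/q})$), and $t>(\eta C)^{1/q}$ (reduces to checking the corner inequality $\eta\le 1+q\eta-q\eta^{1-1/q}$, equivalently $t^{q-1}(q-(q-1)t)\le 1$ with $t=\eta^{1/q}$, which follows from analyzing that function's unique maximum at $t=1$). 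The case $\eta<1$ is handled by the trivial bound $\tilde r_{c,\eta}^*\le\eta\le 1\le q\eta^{1-1/q}+1$.

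Applying the inequality with $t=d_{\mathcal{M}}(X_\alpha,X_\beta)\le W$ and taking expectation gives
\[
\mathbf{E}[\min\{d_{\mathcal{M}}(X_\alpha,X_\beta)^q,\eta C\}]\;\le\;C+q(\eta C)^{1-1/q}\mathbf{E}[(W-C^{1/q})^+].
\]
Since $(\cdot-C^{1/q})^+$ is convex, quantile optimality on the unfolded interval yields, conditionally on $\Theta$,
\[
\mathbf{E}_U[(W-C^{1/q})^+\mid\Theta]\;\le\;\mathbf{E}[(|\sigma_\Theta\tilde X_\alpha-\sigma_\Theta\tilde X_\beta|-C^{1/q})^+\mid\Theta].
\]
Integrating out $\Theta$ and using that $|\sigma_\Theta\tilde X_\alpha-\sigma_\Theta\tilde X_\beta|$ equals $D$ with probability $1-D/(2\pi)$ and $2\pi-D$ (which is $\ge\pi\ge C^{1/q}$) with probability $D/(2\pi)$, one gets
\[
\mathbf{E}[(W-C^{1/q})^+]\;\le\;\mathbf{E}\bigl[(D-C^{1/q})^+(1-D/(2\pi))\bigr]+\mathbf{E}\bigl[(2\pi-D-C^{1/q})(D/(2\pi))\bigr].
\]

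The final step is to bound this shifted-optimal quantity by $C^{1/q}$. For the first summand, use $(D-C^{1/q})^+\le(D^q-C)^+/(qC^{(q-1)/q})$ (a consequence of $D^q-C=(D-C^{1/q})\sum_{j=0}^{q-1}D^{q-1-j}C^{j/q}\ge qC^{(q-1)/q}(D-C^{1/q})$ when $D\ge C^{1/q}$) together with $\mathbf{E}[(D^q-C)^+]\le\mathbf{E}[D^q]=C$ to get $\le C^{1/q}/q$. For the second summand, expand into $\mathbf{E}[D]-\mathbf{E}[D^2]/(2\pi)-C^{1/q}\mathbf{E}[D]/(2\pi)$ and use Jensen ($\mathbf{E}[D]\le C^{1/q}$) together with the cancellation provided by the $C^{1/q}\mathbf{E}[D]/(2\pi)$ and $\mathbf{E}[D^2]/(2\pi)$ terms; the combined sum telescopes to $\le C^{1/q}$. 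Plugging back yields $\mathbf{E}[\min\{d_{\mathcal{M}}(X_\alpha,X_\beta)^q,\eta C\}]\le(1+q\eta^{1-1/q})C$, i.e., $\tilde r_{c,\eta}(\{X_\alpha\})\le q\eta^{1-1/q}+1$, which proves the proposition.

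The main obstacle will be the sharp bookkeeping of constants in the last step: a naive application of $(D-C^{1/q})^+\le D$ costs a factor of~$2$ and would only give $1+(q+1)\eta^{1-1/q}$ or worse, so the Markov-type inequality above for $(D-C^{1/q})^+$ combined with the cancellation in the ``long-arc'' term is essential to land exactly at $q\eta^{1-1/q}+1$.
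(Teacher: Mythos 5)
Your construction (random cut plus quantile coupling) and your pointwise inequality $\min\{t^{q},\eta C\}\le C+q(\eta C)^{1-1/q}(t-C^{1/q})^{+}$ (for $\eta\ge1$) are both fine, and up to the display bounding $\mathbf{E}[(W-C^{1/q})^{+}]$ by the two arc terms the argument is correct. The gap is the final step: the inequality $\mathbf{E}[(W-C^{1/q})^{+}]\le C^{1/q}$ that you need does not follow from your own estimates and is in fact false. Your bounds give first summand $\le C^{1/q}/q$ and second summand $\le\mathbf{E}[D]\le C^{1/q}$, and the subtracted terms $\mathbf{E}[D^{2}]/(2\pi)$ and $C^{1/q}\mathbf{E}[D]/(2\pi)$ are negligible whenever all distances are small compared with the circumference, so nothing ``telescopes'': you only get $(1+1/q)C^{1/q}$. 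Concretely, take $q=1$, $P_{\alpha}=(1-p)\delta_{z}+p\delta_{x}$, $P_{\beta}=(1-p)\delta_{z}+p\delta_{y}$ with $d_{\mathcal{M}}(x,y)=a$ small, $z$ far from $x,y$, and $p$ small; then $C=pa$, the short-arc term $\mathbf{E}[(D-C^{1/q})^{+}(1-D/(2\pi))]\approx pa$ and the long-arc term $\mathbf{E}[(2\pi-D-C^{1/q})D/(2\pi)]\approx pa$, so your right-hand side is $\approx 2C^{1/q}$ (and one can check the actual rotated quantile coupling also has $\mathbf{E}[(W-C)^{+}]\approx 2C$ here). Plugging this back yields only $1+(q+1)\eta^{1-1/q}$ --- exactly the ``naive'' constant you set out to avoid. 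The structural reason is that by placing the kink of your convex majorant at $C^{1/q}$ you spend the additive budget $C$ only once, but after averaging over the cut both the short arc and the long arc contribute an excess of order $C^{1/q}$ each.

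The paper's proof repairs precisely this point. Instead of linearizing at $C^{1/q}$, it uses the convex function $h_{\gamma}$ with kink at $\gamma^{1/q}$ (with $\gamma=\eta C$), defined by $h_{\gamma}(x)=x^{q}$ for $x\le\gamma^{1/q}$ and $h_{\gamma}(x)=q\gamma^{1-1/q}(x-\gamma^{1/q})+\gamma$ beyond, which satisfies $\min\{x^{q},\gamma\}\le h_{\gamma}(x)\le\min\{x^{q},\,q\gamma^{1-1/q}x\}$. Quantile optimality is applied directly to $h_{\gamma}$ of the unfolded distance; then in the rotation average the short-arc term is bounded by $D^{q}$ (expectation $C$), while the long-arc term uses the linear-through-the-origin bound $q\gamma^{1-1/q}\times(\text{arc length})$, whose rotation weight is proportional to $D$, giving a contribution at most $q\gamma^{1-1/q}\mathbf{E}[D]\le q\eta^{1-1/q}C$ by Jensen. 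The two budgets are charged to disjoint sources ($\mathbf{E}[D^{q}]$ and $\mathbf{E}[D]$ respectively), which is what lands exactly at $q\eta^{1-1/q}+1$. If you keep your scalar-inequality framing, you would need to move the kink to $(\eta C)^{1/q}$ and exploit the factor $D/(2\pi)$ on the long arc before discarding it, which is essentially the paper's argument.
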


\medskip{}

Nevertheless, it is unknown whether $\tilde{r}_{c,\eta}^{*}$ grows
sublinearly with $\eta$ for the case $\mathcal{X}=\mathbb{R}^{n}$,
$n\ge2$, $c(x,y)=\Vert x-y\Vert_{2}^{2}$.\medskip{}

\section{Embedding $\mathcal{P}(\mathcal{X})$ into the Space of Random Variables\label{sec:geom}}

The embedding of a metric space into another metric space has been
studied extensively (e.g. see \cite{bourgain1985lipschitz,linial1995geometry,alon1995graph,bartal1996probabilistic,indyk2003fast,khot2006nonembeddability,naor2007planar,andoni2018snowflake}).
The embedding function is required to approximately preserve distances
(e.g. it is bi-Lipschitz). Popular choices of the target metric space
are the sequence spaces $\ell_{1}$, $\ell_{2}$, and function spaces
$L_{1}$, $L_{2}$, due to their theoretic and computational simplicity.
In this section, we show that if the cost function is a metric, the
pairwise multi-marginal optimal transport problem can be stated as
a problem of finding an embedding of the space of probability distributions,
with the 1-Wasserstein distance, into the space of random variables
on the standard probability space (which is also a space of functions).

Let $(\mathcal{X},d)$ be a complete separable metric space where
$d$ takes values in $\mathbb{R}_{\ge0}$ (it cannot take the value
$\infty$), with Borel $\sigma$-algebra $\mathcal{F}$. Write $\mathcal{V}_{\lambda}(\mathcal{X})$
for the space of equivalence classes of random variables (i.e., measurable
functions) $X:[0,1]\to\mathcal{X}$ on the standard probability space
$([0,1],\mathcal{L}([0,1]),\lambda_{[0,1]})$, modulo the equivalence
relation of almost sure equality.\footnote{The subscript $\lambda$ in $\mathcal{V}_{\lambda}(\mathcal{X})$
represents the probability measure of the underlying space $([0,1],\mathcal{L}([0,1]),\lambda_{[0,1]})$.
While writing $\mathcal{V}_{\lambda_{[0,1]}}(\mathcal{X})$ is more
accurate, we write $\mathcal{V}_{\lambda}(\mathcal{X})$ for notational
simplicity.} Let $d_{\lambda}:\mathcal{V}_{\lambda}(\mathcal{X})\times\mathcal{V}_{\lambda}(\mathcal{X})\to\mathbb{R}_{\ge0}\cup\{\infty\}$
be defined by $d_{\lambda}(X,Y)=\mathbf{E}[d(X,Y)]=\int_{0}^{1}d(X(u),Y(u))\mathrm{d}u$.
Then $(\mathcal{V}_{\lambda}(\mathcal{X}),d_{\lambda})$ is a metric
space (where the metric may take the value $\infty$).\footnote{For the purpose of generating a topology, we can convert a metric
that may take the value $\infty$ into a metric that is always finite
by $\tilde{d}_{\lambda}(X,Y):=\min\{d_{\lambda}(X,Y),\,1\}$. Nevertheless,
we allow metrics to take the value $\infty$ in this section (unless
otherwise specified like the case for $d$) to facilitate discussions
on Lipschitz continuity, which also applies to metrics that may take
the value $\infty$.} Let $\varpi_{\lambda}:\mathcal{V}_{\lambda}(\mathcal{X})\to\mathcal{P}(\mathcal{X})$
be defined by $\varpi_{\lambda}(X):=X_{*}\lambda_{[0,1]}$ (i.e.,
the distribution of $X$). Clearly, $\varpi_{\lambda}$ is a 1-Lipschitz
function from the metric space $(\mathcal{V}_{\lambda}(\mathcal{X}),d_{\lambda})$
to the metric space $(\mathcal{P}(\mathcal{X}),C_{d}^{*})$, where
$C_{d}^{*}$ is the 1-Wasserstein distance (which may take the value
$\infty$), i.e.,
\[
C_{d}^{*}(\varpi_{\lambda}(X),\varpi_{\lambda}(Y))\le d_{\lambda}(X,Y)
\]
for any $X,Y\in\mathcal{V}_{\lambda}(\mathcal{X})$. The problem
of finding $r_{d}^{*}(\mathcal{P}(\mathcal{X}))$ is equivalent to
that of finding the best Lipschitz constant among right inverses of
$\varpi_{\lambda}$, as demonstrated in the following proposition.
The proof is omitted since it is a direct consequence of the definition
of $r_{d}^{*}$.
\begin{prop}
\label{prop:bilip}For any $\mathcal{P}'\subseteq\mathcal{P}(\mathcal{X})$,
we have
\begin{align*}
r_{d}^{*}(\mathcal{P}') & =\inf\left\{ r\ge1:\,\exists\,r\text{-Lipschitz}\;\xi:(\mathcal{P}',C_{d}^{*})\to(\mathcal{V}_{\lambda}(\mathcal{X}),d_{\lambda})\;\mathrm{s.t.}\;\varpi_{\lambda}\circ\xi=\mathrm{id}_{\mathcal{P}'}\right\} ,
\end{align*}
where $\varpi_{\lambda}\circ\xi=\mathrm{id}_{\mathcal{P}'}$ means
that $\varpi_{\lambda}(\xi(P))=P$ for all $P\in\mathcal{P}'$. Note
that any $\xi$ satisfying the above conditions is $(r,1)$-bi-Lipschitz
since its left inverse $\varpi_{\lambda}$ is $1$-Lipschitz. As a
result, if $r_{d}^{*}(\mathcal{P}')<r<\infty$, then there exists
an $(r,1)$-bi-Lipschitz embedding function of $(\mathcal{P}',C_{d}^{*})$
into $(\mathcal{V}_{\lambda}(\mathcal{X}),d_{\lambda})$ that is a
right inverse of $\varpi_{\lambda}$.
\end{prop}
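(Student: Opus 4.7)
The plan is to exhibit a canonical bijection between couplings $\{X_\alpha\}_{\alpha\in\mathcal{P}'}\in\Gamma_\lambda(\mathcal{P}')$ (where we index the collection $\mathcal{P}'$ by itself, i.e., $P_\alpha=\alpha$ for $\alpha\in\mathcal{P}'$, as permitted by the footnote of Definition \ref{def:pcr}) and maps $\xi:\mathcal{P}'\to\mathcal{V}_\lambda(\mathcal{X})$ satisfying $\varpi_\lambda\circ\xi=\mathrm{id}_{\mathcal{P}'}$, and then to observe that under this bijection the condition $r_d(\{X_\alpha\}_\alpha)\le r$ is literally the condition that $\xi$ is $r$-Lipschitz.

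First I would set up the correspondence. Given a coupling $\{X_\alpha\}_{\alpha\in\mathcal{P}'}$, define $\xi(\alpha):=X_\alpha\in\mathcal{V}_\lambda(\mathcal{X})$; since $X_\alpha\sim\alpha$, i.e., $X_{\alpha*}\lambda_{[0,1]}=\alpha$, one has $\varpi_\lambda(\xi(\alpha))=\alpha$, so $\varpi_\lambda\circ\xi=\mathrm{id}_{\mathcal{P}'}$. Conversely, any such $\xi$ yields a coupling via $X_\alpha:=\xi(\alpha)$, because $\varpi_\lambda(\xi(\alpha))=\alpha$ forces $X_\alpha\sim\alpha$. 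These two operations are mutually inverse.

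Next I would match the quantitative condition. By the definition of $d_\lambda$, for any $P,Q\in\mathcal{P}'$,
\[
d_\lambda(\xi(P),\xi(Q))=\mathbf{E}[d(X_P,X_Q)].
\]
Hence the Lipschitz estimate $d_\lambda(\xi(P),\xi(Q))\le r\, C_d^*(P,Q)$ for all $P,Q\in\mathcal{P}'$ coincides verbatim with the inequality $\mathbf{E}[d(X_P,X_Q)]\le r\,C_d^*(P,Q)$ for all $P,Q\in\mathcal{P}'$ defining $r_d(\{X_\alpha\}_\alpha)\le r$ in Definition \ref{def:pcr}. The boundary cases require only a brief remark: if $C_d^*(P,Q)=\infty$ the bound is vacuous, and if $C_d^*(P,Q)=0$ then $P=Q$ and both sides vanish, so no difficulty arises.

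Finally I would take the infimum over $r\ge 1$ on both sides. The restriction $r\ge 1$ is consistent with both formulations: $r_d^*(\mathcal{P}')\ge 1$ holds by definition, and for any right inverse $\xi$ the inequality $C_d^*(P,Q)\le d_\lambda(\xi(P),\xi(Q))$ (which follows from $\varpi_\lambda$ being $1$-Lipschitz together with $\varpi_\lambda\circ\xi=\mathrm{id}_{\mathcal{P}'}$) forces the Lipschitz constant of $\xi$ to be at least $1$. Thus the infimum of $r_d(\{X_\alpha\}_\alpha)$ over couplings equals the infimum of Lipschitz constants over right inverses of $\varpi_\lambda$, which is the asserted identity. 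There is essentially no obstacle; the statement is a direct translation between two vocabularies (coupling versus embedding), which is precisely why the excerpt remarks that the claim follows from the definitions.
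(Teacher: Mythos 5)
Your proposal is correct and is exactly the definitional unwinding the paper has in mind when it omits the proof as a direct consequence of the definition of $r_{d}^{*}$: couplings in $\Gamma_{\lambda}(\mathcal{P}')$ correspond to right inverses $\xi$ of $\varpi_{\lambda}$, and the condition $r_{d}(\{X_{\alpha}\}_{\alpha})\le r$ is verbatim the $r$-Lipschitz condition for $\xi$, after which one takes infima. The minor boundary remarks (the cases $C_{d}^{*}(P,Q)\in\{0,\infty\}$ and the constraint $r\ge1$) are handled appropriately, so nothing is missing.
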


\medskip{}

If we are interested in embedding into $L_{1}$ instead of $(\mathcal{V}_{\lambda}(\mathcal{X}),d_{\lambda})$,
as in the previous works on metric space embedding (e.g. \cite{indyk2003fast,khot2006nonembeddability,naor2007planar}),
this can be achieved by combining an embedding function of $(\mathcal{P}(\mathcal{X}),C_{d}^{*})$
into $(\mathcal{V}_{\lambda}(\mathcal{X}),d_{\lambda})$, and an embedding
function of $(\mathcal{X},d)$ into $L_{1}$.
\begin{prop}
\label{prop:bilip_L1}Let $\mathcal{P}_{d}(\mathcal{X}):=\{P\in\mathcal{P}(\mathcal{X}):\,\int\int d(x,y)P(\mathrm{d}x)P(\mathrm{d}y)<\infty\}$.
If $r_{d}^{*}(\mathcal{P}_{d}(\mathcal{X}))<r<\infty$, and $\psi$
is a bi-Lipschitz embedding function of $(\mathcal{X},d)$ into $L_{1}$
with distortion at most $\theta$, then there exists a bi-Lipschitz
embedding function of $(\mathcal{P}_{d}(\mathcal{X}),C_{d}^{*})$
into $L_{1}$ with distortion at most $r\theta$.
\end{prop}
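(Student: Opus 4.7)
The plan is to compose the right inverse guaranteed by Proposition \ref{prop:bilip} with a suitable reindexing of $\psi$ to land in $L_{1}([0,1])$. By Proposition \ref{prop:bilip} applied to $\mathcal{P}'=\mathcal{P}_{d}(\mathcal{X})$, there exists an $r$-Lipschitz map $\xi\colon(\mathcal{P}_{d}(\mathcal{X}),C_{d}^{*})\to(\mathcal{V}_{\lambda}(\mathcal{X}),d_{\lambda})$ satisfying $\varpi_{\lambda}\circ\xi=\mathrm{id}$, i.e.\ $\xi(P)\sim P$ and $d_{\lambda}(\xi(P),\xi(Q))\le rC_{d}^{*}(P,Q)$. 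After rescaling I may assume $\psi$ is $(\theta_{1},\theta_{2})$-bi-Lipschitz with $\theta_{1}\theta_{2}\le\theta$, so that $\theta_{2}^{-1}d(x,y)\le\Vert\psi(x)-\psi(y)\Vert_{L_{1}}\le\theta_{1}d(x,y)$. Fix a base point $x_{0}\in\mathcal{X}$ and a measure-preserving Borel isomorphism $\phi\colon[0,1]^{2}\to[0,1]$ (e.g.\ by interleaving binary expansions), and define $\Psi\colon\mathcal{P}_{d}(\mathcal{X})\to L_{1}$ by
\[
\Psi(P)(\phi(u,v)):=\psi(\xi(P)(u))(v)-\psi(x_{0})(v).
\]

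The Lipschitz estimates are then immediate. By Fubini and the measure-preserving property of $\phi$,
\[
\Vert\Psi(P)-\Psi(Q)\Vert_{L_{1}}=\mathbf{E}\left[\Vert\psi(\xi(P))-\psi(\xi(Q))\Vert_{L_{1}}\right].
\]
Applying the two-sided bi-Lipschitz bounds on $\psi$ pointwise inside the expectation, then the $r$-Lipschitz bound on $\xi$ for the upper direction and the $1$-Lipschitz property of $\varpi_{\lambda}$ for the lower direction,
\begin{align*}
\Vert\Psi(P)-\Psi(Q)\Vert_{L_{1}} & \le\theta_{1}\,\mathbf{E}[d(\xi(P),\xi(Q))]=\theta_{1}d_{\lambda}(\xi(P),\xi(Q))\le r\theta_{1}C_{d}^{*}(P,Q),\\
\Vert\Psi(P)-\Psi(Q)\Vert_{L_{1}} & \ge\theta_{2}^{-1}\mathbf{E}[d(\xi(P),\xi(Q))]\ge\theta_{2}^{-1}C_{d}^{*}(\varpi_{\lambda}(\xi(P)),\varpi_{\lambda}(\xi(Q)))=\theta_{2}^{-1}C_{d}^{*}(P,Q).
\end{align*}
Hence $\Psi$ is $(r\theta_{1},\theta_{2})$-bi-Lipschitz with distortion $r\theta_{1}\theta_{2}\le r\theta$.

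Two subsidiary points remain. First, $\Psi(P)\in L_{1}$: the hypothesis $P\in\mathcal{P}_{d}(\mathcal{X})$ gives $\iint d(x,y)P(\mathrm{d}x)P(\mathrm{d}y)<\infty$, so by Fubini there is some $y_{0}\in\mathcal{X}$ with $\mathbf{E}_{X\sim P}[d(X,y_{0})]<\infty$; the triangle inequality then yields $\mathbf{E}[d(\xi(P),x_{0})]<\infty$ and consequently $\Vert\Psi(P)\Vert_{L_{1}}\le\theta_{1}\mathbf{E}[d(\xi(P),x_{0})]<\infty$. Second, and this is the main technical hurdle, the function $(u,v)\mapsto\psi(\xi(P)(u))(v)$ must be jointly Lebesgue measurable on $[0,1]^{2}$; equivalently, one needs a jointly measurable representative $\bar{\psi}\colon\mathcal{X}\times[0,1]\to\mathbb{R}$ of the continuous map $\psi\colon\mathcal{X}\to L_{1}$. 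Since $\mathcal{X}$ is Polish and therefore separable, such a representative can be produced by selecting representatives on a countable dense subset of $\mathcal{X}$ and extending via standard measurable-selection arguments; once this is in place, composition with the Lebesgue measurable function $\xi(P)\colon[0,1]\to\mathcal{X}$ delivers the joint measurability of $\Psi(P)$ and closes the proof.
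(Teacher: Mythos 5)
Your proposal is correct and follows essentially the same route as the paper's proof: invoke Proposition \ref{prop:bilip} to obtain the right inverse $\xi$ of $\varpi_{\lambda}$, compose with $\psi$ while folding the two $[0,1]$-parameters through a measure-preserving map, and derive the two Lipschitz estimates via Fubini, with integrability of the image handled through the moment condition defining $\mathcal{P}_{d}(\mathcal{X})$. The only additions are cosmetic (the translation by $\psi(x_{0})$, the $(\theta_{1},\theta_{2})$ normalization instead of $(\theta,1)$) plus your explicit remark on choosing a jointly measurable representative of $\psi$, a point the paper leaves implicit but which your sketch resolves correctly.
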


\begin{proof}
[Proof of Proposition \ref{prop:bilip_L1}] Assume $\psi$ is $(\theta,1)$-bi-Lipschitz
without loss of generality. By Proposition \ref{prop:bilip}, there
exists an $(r,1)$-bi-Lipschitz embedding function $\xi$ of $(\mathcal{P}_{d}(\mathcal{X}),C_{d}^{*})$
into $(\mathcal{V}_{\lambda}(\mathcal{X}),d_{\lambda})$ that is a
right inverse of $\varpi_{\lambda}$. Let $\Upsilon:[0,1]\to[0,1]^{2}$
be a measure-preserving function\footnote{Measure-preserving means that $\Upsilon_{*}\lambda_{[0,1]}=\lambda_{[0,1]^{2}}$.
We can simply let $\Upsilon(t):=(\sum_{i=1}^{\infty}2^{-i}b_{2i-1},\,\sum_{i=1}^{\infty}2^{-i}b_{2i})$,
where $\sum_{i=1}^{\infty}2^{-i}b_{i}=t$ is the binary expansion
of $t$ (choose the terminating expansion in case of ambiguity).}, and write $\Upsilon(t)=(\Upsilon_{1}(t),\Upsilon_{2}(t))$. Let
$\zeta:\mathcal{P}_{d}(\mathcal{X})\to L_{1}$ be defined by
\[
\zeta(P)(t):=\psi\big(\xi(P)(\Upsilon_{1}(t))\big)\big(\Upsilon_{2}(t)\big).
\]
Since $\xi(P)_{*}\lambda_{[0,1]}=P$, for any $P\in\mathcal{P}_{d}(\mathcal{X})$
and $y\in\mathcal{X}$,
\begin{align*}
\Vert\zeta(P)\Vert_{1} & =\int_{0}^{1}\left|\psi\big(\xi(P)(\Upsilon_{1}(t))\big)\big(\Upsilon_{2}(t)\big)\right|\mathrm{d}t\\
 & =\int_{0}^{1}\int_{0}^{1}\left|\psi\big(\xi(P)(\tau)\big)(t)\right|\mathrm{d}t\mathrm{d}\tau\\
 & =\int_{0}^{1}\Vert\psi(\xi(P)(\tau))\Vert_{1}\mathrm{d}\tau\\
 & =\int\Vert\psi(x)\Vert_{1}P(\mathrm{d}x)\\
 & \le\int\left(\Vert\psi(x)-\psi(y)\Vert_{1}+\Vert\psi(y)\Vert_{1}\right)P(\mathrm{d}x)\\
 & \le\int\left(\theta d(x,y)+\Vert\psi(y)\Vert_{1}\right)P(\mathrm{d}x)\\
 & =\theta\int d(x,y)P(\mathrm{d}x)+\Vert\psi(y)\Vert_{1}.
\end{align*}
 If $\Vert\zeta(P)\Vert_{1}=\infty$, then $\int d(x,y)P(\mathrm{d}x)=\infty$
for all $y\in\mathcal{X}$, and thus $\int\int d(x,y)P(\mathrm{d}x)P(\mathrm{d}y)=\infty$,
contradicting the definition of $\mathcal{P}_{d}(\mathcal{X})$. Therefore
$\Vert\zeta(P)\Vert_{1}<\infty$, and $\zeta(P)\in L_{1}$.

For any $P,Q\in\mathcal{P}_{d}(\mathcal{X})$,
\begin{align*}
\Vert\zeta(P)-\zeta(Q)\Vert_{1} & =\int_{0}^{1}\Vert\psi(\xi(P)(\tau))-\psi(\xi(Q)(\tau))\Vert_{1}\mathrm{d}\tau\\
 & \le\int_{0}^{1}\theta d\left(\xi(P)(\tau),\,\xi(Q)(\tau)\right)\mathrm{d}\tau\\
 & =\theta d_{\lambda}\left(\xi(P),\,\xi(Q)\right)\\
 & \le r\theta C_{d}^{*}(P,Q).
\end{align*}
Also,
\begin{align*}
\Vert\zeta(P)-\zeta(Q)\Vert_{1} & =\int_{0}^{1}\Vert\psi(\xi(P)(\tau))-\psi(\xi(Q)(\tau))\Vert_{1}\mathrm{d}\tau\\
 & \ge\int_{0}^{1}d\left(\xi(P)(\tau),\,\xi(Q)(\tau)\right)\mathrm{d}\tau\\
 & =d_{\lambda}\left(\xi(P),\,\xi(Q)\right)\\
 & \ge C_{d}^{*}(P,Q).
\end{align*}
Therefore $\zeta$ is $(r\theta,1)$-bi-Lipschitz.
\end{proof}
\medskip{}

Since $(\mathbb{R}^{n},\Vert\cdot\Vert_{p})$ is isometrically embeddable
into $L_{1}$ for $1\le p\le2$ \cite{bretagnolle1966lois}, we can
apply Proposition \ref{prop:bilip_L1} to the result in Proposition
\ref{prop:s_rc_ub} to show that when $\mathcal{X}=[0..s]^{n}$, $s\ge1$,
$d(x,y)=\Vert x-y\Vert_{p}$, $1\le p\le2$, $\mathcal{P}([0..s]^{n})$
is bi-Lipschitz embeddable into $L_{1}$ with distortion $28.66n^{1/p}\log(s+1)$.

As a result of Proposition \ref{prop:bilip_L1} and Theorem \ref{thm:rn_rc_ub},
when $\mathcal{X}=\mathbb{R}^{n}$, $d(x,y)=\Vert x-y\Vert_{2}^{q}$,
$0<q<1$, $\mathcal{P}_{d}(\mathcal{X})=\{P\in\mathcal{P}(\mathcal{X}):\,\int\Vert x\Vert_{2}^{q}P(\mathrm{d}x)<\infty\}$
is bi-Lipschitz embeddable into $L_{1}$.
\begin{prop}
\label{prop:ball_bilip}Let $\mathcal{X}=\mathbb{R}^{n}$, $n\ge1$,
$d(x,y)=\Vert x-y\Vert_{2}^{q}$, $0<q<1$. There exists a bi-Lipschitz
embedding function of $(\mathcal{P}_{d}(\mathcal{X}),C_{d}^{*})$
into $L_{1}$ with distortion at most $10.55n^{q/2}/(1-q)$.
\end{prop}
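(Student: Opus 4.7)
The plan is to reduce the proposition to two ingredients already available: the bound on $r_d^*(\mathcal{P}(\mathbb{R}^n))$ from Theorem~\ref{thm:rn_rc_ub} in the Euclidean case $p = 2$, and Proposition~\ref{prop:bilip_L1}, which converts such a bound into a bi-Lipschitz embedding of $(\mathcal{P}_d(\mathcal{X}), C_d^*)$ into $L_1$ provided a bi-Lipschitz embedding $\psi$ of $(\mathcal{X}, d)$ into $L_1$ is at hand. The missing bridge is therefore an isometric embedding (distortion $\theta = 1$) of the snowflake $(\mathbb{R}^n, \|\cdot\|_2^q)$ into $L_1$.

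First I would apply Theorem~\ref{thm:rn_rc_ub} with $p = 2$ and $c = d$ to obtain
\[
r_d^*\bigl(\mathcal{P}_d(\mathbb{R}^n)\bigr) \;\le\; r_d^*\bigl(\mathcal{P}(\mathbb{R}^n)\bigr) \;<\; \frac{10.55}{1-q}\, n^{q/2},
\]
using $\mathcal{P}_d(\mathbb{R}^n) \subseteq \mathcal{P}(\mathbb{R}^n)$ for the first inequality. Next I would produce $\psi$ as follows: for $0 < q < 1$ the snowflake $\|x-y\|_2^q$ on $\mathbb{R}^n$ is of negative type (by Schoenberg's theorem, since $\|x-y\|_2^{2q}$ is of negative type on $\mathbb{R}^n \times \mathbb{R}^n$), so $(\mathbb{R}^n, \|\cdot\|_2^q)$ embeds isometrically into a separable real Hilbert space; composing with the standard Gaussian realization of $\ell_2$ inside $L_1$ (closely related to the Bretagnolle--Dacunha-Castelle--Krivine embedding \cite{bretagnolle1966lois} already used elsewhere in the paper) then yields an isometric $\psi \colon (\mathbb{R}^n, \|\cdot\|_2^q) \to L_1$, i.e., $\theta = 1$.

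Finally I would feed these two ingredients into Proposition~\ref{prop:bilip_L1}: choosing any $r$ strictly between $r_d^*(\mathcal{P}_d(\mathbb{R}^n))$ and $10.55\, n^{q/2}/(1-q)$ gives a bi-Lipschitz embedding of $(\mathcal{P}_d(\mathbb{R}^n), C_d^*)$ into $L_1$ with distortion at most $r\theta = r < 10.55\, n^{q/2}/(1-q)$, as claimed. No step is genuinely difficult; the only mild subtlety is citing the snowflake isometric embedding into $L_1$ cleanly, since the embedding result \cite{bretagnolle1966lois} quoted by the paper is stated only for the non-snowflaked $\ell_p$ metrics with $1 \le p \le 2$, so the snowflake version needs a brief separate justification via Schoenberg.
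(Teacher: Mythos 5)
Your proposal is correct, and its overall architecture (bound $r_d^*(\mathcal{P}(\mathbb{R}^n))$ by Theorem \ref{thm:rn_rc_ub} with $p=2$, then feed an isometric embedding of the snowflake $(\mathbb{R}^n,\Vert\cdot\Vert_2^q)$ into $L_1$ through Proposition \ref{prop:bilip_L1}) is exactly the paper's. Where you genuinely diverge is in how the isometric embedding $\psi$ is obtained: the paper constructs it explicitly, via the kernel $\tilde{\psi}(x,y)=\mathbf{1}\{y\neq x\}\Vert x-y\Vert_2^{-(n-q)}-\mathbf{1}\{y\neq 0\}\Vert y\Vert_2^{-(n-q)}$, a direct computation showing $\int_{\mathbb{R}^n}|\tilde{\psi}(x,y)-\tilde{\psi}(\tilde{x},y)|\,\mathrm{d}y=\gamma\Vert x-\tilde{x}\Vert_2^q$, and a measure-preserving change of variables to land in $L_1([0,1])$; you instead invoke Schoenberg's criterion (since $\Vert x-y\Vert_2^{2q}$ is of negative type for $0<2q<2$, the snowflake embeds isometrically into Hilbert space) composed with the Gaussian realization of $\ell_2$ inside $L_1$. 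Your route is sound provided you record the routine normalizations: the Gaussian map $x\mapsto\sum_i x_i g_i$ is isometric into $L_1(\Omega)$ only after rescaling by $(\mathbf{E}|g_1|)^{-1}$, and one should note that the closed span of the $g_i$ sits inside $L_1$ of a standard nonatomic probability space, hence isometrically inside $L_1([0,1])$ as the paper defines it. What each approach buys: yours is shorter and modular, outsourcing the analytic work to two classical theorems (which, as you note, must be cited separately since \cite{bretagnolle1966lois} as quoted covers only the non-snowflaked $\ell_p$ case); the paper's computation is self-contained, gives a concrete closed-form embedding (useful in the sketching/hashing spirit of the paper), and in particular verifies by hand that the image functions lie in $L_1$, which in your version follows from finiteness of Gaussian first moments.
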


\begin{proof}
[Proof of Proposition \ref{prop:ball_bilip}] To apply Proposition
\ref{prop:bilip}, we design an isometric embedding function of $(\mathbb{R}^{n},d)$
into $L_{1}$. Let $\tilde{\psi}:\mathbb{R}^{n}\times\mathbb{R}^{n}\to\mathbb{R}$
be defined by
\[
\tilde{\psi}(x,y):=\mathbf{1}\{y\neq x\}\Vert x-y\Vert_{2}^{-(n-q)}-\mathbf{1}\{y\neq0\}\Vert y\Vert_{2}^{-(n-q)}.
\]
If $n\ge2$, for $t>0$, we have
\begin{align*}
 & \frac{\mathrm{d}}{\mathrm{d}t}\int_{\mathbb{R}^{n}}\left|\tilde{\psi}(0,y)-\tilde{\psi}(t\mathrm{e}_{1},y)\right|\mathrm{d}y\\
 & =\frac{\mathrm{d}}{\mathrm{d}t}\int_{\mathbb{R}^{n}}\left|\Vert y\Vert_{2}^{-(n-q)}-\Vert t\mathrm{e}_{1}-y\Vert_{2}^{-(n-q)}\right|\mathrm{d}y\\
 & =2\frac{\mathrm{d}}{\mathrm{d}t}\int_{[t/2,\infty)\times\mathbb{R}^{n-1}}\left(\Vert t\mathrm{e}_{1}-y\Vert_{2}^{-(n-q)}-\Vert y\Vert_{2}^{-(n-q)}\right)\mathrm{d}y\\
 & =2\frac{\mathrm{d}}{\mathrm{d}t}\int_{\mathbb{R}^{n}}\left(\mathbf{1}\{y_{1}\ge-t/2\}\Vert y\Vert_{2}^{-(n-q)}-\mathbf{1}\{y_{1}\ge t/2\}\Vert y\Vert_{2}^{-(n-q)}\right)\mathrm{d}y\\
 & =2\frac{\mathrm{d}}{\mathrm{d}t}\int_{[-t/2,t/2]\times\mathbb{R}^{n-1}}\Vert y\Vert_{2}^{-(n-q)}\mathrm{d}y\\
 & =2\int_{\mathbb{R}^{n-1}}\Vert(t/2,y)\Vert_{2}^{-(n-q)}\mathrm{d}y\\
 & =2\int_{\mathbb{R}^{n-1}}\left(t^{2}/4+\Vert y\Vert_{2}^{2}\right)^{-(n-q)/2}\mathrm{d}y\\
 & =2\int_{0}^{\infty}\left(t^{2}/4+\tau^{2}\right)^{-(n-q)/2}(n-1)\mathrm{V}_{n-1,2}\tau^{n-2}\mathrm{d}\tau\\
 & =2(n-1)\mathrm{V}_{n-1,2}\frac{t}{2}\int_{0}^{\infty}\left(t^{2}/4+\tau^{2}t^{2}/4\right)^{-(n-q)/2}(\tau t/2)^{n-2}\mathrm{d}\tau\\
 & =2(n-1)\mathrm{V}_{n-1,2}\frac{t}{2}\left(\frac{t}{2}\right)^{-(n-q)+n-2}\int_{0}^{\infty}\left(1+\tau^{2}\right)^{-(n-q)/2}\tau^{n-2}\mathrm{d}\tau\\
 & =\gamma qt^{-(1-q)},
\end{align*}
where
\[
\gamma:=\begin{cases}
2^{2-q}q^{-1} & \mathrm{if}\;n=1\\
2^{2-q}q^{-1}(n-1)\mathrm{V}_{n-1,2}\int_{0}^{\infty}\left(1+\tau^{2}\right)^{-(n-q)/2}\tau^{n-2}\mathrm{d}\tau & \mathrm{if}\;n\ge2.
\end{cases}
\]
Note that if $n\ge2$,
\begin{align*}
 & \int_{0}^{\infty}\left(1+\tau^{2}\right)^{-(n-q)/2}\tau^{n-2}\mathrm{d}\tau\\
 & \le1+\int_{1}^{\infty}\left(1+\tau^{2}\right)^{-(n-q)/2}\tau^{n-2}\mathrm{d}\tau\\
 & \le1+\int_{1}^{\infty}\left(2\tau^{2}\right)^{-(n-q)/2}\tau^{n-2}\mathrm{d}\tau\\
 & =1+2^{-(n-q)/2}\int_{1}^{\infty}\tau^{-(2-q)}\mathrm{d}\tau\\
 & <\infty.
\end{align*}
If $n=1$, for $t>0$, we have
\begin{align*}
 & \frac{\mathrm{d}}{\mathrm{d}t}\int_{\mathbb{R}}\left|\tilde{\psi}(0,y)-\tilde{\psi}(t,y)\right|\mathrm{d}y\\
 & =2\frac{\mathrm{d}}{\mathrm{d}t}\int_{t/2}^{\infty}\left(\left|t-y\right|^{-(1-q)}-\left|y\right|^{-(1-q)}\right)\mathrm{d}y\\
 & =2\frac{\mathrm{d}}{\mathrm{d}t}\int_{-\infty}^{\infty}\left(\mathbf{1}\{y\ge-t/2\}\left|y\right|^{-(1-q)}-\mathbf{1}\{y\ge t/2\}\left|y\right|^{-(1-q)}\right)\mathrm{d}y\\
 & =2(t/2)^{-(1-q)}\\
 & =\gamma qt^{-(1-q)}.
\end{align*}
Hence, for any $n\ge1$, $t\ge0$,
\[
\int_{\mathbb{R}^{n}}\left|\tilde{\psi}(0,y)-\tilde{\psi}(t\mathrm{e}_{1},y)\right|\mathrm{d}y=\gamma t^{q}.
\]
By symmetry, for any $x,\tilde{x}\in\mathbb{R}^{n}$,
\begin{equation}
\int_{\mathbb{R}^{n}}\left|\tilde{\psi}(x,y)-\tilde{\psi}(\tilde{x},y)\right|\mathrm{d}y=\gamma\Vert x-\tilde{x}\Vert_{2}^{q}.\label{eq:ball_bilip_psit}
\end{equation}
Let $\Upsilon:[0,1]\to[0,1]^{n}$ be a measure-preserving function,
and write $\Upsilon(t)=(\Upsilon_{1}(t),\ldots,\Upsilon_{n}(t))$.
Let $\psi:\mathbb{R}^{n}\to L_{1}$ be defined by
\[
\psi(x)(t):=\gamma^{-1}\left(\prod_{i=1}^{n}\frac{1}{2\Upsilon_{i}(t)(1-\Upsilon_{i}(t))}\right)\tilde{\psi}\left(x,\,\left\{ \tanh^{-1}(2\Upsilon_{i}(t)-1)\right\} _{i\in[1..n]}\right).
\]
For any $x,\tilde{x}\in\mathbb{R}^{n}$,
\begin{align*}
 & \Vert\psi(x_{1})-\psi(x_{2})\Vert_{1}\\
 & =\gamma^{-1}\int_{0}^{1}\left(\prod_{i=1}^{n}\frac{1}{2\Upsilon_{i}(t)(1-\Upsilon_{i}(t))}\right)\left|\tilde{\psi}\left(x,\left\{ \tanh^{-1}(2\Upsilon_{i}(t)-1)\right\} _{i\in[1..n]}\right)-\tilde{\psi}\left(\tilde{x},\left\{ \tanh^{-1}(2\Upsilon_{i}(t)-1)\right\} _{i\in[1..n]}\right)\right|\mathrm{d}t\\
 & =\gamma^{-1}\int_{[0,1]^{n}}\left(\prod_{i=1}^{n}\frac{1}{2y_{i}(1-y_{i})}\right)\left|\tilde{\psi}\left(x,\left\{ \tanh^{-1}(2y_{i}-1)\right\} _{i\in[1..n]}\right)-\tilde{\psi}\left(\tilde{x},\left\{ \tanh^{-1}(2y_{i}-1)\right\} _{i\in[1..n]}\right)\right|\mathrm{d}y\\
 & \overset{(a)}{=}\gamma^{-1}\int_{\mathbb{R}^{n}}\left|\tilde{\psi}(x,y)-\tilde{\psi}(\tilde{x},y)\right|\mathrm{d}y\\
 & \overset{(b)}{=}\Vert x-\tilde{x}\Vert_{2}^{q},
\end{align*}
where (a) is by substituting $y_{i}\leftarrow\tanh^{-1}(2y_{i}-1)$,
and (b) is by \eqref{eq:ball_bilip_psit}. Therefore $\psi$ is isometric.
In particular, for any $x\in\mathbb{R}^{n}$, $\Vert\psi(x)\Vert_{1}=\Vert\psi(x)-\psi(0)\Vert_{1}=\Vert x\Vert_{2}^{q}<\infty$
since $\psi(0)=0$ by definition. Hence $\psi(x)\in L_{1}$. The result
follows from Proposition \ref{prop:bilip_L1} and Theorem \ref{thm:rn_rc_ub}.
\end{proof}
\medskip{}

We now prove Proposition \ref{prop:rn_rc_lb} where $\mathcal{X}=\mathbb{Z}^{n}$,
$n\ge2$, $d(x,y)=\Vert x-y\Vert_{p}^{q}$, $p\in\mathbb{R}_{\ge1}\cup\{\infty\}$,
$0<q\le1$, using a result in \cite{naor2007planar}.
\begin{proof}
[Proof of Proposition \ref{prop:rn_rc_lb}] Assume $n=2$ without
loss of generality. First consider the case $p=2$, $q=1$. Let $s\in\mathbb{N}$
and $\epsilon>0$. We have $r_{\Vert\cdot\Vert_{2}}^{*}(\mathcal{P}([0..s]^{2}))<\infty$
by the ratio bound in Proposition \ref{prop:rc_prop_misc}. Since
$(\mathbb{R}^{2},\Vert\cdot\Vert_{2})$ is isometrically embeddable
into $L_{1}$ \cite{bretagnolle1966lois}, by Proposition \ref{prop:bilip_L1},
there is a bi-Lipschitz embedding function $\xi$ of $(\mathcal{P}([0..s]^{2}),C_{\Vert\cdot\Vert_{2}}^{*})$
into $L_{1}$ with distortion at most $r_{\Vert\cdot\Vert_{2}}^{*}(\mathcal{P}([0..s]^{2}))+\epsilon$.

We invoke a result in \cite{naor2007planar}, which states that 
if $\zeta$ is a bi-Lipschitz embedding function of $(\mathcal{P}([0..s]^{2}),C_{\Vert\cdot\Vert_{2}}^{*})$
into $L_{1}$, then its distortion is at least $\sqrt{\log s}/(64\pi)$.
Since $\xi$ satisfies these conditions, we have
\[
r_{\Vert\cdot\Vert_{2}}^{*}(\mathcal{P}([0..s]^{2}))+\epsilon\ge\frac{\sqrt{\log s}}{64\pi}.
\]
Letting $\epsilon\to0$, we have
\[
r_{\Vert\cdot\Vert_{2}}^{*}(\mathcal{P}([0..s]^{2}))\ge\frac{\sqrt{\log s}}{64\pi}.
\]
For any $p\in\mathbb{R}_{\ge1}\cup\{\infty\}$, $0<q\le1$, by the
ratio bound in Proposition \ref{prop:rc_prop_misc},
\begin{align}
r_{d}^{*}(\mathcal{P}([0..s]^{2})) & \ge\frac{1}{\max\{2^{q/p-1/2},1\}}\cdot\frac{1}{\max\{2^{1/2-q/p}s^{1-q},s^{1-q}\}}r_{\Vert\cdot\Vert_{2}}^{*}(\mathcal{P}([0..s]^{2}))\nonumber \\
 & \ge\frac{s^{q-1}\sqrt{\log s}}{64\sqrt{2}\pi}.\label{eq:rn_rc_lb_order}
\end{align}
If $q=1$, we have $r_{d}^{*}(\mathcal{P}(\mathbb{Z}^{2}))=\infty$
by letting $s\to\infty$. If $0<q<1$, letting $s=\lceil e^{1/(2-2q)}\rceil$,
we have
\begin{align*}
r_{d}^{*}(\mathcal{P}(\mathbb{Z}^{2})) & \ge r_{d}^{*}(\mathcal{P}([0..s]^{2}))\\
 & \ge\frac{s^{q-1}\sqrt{\log s}}{64\sqrt{2}\pi}\\
 & \ge\lceil e^{1/(2-2q)}\rceil^{q-1}\frac{\sqrt{1/(2-2q)}}{64\sqrt{2}\pi}\\
 & \ge\left(\frac{3}{2}e^{1/(2-2q)}\right)^{q-1}\frac{\sqrt{1/(2-2q)}}{64\sqrt{2}\pi}\\
 & \ge\frac{2}{3}\cdot\frac{1}{64\sqrt{2}\pi}\cdot\frac{1}{\sqrt{2e(1-q)}}\\
 & >\frac{1}{1000\sqrt{1-q}}.
\end{align*}
The other cases in Remark \ref{rem:rc_lb_cases} follow from the
same arguments as in Appendix \ref{subsec:bd_ball_cont}.
\end{proof}
\medskip{}

We can also use the result in \cite{khot2006nonembeddability} to
show a lower bound on $r_{c}^{*}$ for the Hamming distance over $\{0,1\}^{n}$.
\begin{prop}
\label{prop:hamming_embed}Let $\mathcal{X}=\{0,1\}^{n}$, $n\ge2$,
$d(x,y)=\Vert x-y\Vert_{1}$, $c(x,y)=(d(x,y))^{q}$, $q>0$. We have
\[
r_{c}^{*}(\mathcal{P}(\{0,1\}^{n}))=\Omega(n^{1-|1-q|})
\]
as $n\to\infty$, where the constant in $\Omega(\cdots)$ does not
depend on $q$.
\end{prop}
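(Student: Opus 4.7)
The plan is to reduce the general-$q$ bound to the case $q=1$ by means of the ratio bound in Proposition \ref{prop:rc_prop_misc}, and then to deduce the linear lower bound for $q=1$ by combining Proposition \ref{prop:bilip_L1} with the Khot--Naor nonembeddability theorem. First I would take $c_1=d$ and $c_2=d^q$, where $d(x,y)=\Vert x-y\Vert_1$ is the Hamming metric on $\{0,1\}^n$. Since $d$ takes values in $\{1,\dots,n\}$ on distinct pairs, one has
\[
\Bigl(\sup_{x\neq y}d(x,y)^{1-q}\Bigr)\Bigl(\sup_{x\neq y}d(x,y)^{q-1}\Bigr)=n^{|1-q|},
\]
so that Proposition \ref{prop:rc_prop_misc} yields
\[
r_{d^q}^{*}(\mathcal{P}(\{0,1\}^n))\;\ge\;n^{-|1-q|}\,r_{d}^{*}(\mathcal{P}(\{0,1\}^n)).
\]
Thus it suffices to prove $r_{d}^{*}(\mathcal{P}(\{0,1\}^n))=\Omega(n)$ with a constant independent of $q$; this absorbs all the $q$-dependence into the explicit prefactor $n^{-|1-q|}$.

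Second, I would invoke Proposition \ref{prop:bilip_L1}. Since $(\{0,1\}^n,\Vert\cdot\Vert_1)$ embeds isometrically in $L_1$ (as a finite subset of the Banach space $\ell_1$, whose inclusion $\ell_1\hookrightarrow L_1$ is isometric), any right-inverse $\xi$ of $\varpi_\lambda$ with $d_\lambda$-Lipschitz constant $r$ yields a bi-Lipschitz embedding of $(\mathcal{P}(\{0,1\}^n),W_1)$ into $L_1$ of distortion at most $r\cdot 1=r$. Consequently, the $L_1$-distortion $c_1(\mathcal{P}(\{0,1\}^n),W_1)$ of the $1$-Wasserstein space on the Hamming cube is a lower bound for $r_{d}^{*}(\mathcal{P}(\{0,1\}^n))$. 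The Khot--Naor theorem \cite{khot2006nonembeddability}, proved via Fourier analysis on the discrete cube, supplies precisely a lower bound of order $n$ on this distortion, completing the reduction. Plugging this into the first step gives $r_{d^q}^{*}(\mathcal{P}(\{0,1\}^n))=\Omega(n\cdot n^{-|1-q|})=\Omega(n^{1-|1-q|})$, with a constant independent of $q$ as claimed.

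The main obstacle is to verify that the Khot--Naor nonembeddability bound really does give the linear order $\Omega(n)$ required here (as opposed to a polylogarithmic bound, which would only produce a quantitatively weaker statement). This hinges on using the strong form of the Khot--Naor lower bound tailored to the Hamming cube, where test pairs of measures are built from short Fourier characters and the rigidity of cut metrics forces a distortion linear in the dimension. Modulo this black-box invocation, the remainder of the argument is the clean ratio-bound interpolation outlined above, and no additional combinatorial construction in the space of couplings is needed.
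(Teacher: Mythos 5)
Your proposal is correct and follows essentially the same route as the paper: the paper likewise combines the ratio bound of Proposition \ref{prop:rc_prop_misc} (giving the factor $n^{-|1-q|}$) with Proposition \ref{prop:bilip_L1} and the isometric embeddability of $(\{0,1\}^{n},\Vert\cdot\Vert_{1})$ into $L_{1}$, then invokes \cite[Corollary 3.7]{khot2006nonembeddability}, which indeed gives the linear $\Omega(n)$ distortion bound (so your flagged concern is resolved exactly as you hoped). No gaps.
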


\begin{proof}
It is shown in \cite[Corollary 3.7]{khot2006nonembeddability} that
there exists a constant $\gamma>0$ such that if $\zeta$ is a bi-Lipschitz
embedding function of $(\mathcal{P}(\{0,1\}^{n}),C_{d}^{*})$ into
$L_{1}$, then its distortion is at least $\gamma n$. Since $(\{0,1\}^{n},d)$
is clearly isometrically embeddable into $L_{1}$, by Proposition
\ref{prop:bilip_L1}, we have $r_{d}^{*}(\mathcal{P}(\{0,1\}^{n}))\ge\gamma n$.
By the ratio bound in Proposition \ref{prop:rc_prop_misc},
\begin{align*}
r_{c}^{*}(\mathcal{P}(\{0,1\}^{n})) & \ge\frac{1}{\max\{n^{q-1},1\}}\cdot\frac{1}{\max\{n^{1-q},1\}}r_{d}^{*}(\mathcal{P}(\{0,1\}^{n}))\\
 & \ge n^{1-|1-q|}\gamma.
\end{align*}
\end{proof}
Note that $\Vert x-y\Vert_{1}=\Vert x-y\Vert_{p}^{p}$ over $\{0,1\}^{n}$
for $p\ge1$. Therefore, when $\mathcal{X}=\{0,1\}^{n}$, $c(x,y)=\Vert x-y\Vert_{p}^{q}$,
$p\in\mathbb{R}_{\ge1}\cup\{\infty\}$, $q>0$, we have
\[
r_{c}^{*}(\mathcal{P}(\{0,1\}^{n}))=\Omega(n^{1-|1-q/p|}).
\]
\medskip{}

\section{Conjectures\label{sec:unresolved}}

In this section, we list some conjectures and unresolved problems
about $r_{c}^{*}$ that may be of interest.\medskip{}

\begin{enumerate}
\item \emph{For $c(x,y)=|x-y|^{q}$ over $\mathbb{R}$, does there exist
a uniform upper bound for $r_{c}^{*}(\mathcal{P}(\mathbb{R}))$ for
all $q>0$? Is $r_{c}^{*}(\mathcal{P}(\mathbb{R}))=2$ for all $0<q<1$?}\smallskip{}
\\
Theorem \ref{thm:rn_rc_ub} gives an upper bound on $r_{c}^{*}(\mathcal{P}(\mathbb{R}))$
for $0<q<1$. Nevertheless, it is likely not tight, and it tends to
$\infty$ when $q\to1$, which may not be the actual behavior of $r_{c}^{*}$.\medskip{}
\item \emph{Does Theorem \ref{thm:rn_rc_ub} (or a similar bound) hold for
$\mathcal{P}(\mathcal{M})$ where $\mathcal{M}$ is any connected
smooth complete $n$-dimensional Riemannian manifold, and $c(x,y)=(d_{\mathcal{M}}(x,y))^{q}$,
$0<q<1$? Is $r_{c}^{*}(\mathcal{P}(\mathcal{M}))<\infty$ when $\mathcal{M}$
is the hyperbolic space?}\smallskip{}
\\
The upper bound in Theorem \ref{thm:ricci} requires a non-negative
Ricci curvature, and Corollary \ref{cor:dyadic_manifold} requires
finding an embedding into a Euclidean space where the intrinsic distance
can be approximated by the distance in the Euclidean space, which
may not be possible for general Riemannian manifolds. It may be of
interest to find more general bounds.\medskip{}
\item \emph{What is $r_{\mathbf{1}_{\neq}}^{*}(\mathcal{P}([1..4]))$?}\smallskip{}
\\
Theorem \ref{thm:rd_bd} shows that $r_{\mathbf{1}_{\neq}}^{*}(\mathcal{P}([1..4]))\in[3/2,5/3]$.
Nevertheless, its exact value is unknown.
\end{enumerate}
\smallskip{}

\section{Acknowledgements}

The authors acknowledge support from the NSF grants CNS-1527846, CCF-1618145,
the NSF Science \& Technology Center grant CCF-0939370 (Science of
Information), and the William and Flora Hewlett Foundation supported
Center for Long Term Cybersecurity at Berkeley.

\appendix
\[
\]

\section{Deciding Whether $r_{c}^{*}(\{P_{\alpha}\}_{\alpha})=1$ is NP-hard
\label{subsec:hardness}}

We show that the problem of deciding whether $r_{c}^{*}(\{P_{\alpha}\}_{\alpha\in\mathcal{A}})=1$
is NP-hard, where $(\mathcal{X},c)$ is a finite metric space, $c$
only takes values in $\{0,1,2\}$, and $\{P_{\alpha}\}_{\alpha\in\mathcal{A}}$
is a finite collection of probability distributions where each $P_{\alpha}$
is a $b$-type distribution (i.e., $bP_{\alpha}(x)\in\mathbb{Z}_{\ge0}$
for all $x\in\mathcal{X}$), where $b\in\mathbb{N}$ is an input ($b$
is the same for all $P_{\alpha}$). Note that the size of the input
$(|\mathcal{X}|,b,\{c(x,y)\}_{x,y\in\mathcal{X}},\{P_{\alpha}(x)\}_{\alpha\in\mathcal{A},x\in\mathcal{X}})$
is $O(|\mathcal{X}|^{2}+|\mathcal{A}||\mathcal{X}|\log b)$. We will
show this by a polynomial-time reduction from the graph coloring problem
of deciding whether the graph $(V,E)$ admits a proper vertex coloring
with $k$ colors (``proper'' means that every two adjacent vertices
have different colors), which is NP-complete \cite{karp1972reducibility}.

Fix any graph $(V,E)$ with vertex set $V$ and edge set $E$ (without
self-loop or multiple edges) where $(v_{1},v_{2})\in E$ $\Leftrightarrow$
$(v_{2},v_{1})\in E$. Assume $|V|\ge2$. Fix any $k\in[2..|V|]$.
Let $\mathcal{X}:=V\times[1..k]$, and
\[
c\left((v_{1},z_{1}),(v_{2},z_{2})\right):=\mathbf{1}\left\{ (v_{1},z_{1})\neq(v_{2},z_{2})\right\} +\mathbf{1}\left\{ (v_{1},v_{2})\in E\;\mathrm{and}\;z_{1}=z_{2}\right\} .
\]
We can show that $c$ is a metric by the fact that $c$ is symmetric,
$c((v_{1},z_{1}),(v_{2},z_{2}))=0$ $\Leftrightarrow$ $(v_{1},z_{1})=(v_{2},z_{2})$,
and $c$ is $\{0,1,2\}$-valued (if $(v_{1},z_{1}),(v_{2},z_{2}),(v_{3},z_{3})$
are distinct, then $c((v_{1},z_{1}),(v_{2},z_{2}))+c((v_{2},z_{2}),(v_{3},z_{3}))\ge2\ge c((v_{1},z_{1}),(v_{3},z_{3}))$).
Let $\mathcal{A}:=V$, $P_{v}:=\mathrm{Unif}(\{(v,z):\,z\in[1..k]\})$
for $v\in V$. Note that $C_{c}^{*}(P_{v_{1}},P_{v_{2}})=1$ for any
$v_{1}\neq v_{2}$ since $k\ge2$.

We now show that $(V,E)$ is $k$-colorable if and only if $r_{c}^{*}(\{P_{v}\}_{v\in V})=1$.
For the ``only if'' direction, assume $(V,E)$ is $k$-colorable.
Let the proper coloring be $f:V\to[1..k]$. Construct a coupling $\{X_{v}\}_{v}$
by $U\sim\mathrm{Unif}[1..k]$, $X_{v}=(v,U\oplus f(v))$, where $a\oplus b\in[1..k]$
is defined by $a\oplus b\equiv a+b$ ($\mathrm{mod}\;k$). For any
$v_{1}\neq v_{2}$, 
\begin{align*}
c(X_{v_{1}},X_{v_{2}}) & =1+\mathbf{1}\left\{ (v_{1},v_{2})\in E\;\mathrm{and}\;U\oplus f(v_{1})=U\oplus f(v_{2})\right\} \\
 & =1+\mathbf{1}\left\{ (v_{1},v_{2})\in E\;\mathrm{and}\;f(v_{1})=f(v_{2})\right\} \\
 & =1\\
 & =C_{c}^{*}(P_{v_{1}},P_{v_{2}}),
\end{align*}
and hence $r_{c}^{*}(\{P_{v}\}_{v})=r_{c}(\{X_{v}\}_{v})=1$.

For the ``if'' direction, assume $r_{c}^{*}(\{P_{v}\}_{v})=1$.
Let $\{X_{v}\}_{v}$ be a coupling achieving $r_{c}(\{X_{v}\}_{v})\le1+1/(2|E|)$.
Define a coloring by $f_{\{X_{v}\}_{v}}(v):=X_{v,2}$ (where $X_{v,2}$
denotes the second component of the pair $X_{v}\in V\times[1..k]$).
We have
\begin{align*}
 & \mathbf{P}\big(f_{\{X_{v}\}_{v}}\;\text{is not proper}\big)\\
 & =\mathbf{P}\big(\exists(v_{1},v_{2})\in E:\,X_{v_{1},2}=X_{v_{2},2}\big)\\
 & =\mathbf{P}\big(\exists(v_{1},v_{2})\in E:\,c(X_{v_{1}},X_{v_{2}})=2\big)\\
 & \le\sum_{(v_{1},v_{2})\in E}\mathbf{P}\big(c(X_{v_{1}},X_{v_{2}})=2\big)\\
 & =\sum_{(v_{1},v_{2})\in E}\left(\mathbf{E}\big[c(X_{v_{1}},X_{v_{2}})\big]-1\right)\\
 & \le\sum_{(v_{1},v_{2})\in E}\left((1+1/(2|E|))C_{c}^{*}(P_{v_{1}},P_{v_{2}})-1\right)\\
 & =1/2.
\end{align*}
Therefore there exists a proper coloring. The result follows.

We remark that while deciding whether $r_{c}^{*}(\{P_{\alpha}\}_{\alpha})=1$
is NP-hard for general collection of probability distributions $\{P_{\alpha}\}_{\alpha}$,
deciding whether $r_{c}^{*}(\mathcal{P}(\mathcal{X}))=1$ (when $\{P_{\alpha}\}_{\alpha}$
is fixed to the collection of all probability distributions $\mathcal{P}(\mathcal{X})$)
for a finite metric space $(\mathcal{X},c)$ can be performed in $O(|\mathcal{X}|^{2})$
time. This is due to Proposition \ref{prop:rc_prop_misc}, which shows
that if $(\mathcal{X},c)$ is a metric space, then $r_{c}^{*}(\mathcal{P}(\mathcal{X}))=1$
if and only if $(\mathcal{X},c)$ can be isometrically embedded into
$(\mathbb{R},\,(x,y)\mapsto|x-y|)$. We can decide whether such an
embedding function $g:\mathcal{X}\to\mathbb{R}$ exists by fixing
$g(x_{0})=0$ for a point $x_{0}\in\mathcal{X}$, fixing $g(x_{1})=c(x_{0},x_{1})$
for another $x_{1}\in\mathcal{X}$, and then checking whether $g(x):=c(x_{0},x)(1-2\cdot\mathbf{1}\{c(x_{0},x)+c(x_{0},x_{1})=c(x_{1},x)\})$
is an isometric embedding.

\medskip{}

\section{Bounding $r_{\mathbf{1}_{\protect\neq}}^{*}$ for $\{P\in\mathcal{P}(\mathcal{X}):\,\mathrm{supp}(P)\le k\}$
\label{subsec:pf_rd_bd_supp2}}

Here we prove that $r_{\mathbf{1}_{\neq}}^{*}(\{P\in\mathcal{P}(\mathcal{X}):\,\mathrm{supp}(P)\le k\})\le k$
for $k\in\mathbb{N}$. Note that, if $\mathcal{X}$ is uncountable,
there does not exist a $\sigma$-finite measure $\mu$ such that $P\ll\mu$
for all $P\in\{P\in\mathcal{P}(\mathcal{X}):\,\mathrm{supp}(P)=2\}$.
This shows that the existence of a $\sigma$-finite measure $\mu$
such that $P_{\alpha}\ll\mu$ for all non-degenerate $P_{\alpha}\in\{P_{\alpha}\}_{\alpha}$
(non-degenerate means $P_{\alpha}\neq\delta_{x}$ for all $x\in\mathcal{X}$)
is a sufficient condition (by Theorem \ref{thm:rd_bd}), but not a
necessary condition for $r_{\mathbf{1}_{\neq}}^{*}(\{P_{\alpha}\}_{\alpha})\le2$
to hold.
\begin{prop}
\label{prop:rd_bd_supp2}For any Polish space $\mathcal{X}$, $k\in\mathbb{N}$,
and $c(x,y)=\mathbf{1}_{\neq}(x,y)$, we have
\[
r_{\mathbf{1}_{\neq}}^{*}(\{P\in\mathcal{P}(\mathcal{X}):\,\mathrm{supp}(P)\le k\})\le k.
\]
\end{prop}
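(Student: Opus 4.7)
The plan is to couple $\{X_P\}$ via quantile coupling relative to a fixed Borel isomorphism $\pi: \mathcal{X} \to [0,1]$ (which exists since $\mathcal{X}$ is Polish), and then to control the mismatch probability through an exact identity for the quantile overlap combined with the support bound on both marginals.

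Concretely, on the standard probability space let $U$ be the identity map (so $U \sim \mathrm{Unif}[0,1]$), and for each $P$ with $|\mathrm{supp}(P)| \le k$ set $X_P := \pi^{-1}(F_{\pi_* P}^{-1}(U))$; then $X_P \sim P$, so $\{X_P\} \in \Gamma_{\lambda}$. Because $\pi$ is injective, $\mathbf{P}(X_P \ne X_Q) = \mathbf{P}(F_{\pi_* P}^{-1}(U) \ne F_{\pi_* Q}^{-1}(U))$ and $d_{\mathrm{TV}}(P,Q) = d_{\mathrm{TV}}(\pi_* P, \pi_* Q)$, so it suffices to prove $\mathbf{P}(F_{\tilde P}^{-1}(U) \ne F_{\tilde Q}^{-1}(U)) \le k\, d_{\mathrm{TV}}(\tilde P, \tilde Q)$ for arbitrary $\tilde P, \tilde Q$ on $[0,1]$ with support size at most $k$.

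Enumerate $\mathrm{supp}(\tilde P) \cup \mathrm{supp}(\tilde Q) = \{c_1 < \cdots < c_n\}$, so $n \le 2k$, and set $G_l := F_{\tilde P}(c_l) - F_{\tilde Q}(c_l)$, with $G_0 = G_n = 0$ and $|G_l| \le d_{\mathrm{TV}}$. Using the identities $\min(x,y) = (x+y-|x-y|)/2$ and $\max(x,y) = (x+y+|x-y|)/2$, the overlap length of the $\tilde P$- and $\tilde Q$-intervals corresponding to $c_l$ equals
\[
r_l = \max\!\Bigl(0,\ \min(\tilde P(c_l), \tilde Q(c_l)) - \mathbf{1}\{G_{l-1}G_l > 0\}\cdot\min(|G_{l-1}|,|G_l|)\Bigr),
\]
which after summing and using $\sum_l \min(\tilde P(c_l), \tilde Q(c_l)) = 1 - d_{\mathrm{TV}}$ yields
\[
\mathbf{P}(F_{\tilde P}^{-1}(U) \ne F_{\tilde Q}^{-1}(U)) = d_{\mathrm{TV}}(\tilde P,\tilde Q) + \sum_{l:\, G_{l-1}G_l > 0} \min\!\bigl(\min(\tilde P(c_l),\tilde Q(c_l)),\,\min(|G_{l-1}|,|G_l|)\bigr).
\]

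The main obstacle is showing that the ``strict'' sum on the right is at most $(k-1)\,d_{\mathrm{TV}}$. Each summand is at most $d_{\mathrm{TV}}$, and is nonzero only when $c_l \in \mathrm{supp}(\tilde P) \cap \mathrm{supp}(\tilde Q)$ and $l \in \{2,\ldots,n-1\}$ (since $G_0 = G_n = 0$ places $l = 1, n$ in the cross/zero class). Hence the number of strict indices contributing a nonzero summand is at most $\min(a, n - 2)$, where $a := |\mathrm{supp}(\tilde P) \cap \mathrm{supp}(\tilde Q)|$. The support constraint $|\mathrm{supp}(\tilde P)|, |\mathrm{supp}(\tilde Q)| \le k$ then forces $\min(a, n-2) \le k-1$: if $a \le k-1$ the bound is immediate, while $a = k$ forces $\mathrm{supp}(\tilde P) = \mathrm{supp}(\tilde Q)$ of size $k$ and hence $n - 2 = k - 2$. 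Combining, $\mathbf{P}(X_P \neq X_Q) \le k\, d_{\mathrm{TV}}(P,Q)$, with tightness witnessed by the shift example $\tilde P = \tfrac{1}{k}\sum_{i=1}^k \delta_{c_i}$, $\tilde Q = \tfrac{1}{k}\sum_{i=2}^{k+1} \delta_{c_i}$ (where $a = k-1$, $n = k+1$, and every intersection atom is strict).
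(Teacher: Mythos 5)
Your construction coincides with the paper's: after identifying $\mathcal{X}$ with (a Borel subset of) $[0,1]$, both proofs couple all $P$ with $|\mathrm{supp}(P)|\le k$ through the quantile map $F_{P}^{-1}(U)$ with a single $U\sim\mathrm{Unif}[0,1]$. The difference lies in how the factor $k$ is extracted. The paper uses a per-atom union bound: for each of the at most $k$ atoms $x$ of $P_{\alpha}$, the event $\{X_{\alpha}=x,\,X_{\beta}\neq x\}$ corresponds to a subset of $[0,1]$ of measure at most $d_{\mathrm{TV}}(P_{\alpha},P_{\beta})$ (comparing the CDF intervals and splitting into the left tail $[0,x)$ and the right tail $(x,1]$), so summing over atoms gives $k\,d_{\mathrm{TV}}$ in one line. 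You instead compute the mismatch probability exactly, as $d_{\mathrm{TV}}$ plus a sum of correction terms supported on indices where the consecutive CDF gaps $G_{l-1},G_{l}$ share a strict sign; your overlap formula for $r_{l}$, the identity $\sum_{l}\min(\tilde{P}(c_{l}),\tilde{Q}(c_{l}))=1-d_{\mathrm{TV}}$, the bound $\min(|G_{l-1}|,|G_{l}|)\le d_{\mathrm{TV}}$ on each correction term, and the counting of nonzero terms by $\min(a,n-2)\le k-1$ (using that $a=k$ forces equal supports, hence $n=k$) are all correct, so you again obtain $k\,d_{\mathrm{TV}}$. Your route is longer but buys an exact identity for the mismatch probability, which explains where the slack sits and supports the near-tightness shift example; the paper's argument is shorter but only an upper bound. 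One cosmetic point: a Borel isomorphism of $\mathcal{X}$ onto all of $[0,1]$ exists only when $\mathcal{X}$ is uncountable; for countable $\mathcal{X}$ you should either use a Borel embedding into $[0,1]$ (injectivity is all your argument needs) or treat that case separately, as the paper does via Theorem \ref{thm:rd_bd}.
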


\begin{proof}
[Proof of Proposition \ref{prop:rd_bd_supp2}]The case where $\mathcal{X}$
is countable follows from Theorem \ref{thm:rd_bd} by letting $\mu$
be the counting measure (the case $k=1$ is trivial). If $\mathcal{X}$
is uncountable, then it is Borel-isomorphic to $[0,1]$. Hence we
assume $\mathcal{X}=[0,1]$ without loss of generality. Let $\{P_{\alpha}\}_{\alpha\in\mathcal{A}}=\{P\in\mathcal{P}(\mathcal{X}):\,\mathrm{supp}(P)\le k\}$
(we can simply take $\mathcal{A}=\{P\in\mathcal{P}(\mathcal{X}):\,\mathrm{supp}(P)\le k\}$,
$P_{\alpha}=\alpha$). We construct a coupling of $\{P_{\alpha}\}_{\alpha}$
by the inverse transform as in Proposition \ref{prop:r_rc}, i.e.,
let $U\sim\mathrm{Unif}[0,1]$, $X_{\alpha}:=F_{P_{\alpha}}^{-1}(U)$.

Fix any two probability distributions $P_{\alpha},P_{\beta}$. We
have
\begin{align*}
 & \mathbf{P}(X_{\alpha}\neq X_{\beta})\\
 & =\sum_{x\in\mathrm{supp}(P_{\alpha})}\mathbf{P}(X_{\alpha}=x\;\mathrm{and}\;X_{\beta}\neq x)\\
 & =\sum_{x\in\mathrm{supp}(P_{\alpha})}\lambda\left([P_{\alpha}([0,x)),\,P_{\alpha}([0,x])]\backslash[P_{\beta}([0,x)),\,P_{\beta}([0,x])]\right)\\
 & \le\sum_{x\in\mathrm{supp}(P_{\alpha})}\left(\max\left\{ P_{\beta}([0,x))-P_{\alpha}([0,x)),\,0\right\} +\max\left\{ P_{\alpha}([0,x])-P_{\beta}([0,x]),\,0\right\} \right)\\
 & =\sum_{x\in\mathrm{supp}(P_{\alpha})}\left(\max\left\{ P_{\beta}([0,x))-P_{\alpha}([0,x)),\,0\right\} +\max\left\{ P_{\beta}((x,1])-P_{\alpha}((x,1]),\,0\right\} \right)\\
 & \le\sum_{x\in\mathrm{supp}(P_{\alpha})}\left(\sum_{y\in\mathrm{supp}(P_{\beta})\cap[0,x)}\max\left\{ P_{\beta}(y)-P_{\alpha}(y),\,0\right\} +\sum_{y\in\mathrm{supp}(P_{\beta})\cap(x,1]}\max\left\{ P_{\beta}(y)-P_{\alpha}(y),\,0\right\} \right)\\
 & \le\sum_{x\in\mathrm{supp}(P_{\alpha})}d_{\mathrm{TV}}(P_{\alpha},P_{\beta})\\
 & \le kd_{\mathrm{TV}}(P_{\alpha},P_{\beta}).
\end{align*}
The result follows.
\end{proof}

\section{Proof of Theorem \ref{thm:rd_bd} for Nonstandard Probability Space
\label{subsec:pf_rd_bd_nonst}}

Here we extend Theorem \ref{thm:rd_bd} to the collection of all distributions
$\mathcal{P}(\mathcal{X})$ (rather than only $\mathcal{P}_{\ll\mu}(\mathcal{X})$)
if the definition of coupling is relaxed to allow a nonstandard probability
space. For a collection of probability distributions $\{P_{\alpha}\}_{\alpha\in\mathcal{A}}$
over a measurable space $(\mathcal{X},\mathcal{F})$, let 
\[
r_{c}^{\mathrm{nst}}(Q):=\inf\left\{ r\ge1:\,\mathbf{E}_{\{X_{\gamma}\}_{\gamma}\sim Q}[c(X_{\alpha},X_{\beta})]\le rC_{c}^{*}(P_{\alpha},P_{\beta})\,\forall\alpha,\beta\in\mathcal{A}\right\} ,
\]
for $Q\in\Gamma(\{P_{\alpha}\}_{\alpha})$, and
\[
r_{c}^{\mathrm{nst}*}(\{P_{\alpha}\}_{\alpha}):=\inf_{Q\in\Gamma(\{P_{\alpha}\}_{\alpha\in\mathcal{A}})}r_{c}(Q).
\]

\begin{prop}
\label{prop:rd_bd_nst}For any Polish space $\mathcal{X}$, and $c(x,y)=\mathbf{1}_{\neq}(x,y)$,
we have
\[
r_{\mathbf{1}_{\neq}}^{\mathrm{nst}*}(\mathcal{P}(\mathcal{X}))\le2.
\]
\end{prop}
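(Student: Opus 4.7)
The plan is to obtain the desired coupling of the uncountable family $\mathcal{P}(\mathcal{X})$ by first constructing compatible finite-dimensional couplings for every finite sub-family (using Theorem~\ref{thm:rd_bd}), and then gluing them together via a compactness argument followed by the Kolmogorov extension theorem. For each finite $F\subseteq\mathcal{P}(\mathcal{X})$, let $\mu_F:=\sum_{P\in F}P$, which is a finite (hence $\sigma$-finite) measure dominating every $P\in F$. If all elements of $F$ are equal to a single Dirac mass, the constant coupling trivially works; otherwise $|\mathrm{supp}(\mu_F)|\ge 2$, and Theorem~\ref{thm:rd_bd} produces a coupling $Q_F\in\Gamma(\{P\}_{P\in F})\subseteq\mathcal{P}(\mathcal{X}^F)$ with
\[
\mathbf{E}_{Q_F}[\mathbf{1}_{\neq}(X_P,X_{P'})]\le 2\,d_{\mathrm{TV}}(P,P')\quad\text{for all }P,P'\in F.
\]
Let $\mathcal{K}_F$ denote the set of all such couplings; the observation is that $\mathcal{K}_F$ is nonempty, and more importantly weakly compact.

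Compactness comes from two facts. First, the set of all couplings of $\{P\}_{P\in F}$ in $\mathcal{P}(\mathcal{X}^F)$ is tight (since each marginal is tight on the Polish space $\mathcal{X}$) and weakly closed, hence weakly compact by Prokhorov's theorem. Second, for each pair $P,P'\in F$ the diagonal $\{x_P=x_{P'}\}\subseteq\mathcal{X}^F$ is closed, so by the Portmanteau theorem $Q_F\mapsto Q_F(X_P\neq X_{P'})=\mathbf{E}_{Q_F}[\mathbf{1}_{\neq}(X_P,X_{P'})]$ is lower semicontinuous, and therefore the constraint $\mathbf{E}_{Q_F}[\mathbf{1}_{\neq}(X_P,X_{P'})]\le 2d_{\mathrm{TV}}(P,P')$ cuts out a closed subset. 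Thus $\mathcal{K}_F$ is a finite intersection of closed subsets of a compact set, hence compact (and nonempty).

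The finite subsets of $\mathcal{P}(\mathcal{X})$ form a directed set under inclusion; for $F\subseteq F'$ the marginalization map $\pi_{F,F'}\colon\mathcal{P}(\mathcal{X}^{F'})\to\mathcal{P}(\mathcal{X}^F)$ is weakly continuous and sends $\mathcal{K}_{F'}$ into $\mathcal{K}_F$ (the pairwise bounds depend only on bivariate marginals). Since each $\mathcal{K}_F$ is nonempty compact Hausdorff, the inverse limit $\varprojlim_F\mathcal{K}_F$ is nonempty (a standard consequence of the finite intersection property applied to the projections inside the Tychonoff product $\prod_F\mathcal{K}_F$). This yields a consistent family $\{Q_F^*\}_F$ of finite-dimensional probability measures with the required marginals and pairwise bounds. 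Because $\mathcal{X}$ is Polish, the Kolmogorov extension theorem for arbitrary index sets and Polish factor spaces produces a probability measure $Q$ on $(\mathcal{X}^{\mathcal{P}(\mathcal{X})},\mathcal{F}^{\otimes\mathcal{P}(\mathcal{X})})$ whose finite-dimensional marginals are exactly $\{Q_F^*\}_F$; this $Q$ lies in $\Gamma(\mathcal{P}(\mathcal{X}))$ and satisfies $\mathbf{E}_{\{X_\gamma\}\sim Q}[\mathbf{1}_{\neq}(X_P,X_{P'})]\le 2d_{\mathrm{TV}}(P,P')$ for all $P,P'$.

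The step that warrants the most care is the compactness argument: one must verify that the pairwise cost constraint is genuinely a closed condition under weak convergence (handled above via Portmanteau and the closedness of the diagonal) and that compatibility of the finite $\mathcal{K}_F$'s persists when passing through the inverse limit before invoking Kolmogorov. Everything else is either an immediate invocation of Theorem~\ref{thm:rd_bd} or a routine application of standard measure-theoretic machinery on Polish spaces, so the proof reduces to correctly executing this topological skeleton.
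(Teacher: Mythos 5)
Your argument is correct, but it follows a genuinely different route from the paper's. The paper's proof (Appendix \ref{subsec:pf_rd_bd_nonst}) is constructive: by transfinite recursion (well-ordering of $\mathcal{P}(\mathcal{X})$) it builds mutually singular measures $\nu_{\alpha}\le P_{\alpha}$ such that each $P_{\alpha}$ is dominated by a countable weighted sum of the $\nu_{\beta}$'s, then uses Kolmogorov extension only to define an independent family of Poisson processes $\Phi_{\alpha}\sim\mathrm{PP}(\nu_{\alpha}\times\lambda_{\mathbb{R}_{\ge0}})$, and defines each $X_{\alpha}$ explicitly as the Poisson functional representation applied to a superposition of rescaled $\Phi_{\beta}$'s; the bound then follows from the Poisson matching lemma, and in fact $\mathbf{P}(X_{\alpha}\neq X_{\alpha'})=d_{\mathrm{PC}}(P_{\alpha},P_{\alpha'})\le 2d_{\mathrm{TV}}/(1+d_{\mathrm{TV}})$, slightly sharper than the stated bound. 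Your proof instead treats Theorem \ref{thm:rd_bd} as a black box on finite sub-families and glues via weak compactness: the key points you need — tightness and closedness of the coupling set, lower semicontinuity of $Q\mapsto Q(x_P\neq x_{P'})$ (equivalently, openness of $\{x\neq y\}$), closedness of the consistency conditions, nonemptiness of the inverse limit via Tychonoff and the finite intersection property, and Kolmogorov extension for an arbitrary index set with Polish factors — all check out, including the finite-intersection step (take the union $F^{*}$ of the finitely many $F'$'s, pick $Q^{*}\in\mathcal{K}_{F^{*}}$, and push it down). What each approach buys: yours is shorter in spirit, avoids the transfinite construction of the singular decomposition, and generalizes immediately to any lower semicontinuous cost for which a uniform finite-family bound is available; the paper's yields an explicit coupling (each $X_{\alpha}$ a concrete functional of the Poisson processes, consistent with the universal Poisson coupling used throughout) and a marginally better constant, at the cost of full AC rather than the weaker choice principles (ultrafilter lemma for Tychonoff on compact Hausdorff products) that suffice for your argument.
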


\begin{proof}
[Proof of Proposition \ref{prop:rd_bd_nst}]Let $\{P_{\alpha}\}_{\alpha\in\mathcal{A}}=\mathcal{P}(\mathcal{X})$
(we can simply take $\mathcal{A}=\mathcal{P}(\mathcal{X})$, $P_{\alpha}=\alpha$).
Note that if $\mathcal{X}$ is uncountable, there does not exist a
measure $\mu$ such that $P_{\alpha}\ll\mu$ for all $\alpha\in\mathcal{A}$.

For two $\sigma$-finite measures $\mu,\nu$, Lebesgue's decomposition
theorem states that there exist unique $\sigma$-finite measures $\nu_{1},\nu_{2}$
such that $\nu=\nu_{1}+\nu_{2}$, $\nu_{1}\ll\mu$ and $\nu_{2}\perp\mu$.
Write 
\begin{equation}
\nu\cap_{\mathrm{L}}\mu:=\nu_{1},\,\nu\backslash_{\mathrm{L}}\mu:=\nu_{2}.\label{eq:lebesgue_dec}
\end{equation}
By the well-ordering theorem (which requires the axiom of choice),
assume a well-ordering $\le$ of $\mathcal{A}$. We now construct
measures $\{\nu_{\alpha}\}_{\alpha\in\mathcal{A}}$ by transfinite
recursion satisfying $\nu_{\alpha}\le P_{\alpha}$, $\nu_{\alpha}\perp\nu_{\beta}$
for all $\alpha\neq\beta$ and, for any $\alpha\in\mathcal{A}$, there
exists sequence $\{\beta_{\alpha,i}\}_{i\in[1..l_{\alpha}]}$ ($l_{\alpha}\in\mathbb{N}\cup\{\infty\}$)
with distinct elements $\beta_{\alpha,i}\le\alpha$ such that $P_{\alpha}\ll\sum_{i=1}^{l_{\alpha}}2^{-i}\nu_{\beta_{\alpha,i}}$.

Assume such $\nu_{\beta}$'s are constructed for all $\beta<\alpha$.
We now construct $\nu_{\alpha}$. We define $\nu_{\alpha,i}$ recursively.
Let $\nu_{\alpha,1}=P_{\alpha}$. For $i\ge1$, if $t_{i}:=\sup_{\beta<\alpha}((\nu_{\alpha,i}\cap_{\mathrm{L}}P_{\beta})(\mathcal{X}))>0$,
let $\tilde{\beta}_{\alpha,i}<\alpha$ attain at least half of the
supremum, and let $\nu_{\alpha,i+1}=\nu_{\alpha,i}\backslash_{\mathrm{L}}P_{\tilde{\beta}_{\alpha,i}}$.
If $t_{i}=0$ (or if $\alpha$ is the least element), output $\nu_{\alpha}=\nu_{\alpha,i}$,
and the process terminates at time $\tilde{l}_{\alpha}:=i$. If the
process continues indefinitely, then $\tilde{l}_{\alpha}:=\infty$,
and let $\nu_{\alpha}$ be such that
\[
\frac{\mathrm{d}\nu_{\alpha}}{\mathrm{d}P_{\alpha}}(x)=\inf_{i\in\mathbb{N}}\frac{\mathrm{d}\nu_{\alpha,i}}{\mathrm{d}P_{\alpha}}(x).
\]
Note that the pointwise limit of measurable function is measurable.
We have $\nu_{\alpha,i}(\mathcal{X})\le1-(1/2)\sum_{j=1}^{i-1}t_{j}$,
and hence either the process stops at $t_{i}=0$, or $\lim_{i}t_{i}=0$.
Assume the contrary that there exists $\beta<\alpha$ such that $\nu_{\alpha}\cancel{\perp}P_{\beta}$.
Then $(\nu_{\alpha}\cap_{\mathrm{L}}P_{\beta})(\mathcal{X})>0$, and
$(\nu_{\alpha}\cap_{\mathrm{L}}P_{\beta})(\mathcal{X})>t_{i}$ for
some $i$, leading to a contradiction. Hence $\nu_{\alpha}\perp P_{\beta}$
(and $\nu_{\alpha}\perp\nu_{\beta}$) for all $\beta<\alpha$. Also
we have 
\begin{align*}
P_{\alpha} & =\nu_{\alpha}+\sum_{i=1}^{\tilde{l}_{\alpha}}\nu_{\alpha,i}\cap_{\mathrm{L}}P_{\tilde{\beta}_{\alpha,i}}\\
 & \ll\nu_{\alpha}+\sum_{i=1}^{\tilde{l}_{\alpha}}2^{-i-1}P_{\tilde{\beta}_{\alpha,i}}\\
 & \ll\nu_{\alpha}+\sum_{i=1}^{\tilde{l}_{\alpha}}2^{-i-1}\sum_{j=1}^{l_{\tilde{\beta}_{\alpha,i}}}2^{-j}\nu_{\beta_{\tilde{\beta}_{\alpha,i},j}},
\end{align*}
by the induction hypothesis. Therefore we can let $\{\beta_{\alpha,i}\}_{i\in[1..l_{\alpha}]}$
be the set $\{\alpha\}\cup\{\beta_{\tilde{\beta}_{\alpha,i},j}\}_{i,j}$.
Hence the measures $\{\nu_{\alpha}\}_{\alpha\in\mathcal{A}}$ can
be constructed by transfinite recursion.

By the Kolmogorov extension theorem, we can define $\Phi_{\alpha}\sim\mathrm{PP}(\nu_{\alpha}\times\lambda_{\mathbb{R}_{\ge0}})$
independent across $\alpha\in\mathcal{A}$ ($\{\Phi_{\alpha}\}_{\alpha\in\mathcal{A}}$
can be defined on the space $[0,1]^{\mathcal{A}}$ with the product
$\sigma$-algebra).  Let 
\[
X_{\alpha}:=\varrho_{P_{\alpha}\Vert\sum_{i=1}^{l_{\alpha}}2^{-i}\nu_{\beta_{\alpha,i}}}\left(\sum_{i=1}^{l_{\alpha}}\big((x,t)\mapsto(x,2^{i}t)\big)_{*}\Phi_{\beta_{\alpha,i}}\right),
\]
where $((x,t)\mapsto(x,2^{i}t))_{*}\Phi_{\beta_{\alpha,i}}$ denotes
the pushforward measure of $\Phi_{\beta_{\alpha,i}}$ (a random measure)
by the mapping $(x,t)\mapsto(x,2^{i}t)$. Note that $(\sum_{i=1}^{l_{\alpha}}2^{-i}\nu_{\beta_{\alpha,i}})(\mathcal{X})\le1$,
so the measure $\sum_{i=1}^{l_{\alpha}}2^{-i}\nu_{\beta_{\alpha,i}}$
is $\sigma$-finite. By the superposition theorem \cite{last2017lectures},
the point process given by the sum has distribution $\mathrm{PP}(\sum_{i=1}^{l_{\alpha}}2^{-i}\nu_{\beta_{\alpha,i}}\times\lambda_{\mathbb{R}_{\ge0}})$,
and hence $X_{\alpha}\sim P_{\alpha}$. For any $\alpha,\alpha'\in\mathcal{A}$,
let $\{\beta_{i}\}_{i\in[1..l]}=\{\beta_{\alpha,i}\}_{i\in[1..l_{\alpha}]}\cup\{\beta_{\alpha',i}\}_{i\in[1..l_{\alpha'}]}$.
It can be checked that
\[
X_{\alpha}=\varrho_{P_{\alpha}\Vert\sum_{i=1}^{l}2^{-i}\nu_{\beta_{i}}}\left(\sum_{i=1}^{l}\big((x,t)\mapsto(x,2^{i}t)\big)_{*}\Phi_{\beta_{i}}\right),
\]
and similarly for $X_{\alpha'}$. Hence $\mathbf{P}(X_{\alpha}\neq X_{\alpha'})=d_{\mathrm{PC}}(P_{\alpha},P_{\alpha'})\le2d_{\mathrm{TV}}(P_{\alpha},P_{\alpha'})$
by Lemma \ref{lem:pml} and Proposition \ref{prop:dpc_prop}.
\end{proof}
\[
\]

\section{Proof of Lemma \ref{lem:spfr_dist}\label{subsec:pf_hpfr_dist}}

Before we prove Lemma \ref{lem:spfr_dist}, we show that $\bar{\varrho}_{\bar{P}_{I}\Vert\mu_{I}}(\Phi_{I})$
is a random variable. We first show that if $\kappa$ is a probability
kernel from the measurable space $\mathcal{Y}$ to the Polish space
$\mathcal{X}$, $\mu$ is a $\sigma$-finite measure over $\mathcal{X}$,
$Y\sim Q$ independent of $\Phi\sim\mathrm{PP}(\mu\times\lambda_{\mathbb{R}_{\ge0}})$,
then $\varrho_{\kappa(\cdot|Y)\,\Vert\,\mu}(\Phi)$ is a random variable.
Since a Poisson process is a proper point process \cite[Corollary 6.5]{last2017lectures},
there exist random variables $(X_{1},T_{1}),(X_{2},T_{2}),\ldots\in\mathcal{X}\times\mathbb{R}_{\ge0}$
and a random variable $K\in\mathbb{Z}_{\ge0}\cup\{\infty\}$ such
that $\sum_{i=1}^{K}\delta_{X_{i}}=\Phi$ almost surely. Note that
$T_{i}((\mathrm{d}\kappa(\cdot|Y)/\mathrm{d}\mu)(X_{i}))^{-1}$ are
random variables for $i\in\mathbb{N}$ (since $(\mathrm{d}\kappa(\cdot|Y)/\mathrm{d}\mu)(X_{i})$
is a random variable by \cite[Exercise 6.10.72]{bogachev2007measure}),
and the argmin of random variables is a random variable. Therefore
$\varrho_{\kappa(\cdot|Y)\,\Vert\,\mu}(\Phi)$ is a random variable.
As a result, for the case $I=[k..l]$, $\bar{\varrho}_{\bar{P}_{I}\Vert\mu_{I},l}(\Phi_{I})$
is a random variable. For the case $I=(-\infty..l]$, we regard $\bar{\varrho}_{\bar{P}_{I}\Vert\mu_{I},l}(\Phi_{I})=\emptyset$
if the limit in \eqref{eq:spfr_kinfty} does not exist (where $\emptyset$
is regarded as a symbol not in $\mathcal{X}$, and $\bar{\varrho}_{\bar{P}_{I}\Vert\mu_{I},l}(\Phi_{I})$
is in the measurable space $\mathcal{X}\cup\{\emptyset\}$ with $\sigma$-algebra
$\sigma(\mathcal{F}\cup\{\{\emptyset\}\})$, where $\mathcal{F}$
is the $\sigma$-algebra of $\mathcal{X}$). Then $\bar{\varrho}_{\bar{P}_{I}\Vert\mu_{I},l}(\Phi_{I})$
is a random variable since the limit of a sequence of random variables
is a random variable, and the event that the limit does not exist
is measurable. The measurability for the case $\sup I=\infty$ follows
from the previous two cases by definition.

We consider each case separately:

\medskip{}

\noindent \textbf{Case} $I=[k..l]$\textbf{.} If $k=l$, then $\bar{\varrho}_{\{\bar{P}_{i|I_{<i}}\}_{i\in[l..l]}\Vert\mu_{[l..l]}}(\Phi_{[l..l]})=\varrho_{\bar{P}_{l}\Vert\mu_{l}}(\Phi_{l})\sim\bar{P}_{l}$
by the property of Poisson functional representation. Assume $\bar{\varrho}_{\{\bar{P}_{i|I_{<i}}\}_{i\in[k..l-1]}\Vert\mu_{[k..l-1]}}(\Phi_{[k..l-1]})\sim\bar{P}_{[k..l-1]}$.
By the construction in \eqref{eq:spfr}, 
\[
\bar{\varrho}_{\{\bar{P}_{i|I_{<i}}\}_{i\in[k..l]}\Vert\mu_{[k..l]},l}(\Phi_{[k..l]})|\bar{\varrho}_{\{\bar{P}_{i|I_{<i}}\}_{i\in[k..l-1]}\Vert\mu_{[k..l-1]}}(\Phi_{[k..l-1]})\sim\bar{P}_{l|[k..l-1]}.
\]
Hence $\bar{\varrho}_{\{\bar{P}_{i|I_{<i}}\}_{i\in[k..l]}\Vert\mu_{[k..l]},l}(\Phi_{[k..l]})\sim\bar{P}_{[k..l]}$.
Claim \ref{enu:spfr_dist_dist} follows from induction. Claim \ref{enu:spfr_dist_rcd_welldef}
can be proved similarly using induction. Claim \ref{enu:spfr_dist_subint}
follows directly from the definition.

\medskip{}

\noindent \textbf{Case} $I=(-\infty..l]$\textbf{.} We regard $\bar{\varrho}_{\{\bar{P}_{i'|I_{<i'}}\}_{i'\in I}\Vert\mu_{I},l}(\phi_{I})=\emptyset$
if the limit in \eqref{eq:spfr_kinfty} does not exist (where $\emptyset$
is regarded as a symbol not in $\mathcal{X}$). Let $\epsilon>0$.
By \eqref{eq:spfr_kinfty_lim} and \eqref{eq:spfr_kinfty_lim2}, let
$i_{\epsilon}\in I$ such that 
\begin{equation}
\sum_{i\in I_{<i_{\epsilon}}}d_{\mathrm{TV}}\left(\bigg(\prod_{j\in I_{<i}}\nu_{j}\bigg)\bar{P}_{i|I_{<i}},\,\prod_{j\in I_{\le i}}\nu_{j}\right)<\epsilon,\label{eq:pf_spfr_dist_dtv}
\end{equation}
and
\begin{equation}
d_{\mathrm{TV}}\Bigg(\bar{P}_{I_{<i_{\epsilon}}},\,\prod_{j\in I_{<i_{\epsilon}}}\nu_{j}\Bigg)<\epsilon.\label{eq:pf_spfr_dist_dtv2}
\end{equation}
Let $\tilde{Z}_{\epsilon}\in\mathcal{X}^{I}$ be defined by
\begin{equation}
\tilde{Z}_{\epsilon,i}:=\begin{cases}
\varrho_{\nu_{i}\Vert\mu_{i}}(\Phi_{i}) & \,\mathrm{if}\;i<i_{\epsilon}\\
\bar{\varrho}_{\{\bar{P}_{i'|I_{<i'}}(\cdot|(\{\varrho_{\nu_{j}\Vert\mu_{j}}(\Phi_{j})\}_{j<i_{\epsilon}},\cdot))\}_{i'\in[i_{\epsilon}..i]}\Vert\mu_{[i_{\epsilon}..i]},i}(\Phi_{[i_{\epsilon}..i]}) & \,\mathrm{if}\;i\ge i_{\epsilon}.
\end{cases}\label{eq:pf_spfr_zei}
\end{equation}
By \eqref{eq:spfr}, if
\[
\varrho_{\bar{P}_{i|I_{<i}}\big(\cdot\,|\,\{\varrho_{\nu_{j}\Vert\mu_{j}}(\Phi_{j})\}_{j<i}\big)\Vert\mu_{i}}(\Phi_{i})=\varrho_{\nu_{i}\Vert\mu_{i}}(\Phi_{i})\;\forall i<i_{\epsilon},
\]
then 
\[
\bar{\varrho}_{\{\bar{P}_{i'|I_{<i'}}(\cdot|(\{\varrho_{\nu_{j}\Vert\mu_{j}}(\Phi_{j})\}_{j<k},\cdot))\}_{i'\in[k..i]}\Vert\mu_{[k..i]},i}(\Phi_{[k..i]})=\tilde{Z}_{\epsilon,i}\;\forall i\in I,\,k<\min\{i_{\epsilon},i\},
\]
and hence by \eqref{eq:spfr_kinfty},
\begin{align*}
 & \bar{\varrho}_{\{\bar{P}_{i'|I_{<i'}}\}_{i'\in I_{\le i}}\Vert\mu_{I_{\le i}},i}(\Phi_{I_{\le i}})\\
 & =\lim_{k\to-\infty}\bar{\varrho}_{\{\bar{P}_{i'|I_{<i'}}(\cdot|(\{\varrho_{\nu_{j}\Vert\mu_{j}}(\Phi_{j})\}_{j<k},\cdot))\}_{i'\in[k..i]}\Vert\mu_{[k..i]},i}(\Phi_{[k..i]})\\
 & =\tilde{Z}_{\epsilon,i}\;\forall i\in I,
\end{align*}
and hence $\bar{\varrho}_{\{\bar{P}_{i'|I_{<i'}}\}_{i'\in I}\Vert\mu_{I}}(\Phi_{I})=\tilde{Z}_{\epsilon}$.
Therefore,
\begin{align}
 & \mathbf{P}\Big(\bar{\varrho}_{\{\bar{P}_{i'|I_{<i'}}\}_{i'\in I}\Vert\mu_{I}}(\Phi_{I})\neq\tilde{Z}_{\epsilon}\Big)\nonumber \\
 & \le\mathbf{P}\left(\exists i<i_{\epsilon}:\,\varrho_{\bar{P}_{i|I_{<i}}(\cdot\,|\,\{\varrho_{\nu_{j}\Vert\mu_{j}}(\Phi_{j})\}_{j<i})\Vert\mu_{i}}(\Phi_{i})\neq\varrho_{\nu_{i}\Vert\mu_{i}}(\Phi_{i})\right)\nonumber \\
 & \le\sum_{i<i_{\epsilon}}\mathbf{P}\left(\varrho_{\bar{P}_{i|I_{<i}}(\cdot\,|\,\{\varrho_{\nu_{j}\Vert\mu_{j}}(\Phi_{j})\}_{j<i})\Vert\mu_{i}}(\Phi_{i})\neq\varrho_{\nu_{i}\Vert\mu_{i}}(\Phi_{i})\right)\nonumber \\
 & \stackrel{(a)}{=}\sum_{i<i_{\epsilon}}\mathbf{E}\left[d_{\mathrm{PC}}\left(\bar{P}_{i|I_{<i}}\big(\cdot\,|\,\{\varrho_{\nu_{j}\Vert\mu_{j}}(\Phi_{j})\}_{j<i}\big),\,\nu_{i}\right)\right]\nonumber \\
 & \stackrel{(b)}{\le}2\sum_{i<i_{\epsilon}}\mathbf{E}\left[d_{\mathrm{TV}}\left(\bar{P}_{i|I_{<i}}\big(\cdot\,|\,\{\varrho_{\nu_{j}\Vert\mu_{j}}(\Phi_{j})\}_{j<i}\big),\,\nu_{i}\right)\right]\nonumber \\
 & \stackrel{(c)}{=}2\sum_{i<i_{\epsilon}}d_{\mathrm{TV}}\left(\bigg(\prod_{j<i}\nu_{j}\bigg)\bar{P}_{i|I_{<i}},\,\prod_{j\le i}\nu_{j}\right)\nonumber \\
 & \stackrel{(d)}{<}2\epsilon,\label{eq:pf_spfr_dist_peq}
\end{align}
where (a) is by \eqref{eq:dpc_pneq}, (b) is by Proposition \ref{prop:dpc_prop},
(c) is because $\{\varrho_{\nu_{j}\Vert\mu_{j}}(\Phi_{j})\}_{j<i}\sim\prod_{j<i}\nu_{j}$,
and (d) is by \eqref{eq:pf_spfr_dist_dtv}.

Also note that 
\[
\tilde{Z}_{\epsilon}\sim\bigg(\prod_{j<i_{\epsilon}}\nu_{j}\bigg)\bar{P}_{i_{\epsilon}|I_{<i_{\epsilon}}}\bar{P}_{i_{\epsilon}+1|I_{<i_{\epsilon}+1}}\cdots\bar{P}_{l|I_{<l}}.
\]
Hence, by \eqref{eq:pf_spfr_dist_dtv2},
\begin{align*}
 & d_{\mathrm{TV}}\left(\bigg(\prod_{j<i_{\epsilon}}\nu_{j}\bigg)\bar{P}_{i_{\epsilon}|I_{<i_{\epsilon}}}\bar{P}_{i_{\epsilon}+1|I_{<i_{\epsilon}+1}}\cdots\bar{P}_{l|I_{<l}},\,\bar{P}\right)\\
 & =d_{\mathrm{TV}}\left(\bigg(\prod_{j<i_{\epsilon}}\nu_{j}\bigg),\,\bar{P}_{I_{<i_{\epsilon}}}\right)\\
 & <\epsilon.
\end{align*}
Combining this with \eqref{eq:pf_spfr_dist_peq}, and letting $\epsilon\to0$,
we have proved Claim \ref{enu:spfr_dist_dist} and Claim \ref{enu:spfr_dist_kinfty_match},
i.e., $\bar{\varrho}_{\{\bar{P}_{i'|I_{<i'}}\}_{i'\in I}\Vert\mu_{I}}(\Phi_{I})\sim\bar{P}$,
is defined almost surely, and satisfies \eqref{eq:spfr_dist_agree}
(since $\mathbf{P}(\bar{\varrho}_{\{\bar{P}_{i'|I_{<i'}}\}_{i'\in I}\Vert\mu_{I},i}(\Phi_{i})=\varrho_{\nu_{i}\Vert\mu_{i}}(\Phi_{i})\,\forall i<i_{\epsilon})\ge\mathbf{P}(\bar{\varrho}_{\{\bar{P}_{i'|I_{<i'}}\}_{i'\in I}\Vert\mu_{I}}(\Phi_{I})=\tilde{Z}_{\epsilon})\ge1-2\epsilon$).

We now prove Claim \ref{enu:spfr_dist_rcd_welldef} that $\bar{\varrho}_{\{\bar{P}_{i'|I_{<i'}}\}_{i'\in I}\Vert\mu_{I}}(\Phi_{I})$
does not depend on the choice of $\{\bar{P}_{i'|I_{<i'}}\}_{i'\in I}$.
Let $\{\bar{P}_{i|I_{<i}}\}_{i\in I}$ and $\{\bar{P}'_{i|I_{<i}}\}_{i\in I}$
be regular conditional distributions of $\bar{P}$ satisfying \eqref{eq:spfr_abscont_kernel}
and \eqref{eq:spfr_kinfty_lim}. Fix any $\epsilon>0$. Let $i_{\epsilon}\in I$
such that \eqref{eq:pf_spfr_dist_dtv2} is satisfied. Let $\{Z_{i}\}_{i\in I}\sim\bar{P}$.
By \eqref{eq:spfr_kinfty},
\begin{align*}
 & \mathbf{P}\Big(\bar{\varrho}_{\{\bar{P}_{i'|I_{<i'}}\}_{i'\in I}\Vert\mu_{I},l}(\Phi_{I})\neq\bar{\varrho}_{\{\bar{P}'_{i'|I_{<i'}}\}_{i'\in I}\Vert\mu_{I},l}(\Phi_{I})\Big)\\
 & \le\mathbf{P}\bigg(\exists k<i_{\epsilon}:\,\bar{\varrho}_{\{\bar{P}_{i|I_{<i}}(\cdot|(\{\varrho_{\nu_{j}\Vert\mu_{j}}(\Phi_{j})\}_{j<k},\cdot))\}_{i\in[k..l]}\Vert\mu_{[k..l]},l}(\Phi_{[k..l]})\\
 & \;\;\;\;\;\;\;\;\neq\bar{\varrho}_{\{\bar{P}'_{i|I_{<i}}(\cdot|(\{\varrho_{\nu_{j}\Vert\mu_{j}}(\Phi_{j})\}_{j<k},\cdot))\}_{i\in[k..l]}\Vert\mu_{[k..l]},l}(\Phi_{[k..l]})\bigg)\\
 & \stackrel{(a)}{\le}\mathbf{P}\left(\exists k<i_{\epsilon}:\,\prod_{i=k}^{l}\bar{P}_{i|I_{<i}}(\cdot|(\{\varrho_{\nu_{j}\Vert\mu_{j}}(\Phi_{j})\}_{j<k},\cdot))\neq\prod_{i=k}^{l}\bar{P}'_{i|I_{<i}}(\cdot|(\{\varrho_{\nu_{j}\Vert\mu_{j}}(\Phi_{j})\}_{j<k},\cdot))\right)\\
 & \stackrel{(b)}{\le}\mathbf{P}\left(\exists k<i_{\epsilon}:\,\prod_{i=k}^{l}\bar{P}_{i|I_{<i}}(\cdot|(\{Z_{j}\}_{j<k},\cdot))\neq\prod_{i=k}^{l}\bar{P}'_{i|I_{<i}}(\cdot|(\{Z_{j}\}_{j<k},\cdot))\right)+\epsilon\\
 & \stackrel{(c)}{=}\epsilon,
\end{align*}
where the product on the left hand side in (a) is the semidirect product
between a probability distribution $\bar{P}_{k|I_{<k}}(\cdot|\{\varrho_{\nu_{j}\Vert\mu_{j}}(\Phi_{j})\}_{j<k})$
and a sequence of probability kernels. Note that (a) is because if
the semidirect products coincide, then $\{\bar{P}_{i|I_{<i}}(\cdot|(\{\varrho_{\nu_{j}\Vert\mu_{j}}(\Phi_{j})\}_{j<k},\cdot))\}_{i\in[k..l]}$
and $\{\bar{P}'_{i|I_{<i}}(\cdot|(\{\varrho_{\nu_{j}\Vert\mu_{j}}(\Phi_{j})\}_{j<k},\cdot))\}_{i\in[k..l]}$
are two sequences of regular conditional distributions of that semidirect
product distribution, and hence the two $\bar{\varrho}$'s in the
previous line coincide almost surely since we have already proved
that the choice of regular conditional distribuions does not matter
for the case $I=[k..l]$. For (b), it is due to \eqref{eq:pf_spfr_dist_dtv2},
and thus replacing $\{\varrho_{\nu_{j}\Vert\mu_{j}}(\Phi_{j})\}_{j<i_{\epsilon}}$
(which has distribution $\prod_{j<i_{\epsilon}}\nu_{j}$) by $\{Z_{j}\}_{j<i_{\epsilon}}$
affects the probability by at most $\epsilon$. For (c), it is due
to the uniqueness of regular conditional distributions in the sense
that $\bar{P}_{i|I_{<i}}(\cdot|\{z_{j}\}_{j<i})=\bar{P}'_{i|I_{<i}}(\cdot|\{z_{j}\}_{j<i})$
for $\bar{P}$-almost all $\{z_{i}\}_{i\in I}$. The result follows
from letting $\epsilon\to0$.

For Claim \ref{enu:spfr_dist_recur}, we check \eqref{eq:spfr} by
\begin{align*}
 & \mathbf{P}\Big(\bar{\varrho}_{\bar{P}_{I}\Vert\mu_{I},l}(\Phi_{I})\neq\varrho_{\bar{P}_{l|I_{<l}}(\cdot\,|\,\bar{\varrho}_{\bar{P}_{I_{<l}}\Vert\mu_{I_{<l}}}(\Phi_{I_{<l}}))\,\Vert\,\mu_{l}}(\Phi_{l})\Big)\\
 & \stackrel{(a)}{\le}\mathbf{P}\Big(\tilde{Z}_{\epsilon,l}\neq\varrho_{\bar{P}_{l|I_{<l}}(\cdot\,|\,\{\tilde{Z}_{\epsilon,i}\}_{i<l})\,\Vert\,\mu_{l}}(\Phi_{l})\Big)+2\epsilon\\
 & \stackrel{(b)}{=}\mathbf{P}\Big(\bar{\varrho}_{\{\bar{P}_{i'|I_{<i'}}(\cdot|(\{\varrho_{\nu_{j}\Vert\mu_{j}}(\Phi_{j})\}_{j<i_{\epsilon}},\cdot))\}_{i'\in[i_{\epsilon}..l]}\Vert\mu_{[i_{\epsilon}..l]},l}(\Phi_{[i_{\epsilon}..l]})\neq\varrho_{\bar{P}_{l|I_{<l}}(\cdot\,|\,\{\tilde{Z}_{\epsilon,i}\}_{i<l})\,\Vert\,\mu_{l}}(\Phi_{l})\Big)+2\epsilon\\
 & \stackrel{(c)}{=}\mathbf{P}\Big(\varrho_{\bar{P}_{l|I_{<l}}(\cdot\,|\,\{\tilde{Z}_{\epsilon,i}\}_{i<l})\,\Vert\,\mu_{l}}(\Phi_{l})\neq\varrho_{\bar{P}_{l|I_{<l}}(\cdot\,|\,\{\tilde{Z}_{\epsilon,i}\}_{i<l})\,\Vert\,\mu_{l}}(\Phi_{l})\Big)+2\epsilon\\
 & =2\epsilon,
\end{align*}
where (a) is by \eqref{eq:pf_spfr_dist_peq}, (b) is by the definition
of $\tilde{Z}_{\epsilon}$, and (c) is by \eqref{eq:spfr} and the
definition of $\tilde{Z}_{\epsilon}$. Letting $\epsilon\to0$, we
can deduce that \eqref{eq:spfr} is satisfied almost surely.

Claim \ref{enu:spfr_dist_subint} follows directly from the definition.

\medskip{}

\noindent \textbf{Case} $\sup I=\infty$\textbf{.} Since we have proved
that $\bar{\varrho}_{\bar{P}_{I_{\le i}}\Vert\mu_{I_{\le i}}}(\phi_{I_{\le i}})\sim\bar{P}_{I_{\le i}}$
for any $i\in I$, and the distribution of a random process is characterized
by its finite dimensional marginals, we have $\bar{\varrho}_{\bar{P}_{I}\Vert\mu_{I}}(\phi_{I})\sim\bar{P}$.
To check \eqref{eq:spfr_dist_agree} for the case $I=(-\infty..\infty)$,
it suffices to check that $\bar{\varrho}_{\bar{P}_{I_{\le0}}\Vert\mu_{I_{\le0}}}(\phi_{I_{\le0}})$
satisfies \eqref{eq:spfr_dist_agree}, which is proved in the previous
case. All the claims continue to hold.

\[
\]

\section{Proof of the SPFR Condition for Theorem \ref{thm:metric_pow}\label{subsec:pf_metric_pow_spfr}}

We check that the SPFR condition is satisfied for $\bar{P}:=(\prod_{i\in\mathbb{Z}}\mathrm{U}\mathcal{B}_{e^{-\eta(i+\theta)}})\circ P$
for any distribution $P$ over $\mathcal{X}$, where $\theta\in[0,1]$,
$\eta>0$. Let the Borel $\sigma$-algebra of $\mathcal{X}$ be $\mathcal{F}$.
Let $X\sim P$, and $Z_{i}|X\sim\mathrm{U}\mathcal{B}_{e^{-\eta(i+\theta)}}$
be conditionally independent across $i$ given $X$. For $J\subseteq\mathbb{Z}$,
write $\bar{P}_{X|Z_{J}}$ for a regular conditional distribution
of $X$ given $\{Z_{j}\}_{j\in J}$. We choose the regular conditional
distribution of $Z_{i}$ given $\{Z_{j}\}_{j<i}$ to be
\[
\bar{P}_{i|I_{<i}}(\cdot|\{z_{j}\}_{j})=\mathrm{U}\mathcal{B}_{e^{-\eta(i+\theta)}}\circ\bar{P}_{X|Z_{(-\infty..i)}}(\cdot|\{z_{j}\}_{j})
\]
for any $\{z_{j}\}_{j}\in\mathcal{X}^{(-\infty,i)}$, where the precise
choice of the regular conditional distribution $\bar{P}_{X|Z_{(-\infty..i)}}$
is given at the end of Remark \ref{rem:metric_pow_pf_bayesjust}.
By definition, \eqref{eq:spfr_abscont_kernel} is satisfied.

By \eqref{eq:metric_pow_limsup}, we can pick $i_{0}\in\mathbb{Z}$
small enough such that
\begin{equation}
\xi:=\sup_{w\ge e^{-\eta(i_{0}+1)},\,x,y\in\mathcal{X}:\,d(x,y)\le w}\frac{\mu(\mathcal{B}_{w}(x))}{\mu(\mathcal{B}_{w}(y))}<\infty.\label{eq:metric_pow_pf_xi}
\end{equation}
Let
\[
\tilde{\xi}:=2\xi^{2\lceil\eta^{-1}\log(4\Psi/(1-e^{-\eta}))\rceil}.
\]

For \eqref{eq:spfr_kinfty_lim},
\begin{align}
 & \sum_{i\in(-\infty..i_{0}]}d_{\mathrm{TV}}\left(\bigg(\prod_{j\in I_{<i}}\nu_{j}\bigg)\bar{P}_{i|I_{<i}},\,\prod_{j\in I_{\le i}}\nu_{j}\right)\nonumber \\
 & =\sum_{i\in(-\infty..i_{0}]}\int d_{\mathrm{TV}}\left(\bar{P}_{i|I_{<i}}(\cdot|\{z_{j}\}_{j}),\,\nu_{i}\right)\Big(\prod_{j<i}\nu_{j}\Big)(\mathrm{d}\{z_{j}\}_{j})\nonumber \\
 & =\sum_{i\in(-\infty..i_{0}]}\int d_{\mathrm{TV}}\left(\mathrm{U}\mathcal{B}_{e^{-\eta(i+\theta)}}\circ\bar{P}_{X|Z_{(-\infty..i)}}(\cdot|\{z_{j}\}_{j}),\,\mathrm{U}\mathcal{B}_{e^{-\eta(i+\theta)}}(x_{0},\cdot)\right)\Big(\prod_{j<i}\mathrm{U}\mathcal{B}_{e^{-\eta(j+\theta)}}(x_{0},\cdot)\Big)(\mathrm{d}\{z_{j}\}_{j})\nonumber \\
 & \le\sum_{i\in(-\infty..i_{0}]}\int\Bigg(\mathbf{1}\bigg\{ P\Big(\bigcap_{j<i}\mathcal{B}_{e^{-\eta(j+\theta)}}(z_{j})\Big)\ge\frac{1}{2}\bigg\} d_{\mathrm{TV}}\Big(\mathrm{U}\mathcal{B}_{e^{-\eta(i+\theta)}}\circ\bar{P}_{X|Z_{(-\infty..i)}}(\cdot|\{z_{j}\}_{j}),\,\mathrm{U}\mathcal{B}_{e^{-\eta(i+\theta)}}(x_{0},\cdot)\Big)\nonumber \\
 & \;\;\;\;\;\;\;\;+\mathbf{1}\bigg\{ P\Big(\bigcap_{j<i}\mathcal{B}_{e^{-\eta(j+\theta)}}(z_{j})\Big)<\frac{1}{2}\bigg\}\Bigg)\Big(\prod_{j<i}\mathrm{U}\mathcal{B}_{e^{-\eta(j+\theta)}}(x_{0},\cdot)\Big)(\mathrm{d}\{z_{j}\}_{j}).\label{eq:metric_pow_pf_terms}
\end{align}
We consider the two terms separately. First,
\begin{align}
 & \sum_{i\in(-\infty..i_{0}]}\int\mathbf{1}\bigg\{ P\Big(\bigcap_{j<i}\mathcal{B}_{e^{-\eta(j+\theta)}}(z_{j})\Big)\ge\frac{1}{2}\bigg\} d_{\mathrm{TV}}\Big(\mathrm{U}\mathcal{B}_{e^{-\eta(i+\theta)}}\circ\bar{P}_{X|Z_{(-\infty..i)}}(\cdot|\{z_{j}\}_{j}),\,\mathrm{U}\mathcal{B}_{e^{-\eta(i+\theta)}}(x_{0},\cdot)\Big)\nonumber \\
 & \;\;\;\;\;\;\;\;\cdot\Big(\prod_{j<i}\mathrm{U}\mathcal{B}_{e^{-\eta(j+\theta)}}(x_{0},\cdot)\Big)(\mathrm{d}\{z_{j}\}_{j})\nonumber \\
 & \stackrel{(a)}{\le}\sum_{i\in(-\infty..i_{0}]}\int\mathbf{1}\bigg\{ P\Big(\bigcap_{j<i}\mathcal{B}_{e^{-\eta(j+\theta)}}(z_{j})\Big)\ge\frac{1}{2}\bigg\}\int d_{\mathrm{TV}}\left(\mathrm{U}\mathcal{B}_{e^{-\eta(i+\theta)}}(x,\cdot),\,\mathrm{U}\mathcal{B}_{e^{-\eta(i+\theta)}}(x_{0},\cdot)\right)\nonumber \\
 & \;\;\;\;\;\;\;\;\cdot\bar{P}_{X|Z_{(-\infty..i)}}(\mathrm{d}x|\{z_{j}\}_{j})\Big(\prod_{j<i}\mathrm{U}\mathcal{B}_{e^{-\eta(j+\theta)}}(x_{0},\cdot)\Big)(\mathrm{d}\{z_{j}\}_{j})\nonumber \\
 & \stackrel{(b)}{\le}\sum_{i\in(-\infty..i_{0}]}\int\mathbf{1}\bigg\{ P\Big(\bigcap_{j<i}\mathcal{B}_{e^{-\eta(j+\theta)}}(z_{j})\Big)\ge\frac{1}{2}\bigg\}\nonumber \\
 & \;\;\;\;\;\;\;\;\cdot\int\Psi e^{\eta(i+\theta)}d(x,x_{0})\bar{P}_{X|Z_{(-\infty..i)}}(\mathrm{d}x|\{z_{j}\}_{j})\Big(\prod_{j<i}\mathrm{U}\mathcal{B}_{e^{-\eta(j+\theta)}}(x_{0},\cdot)\Big)(\mathrm{d}\{z_{j}\}_{j})\nonumber \\
 & \stackrel{(c)}{\le}\tilde{\xi}\sum_{i\in(-\infty..i_{0}]}\int\mathbf{1}\bigg\{ P\Big(\bigcap_{j<i}\mathcal{B}_{e^{-\eta(j+\theta)}}(z_{j})\Big)\ge\frac{1}{2}\bigg\}\nonumber \\
 & \;\;\;\;\;\;\;\;\cdot\int\Psi e^{\eta(i+\theta)}d(x,x_{0})P\left(\mathrm{d}x\left|\bigcap_{j<i}\mathcal{B}_{e^{-\eta(j+\theta)}}(z_{j})\right.\right)\Big(\prod_{j<i}\mathrm{U}\mathcal{B}_{e^{-\eta(j+\theta)}}(x_{0},\cdot)\Big)(\mathrm{d}\{z_{j}\}_{j})\nonumber \\
 & \le\tilde{\xi}\sum_{i\in(-\infty..i_{0}]}2\int\int\Psi e^{\eta(i+\theta)}\mathbf{1}\bigg\{ x\in\bigcap_{j<i}\mathcal{B}_{e^{-\eta(j+\theta)}}(z_{j})\bigg\} d(x,x_{0})P(\mathrm{d}x)\Big(\prod_{j<i}\mathrm{U}\mathcal{B}_{e^{-\eta(j+\theta)}}(x_{0},\cdot)\Big)(\mathrm{d}\{z_{j}\}_{j})\nonumber \\
 & \stackrel{(d)}{\le}\tilde{\xi}\sum_{i\in(-\infty..i_{0}]}2\int\int\Psi e^{\eta(i+\theta)}\mathbf{1}\left\{ x\in\mathcal{B}_{2e^{-\eta(i-1+\theta)}}(x_{0})\right\} d(x,x_{0})P(\mathrm{d}x)\Big(\prod_{j<i}\mathrm{U}\mathcal{B}_{e^{-\eta(j+\theta)}}(x_{0},\cdot)\Big)(\mathrm{d}\{z_{j}\}_{j})\nonumber \\
 & =2\tilde{\xi}\Psi\int\left(\sum_{i\in(-\infty..i_{0}]}e^{\eta(i+\theta)}\mathbf{1}\left\{ x\in\mathcal{B}_{2e^{-\eta(i-1+\theta)}}(x_{0})\right\} \right)d(x,x_{0})P(\mathrm{d}x)\nonumber \\
 & \le2\tilde{\xi}\Psi\int\int_{-\infty}^{\infty}e^{\eta(t+1+\theta)}\mathbf{1}\left\{ x\in\mathcal{B}_{2e^{-\eta(t-1+\theta)}}(x_{0})\right\} \mathrm{d}t\cdot d(x,x_{0})P(\mathrm{d}x)\nonumber \\
 & =2\tilde{\xi}\Psi\int2\eta^{-1}e^{2\eta}(d(x,x_{0}))^{-1}d(x,x_{0})P(\mathrm{d}x)\nonumber \\
 & =4\tilde{\xi}\Psi\eta^{-1}e^{3\eta},\label{eq:metric_pow_pf_term1}
\end{align}
where (a) is by the convexity of $d_{\mathrm{TV}}$, (b) is by \eqref{eq:metric_pow_delta_dist}
and \eqref{eq:metric_pow_pf_dtv}, and (d) is because $z_{i-1}\in\mathcal{B}_{e^{-\eta(i-1+\theta)}}(x_{0})$,
and hence $\mathcal{B}_{e^{-\eta(i-1+\theta)}}(z_{i-1})\subseteq\mathcal{B}_{2e^{-\eta(i-1+\theta)}}(x_{0})$.
For (c), it is because for any $x,y\in\bigcap_{j<i}\mathcal{B}_{e^{-\eta(j+\theta)}}(z_{j})$,
their likelihood ratio is bounded by

\begin{align}
 & \prod_{j<i}\frac{\left(\mathrm{d}\mathrm{U}\mathcal{B}_{e^{-\eta(j+\theta)}}(x,\cdot)/\mathrm{d}\mu\right)(z_{j})}{\left(\mathrm{d}\mathrm{U}\mathcal{B}_{e^{-\eta(j+\theta)}}(y,\cdot)/\mathrm{d}\mu\right)(z_{j})}\nonumber \\
 & =\prod_{j<i}\frac{\left(\mathrm{d}(\mu_{\mathcal{B}_{e^{-\eta(j+\theta)}}(x)}/\mu(\mathcal{B}_{e^{-\eta(j+\theta)}}(x))/\mathrm{d}\mu\right)(z_{j})}{\left(\mathrm{d}(\mu_{\mathcal{B}_{e^{-\eta(j+\theta)}}(y)}/\mu(\mathcal{B}_{e^{-\eta(j+\theta)}}(y))/\mathrm{d}\mu\right)(z_{j})}\nonumber \\
 & =\prod_{j<i}\frac{(\mu(\mathcal{B}_{e^{-\eta(j+\theta)}}(x)))^{-1}}{(\mu(\mathcal{B}_{e^{-\eta(j+\theta)}}(y)))^{-1}}\nonumber \\
 & \stackrel{(e)}{\le}\xi^{2\lceil\eta^{-1}\log(4\Psi/(1-e^{-\eta}))\rceil}\prod_{j<i-\lceil\eta^{-1}\log(4\Psi/(1-e^{-\eta}))\rceil}\frac{\mu(\mathcal{B}_{e^{-\eta(j+\theta)}}(y))}{\mu(\mathcal{B}_{e^{-\eta(j+\theta)}}(x))}\nonumber \\
 & \stackrel{(f)}{\le}\xi^{2\lceil\eta^{-1}\log(4\Psi/(1-e^{-\eta}))\rceil}\prod_{j<i-\lceil\eta^{-1}\log(4\Psi/(1-e^{-\eta}))\rceil}\left(1-2\Psi e^{\eta(j-i+1)}\right)^{-1}\nonumber \\
 & \le\xi^{2\lceil\eta^{-1}\log(4\Psi/(1-e^{-\eta}))\rceil}\left(1-\sum_{j<i-\lceil\eta^{-1}\log(4\Psi/(1-e^{-\eta}))\rceil}2\Psi e^{\eta(j-i+1)}\right)^{-1}\nonumber \\
 & =\xi^{2\lceil\eta^{-1}\log(4\Psi/(1-e^{-\eta}))\rceil}\left(1-\frac{2\Psi e^{-\eta\lceil\eta^{-1}\log(4\Psi/(1-e^{-\eta}))\rceil}}{1-e^{-\eta}}\right)^{-1}\nonumber \\
 & \le2\xi^{2\lceil\eta^{-1}\log(4\Psi/(1-e^{-\eta}))\rceil}\nonumber \\
 & =\tilde{\xi},\label{eq:metric_pow_f_likelihoodbd}
\end{align}
where (e) is by \eqref{eq:metric_pow_pf_xi} and $\mu(\mathcal{B}_{e^{-\eta(j+\theta)}}(y))\le\xi\mu(\mathcal{B}_{e^{-\eta(j+\theta)}}(z_{j}))\le\xi^{2}\mu(\mathcal{B}_{e^{-\eta(j+\theta)}}(x))$,
and (f) is because
\begin{align*}
\mu(\mathcal{B}_{e^{-\eta(j+\theta)}}(y)) & \le\mu(\mathcal{B}_{e^{-\eta(j+\theta)}}(x))+\mu(\mathcal{B}_{e^{-\eta(j+\theta)}}(y)\backslash\mathcal{B}_{e^{-\eta(j+\theta)}}(x))\\
 & \le\mu(\mathcal{B}_{e^{-\eta(j+\theta)}}(x))+\Psi e^{\eta(j+\theta)}d(x,y)\mu(\mathcal{B}_{e^{-\eta(j+\theta)}}(y))\\
 & \le\mu(\mathcal{B}_{e^{-\eta(j+\theta)}}(x))+\Psi e^{\eta(j+\theta)}2e^{-\eta(i-1+\theta)}\mu(\mathcal{B}_{e^{-\eta(j+\theta)}}(y))\\
 & =\mu(\mathcal{B}_{e^{-\eta(j+\theta)}}(x))+2\Psi e^{\eta(j-i+1)}\mu(\mathcal{B}_{e^{-\eta(j+\theta)}}(y))
\end{align*}
by \eqref{eq:metric_pow_delta_dist} and $x,y\in\mathcal{B}_{e^{-\eta(i-1+\theta)}}(z_{i-1})$.
Since the likelihood ratio is bounded by $\tilde{\xi}$, by Bayes'
rule, 
\begin{equation}
\bar{P}_{X|Z_{(-\infty..i)}}(\cdot|\{z_{j}\}_{j})\le\tilde{\xi}P\big(\cdot\big|\bigcap_{j<i}\mathcal{B}_{e^{-\eta(j+\theta)}}(z_{j})\big)\label{eq:metric_pow_pf_bayescomp}
\end{equation}
whenever $P(\bigcap_{j<i}\mathcal{B}_{e^{-\eta(j+\theta)}}(z_{j}))>0$,
where the comparison is between measures (see Remark \ref{rem:metric_pow_pf_bayesjust}
for a formal justification), and thus (c) holds.

For the second term in \eqref{eq:metric_pow_pf_terms}, let $\tau>0$
be such that $P(\mathcal{B}_{\tau}(x_{0}))>3/4$ (such $\tau$ exists
since $\sum_{j=0}^{\infty}P(\mathcal{B}_{j}(x_{0})\backslash\mathcal{B}_{j-1}(x_{0}))=P(\bigcup_{j=0}^{\infty}(\mathcal{B}_{j}(x_{0})\backslash\mathcal{B}_{j-1}(x_{0})))=1$).
We have
\begin{align*}
 & \sum_{i\in(-\infty..i_{0}]}\int\mathbf{1}\bigg\{ P\Big(\bigcap_{j<i}\mathcal{B}_{e^{-\eta(j+\theta)}}(z_{j})\Big)<\frac{1}{2}\bigg\}\Big(\prod_{j<i}\mathrm{U}\mathcal{B}_{e^{-\eta(j+\theta)}}(x_{0},\cdot)\Big)(\mathrm{d}\{z_{j}\}_{j})\\
 & \le\sum_{i\in(-\infty..i_{0}]}\int\mathbf{1}\bigg\{ P\Big(\mathcal{B}_{\tau}(x_{0})\backslash\bigcap_{j<i}\mathcal{B}_{e^{-\eta(j+\theta)}}(z_{j})\Big)>\frac{1}{4}\bigg\}\Big(\prod_{j<i}\mathrm{U}\mathcal{B}_{e^{-\eta(j+\theta)}}(x_{0},\cdot)\Big)(\mathrm{d}\{z_{j}\}_{j})\\
 & \le4\sum_{i\in(-\infty..i_{0}]}\int P\Big(\mathcal{B}_{\tau}(x_{0})\backslash\bigcap_{j<i}\mathcal{B}_{e^{-\eta(j+\theta)}}(z_{j})\Big)\Big(\prod_{j<i}\mathrm{U}\mathcal{B}_{e^{-\eta(j+\theta)}}(x_{0},\cdot)\Big)(\mathrm{d}\{z_{j}\}_{j})\\
 & =4\sum_{i\in(-\infty..i_{0}]}\int\int_{\mathcal{B}_{\tau}(x_{0})}\mathbf{1}\Big\{ x\notin\bigcap_{j<i}\mathcal{B}_{e^{-\eta(j+\theta)}}(z_{j})\Big\} P(\mathrm{d}x)\Big(\prod_{j<i}\mathrm{U}\mathcal{B}_{e^{-\eta(j+\theta)}}(x_{0},\cdot)\Big)(\mathrm{d}\{z_{j}\}_{j})\\
 & =4\int_{\mathcal{B}_{\tau}(x_{0})}\left(\sum_{i\in(-\infty..i_{0}]}\int\mathbf{1}\Big\{ x\notin\bigcap_{j<i}\mathcal{B}_{e^{-\eta(j+\theta)}}(z_{j})\Big\}\Big(\prod_{j<i}\mathrm{U}\mathcal{B}_{e^{-\eta(j+\theta)}}(x_{0},\cdot)\Big)(\mathrm{d}\{z_{j}\}_{j})\right)P(\mathrm{d}x)\\
 & \le4\int_{\mathcal{B}_{\tau}(x_{0})}\left(\sum_{i\in(-\infty..i_{0}]}\sum_{j<i}\int\mathbf{1}\Big\{ x\notin\mathcal{B}_{e^{-\eta(j+\theta)}}(z_{j})\Big\}\mathrm{U}\mathcal{B}_{e^{-\eta(j+\theta)}}(x_{0},\mathrm{d}z_{j})\right)P(\mathrm{d}x)\\
 & \stackrel{(a)}{\le}4\int_{\mathcal{B}_{\tau}(x_{0})}\left(\sum_{i\in(-\infty..i_{0}]}\sum_{j<i}\Psi e^{\eta(j+\theta)}d(x,x_{0})\right)P(\mathrm{d}x)\\
 & =4\int_{\mathcal{B}_{\tau}(x_{0})}\Psi\frac{e^{\eta(i_{0}-1+\theta)}}{(1-e^{-\eta})^{2}}d(x,x_{0})P(\mathrm{d}x)\\
 & \le4\tau\Psi\frac{e^{\eta(i_{0}-1+\theta)}}{(1-e^{-\eta})^{2}},
\end{align*}
where (a) is by \eqref{eq:metric_pow_delta_dist}. Combining this
with \eqref{eq:metric_pow_pf_term1},
\[
\sum_{i\in(-\infty..i_{0}]}d_{\mathrm{TV}}\left(\bigg(\prod_{j\in I_{<i}}\nu_{j}\bigg)\bar{P}_{i|I_{<i}},\,\prod_{j\in I_{\le i}}\nu_{j}\right)<\infty.
\]
We then check \eqref{eq:spfr_kinfty_lim2}:
\begin{align*}
 & d_{\mathrm{TV}}\Bigg(\bar{P}_{I_{\le i}},\,\prod_{j\le i}\nu_{j}\Bigg)\\
 & \stackrel{(a)}{\le}\int d_{\mathrm{TV}}\Bigg(\prod_{j\le i}\mathrm{U}\mathcal{B}_{e^{-\eta(j+\theta)}}(x,\cdot),\,\prod_{j\le i}\nu_{j}\Bigg)P(\mathrm{d}x)\\
 & \le\int\min\left\{ \sum_{j\le i}d_{\mathrm{TV}}\left(\mathrm{U}\mathcal{B}_{e^{-\eta(j+\theta)}}(x,\cdot),\,\nu_{j}\right),\,1\right\} P(\mathrm{d}x)\\
 & =\int\min\left\{ \sum_{j\le i}d_{\mathrm{TV}}\left(\mathrm{U}\mathcal{B}_{e^{-\eta(j+\theta)}}(x,\cdot),\,\mathrm{U}\mathcal{B}_{e^{-\eta(j+\theta)}}(x_{0},\cdot)\right),\,1\right\} P(\mathrm{d}x)\\
 & \stackrel{(b)}{\le}\int\min\left\{ \sum_{j\le i}\Psi e^{\eta(j+\theta)}d(x,x_{0}),\,1\right\} P(\mathrm{d}x)\\
 & =\int\min\left\{ \frac{\Psi e^{\eta(i+\theta)}}{1-e^{\eta}}d(x,x_{0}),\,1\right\} P(\mathrm{d}x)\\
 & \to0
\end{align*}
as $i\to-\infty$ by Lebesgue's dominated convergence theorem, where
(a) is by the definition of $\bar{P}$ and the convexity of $d_{\mathrm{TV}}$,
and (b) is by \eqref{eq:metric_pow_delta_dist} and \eqref{eq:metric_pow_pf_dtv}.

\medskip{}

\begin{rem}
\label{rem:metric_pow_pf_bayesjust}We now show \eqref{eq:metric_pow_pf_bayescomp}
formally. Let $X\sim P$, and $Z_{i}|X\sim\mathrm{U}\mathcal{B}_{e^{-\eta(i+\theta)}}$
be conditionally independent across $i$ given $X$. We first show
that ``$P(\bigcap_{j<i}\mathcal{B}_{e^{-\eta(j+\theta)}}(Z_{j}))>0$
for all $i\in\mathbb{Z}$'' holds almost surely. Note that ``$P(\mathcal{B}_{w}(X))>0$
for all $w>0$'' holds almost surely (since $P(\exists w>0:\,P(\mathcal{B}_{w}(X))=0)\le P(\bigcup_{\mathrm{open}\,S\subseteq\mathcal{X}:\,P(S)=0}S)=0$
because $\mathcal{X}$ is strongly Lindel\"of). If $P(\mathcal{B}_{w}(X))>0$
for all $w>0$, for any $i\in\mathbb{Z}$, we have
\begin{align*}
 & \mathbf{P}\left(P\big(\bigcap_{j<i}\mathcal{B}_{e^{-\eta(j+\theta)}}(Z_{j})\big)=0\,\Big|\,X\right)\\
 & \le\inf_{w>0}\mathbf{P}\left(P\big(\bigcap_{j<i}\mathcal{B}_{e^{-\eta(j+\theta)}}(Z_{j})\,\big|\,\mathcal{B}_{w}(X)\big)=0\,\Big|\,X\right)\\
 & \le\inf_{w>0}\mathbf{E}\left[1-P\big(\bigcap_{j<i}\mathcal{B}_{e^{-\eta(j+\theta)}}(Z_{j})\,\big|\,\mathcal{B}_{w}(X)\big)\,\Big|\,X\right]\\
 & =\inf_{w>0}\left(1-\mathbf{E}\left[\frac{1}{P(\mathcal{B}_{w}(X))}\int_{\mathcal{B}_{w}(X)}\mathbf{1}\big\{\tilde{x}\in\bigcap_{j<i}\mathcal{B}_{e^{-\eta(j+\theta)}}(Z_{j})\big\} P(\mathrm{d}\tilde{x})\,\Big|\,X\right]\right)\\
 & =\inf_{w>0}\left(1-\frac{1}{P(\mathcal{B}_{w}(X))}\int_{\mathcal{B}_{w}(X)}\mathbf{E}\left[\mathbf{1}\big\{\tilde{x}\in\bigcap_{j<i}\mathcal{B}_{e^{-\eta(j+\theta)}}(Z_{j})\big\}\,\Big|\,X\right]P(\mathrm{d}\tilde{x})\right)\\
 & \le\inf_{w>0}\left(1-\frac{1}{P(\mathcal{B}_{w}(X))}\int_{\mathcal{B}_{w}(X)}\left(1-\sum_{j<i}\mathbf{P}\left(\tilde{x}\notin\mathcal{B}_{e^{-\eta(j+\theta)}}(Z_{j})\,\Big|\,X\right)\right)P(\mathrm{d}\tilde{x})\right)\\
 & \stackrel{(a)}{\le}\inf_{w>0}\left(1-\frac{1}{P(\mathcal{B}_{w}(X))}\int_{\mathcal{B}_{w}(X)}\left(1-\sum_{j<i}\Psi e^{\eta(j+\theta)}w\right)P(\mathrm{d}\tilde{x})\right)\\
 & =\inf_{w>0}\left(1-\frac{1}{P(\mathcal{B}_{w}(X))}\int_{\mathcal{B}_{w}(X)}\left(1-\frac{\Psi e^{\eta(i-1+\theta)}}{1-e^{\eta}}w\right)P(\mathrm{d}\tilde{x})\right)\\
 & =\inf_{w>0}\frac{\Psi e^{\eta(i-1+\theta)}}{1-e^{\eta}}w\\
 & =0,
\end{align*}
where (a) is due to \eqref{eq:metric_pow_delta_dist} and $d(X,\tilde{x})\le w$.
Therefore ``$P(\bigcap_{j<i}\mathcal{B}_{e^{-\eta(j+\theta)}}(Z_{j}))>0$
for all $i\in\mathbb{Z}$'' holds almost surely.

For any $E\in\mathcal{F}$ and $k<i$, for $\bar{P}_{[k..i)}$-almost
all $\{z_{j}\}_{j\in[k..i)}$, it is straightforward to check by Bayes'
rule that

\begin{align*}
 & \bar{P}_{X|Z_{[k..i)}}(E|\{z_{j}\}_{j})\\
 & =\frac{\int_{E\cap\bigcap_{j=k}^{i-1}\mathcal{B}_{e^{-\eta(j+\theta)}}(z_{j})}\Big(\prod_{j=k}^{i-1}\mu(\mathcal{B}_{e^{-\eta(j+\theta)}}(x))\Big)^{-1}P(\mathrm{d}x)}{\int_{\bigcap_{j=k}^{i-1}\mathcal{B}_{e^{-\eta(j+\theta)}}(z_{j})}\Big(\prod_{j=k}^{i-1}\mu(\mathcal{B}_{e^{-\eta(j+\theta)}}(x))\Big)^{-1}P(\mathrm{d}x)}\\
 & \le\frac{\int_{E\cap\bigcap_{j=k}^{i-1}\mathcal{B}_{e^{-\eta(j+\theta)}}(z_{j})}\Big(\inf_{y\in\bigcap_{j=k}^{i-1}\mathcal{B}_{e^{-\eta(j+\theta)}}(z_{j})}\prod_{j=k}^{i-1}\mu(\mathcal{B}_{e^{-\eta(j+\theta)}}(y))\Big)^{-1}P(\mathrm{d}x)}{\int_{\bigcap_{j=k}^{i-1}\mathcal{B}_{e^{-\eta(j+\theta)}}(z_{j})}\Big(\sup_{y\in\bigcap_{j=k}^{i-1}\mathcal{B}_{e^{-\eta(j+\theta)}}(z_{j})}\prod_{j=k}^{i-1}\mu(\mathcal{B}_{e^{-\eta(j+\theta)}}(y))\Big)^{-1}P(\mathrm{d}x)}\\
 & =\frac{\sup_{y\in\bigcap_{j=k}^{i-1}\mathcal{B}_{e^{-\eta(j+\theta)}}(z_{j})}\prod_{j=k}^{i-1}\mu(\mathcal{B}_{e^{-\eta(j+\theta)}}(y))}{\inf_{y\in\bigcap_{j=k}^{i-1}\mathcal{B}_{e^{-\eta(j+\theta)}}(z_{j})}\prod_{j=k}^{i-1}\mu(\mathcal{B}_{e^{-\eta(j+\theta)}}(y))}\cdot\frac{\int_{E\cap\bigcap_{j=k}^{i-1}\mathcal{B}_{e^{-\eta(j+\theta)}}(z_{j})}P(\mathrm{d}x)}{\int_{\bigcap_{j=k}^{i-1}\mathcal{B}_{e^{-\eta(j+\theta)}}(z_{j})}P(\mathrm{d}x)}\\
 & \stackrel{(a)}{\le}\tilde{\xi}\frac{\int_{E\cap\bigcap_{j=k}^{i-1}\mathcal{B}_{e^{-\eta(j+\theta)}}(z_{j})}P(\mathrm{d}x)}{\int_{\bigcap_{j=k}^{i-1}\mathcal{B}_{e^{-\eta(j+\theta)}}(z_{j})}P(\mathrm{d}x)}\\
 & =\tilde{\xi}P\left(E\left|\bigcap_{j=k}^{i-1}\mathcal{B}_{e^{-\eta(j+\theta)}}(z_{j})\right.\right),
\end{align*}
where (a) is by $\prod_{j=k}^{i-1}\mu(\mathcal{B}_{e^{-\eta(j+\theta)}}(y))/\mu(\mathcal{B}_{e^{-\eta(j+\theta)}}(x))\le\tilde{\xi}$
for any $x,y\in\bigcap_{j=k}^{i-1}\mathcal{B}_{e^{-\eta(j+\theta)}}(z_{j})$,
which is proved using the same arguments as in \eqref{eq:metric_pow_f_likelihoodbd}.
By the martingale convergence theorem, $\bar{P}_{X|Z_{[k..i)}}(E|\{Z_{j}\}_{j})\to\bar{P}_{X|Z_{(-\infty..i)}}(E|\{Z_{j}\}_{j})$
almost surely as $k\to-\infty$. Hence, almost surely,
\begin{align}
\bar{P}_{X|Z_{(-\infty..i)}}(E|\{Z_{j}\}_{j}) & =\lim_{k\to-\infty}\bar{P}_{X|Z_{[k..i)}}(E|\{Z_{j}\}_{j})\nonumber \\
 & \le\underset{k\to-\infty}{\lim\inf}\tilde{\xi}P\left(E\left|\bigcap_{j=k}^{i-1}\mathcal{B}_{e^{-\eta(j+\theta)}}(Z_{j})\right.\right)\nonumber \\
 & \le\underset{k\to-\infty}{\lim\inf}\tilde{\xi}\frac{P(E\cap\bigcap_{j=k}^{i-1}\mathcal{B}_{e^{-\eta(j+\theta)}}(Z_{j}))}{P(\bigcap_{j<i}\mathcal{B}_{e^{-\eta(j+\theta)}}(Z_{j}))}\nonumber \\
 & =\tilde{\xi}\frac{P(E\cap\bigcap_{j<i}\mathcal{B}_{e^{-\eta(j+\theta)}}(Z_{j}))}{P(\bigcap_{j<i}\mathcal{B}_{e^{-\eta(j+\theta)}}(Z_{j}))}\nonumber \\
 & =\tilde{\xi}P\left(E\left|\bigcap_{j<i}\mathcal{B}_{e^{-\eta(j+\theta)}}(Z_{j})\right.\right).\label{eq:metric_pow_pf_pcompe}
\end{align}

We now show that $\bar{P}_{X|Z_{(-\infty..i)}}(\cdot|\{Z_{j}\}_{j})\le\tilde{\xi}P(\cdot|\bigcap_{j<i}\mathcal{B}_{e^{-\eta(j+\theta)}}(Z_{j}))$
almost surely. Since $\mathcal{X}$ is a second-countable topological
space, let its base be $\{H_{i}\}_{i\in\mathbb{N}}$ (i.e., $H_{i}\subseteq\mathcal{X}$
is open, and each open set in $\mathcal{X}$ is a union of sets in
$\{H_{i}\}_{i\in\mathbb{N}}$). Let $\{\tilde{H}_{i}\}_{i\in\mathbb{N}}$
be the collection of sets in $\{H_{i}\backslash\bigcup_{j\in J}H_{j}:\,i\in\mathbb{N},\,J\subseteq[1..i-1]\}$
(which is countable). Then each open set in $\mathcal{X}$ is a disjoint
union of sets in $\{\tilde{H}_{i}\}_{i\in\mathbb{N}}$ (since $\bigcup_{j=1}^{\infty}H_{i_{j}}=\bigcup_{j=1}^{\infty}(H_{i_{j}}\backslash\bigcup_{k=1}^{j-1}H_{i_{k}})$
for an increasing sequence $i_{1},i_{2},\ldots$). By \eqref{eq:metric_pow_pf_pcompe},
``$\bar{P}_{X|Z_{(-\infty..i)}}(\tilde{H}_{k}|\{Z_{j}\}_{j})\le\tilde{\xi}P(\tilde{H}_{k}|\bigcap_{j<i}\mathcal{B}_{e^{-\eta(j+\theta)}}(Z_{j}))$
for all $k\in\mathbb{N}$'' holds almost surely. Fix any $\{z_{j}\}_{j}$
such that ``$\bar{P}_{X|Z_{(-\infty..i)}}(\tilde{H}_{k}|\{z_{j}\}_{j})\le\tilde{\xi}P(\tilde{H}_{k}|\bigcap_{j<i}\mathcal{B}_{e^{-\eta(j+\theta)}}(z_{j}))$
for all $k\in\mathbb{N}$'' holds. For any $E\in\mathcal{F}$, since
any Borel probability measure on a Polish space is regular,
\begin{align*}
 & \bar{P}_{X|Z_{(-\infty..i)}}(E|\{Z_{j}\}_{j})\\
 & =\inf\left\{ \bar{P}_{X|Z_{(-\infty..i)}}(\tilde{E}|\{Z_{j}\}_{j}):\,\tilde{E}\supseteq E,\,\tilde{E}\,\text{is open}\right\} \\
 & =\inf\left\{ \bar{P}_{X|Z_{(-\infty..i)}}(\tilde{E}|\{Z_{j}\}_{j}):\,\tilde{E}\supseteq E,\,\tilde{E}\,\text{is a disjoint union of sets in }\{\tilde{H}_{i}\}_{i\in\mathbb{N}}\right\} \\
 & \le\inf\left\{ \tilde{\xi}P\big(\tilde{E}\big|\bigcap_{j<i}\mathcal{B}_{e^{-\eta(j+\theta)}}(Z_{j})\big):\,\tilde{E}\supseteq E,\,\tilde{E}\,\text{is a disjoint union of sets in }\{\tilde{H}_{i}\}_{i\in\mathbb{N}}\right\} \\
 & =\tilde{\xi}P\big(E\big|\bigcap_{j<i}\mathcal{B}_{e^{-\eta(j+\theta)}}(Z_{j})\big),
\end{align*}
and thus $\bar{P}_{X|Z_{(-\infty..i)}}(\cdot|\{z_{j}\}_{j})\le\tilde{\xi}P(\cdot|\bigcap_{j<i}\mathcal{B}_{e^{-\eta(j+\theta)}}(z_{j}))$.
Hence, $\bar{P}_{X|Z_{(-\infty..i)}}(\cdot|\{Z_{j}\}_{j})\le\tilde{\xi}P(\cdot|\bigcap_{j<i}\mathcal{B}_{e^{-\eta(j+\theta)}}(Z_{j}))$
almost surely.

Since we are free to modify $\bar{P}_{X|Z_{(-\infty..i)}}(\cdot|\{z_{j}\}_{j})$
over a set of $\{z_{j}\}_{j}$ with $\bar{P}_{(-\infty..i)}$-measure
zero, we can choose $\bar{P}_{X|Z_{(-\infty..i)}}$ such that ``$P(\bigcap_{j<i}\mathcal{B}_{e^{-\eta(j+\theta)}}(z_{j}))=0$
or $\bar{P}_{X|Z_{(-\infty..i)}}(\cdot|\{z_{j}\}_{j})\le\tilde{\xi}P(\cdot|\bigcap_{j<i}\mathcal{B}_{e^{-\eta(j+\theta)}}(z_{j}))$''
holds for all $\{z_{j}\}_{j}\in\mathcal{X}^{(-\infty..i)}$. This
can be achieved by taking
\[
\bar{P}'_{X|Z_{(-\infty..i)}}(\cdot|\{z_{j}\}_{j}):=\begin{cases}
P & \mathrm{if}\;P(\bigcap_{j<i}\mathcal{B}_{e^{-\eta(j+\theta)}}(z_{j}))=0\\
\bar{P}_{X|Z_{(-\infty..i)}}(\cdot|\{z_{j}\}_{j}) & \mathrm{else}\,\mathrm{if}\;\bar{P}_{X|Z_{(-\infty..i)}}(\cdot|\{z_{j}\}_{j})\le\tilde{\xi}P(\cdot|\bigcap_{j<i}\mathcal{B}_{e^{-\eta(j+\theta)}}(z_{j}))\\
P(\cdot|\bigcap_{j<i}\mathcal{B}_{e^{-\eta(j+\theta)}}(z_{j})) & \mathrm{otherwise}.
\end{cases}
\]
Note that $\{z_{j}\}_{j}\mapsto\bar{P}'_{X|Z_{(-\infty..i)}}(E|\{z_{j}\}_{j})$
is a measurable function for any $E\in\mathcal{F}$, since $P(\bigcap_{j<i}\mathcal{B}_{e^{-\eta(j+\theta)}}(z_{j}))=\inf_{k<i}P(\bigcap_{j=k}^{i-1}\mathcal{B}_{e^{-\eta(j+\theta)}}(z_{j}))$
is a measurable function of $\{z_{j}\}_{j}$, $P(E|\bigcap_{j<i}\mathcal{B}_{e^{-\eta(j+\theta)}}(z_{j}))=\inf_{k<i}P(E\cap\bigcap_{j=k}^{i-1}\mathcal{B}_{e^{-\eta(j+\theta)}}(z_{j}))/P(\bigcap_{j<i}\mathcal{B}_{e^{-\eta(j+\theta)}}(z_{j}))$
is a measurable function of $\{z_{j}\}_{j}$, and 
\begin{align*}
 & \left\{ \{z_{j}\}_{j}:\,\bar{P}_{X|Z_{(-\infty..i)}}(\cdot|\{z_{j}\}_{j})\le\tilde{\xi}P(\cdot|\bigcap_{j<i}\mathcal{B}_{e^{-\eta(j+\theta)}}(z_{j}))\right\} \\
 & =\bigcap_{k\in\mathbb{N}}\left\{ \{z_{j}\}_{j}:\,\bar{P}_{X|Z_{(-\infty..i)}}(\tilde{H}_{k}|\{z_{j}\}_{j})\le\tilde{\xi}P(\tilde{H}_{k}|\bigcap_{j<i}\mathcal{B}_{e^{-\eta(j+\theta)}}(z_{j}))\right\} 
\end{align*}
is measurable. Hence $\bar{P}'_{X|Z_{(-\infty..i)}}$ is a valid regular
conditional distribution, and we can use it in place of $\bar{P}_{X|Z_{(-\infty..i)}}$.

\end{rem}

\medskip{}

\section{Proof that $t\breve{\mathrm{V}}_{n}'(t)/\breve{\mathrm{V}}_{n}(t)$
is non-decreasing\label{sec:hyperbolic_inc}}

Equivalently, we will prove that
\[
\log\frac{t(\sinh t)^{n-1}}{\int_{0}^{t}(\sinh x)^{n-1}\mathrm{d}x}
\]
is non-decreasing. We have
\begin{align*}
 & \frac{\mathrm{d}}{\mathrm{d}t}\log\frac{t(\sinh t)^{n-1}}{\int_{0}^{t}(\sinh x)^{n-1}\mathrm{d}x}\\
 & =t^{-1}+\frac{(n-1)(\sinh t)^{n-2}\cosh t}{(\sinh t)^{n-1}}-\frac{(\sinh t)^{n-1}}{\int_{0}^{t}(\sinh x)^{n-1}\mathrm{d}x}\\
 & =t^{-1}+(n-1)\frac{\cosh t}{\sinh t}-\frac{(\sinh t)^{n-1}}{\int_{0}^{t}(\sinh x)^{n-1}\mathrm{d}x}.
\end{align*}
To prove that this derivative is non-negative, it suffices to prove
that
\[
\int_{0}^{t}(\sinh x)^{n-1}\mathrm{d}x\ge\frac{(\sinh t)^{n-1}}{t^{-1}+(n-1)\frac{\cosh t}{\sinh t}}.
\]
Note that
\[
\lim_{t\to0}\left(\int_{0}^{t}(\sinh x)^{n-1}\mathrm{d}x-\frac{(\sinh t)^{n-1}}{t^{-1}+(n-1)\frac{\cosh t}{\sinh t}}\right)=0,
\]
and

\begin{align*}
 & \frac{\mathrm{d}}{\mathrm{d}t}\left(\int_{0}^{t}(\sinh x)^{n-1}\mathrm{d}x-\frac{(\sinh t)^{n-1}}{t^{-1}+(n-1)\frac{\cosh t}{\sinh t}}\right)\\
 & =(\sinh t)^{n-1}-\frac{\mathrm{d}}{\mathrm{d}t}\frac{(\sinh t)^{n-1}}{t^{-1}+(n-1)\frac{\cosh t}{\sinh t}}\\
 & =(\sinh t)^{n-1}-\frac{(n-1)\cosh t(\sinh t)^{n-2}}{t^{-1}+(n-1)\frac{\cosh t}{\sinh t}}-\frac{(\sinh t)^{n-1}\left(t^{-2}+(n-1)(\sinh t)^{-2}\right)}{\left(t^{-1}+(n-1)\frac{\cosh t}{\sinh t}\right)^{2}}\\
 & =(\sinh t)^{n-1}\left(1-\frac{(n-1)\frac{\cosh t}{\sinh t}}{t^{-1}+(n-1)\frac{\cosh t}{\sinh t}}-\frac{t^{-2}+(n-1)(\sinh t)^{-2}}{\left(t^{-1}+(n-1)\frac{\cosh t}{\sinh t}\right)^{2}}\right)\\
 & =\frac{(\sinh t)^{n-1}}{\left(t^{-1}+(n-1)\frac{\cosh t}{\sinh t}\right)^{2}}\left(t^{-1}\left(t^{-1}+(n-1)\frac{\cosh t}{\sinh t}\right)-t^{-2}-(n-1)(\sinh t)^{-2}\right)\\
 & =\frac{(n-1)(\sinh t)^{n-1}}{\left(t^{-1}+(n-1)\frac{\cosh t}{\sinh t}\right)^{2}(\sinh t)^{2}}\left(t^{-1}\sinh t\cosh t-1\right)\\
 & =\frac{(n-1)(\sinh t)^{n-1}}{\left(t^{-1}+(n-1)\frac{\cosh t}{\sinh t}\right)^{2}(\sinh t)^{2}}\left(\frac{1}{2}t^{-1}\sinh(2t)-1\right)\\
 & \ge0.
\end{align*}
The result follows.

\section{Proof of Theorem \ref{thm:rn_rc_lb_ball}\label{subsec:pf_rn_rc_lb_ball}}

Let $0<q\le1$. \footnote{Theorem \ref{thm:rn_rc_lb_ball} requires that $0<q<1$. Nevertheless,
we also allow the case $q=1$ in the proof.} Let $k\ge3$ be an odd integer. Let $\mathcal{A}:=[-(k-1)/2..\,(k-1)/2]^{n}$
with $|\mathcal{A}|=k^{n}$. For $x,y\in\mathbb{Z}^{n}$, define $x\oplus y\in\mathcal{A}$
such that $(x\oplus y)_{i}\equiv x_{i}+y_{i}\;\mathrm{mod}\;k$ for
$i=1,\ldots,n$, and define $x\ominus y:=x\oplus-y$.

For $\alpha\in\mathcal{A}$, let $P_{\alpha}:=(|\mathcal{A}|-1)^{-1}\sum_{\beta\in\mathcal{A}\backslash\{\alpha\}}\delta_{\beta}$.
Fix any coupling $\{X_{\alpha}\}$ of $\{P_{\alpha}\}$. Write $X_{\alpha}=(X_{\alpha,1},\ldots,X_{\alpha,n})$.
Let $r:=r_{c}(\{X_{\alpha}\}_{\alpha\in\mathcal{A}})$. Let $Z\in\mathbb{Z}^{n}$
be a random vector where $Z_{i}$ is the median of the multiset $\{X_{\alpha,i}\}_{\alpha\in\mathcal{A}}$
(there is a unique integer median since $|\mathcal{A}|$ is odd).
Define
\[
G_{n,p,q}(\gamma):=\sum_{x\in\mathbb{Z}^{n}}\max\left\{ \gamma-\Vert x\Vert_{p}^{q},\,0\right\} .
\]

The proof is divided into 4 parts. First, we bound $r_{c}^{*}(\mathcal{P}(\mathbb{Z}^{n}))$
in terms of $G_{n,p,q}$. Next, we bound $G_{n,p,q}$. Then we find
the rate of growth of $r_{c}^{*}(\mathcal{P}(\mathbb{Z}^{n}))$ as
$n$ increases. Finally, we extend this result to $\mathcal{P}_{\ll\lambda_{S}}(\mathbb{R}^{n})$
for any $S\subseteq\mathbb{R}^{n}$ with $\lambda(S)>0$, and to $\mathcal{P}(\mathcal{M})$
for a Riemannian manifold $\mathcal{M}$.

\medskip{}

\subsection{Bound on $r_{c}^{*}(\mathcal{P}(\mathbb{Z}^{n}))$ in terms of $G_{n,p,q}$}

For any $0\le\gamma\le(k/2)^{q}$, we have
\begin{align*}
 & \mathbf{E}\left[\min\left\{ \sum_{\alpha\in\mathcal{A}}\Vert X_{\alpha}-Z\Vert_{p}^{q},\,G_{n,p,q}(\gamma)\right\} \right]\\
 & \ge\mathbf{E}\left[\min\left\{ \sum_{\alpha\in\mathcal{A}}\Vert X_{\alpha}\ominus Z\Vert_{p}^{q},\,G_{n,p,q}(\gamma)\right\} \right]\\
 & =G_{n,p,q}(\gamma)-\mathbf{E}\left[\max\left\{ G_{n,p,q}(\gamma)-\sum_{\alpha\in\mathcal{A}}\Vert X_{\alpha}\ominus Z\Vert_{p}^{q},\,0\right\} \right]\\
 & \stackrel{(a)}{=}G_{n,p,q}(\gamma)-\mathbf{E}\left[\max\left\{ \sum_{\alpha\in\mathcal{A}}\left(\max\left\{ \gamma-\Vert Z\ominus\alpha\Vert_{p}^{q},\,0\right\} -\Vert X_{\alpha}\ominus Z\Vert_{p}^{q}\right),\,0\right\} \right]\\
 & \stackrel{(b)}{\ge}G_{n,p,q}(\gamma)-\mathbf{E}\left[\sum_{\alpha\in\mathcal{A}}\max\left\{ \gamma-\Vert X_{\alpha}\ominus\alpha\Vert_{p}^{q},\,0\right\} \right]\\
 & =G_{n,p,q}(\gamma)-\frac{|\mathcal{A}|}{|\mathcal{A}|-1}\left(G_{n,p,q}(\gamma)-\gamma\right)\\
 & =\frac{|\mathcal{A}|\gamma-G_{n,p,q}(\gamma)}{|\mathcal{A}|-1}\\
 & \ge\gamma-|\mathcal{A}|^{-1}G_{n,p,q}(\gamma).
\end{align*}
where (a) is because $\{Z\ominus\alpha:\,\alpha\in\mathcal{A}\}=\mathcal{A}$,
and $\gamma-\Vert x\Vert_{p}^{q}\le0$ for $x\in\mathbb{Z}^{n}\backslash\mathcal{A}$
since $\gamma\le(k/2)^{q}$, (b) is because $|\Vert Z\ominus\alpha\Vert_{p}^{q}-\Vert X_{\alpha}\ominus\alpha\Vert_{p}^{q}|\le\Vert X_{\alpha}\ominus Z\Vert_{p}^{q}$
since $\Vert x\ominus y\Vert_{p}^{q}$ is a metric. Let $U\sim\mathrm{Unif}(\mathcal{A})$
independent of $\{X_{\alpha}\}$. Then for any $0\le\gamma\le(k/2)^{q}$,
\begin{align}
 & \mathbf{E}\left[\min\left\{ \mathbf{E}[\Vert X_{U}-Z\Vert_{p}^{q}\,|\,\{X_{\alpha}\}],\,|\mathcal{A}|^{-1}G_{n,p,q}(\gamma)\right\} \right]\nonumber \\
 & \ge|\mathcal{A}|^{-1}\gamma-|\mathcal{A}|^{-2}G_{n,p,q}(\gamma).\label{eq:Epq_G_bd}
\end{align}
For any $j\in[1..n]$, we have
\begin{align*}
 & \sum_{\alpha,\beta\in\mathcal{A}:\,\Vert\alpha\ominus\beta\Vert_{1}=1}|X_{\alpha,j}-X_{\beta,j}|^{q}\\
 & \stackrel{(a)}{\ge}2^{-(1-q)}\sum_{\alpha,\beta\in\mathcal{A}:\,\Vert\alpha\ominus\beta\Vert_{1}=1}\left|\mathrm{sgn}(X_{\alpha,j}-Z_{j})|X_{\alpha,j}-Z_{j}|^{q}-\mathrm{sgn}(X_{\beta,j}-Z_{j})|X_{\beta,j}-Z_{j}|^{q}\right|\\
 & =2^{-(1-q)}\int_{-\infty}^{\infty}\sum_{\alpha,\beta\in\mathcal{A}:\,\Vert\alpha\ominus\beta\Vert_{1}=1}\left|\mathbf{1}\left\{ \mathrm{sgn}(X_{\alpha,j}-Z_{j})|X_{\alpha,j}-Z_{j}|^{q}\le t\right\} -\mathbf{1}\left\{ \mathrm{sgn}(X_{\beta,j}-Z_{j})|X_{\beta,j}-Z_{j}|^{q}\le t\right\} \right|\mathrm{d}t\\
 & \stackrel{(b)}{\ge}2^{-(1-q)}\int_{-\infty}^{\infty}4|\mathcal{A}|^{1-1/n}F_{n}\left(\frac{|\{\alpha\in\mathcal{A}:\,\mathrm{sgn}(X_{\alpha,j}-Z_{j})|X_{\alpha,j}-Z_{j}|^{q}\le t\}|}{|\mathcal{A}|}\right)\mathrm{d}t\\
 & =2^{1+q}|\mathcal{A}|^{1-1/n}\int_{-\infty}^{\infty}F_{n}\left(\mathbf{P}(\mathrm{sgn}(X_{U,j}-Z_{j})|X_{U,j}-Z_{j}|^{q}\le t\,|\,\{X_{\alpha}\})\right)\mathrm{d}t\\
 & \stackrel{(c)}{\ge}2^{1+q}|\mathcal{A}|^{1-1/n}\frac{F_{n}\left(\min\left\{ \mathbf{E}[|X_{U,j}-Z_{j}|^{q}\,|\,\{X_{\alpha}\}],\,1/2\right\} \right)}{\min\left\{ \mathbf{E}[|X_{U,j}-Z_{j}|^{q}\,|\,\{X_{\alpha}\}],\,1/2\right\} }\\
 & \;\;\;\;\;\cdot\int_{-\infty}^{\infty}\min\left\{ \mathbf{P}(\mathrm{sgn}(X_{U,j}-Z_{j})|X_{U,j}-Z_{j}|^{q}\le t\,|\,\{X_{\alpha}\}),\,1-\mathbf{P}(\mathrm{sgn}(X_{U,j}-Z_{j})|X_{U,j}-Z_{j}|^{q}\le t\,|\,\{X_{\alpha}\})\right\} \mathrm{d}t\\
 & \stackrel{(d)}{=}2^{1+q}|\mathcal{A}|^{1-1/n}\frac{F_{n}\left(\min\left\{ \mathbf{E}[|X_{U,j}-Z_{j}|^{q}\,|\,\{X_{\alpha}\}],\,1/2\right\} \right)}{\min\left\{ \mathbf{E}[|X_{U,j}-Z_{j}|^{q}\,|\,\{X_{\alpha}\}],\,1/2\right\} }\\
 & \;\;\;\;\;\cdot\left(\int_{-\infty}^{0}\mathbf{P}(\mathrm{sgn}(X_{U,j}-Z_{j})|X_{U,j}-Z_{j}|^{q}\le t\,|\,\{X_{\alpha}\})\mathrm{d}t+\int_{0}^{\infty}\mathbf{P}(\mathrm{sgn}(X_{U,j}-Z_{j})|X_{U,j}-Z_{j}|^{q}>t\,|\,\{X_{\alpha}\})\mathrm{d}t\right)\\
 & =2^{1+q}|\mathcal{A}|^{1-1/n}\frac{F_{n}\left(\min\left\{ \mathbf{E}[|X_{U,j}-Z_{j}|^{q}\,|\,\{X_{\alpha}\}],\,1/2\right\} \right)}{\min\left\{ \mathbf{E}[|X_{U,j}-Z_{j}|^{q}\,|\,\{X_{\alpha}\}],\,1/2\right\} }\mathbf{E}[|X_{U,j}-Z_{j}|^{q}\,|\,\{X_{\alpha}\}]\\
 & \ge2^{1+q}|\mathcal{A}|^{1-1/n}F_{n}\left(\min\left\{ \mathbf{E}[|X_{U,j}-Z_{j}|^{q}\,|\,\{X_{\alpha}\}],\,1/2\right\} \right),
\end{align*}
where (a) is due to $|x-y|^{q}\ge|x^{q}-y^{q}|$ and $((x+y)/2)^{q}\ge(x^{q}+y^{q})/2$
for $x,y\ge0$, (b) is by the edge-isoperimetric inequality on the
discrete torus \cite[Theorem 8]{bollobas1991edge} with \footnote{The edge-isoperimetric inequality on the discrete torus \cite[Theorem 8]{bollobas1991edge}
states that $\sum_{\alpha,\beta\in\mathcal{A}:\,\Vert\alpha\ominus\beta\Vert_{1}=1}|\mathbf{1}\{\alpha\in S\}-\mathbf{1}\{\beta\in S\}|\ge4|\mathcal{A}|^{1-1/n}F_{n}(|S|/|\mathcal{A}|)$
for any $S\subseteq\mathcal{A}$.}
\begin{align*}
F_{n}(\tau) & :=\min_{b\in[1..n]}b\left(\min\{\tau,\,1-\tau\}\right)^{1-1/b}.
\end{align*}
Note that each edge is counted twice here, and hence the factor 4.
For (c), when $t<0$, since $Z_{j}$ is the median of $X_{U,j}$,
\begin{align*}
 & \mathbf{P}(\mathrm{sgn}(X_{U,j}-Z_{j})|X_{U,j}-Z_{j}|^{q}\le t\,|\,\{X_{\alpha}\})\\
 & \le\mathbf{P}(\mathrm{sgn}(X_{U,j}-Z_{j})|X_{U,j}-Z_{j}|^{q}\le-1\,|\,\{X_{\alpha}\})\\
 & \le\mathbf{E}[|X_{U,j}-Z_{j}|^{q}\,|\,\{X_{\alpha}\}],
\end{align*}
and when $t\ge0$,
\begin{align*}
 & 1-\mathbf{P}(\mathrm{sgn}(X_{U,j}-Z_{j})|X_{U,j}-Z_{j}|^{q}\le t\,|\,\{X_{\alpha}\})\\
 & \le\mathbf{P}(\mathrm{sgn}(X_{U,j}-Z_{j})|X_{U,j}-Z_{j}|^{q}\ge1\,|\,\{X_{\alpha}\})\\
 & \le\mathbf{E}[|X_{U,j}-Z_{j}|^{q}\,|\,\{X_{\alpha}\}],
\end{align*}
and hence $\min\{\mathbf{P}(\mathrm{sgn}(X_{U,j}-Z_{j})|X_{U,j}-Z_{j}|^{q}\le t\,|\,\{X_{\alpha}\}),\,1-\mathbf{P}(\mathrm{sgn}(X_{U,j}-Z_{j})|X_{U,j}-Z_{j}|^{q}\le t\,|\,\{X_{\alpha}\})\}\le\mathbf{E}[|X_{U,j}-Z_{j}|^{q}\,|\,\{X_{\alpha}\}]$,
and (c) follows from the concavity of $F_{n}(t)$. For (d), we have
$\mathbf{P}(\mathrm{sgn}(X_{U,j}-Z_{j})|X_{U,j}-Z_{j}|^{q}\le t\,|\,\{X_{\alpha}\})\le1/2$
when $t<0$ since $Z_{j}$ is the median of $X_{U,j}$ (and similar
for the case $t\ge0$). Hence,
\begin{align*}
 & \sum_{\alpha,\beta\in\mathcal{A}:\,\Vert\alpha\ominus\beta\Vert_{1}=1}\Vert X_{\alpha}-X_{\beta}\Vert_{p}^{q}\\
 & \ge n^{q(1/p-1)}\sum_{\alpha,\beta\in\mathcal{A}:\,\Vert\alpha\ominus\beta\Vert_{1}=1}\Vert X_{\alpha}-X_{\beta}\Vert_{1}^{q}\\
 & \ge2^{1+q}n^{q(1/p-1)}|\mathcal{A}|^{1-1/n}\sum_{j=1}^{n}F_{n}\left(\min\left\{ \mathbf{E}[|X_{U,j}-Z_{j}|^{q}\,|\,\{X_{\alpha}\}],\,1/2\right\} \right)\\
 & \stackrel{(a)}{\ge}2^{1+q}n^{q(1/p-1)}|\mathcal{A}|^{1-1/n}F_{n}\left(\min\left\{ \mathbf{E}[\Vert X_{U}-Z\Vert_{q}^{q}\,|\,\{X_{\alpha}\}],\,1/2\right\} \right)\\
 & \ge2^{1+q}n^{q(1/p-1)}|\mathcal{A}|^{1-1/n}F_{n}\left(\min\left\{ n^{-\max\{1/p-1/q,0\}}\mathbf{E}[\Vert X_{U}-Z\Vert_{p}^{q}\,|\,\{X_{\alpha}\}],\,1/2\right\} \right),
\end{align*}
where (a) is by the concavity of $t\mapsto F_{n}(\min\{t,1/2\})$.
Therefore, when $\mathbf{E}[\Vert X_{U}-Z\Vert_{p}^{q}\,|\,\{X_{\alpha}\}]\ge(1-1/n)^{n(n-1)}$,
\begin{align}
 & \sum_{\alpha,\beta\in\mathcal{A}:\,\Vert\alpha\ominus\beta\Vert_{1}=1}\Vert X_{\alpha}-X_{\beta}\Vert_{p}^{q}\nonumber \\
 & \ge2^{1+q}n^{q(1/p-1)}|\mathcal{A}|^{1-1/n}F_{n}\left(\min\left\{ n^{-\max\{1/p-1/q,0\}}(1-1/n)^{n(n-1)},\,1/2\right\} \right)\nonumber \\
 & =2^{1+q}n^{q(1/p-1)}|\mathcal{A}|^{1-1/n}F_{n}\left(n^{-\max\{1/p-1/q,0\}}(1-1/n)^{n(n-1)}\right)\nonumber \\
 & =\xi_{n,p,q}|\mathcal{A}|^{1-1/n},\label{eq:pq_sumbd_1}
\end{align}
where 
\[
\xi_{n,p,q}:=2^{1+q}n^{q(1/p-1)}F_{n}\left(n^{-\max\{1/p-1/q,0\}}(1-1/n)^{n(n-1)}\right).
\]
When $\mathbf{E}[\Vert X_{U}-Z\Vert_{p}^{q}\,|\,\{X_{\alpha}\}]\le(1-1/n)^{n(n-1)}$,
\begin{align*}
 & \sum_{\alpha,\beta\in\mathcal{A}:\,\Vert\alpha\ominus\beta\Vert_{1}=1}\Vert X_{\alpha}-X_{\beta}\Vert_{p}^{q}\\
 & \stackrel{(a)}{\ge}\sum_{\alpha,\beta\in\mathcal{A}:\,\Vert\alpha\ominus\beta\Vert_{1}=1}\left|\Vert X_{\alpha}-Z\Vert_{p}^{q}-\Vert X_{\beta}-Z\Vert_{p}^{q}\right|\\
 & =\int_{0}^{\infty}\sum_{\alpha,\beta\in\mathcal{A}:\,\Vert\alpha\ominus\beta\Vert_{1}=1}\left|\mathbf{1}\left\{ \Vert X_{\alpha}-Z\Vert_{p}^{q}\ge t\right\} -\mathbf{1}\left\{ \Vert X_{\beta}-Z\Vert_{p}^{q}\ge t\right\} \right|\mathrm{d}t\\
 & \stackrel{(b)}{\ge}\int_{0}^{\infty}4|\mathcal{A}|^{1-1/n}F_{n}\left(\frac{|\{\alpha\in\mathcal{A}:\,\Vert X_{\alpha}-Z\Vert_{p}^{q}\ge t\}|}{|\mathcal{A}|}\right)\mathrm{d}t\\
 & =4|\mathcal{A}|^{1-1/n}\int_{0}^{\infty}F_{n}\left(\mathbf{P}(\Vert X_{U}-Z\Vert_{p}^{q}\ge t\,|\,\{X_{\alpha}\})\right)\mathrm{d}t\\
 & \stackrel{(c)}{\ge}4|\mathcal{A}|^{1-1/n}\int_{0}^{\infty}F_{n}\left(\mathbf{E}[\Vert X_{U}-Z\Vert_{p}^{q}\,|\,\{X_{\alpha}\}]\right)\frac{\mathbf{P}(\Vert X_{U}-Z\Vert_{p}^{q}\ge t\,|\,\{X_{\alpha}\})}{\mathbf{E}[\Vert X_{U}-Z\Vert_{p}^{q}\,|\,\{X_{\alpha}\}]}\mathrm{d}t\\
 & =4|\mathcal{A}|^{1-1/n}F_{n}\left(\mathbf{E}[\Vert X_{U}-Z\Vert_{p}^{q}\,|\,\{X_{\alpha}\}]\right)\\
 & \stackrel{(d)}{=}4n|\mathcal{A}|^{1-1/n}\left(\mathbf{E}[\Vert X_{U}-Z\Vert_{p}^{q}\,|\,\{X_{\alpha}\}]\right)^{1-1/n},
\end{align*}
where (a) is because $\Vert x-y\Vert_{p}^{q}$ is a metric, (b) is
again by \cite[Theorem 8]{bollobas1991edge}, and (c) is by the concavity
of $F_{n}(t)$, and for any $t>0$, since $\Vert X_{U}-Z\Vert_{p}^{q}\ge1$
if $\Vert X_{U}-Z\Vert_{p}^{q}\ge t>0$,
\begin{align*}
 & \mathbf{P}(\Vert X_{U}-Z\Vert_{p}^{q}\ge t\,|\,\{X_{\alpha}\})\\
 & \le\mathbf{P}(\Vert X_{U}-Z\Vert_{p}^{q}\ge1\,|\,\{X_{\alpha}\})\\
 & \le\mathbf{E}[\Vert X_{U}-Z\Vert_{p}^{q}\,|\,\{X_{\alpha}\}],
\end{align*}
and (d) is because $F_{n}(t)=nt^{1-1/n}$ when $t\le(1-1/n)^{n(n-1)}$.
Combining this with \eqref{eq:pq_sumbd_1}, and considering both cases
whether $\mathbf{E}[\Vert X_{U}-Z\Vert_{p}^{q}\,|\,\{X_{\alpha}\}]\le(1-1/n)^{n(n-1)}$,
we have
\begin{align}
 & \sum_{\alpha,\beta\in\mathcal{A}:\,\Vert\alpha\ominus\beta\Vert_{1}=1}\Vert X_{\alpha}-X_{\beta}\Vert_{p}^{q}\nonumber \\
 & \ge4n|\mathcal{A}|^{1-1/n}\left(\min\left\{ \mathbf{E}[\Vert X_{U}-Z\Vert_{p}^{q}\,|\,\{X_{\alpha}\}],\,\tilde{\xi}_{n,p,q}\right\} \right)^{1-1/n},\label{eq:pq_sumbd_2}
\end{align}
where 
\[
\tilde{\xi}_{n,p,q}:=\min\left\{ (1-1/n)^{n(n-1)},\,(\xi_{n,p,q}/(4n))^{n/(n-1)}\right\} .
\]
Note that for any $0<\eta<1$, and random variable $T\ge0$,
\begin{align}
\mathbf{E}\left[T^{\eta}\right] & =\int_{0}^{\infty}\mathbf{P}\left(T^{\eta}\le t\right)\mathrm{d}t\nonumber \\
 & =\int_{0}^{\infty}\mathbf{P}\left(T\le t\right)\eta t^{\eta-1}\mathrm{d}t\nonumber \\
 & =\int_{0}^{\infty}\mathbf{P}\left(T\le t\right)\int_{t}^{\infty}\eta(1-\eta)s^{\eta-2}\mathrm{d}s\mathrm{d}t\nonumber \\
 & =\int_{0}^{\infty}\left(\int_{0}^{s}\mathbf{P}\left(T\le t\right)\mathrm{d}t\right)\eta(1-\eta)s^{\eta-2}\mathrm{d}s\nonumber \\
 & =\int_{0}^{\infty}\mathbf{E}\left[\min\{T,s\}\right]\eta(1-\eta)s^{\eta-2}\mathrm{d}s.\label{eq:ezpow}
\end{align}
Hence, we have
\begin{align*}
 & \mathbf{E}\left[\left(\min\left\{ \mathbf{E}[\Vert X_{U}-Z\Vert_{p}^{q}\,|\,\{X_{\alpha}\}],\,\tilde{\xi}_{n,p,q}\right\} \right)^{1-1/n}\right]\\
 & =\mathbf{E}\left[\int_{0}^{\infty}\min\left\{ \mathbf{E}[\Vert X_{U}-Z\Vert_{p}^{q}\,|\,\{X_{\alpha}\}],\,t,\,\tilde{\xi}_{n,p,q}\right\} \frac{1}{n}\left(1-\frac{1}{n}\right)t^{-(1+1/n)}\mathrm{d}t\right]\\
 & \ge\mathbf{E}\left[\int_{0}^{\tilde{\xi}_{n,p,q}}\min\left\{ \mathbf{E}[\Vert X_{U}-Z\Vert_{p}^{q}\,|\,\{X_{\alpha}\}],\,t\right\} \frac{1}{n}\left(1-\frac{1}{n}\right)t^{-(1+1/n)}\mathrm{d}t\right]\\
 & =\frac{n-1}{n^{2}}\int_{0}^{\tilde{\xi}_{n,p,q}}\mathbf{E}\left[\min\left\{ \mathbf{E}[\Vert X_{U}-Z\Vert_{p}^{q}\,|\,\{X_{\alpha}\}],\,t\right\} \right]t^{-(1+1/n)}\mathrm{d}t\\
 & =\frac{n-1}{n^{2}}\!\int_{0}^{G_{n,p,q}^{-1}(|\mathcal{A}|\tilde{\xi}_{n,p,q})}\!\!\mathbf{E}\left[\min\left\{ \mathbf{E}[\Vert X_{U}-Z\Vert_{p}^{q}\,|\,\{X_{\alpha}\}],\,|\mathcal{A}|^{-1}G_{n,p,q}(\gamma)\right\} \right](|\mathcal{A}|^{-1}G_{n,p,q}(\gamma))^{-(1+1/n)}\mathrm{d}|\mathcal{A}|^{-1}G_{n,p,q}(\gamma)\\
 & \ge\!\frac{n-1}{n^{2}}\!\int_{0}^{\min\{G_{n,p,q}^{-1}(|\mathcal{A}|\tilde{\xi}_{n,p,q}),\,(k/2)^{q}\}}\!\mathbf{E}\left[\min\left\{ \mathbf{E}[\Vert X_{U}-Z\Vert_{p}^{q}\,|\,\{X_{\alpha}\}],|\mathcal{A}|^{-1}G_{n,p,q}(\gamma)\right\} \right]\\
 & \;\;\;\;\;\;\cdot(|\mathcal{A}|^{-1}G_{n,p,q}(\gamma))^{-(1+1/n)}\mathrm{d}|\mathcal{A}|^{-1}G_{n,p,q}(\gamma)\\
 & \stackrel{(a)}{\ge}\frac{n-1}{n^{2}}\!\int_{0}^{\min\{G_{n,p,q}^{-1}(|\mathcal{A}|\tilde{\xi}_{n,p,q}),\,(k/2)^{q}\}}\left(|\mathcal{A}|^{-1}\gamma-|\mathcal{A}|^{-2}G_{n,p,q}(\gamma)\right)(|\mathcal{A}|^{-1}G_{n,p,q}(\gamma))^{-(1+1/n)}\mathrm{d}|\mathcal{A}|^{-1}G_{n,p,q}(\gamma)\\
 & =\frac{n-1}{n^{2}}|\mathcal{A}|^{-(1-1/n)}\int_{0}^{\min\{G_{n,p,q}^{-1}(|\mathcal{A}|\tilde{\xi}_{n,p,q}),\,(k/2)^{q}\}}\left(\gamma-|\mathcal{A}|^{-1}G_{n,p,q}(\gamma)\right)(G_{n,p,q}(\gamma))^{-(1+1/n)}\mathrm{d}G_{n,p,q}(\gamma)\\
 & =\!\frac{n\!-\!1}{n^{2}}|\mathcal{A}|^{-(1-1/n)}\Big(\int_{0}^{\min\{G_{n,p,q}^{-1}(|\mathcal{A}|\tilde{\xi}_{n,p,q}),(k/2)^{q}\}}\!\!\gamma(G_{n,p,q}(\gamma))^{-(1+1/n)}\mathrm{d}G_{n,p,q}(\gamma)\!-\!|\mathcal{A}|^{-1}\Big(1-\frac{1}{n}\Big)^{1-1/n}\!\big(|\mathcal{A}|\tilde{\xi}_{n,p,q}\big)^{1-1/n}\Big)\\
 & =\frac{n-1}{n^{2}}|\mathcal{A}|^{-(1-1/n)}\Big(\int_{0}^{\min\{G_{n,p,q}^{-1}(|\mathcal{A}|\tilde{\xi}_{n,p,q}),\,(k/2)^{q}\}}\gamma(G_{n,p,q}(\gamma))^{-(1+1/n)}\mathrm{d}G_{n,p,q}(\gamma)-\frac{n}{n-1}|\mathcal{A}|^{-1/n}\tilde{\xi}_{n,p,q}^{1-1/n}\Big)
\end{align*}
where $G_{n,p,q}^{-1}$ denotes the inverse function of $G_{n,p,q}$
(which is strictly increasing from $0$ to $\infty$), and (a) is
by \eqref{eq:Epq_G_bd}. Combining this with \eqref{eq:pq_sumbd_2},
\begin{align}
 & \mathbf{E}\left[\sum_{\alpha,\beta\in\mathcal{A}:\,\Vert\alpha\ominus\beta\Vert_{1}=1}\Vert X_{\alpha}-X_{\beta}\Vert_{p}^{q}\right]\nonumber \\
 & \ge4\left(1-\frac{1}{n}\right)\left(\int_{0}^{\min\{G_{n,p,q}^{-1}(|\mathcal{A}|\tilde{\xi}_{n,p,q}),\,(k/2)^{q}\}}\gamma(G_{n,p,q}(\gamma))^{-(1+1/n)}\mathrm{d}G_{n,p,q}(\gamma)-\frac{n}{n-1}|\mathcal{A}|^{-1/n}\tilde{\xi}_{n,p,q}^{1-1/n}\right).\label{eq:Epq_bd}
\end{align}
On the other hand,
\begin{align}
 & \mathbf{E}\bigg[\sum_{\alpha,\beta\in\mathcal{A}:\,\Vert\alpha\ominus\beta\Vert_{1}=1}\Vert X_{\alpha}-X_{\beta}\Vert_{p}^{q}\bigg]\nonumber \\
 & =\sum_{\alpha,\beta\in\mathcal{A}:\,\Vert\alpha\ominus\beta\Vert_{1}=1}\mathbf{E}[c(X_{\alpha},X_{\beta})]\nonumber \\
 & \le r\sum_{\alpha,\beta\in\mathcal{A}:\,\Vert\alpha\ominus\beta\Vert_{1}=1}C^{*}(P_{\alpha},P_{\beta})\nonumber \\
 & \le\frac{r}{|\mathcal{A}|-1}\sum_{\alpha,\beta\in\mathcal{A}:\,\Vert\alpha\ominus\beta\Vert_{1}=1}\Vert\alpha-\beta\Vert_{p}^{q}\nonumber \\
 & =\frac{r}{|\mathcal{A}|-1}\left(2n|\mathcal{A}|+2n|\mathcal{A}|^{1-1/n}((k-1)^{q}-1)\right)\nonumber \\
 & =\frac{2nr|\mathcal{A}|}{|\mathcal{A}|-1}\left(1+|\mathcal{A}|^{-1/n}((k-1)^{q}-1)\right)\nonumber \\
 & \le\frac{2nr|\mathcal{A}|}{|\mathcal{A}|-1}\left(1+|\mathcal{A}|^{-1/n}k^{q}\right)\nonumber \\
 & =\frac{2nr|\mathcal{A}|}{|\mathcal{A}|-1}\left(1+|\mathcal{A}|^{-(1-q)/n}\right).\label{eq:Eqp_ub_pow}
\end{align}
Combining this with \eqref{eq:Epq_bd},
\begin{align*}
r & \ge\left(\frac{2n|\mathcal{A}|}{|\mathcal{A}|-1}\left(1+|\mathcal{A}|^{-(1-q)/n}\right)\right)^{-1}\\
 & \;\;\;\cdot4\left(1-\frac{1}{n}\right)\left(\int_{0}^{\min\{G_{n,p,q}^{-1}(|\mathcal{A}|\tilde{\xi}_{n,p,q}),\,(k/2)^{q}\}}\gamma(G_{n,p,q}(\gamma))^{-(1+1/n)}\mathrm{d}G_{n,p,q}(\gamma)-\frac{n}{n-1}|\mathcal{A}|^{-1/n}\tilde{\xi}_{n,p,q}^{1-1/n}\right).
\end{align*}
Letting $k=|\mathcal{A}|^{1/n}\to\infty$, we have
\begin{align}
 & r_{c}^{*}(\mathcal{P}(\mathbb{Z}^{n}))\nonumber \\
 & \ge\left(1+\mathbf{1}\{q<1\}\right)\frac{1}{n}\left(1-\frac{1}{n}\right)\int_{0}^{\infty}\gamma(G_{n,p,q}(\gamma))^{-(1+1/n)}\mathrm{d}G_{n,p,q}(\gamma)\nonumber \\
 & =\left(1+\mathbf{1}\{q<1\}\right)\frac{1}{n}\left(1-\frac{1}{n}\right)\int_{0}^{\infty}G_{n,p,q}^{-1}(t)t^{-(1+1/n)}\mathrm{d}t.\label{eq:rc_lb_Gnpq}
\end{align}

\medskip{}

\subsection{Bound on $G_{n,p,q}^{-1}(t)$}

We give two methods to bound $G_{n,p,q}^{-1}(t)$ from below. For
the first method, for $v\in\mathbb{Z}^{n}$, we have
\begin{align*}
 & \int_{[-1/2,1/2]^{n}}\left(\Vert v+x\Vert_{p}^{q}-\Vert v\Vert_{p}^{q}\right)\mathrm{d}x\\
 & \le\int_{[-1/2,1/2]^{n}}\Vert x\Vert_{p}^{q}\mathrm{d}x\\
 & \le n^{q/p}\int_{[-1/2,1/2]^{n}}\Vert x\Vert_{\infty}^{q}\mathrm{d}x\\
 & =\frac{2^{-q}n^{q/p+1}}{n+q}.
\end{align*}
Hence, for $\gamma\ge0$,
\begin{align*}
 & \gamma-\Vert v\Vert_{p}^{q}\\
 & \le\int_{[-1/2,1/2]^{n}}\left(\gamma+\frac{2^{-q}n^{q/p+1}}{n+q}-\Vert v+x\Vert_{p}^{q}\right)\mathrm{d}x\\
 & \le\int_{[-1/2,1/2]^{n}}\max\left\{ \gamma+\frac{2^{-q}n^{q/p+1}}{n+q}-\Vert v+x\Vert_{p}^{q},\,0\right\} \mathrm{d}x.
\end{align*}
Hence,

\begin{align*}
 & \max\left\{ \gamma-\Vert v\Vert_{p}^{q},0\right\} \\
 & \le\int_{[-1/2,1/2]^{n}}\max\left\{ \gamma+\frac{2^{-q}n^{q/p+1}}{n+q}-\Vert v+x\Vert_{p}^{q},\,0\right\} \mathrm{d}x.
\end{align*}
We have
\begin{align*}
G_{n,p,q}(\gamma) & =\sum_{v\in\mathbb{Z}^{n}}\max\left\{ \gamma-\Vert v\Vert_{p}^{q},\,0\right\} \\
 & \le\sum_{v\in\mathbb{Z}^{n}}\int_{[-1/2,1/2]^{n}}\max\left\{ \gamma+\frac{2^{-q}n^{q/p+1}}{n+q}-\Vert v+x\Vert_{p}^{q},\,0\right\} \mathrm{d}x\\
 & =\int_{\mathbb{R}^{n}}\max\left\{ \gamma+\frac{2^{-q}n^{q/p+1}}{n+q}-\Vert x\Vert_{p}^{q},\,0\right\} \mathrm{d}x\\
 & =\int_{0}^{\gamma+\frac{2^{-q}n^{q/p+1}}{n+q}}\int_{\mathbb{R}^{n}}\mathbf{1}\left\{ \Vert x\Vert_{p}^{q}\le t\right\} \mathrm{d}x\mathrm{d}t\\
 & =\int_{0}^{\gamma+\frac{2^{-q}n^{q/p+1}}{n+q}}\mathrm{V}_{n,p}t^{n/q}\mathrm{d}t\\
 & =\frac{\mathrm{V}_{n,p}}{n/q+1}\left(\gamma+\frac{2^{-q}n^{q/p+1}}{n+q}\right)^{n/q+1},
\end{align*}
where $\mathrm{V}_{n,p}$ is the volume of the unit $\ell_{p}$ ball
given in \eqref{eq:lp_ball_vol}. Therefore, 
\begin{align}
G_{n,p,q}^{-1}(t) & \ge\left(\frac{n/q+1}{\mathrm{V}_{n,p}}t\right)^{1/(n/q+1)}-\frac{2^{-q}n^{q/p+1}}{n+q}.\label{eq:Ginv_bd_1}
\end{align}

For the second method, since the real-valued function on $\mathbb{R}^{n}$
given by $x\mapsto\max\{\Vert x\Vert_{p}^{q},1\}$ is $q$-Lipschitz
with respect to $\Vert\cdot\Vert_{p}$ (it is the composition of the
$1$-Lipschitz function $x\mapsto\max\{\Vert x\Vert_{p},1\}$ from
$\mathbb{R}^{n}$ to $\mathbb{R}_{\ge1}$ and the $q$-Lipschitz function
$t\mapsto t^{q}$ from $\mathbb{R}_{\ge1}$ to $\mathbb{R}$), for
$v\in\mathbb{Z}^{n}$, we have
\begin{align*}
 & \int_{[-1/2,1/2]^{n}}\left(\max\{\Vert v+x\Vert_{p}^{q},1\}-\max\{\Vert v\Vert_{p}^{q},1\}\right)\mathrm{d}x\\
 & \le q\int_{[-1/2,1/2]^{n}}\Vert x\Vert_{p}\mathrm{d}x\\
 & \le qn^{1/p}\int_{[-1/2,1/2]^{n}}\Vert x\Vert_{\infty}\mathrm{d}x\\
 & =\frac{qn^{1/p+1}}{2(n+1)}.
\end{align*}
Hence, for $\gamma\ge0$,
\begin{align*}
 & \gamma-\max\{\Vert v\Vert_{p}^{q},1\}\\
 & \le\int_{[-1/2,1/2]^{n}}\left(\gamma+\frac{qn^{1/p+1}}{2(n+1)}-\max\{\Vert v+x\Vert_{p}^{q},1\}\right)\mathrm{d}x\\
 & \le\int_{[-1/2,1/2]^{n}}\max\left\{ \gamma+\frac{qn^{1/p+1}}{2(n+1)}-\max\{\Vert v+x\Vert_{p}^{q},1\},\,0\right\} \mathrm{d}x.
\end{align*}
Therefore,

\begin{align*}
 & \max\left\{ \gamma-\max\{\Vert v\Vert_{p}^{q},1\},0\right\} \\
 & \le\int_{[-1/2,1/2]^{n}}\max\left\{ \gamma+\frac{qn^{1/p+1}}{2(n+1)}-\max\{\Vert v+x\Vert_{p}^{q},1\},\,0\right\} \mathrm{d}x.
\end{align*}
We have
\begin{align*}
G_{n,p,q}(\gamma) & =\sum_{v\in\mathbb{Z}^{n}}\max\left\{ \gamma-\Vert v\Vert_{p}^{q},\,0\right\} \\
 & \stackrel{(a)}{\le}\sum_{v\in\mathbb{Z}^{n}}\max\left\{ \gamma-\max\{\Vert v\Vert_{p}^{q},1\},\,0\right\} +1\\
 & \le\sum_{v\in\mathbb{Z}^{n}}\int_{[-1/2,1/2]^{n}}\max\left\{ \gamma+\frac{qn^{1/p+1}}{2(n+1)}-\max\{\Vert v+x\Vert_{p}^{q},1\},\,0\right\} \mathrm{d}x+1\\
 & =\int_{\mathbb{R}^{n}}\max\left\{ \gamma+\frac{qn^{1/p+1}}{2(n+1)}-\max\{\Vert x\Vert_{p}^{q},1\},\,0\right\} \mathrm{d}x+1\\
 & =\int_{1}^{\max\left\{ \gamma+\frac{qn^{1/p+1}}{2(n+1)},\,1\right\} }\int_{\mathbb{R}^{n}}\mathbf{1}\left\{ \Vert x\Vert_{p}^{q}\le t\right\} \mathrm{d}x\mathrm{d}t+1\\
 & =\int_{1}^{\max\left\{ \gamma+\frac{qn^{1/p+1}}{2(n+1)},\,1\right\} }\mathrm{V}_{n,p}t^{n/q}\mathrm{d}t+1\\
 & =\frac{\mathrm{V}_{n,p}}{n/q+1}\left(\left(\max\left\{ \gamma+\frac{qn^{1/p+1}}{2(n+1)},\,1\right\} \right)^{n/q+1}-1\right)+1,
\end{align*}
where (a) is because $\max\{\Vert v\Vert_{p}^{q},1\}\neq\Vert v\Vert_{p}^{q}$
only if $v=0$. Therefore, if $t>1$,
\begin{align*}
G_{n,p,q}^{-1}(t) & \ge\left(\frac{n/q+1}{\mathrm{V}_{n,p}}(t-1)+1\right)^{1/(n/q+1)}-\frac{qn^{1/p+1}}{2(n+1)}.
\end{align*}
If $t>1$ and $(n/q+1)/\mathrm{V}_{n,p}\le1$, then
\begin{align*}
G_{n,p,q}^{-1}(t) & \ge\left(\frac{n/q+1}{\mathrm{V}_{n,p}}t\right)^{1/(n/q+1)}-\frac{qn^{1/p+1}}{2(n+1)}.
\end{align*}
If $t>1$ and $(n/q+1)/\mathrm{V}_{n,p}>1$, then
\begin{align*}
 & G_{n,p,q}^{-1}(t)\\
 & \ge\left(\frac{n/q+1}{\mathrm{V}_{n,p}}t-\left(\frac{n/q+1}{\mathrm{V}_{n,p}}-1\right)\right)^{1/(n/q+1)}-\frac{qn^{1/p+1}}{2(n+1)}\\
 & =\left(\frac{n/q+1}{\mathrm{V}_{n,p}}t\right)^{1/(n/q+1)}-\left(\left(\frac{n/q+1}{\mathrm{V}_{n,p}}t\right)^{1/(n/q+1)}-\left(\frac{n/q+1}{\mathrm{V}_{n,p}}t-\left(\frac{n/q+1}{\mathrm{V}_{n,p}}-1\right)\right)^{1/(n/q+1)}\right)-\frac{qn^{1/p+1}}{2(n+1)}\\
 & \stackrel{(a)}{\ge}\left(\frac{n/q+1}{\mathrm{V}_{n,p}}t\right)^{1/(n/q+1)}-\left(\left(\frac{n/q+1}{\mathrm{V}_{n,p}}\right)^{1/(n/q+1)}-\left(\frac{n/q+1}{\mathrm{V}_{n,p}}-\left(\frac{n/q+1}{\mathrm{V}_{n,p}}-1\right)\right)^{1/(n/q+1)}\right)-\frac{qn^{1/p+1}}{2(n+1)}\\
 & =\left(\frac{n/q+1}{\mathrm{V}_{n,p}}t\right)^{1/(n/q+1)}-\left(\left(\frac{n/q+1}{\mathrm{V}_{n,p}}\right)^{1/(n/q+1)}-1\right)-\frac{qn^{1/p+1}}{2(n+1)},
\end{align*}
where (a) is because $z\mapsto z^{1/(n/q+1)}$ is concave, and hence
$z\mapsto z^{1/(n/q+1)}-(z-a)^{1/(n/q+1)}$ is non-increasing. Therefore,
if $t>1$,
\begin{align*}
G_{n,p,q}^{-1}(t) & \ge\left(\frac{n/q+1}{\mathrm{V}_{n,p}}t\right)^{1/(n/q+1)}-\max\left\{ \left(\frac{n/q+1}{\mathrm{V}_{n,p}}\right)^{1/(n/q+1)}-1,\,0\right\} -\frac{qn^{1/p+1}}{2(n+1)}.
\end{align*}
Combining this with \eqref{eq:Ginv_bd_1}, for $t>1$,
\begin{align*}
G_{n,p,q}^{-1}(t) & \ge\left(\frac{n/q+1}{\mathrm{V}_{n,p}}t\right)^{1/(n/q+1)}-\min\left\{ \frac{2^{-q}n^{q/p+1}}{n+q},\,\max\left\{ \left(\frac{n/q+1}{\mathrm{V}_{n,p}}\right)^{1/(n/q+1)}-1,\,0\right\} +\frac{qn^{1/p+1}}{2(n+1)}\right\} .
\end{align*}
Also note that $G_{n,p,q}^{-1}(t)=t$ when $t\le1$. Hence,
\begin{align*}
 & \int_{0}^{\infty}G_{n,p,q}^{-1}(t)t^{-(1+1/n)}\mathrm{d}t\\
 & =\int_{0}^{1}t\cdot t^{-(1+1/n)}\mathrm{d}t+\int_{1}^{\infty}G_{n,p,q}^{-1}(t)t^{-(1+1/n)}\mathrm{d}t\\
 & \ge\left(1-\frac{1}{n}\right)^{-1}+\int_{1}^{\infty}\Bigg(\left(\frac{n/q+1}{\mathrm{V}_{n,p}}t\right)^{1/(n/q+1)}\\
 & \;\;\;\;-\min\left\{ \frac{2^{-q}n^{q/p+1}}{n+q},\max\left\{ \left(\frac{n/q+1}{\mathrm{V}_{n,p}}\right)^{1/(n/q+1)}\!-1,0\right\} +\frac{qn^{1/p+1}}{2(n+1)}\right\} \Bigg)t^{-(1+1/n)}\mathrm{d}t\\
 & =\frac{1}{n^{-1}-(n/q+1)^{-1}}\left(\frac{n/q+1}{\mathrm{V}_{n,p}}\right)^{1/(n/q+1)}-n\psi_{n,p,q}.
\end{align*}
where
\[
\psi_{n,p,q}:=\min\left\{ \frac{2^{-q}n^{q/p+1}}{n+q},\,\max\left\{ \left(\frac{n/q+1}{\mathrm{V}_{n,p}}\right)^{1/(n/q+1)}-1,0\right\} +\frac{qn^{1/p+1}}{2(n+1)}\right\} -\frac{1}{n-1}
\]
Combining this with \eqref{eq:rc_lb_Gnpq},
\begin{align*}
 & r_{c}^{*}(\mathcal{P}(\mathbb{Z}^{n}))\\
 & \ge\left(1+\mathbf{1}\{q<1\}\right)\frac{1}{n}\left(1-\frac{1}{n}\right)\left(\frac{1}{n^{-1}-(n/q+1)^{-1}}\left(\frac{n/q+1}{\mathrm{V}_{n,p}}\right)^{1/(n/q+1)}-n\psi_{n,p,q}\right)\\
 & =\left(1+\mathbf{1}\{q<1\}\right)\left(1-\frac{1}{n}\right)\left(\frac{n+q}{n+q-nq}\left(\frac{n/q+1}{\mathrm{V}_{n,p}}\right)^{1/(n/q+1)}-\psi_{n,p,q}\right).
\end{align*}

\medskip{}

\subsection{Rate of growth of $r_{c}^{*}(\mathcal{P}(\mathbb{Z}^{n}))$}

We now find the rate of growth of $r_{c}^{*}(\mathcal{P}(\mathbb{Z}^{n}))$
as $n$ increases. If $p<\infty$,
\begin{align*}
 & r_{c}^{*}(\mathcal{P}(\mathbb{Z}^{n}))\\
 & \ge\left(1+\mathbf{1}\{q<1\}\right)\left(1-\frac{1}{n}\right)\left(\frac{n+q}{n+q-nq}\left(\frac{(n/q+1)\mathit{\Gamma}(1+n/p)}{2^{n}(\mathit{\Gamma}(1+1/p))^{n}}\right)^{1/(n/q+1)}+\frac{1}{n-1}-\frac{2^{-q}n^{q/p+1}}{n+q}\right)\\
 & \stackrel{(a)}{\ge}\!\left(1\!+\!\mathbf{1}\{q\!<\!1\}\right)\!\left(1\!-\!\frac{1}{n}\right)\!\left(\frac{n+q}{n+q-nq}\left(\frac{(n/q+1)\sqrt{2\pi}(1+n/p)^{(1+n/p)-1/2}e^{-(1+n/p)}}{2^{n}(\mathit{\Gamma}(1+1/p))^{n}}\right)^{\!1/(n/q+1)}\!\!\!\!+\!\frac{1}{n-1}\!-\!\frac{2^{-q}n^{q/p+1}}{n+q}\right)\\
 & \ge\left(1+\mathbf{1}\{q<1\}\right)\left(1-\frac{1}{n}\right)\left(\frac{n+q}{n+q-nq}\left(\frac{(n/q+1)\sqrt{2\pi}(n/p)^{n/p}e^{-(1+n/p)}}{2^{n}(\mathit{\Gamma}(1+1/p))^{n}}\right)^{1/(n/q+1)}-\frac{2^{-q}n^{q/p+1}}{n+q}\right)\\
 & =\left(1+\mathbf{1}\{q<1\}\right)\left(1-\frac{1}{n}\right)\left(\frac{n+q}{n+q-nq}\left(\sqrt{2\pi}e^{-1}(n/q+1)\right)^{1/(n/q+1)}\left(\frac{(n/p)^{1/p}e^{-1/p}}{2\mathit{\Gamma}(1+1/p)}\right)^{1/(1/q+1/n)}-\frac{2^{-q}n^{q/p+1}}{n+q}\right)\\
 & \stackrel{(b)}{\ge}\left(1+\mathbf{1}\{q<1\}\right)\left(1-\frac{1}{n}\right)\left(\frac{n+q}{n+q-nq}\left(\frac{(n/p)^{1/p}e^{-1/p}}{2}\right)^{1/(1/q+1/n)}-2^{-q}n^{q/p}\right)\\
 & \ge2^{-q}\left(1+\mathbf{1}\{q<1\}\right)\left(1-\frac{1}{n}\right)\left(\frac{n+q}{n+q-nq}\left(\frac{n}{ep}\right)^{1/(p/q+p/n)}-n^{q/p}\right)\\
 & =2^{\mathbf{1}\{q<1\}-q}\left(1-\frac{1}{n}\right)\left(\frac{n+q}{n+q-nq}\left(\frac{n}{ep}\right)^{1/(p/q+p/n)}-n^{q/p}\right),
\end{align*}
where (a) is by Stirling's approximation, and (b) is because $\mathit{\Gamma}(1+1/p)\le1$
and $\sqrt{2\pi}e^{-1}(n/q+1)\ge1$. Hence $r_{c}^{*}(\mathcal{P}(\mathbb{Z}^{n}))=\Omega(n^{1+q/p})$
as $n\to\infty$ if $q=1$. If $q<1$, then
\begin{align*}
 & r_{c}^{*}(\mathcal{P}(\mathbb{Z}^{n}))\\
 & \ge2^{\mathbf{1}\{q<1\}-q}\left(1-\frac{1}{n}\right)\left(\frac{n+q}{n+q-nq}\left(\frac{n}{ep}\right)^{1/(p/q+p/n)}-n^{q/p}\right)\\
 & =2^{\mathbf{1}\{q<1\}-q}\left(1-\frac{1}{n}\right)\left((1-o(1))\frac{1}{1-q}\left(\frac{n}{ep}\right)^{q/p}-n^{q/p}\right)\\
 & =2^{\mathbf{1}\{q<1\}-q}\left(1-\frac{1}{n}\right)\left((1-o(1))\frac{(ep)^{-q/p}}{1-q}-1\right)n^{q/p}\\
 & \stackrel{(a)}{\ge}2^{\mathbf{1}\{q<1\}-q}\left(1-\frac{1}{n}\right)\left((1-o(1))\frac{e^{-q}}{1-q}-1\right)n^{q/p}\\
 & \stackrel{(b)}{=}2^{\mathbf{1}\{q<1\}-q}\left(\frac{e^{-q}}{1-q}-1\right)(1-o(1))n^{q/p}\\
 & =\Omega(n^{q/p})
\end{align*}
where (a) is because $(ep)^{-1/p}$ is increasing in $p\in[1,\infty)$,
and hence $(ep)^{-1/p}\ge e^{-1}$, and (b) is because $e^{-q}/(1-q)$
is strictly increasing in $q\in(0,1)$, and hence $e^{-q}/(1-q)>1$.

If $p=\infty$,
\begin{align*}
 & r_{c}^{*}(\mathcal{P}(\mathbb{Z}^{n}))\\
 & \ge\left(1+\mathbf{1}\{q<1\}\right)\left(1-\frac{1}{n}\right)\left(\frac{1}{1-(1/q+1/n)^{-1}}\left(\frac{(n/q+1)}{2^{n}}\right)^{1/(n/q+1)}+\frac{1}{n-1}-2^{-q}\frac{n}{n+q}\right)\\
 & =\left(1+\mathbf{1}\{q<1\}\right)\left(1-\frac{1}{n}\right)\left(\frac{n/q+1}{n(1/q-1)+1}(n/q+1)^{1/(n/q+1)}2^{-1/(1/q+1/n)}+\frac{1}{n-1}-2^{-q}\frac{n}{n+q}\right),
\end{align*}
and hence $r_{c}^{*}(\mathcal{P}(\mathbb{Z}^{n}))=\Omega(n^{\mathbf{1}\{q=1\}})$.

\medskip{}

\subsection{Extension to $\mathcal{P}_{\ll\lambda_{S}}(\mathbb{R}^{n})$ and
$\mathcal{P}(\mathcal{M})$\label{subsec:bd_ball_cont}}

We extend this result to $\mathcal{P}_{\ll\lambda_{S}}(\mathbb{R}^{n})$
for any $S\subseteq\mathbb{R}^{n}$ with $\lambda(S)>0$. By the definition
of Lebesgue outer measure,
\[
\lambda(S)=\inf\left\{ \sum_{i=1}^{\infty}\lambda(R_{i}):\,R_{i}\in\mathcal{C}_{n},\,S\subseteq\bigcup_{i=1}^{\infty}R_{i}\right\} ,
\]
where $\mathcal{C}_{n}$ is the set of hypercubes in the form $[z_{1}-w/2,z_{1}+w/2]\times\cdots\times[z_{n}-w/2,z_{n}+w/2]$
for some $a\in\mathbb{R}^{n}$, $w\in\mathbb{R}_{>0}$ (while the
original definition is for $R_{i}$ being rectangle sets, any rectangle
set can be covered by a union of hypercubes with total volume arbitrarily
close to the volume of the rectangle). Hence for any $\epsilon>0$,
we can find $R_{1},R_{2},\ldots\in\mathcal{C}_{n}$ such that $S\subseteq\bigcup_{i}R_{i}$
and $\sum_{i}\lambda(R_{i})<(1+\epsilon)\lambda(S)$. Since $\lambda(S)\le\sum_{i}\lambda(R_{i}\cap S)$,
there exists $i$ with $\lambda(R_{i})\le(1+\epsilon)\lambda(R_{i}\cap S)$,
and hence $\lambda(R_{i}\backslash S)\le(\epsilon/(1+\epsilon))\lambda(R_{i})<\epsilon\lambda(R_{i})$.

For brevity, write $R:=R_{i}$, and let $R=[z_{1}-w/2,z_{1}+w/2]\times\cdots\times[z_{n}-w/2,z_{n}+w/2]$.
We now construct probability distributions $\{\tilde{P}_{\alpha}\}_{\alpha\in\mathcal{A}}$,
over $R\cap S$, where $\mathcal{A}:=[-(k-1)/2..\,(k-1)/2]^{n}$ with
$k:=\lfloor n^{-1}\epsilon^{-1/(2n)}\rfloor$ (assume $\epsilon$
is small enough that $k\ge3$). Let 
\[
\gamma:=nw\epsilon^{1/n}/2,
\]
\[
\mathcal{B}_{\gamma}(x):=\{y\in\mathbb{R}^{n}:\,\Vert y-x\Vert_{1}\le\gamma\},
\]
\[
Q_{\alpha}:=\frac{\lambda_{R\cap S\cap\mathcal{B}_{\gamma}((w/k)\alpha+z)}}{\lambda(R\cap S\cap\mathcal{B}_{\gamma}((w/k)\alpha+z))},
\]
\[
\tilde{P}_{\alpha}:=\sum_{\beta\in\mathcal{A}}P_{\alpha}(\beta)Q_{\beta},
\]
where $P_{\alpha}:=(|\mathcal{A}|-1)^{-1}\sum_{\beta\in\mathcal{A}\backslash\{\alpha\}}\delta_{\beta}$
is defined before. Since $k\le n^{-1}\epsilon^{-1/(2n)}\le n^{-1}\epsilon^{-1/n}=w/(2\gamma)$,
it can be checked that $\mathcal{B}_{\gamma}((w/k)\alpha+z)\subseteq R$,
and hence
\begin{align*}
 & \lambda(R\cap S\cap\mathcal{B}_{\gamma}((w/k)\alpha+z))\\
 & \ge\lambda(\mathcal{B}_{\gamma}((w/k)\alpha+z))-\lambda(R\backslash S)\\
 & >(2\gamma/n)^{n}-\epsilon w^{n}\\
 & =0,
\end{align*}
and $Q_{\alpha}$ are valid probability measures. For $\alpha\neq\beta\in\mathcal{A}$,
$x\in\mathcal{B}_{\gamma}((w/k)\alpha+z)$, $y\in\mathcal{B}_{\gamma}((w/k)\beta+z)$,
\begin{align*}
\Vert x-y\Vert_{p} & \ge\Vert((w/k)\alpha+z)-((w/k)\beta+z)\Vert_{p}-2\gamma\\
 & =\frac{w}{k}\Vert\alpha-\beta\Vert_{p}-2\gamma\\
 & \ge\left(\frac{w}{k}-2\gamma\right)\Vert\alpha-\beta\Vert_{p}\\
 & =w\left(\frac{1}{k}-n\epsilon^{1/n}\right)\Vert\alpha-\beta\Vert_{p}\\
 & \ge wn\left(\epsilon^{1/(2n)}-\epsilon^{1/n}\right)\Vert\alpha-\beta\Vert_{p}.
\end{align*}
This is also obviously true for $\alpha=\beta$. Fix any coupling
$\{\tilde{X}_{\alpha}\}_{\alpha\in\mathcal{A}}$ of $\{\tilde{P}_{\alpha}\}_{\alpha\in\mathcal{A}}$.
Let $X_{\alpha}:=\mathrm{round}((k/w)(\tilde{X}_{\alpha}-z))$, where
$\mathrm{round}(x):=(\lfloor x_{1}+1/2\rfloor,\ldots,\lfloor x_{n}+1/2\rfloor)$,
then $X_{\alpha}\sim P_{\alpha}$, and
\begin{align*}
 & \mathbf{E}[c(\tilde{X}_{\alpha},\tilde{X}_{\beta})]\\
 & \ge\left(wn(\epsilon^{1/(2n)}-\epsilon^{1/n})\right)^{q}\mathbf{E}[c(X_{\alpha},X_{\beta})].
\end{align*}
Also, for $\alpha\neq\beta\in\mathcal{A}$, $x\in\mathcal{B}_{\gamma}((w/k)\alpha+z)$,
$y\in\mathcal{B}_{\gamma}((w/k)\beta+z)$,
\begin{align*}
\Vert x-y\Vert_{p} & \le\Vert((w/k)\alpha+z)-((w/k)\beta+z)\Vert_{p}+2\gamma\\
 & =\frac{w}{k}\Vert\alpha-\beta\Vert_{p}+2\gamma\\
 & \le\left(\frac{w}{k}+2\gamma\right)\Vert\alpha-\beta\Vert_{p}\\
 & \le\left(\frac{w}{n^{-1}\epsilon^{-1/(2n)}-1}+nw\epsilon^{1/n}\right)\Vert\alpha-\beta\Vert_{p}\\
 & =wn\left((\epsilon^{-1/(2n)}-n)^{-1}+\epsilon^{1/n}\right)\Vert\alpha-\beta\Vert_{p}.
\end{align*}
Given any coupling $(X,Y)$ of $(P_{\alpha},P_{\beta})$, we can construct
a coupling $(\tilde{X},\tilde{Y})$ of $(\tilde{P}_{\alpha},\tilde{P}_{\beta})$
where $X=\mathrm{round}((k/w)(\tilde{X}-z))$, $Y=\mathrm{round}((k/w)(\tilde{Y}-z))$,
and $\tilde{X}=\tilde{Y}$ when $X=Y$. This can be achieved by letting
$B\sim\mathrm{Unif}[0,1]$ and $\tilde{X}=f_{X}(B)$, $\tilde{Y}=f_{Y}(B)$
where $f_{\alpha}:[0,1]\to\mathbb{R}^{n}$, $f_{\alpha*}\lambda_{[0,1]}=Q_{\alpha}$
for $\alpha\in\mathcal{A}$. Hence,
\[
C_{c}^{*}(\tilde{P}_{\alpha},\tilde{P}_{\beta})\le\left(wn((\epsilon^{-1/(2n)}-n)^{-1}+\epsilon^{1/n})\right)^{q}C_{c}^{*}(P_{\alpha},P_{\beta}).
\]
Therefore,
\[
r_{c}(\{\tilde{X}_{\alpha}\}_{\alpha})\ge\frac{\left(wn(\epsilon^{1/(2n)}-\epsilon^{1/n})\right)^{q}}{\left(wn((\epsilon^{-1/(2n)}-n)^{-1}+\epsilon^{1/n})\right)^{q}}r_{c}(\{X_{\alpha}\}_{\alpha}).
\]
Let $\epsilon\to0$ (and hence $k\to\infty$). Since
\[
\frac{\left(wn(\epsilon^{1/(2n)}-\epsilon^{1/n})\right)^{q}}{\left(wn((\epsilon^{-1/(2n)}-n)^{-1}+\epsilon^{1/n})\right)^{q}}\to1,
\]
we obtain the same result as in \eqref{eq:rc_lb_Gnpq}:
\begin{align*}
 & r_{c}^{*}(\mathcal{P}_{\ll\lambda_{S}}(\mathbb{R}^{n}))\\
 & \ge\left(1+\mathbf{1}\{q<1\}\right)\frac{1}{n}\left(1-\frac{1}{n}\right)\int_{0}^{\infty}G_{n,p,q}^{-1}(t)t^{-(1+1/n)}\mathrm{d}t.
\end{align*}

\medskip{}

We finally consider the case $\mathcal{P}(\mathcal{M})$ where $\mathcal{M}$
is a connected smooth complete $n$-dimensional Riemannian manifold.
We take $p=2$ and $c(x,y)=(d_{\mathcal{M}}(x,y))^{q}$, where $d_{\mathcal{M}}$
denotes the intrinsic distance on the manifold $\mathcal{M}$. Let
$x_{0}\in\mathcal{M}$, and let $\psi:E\to F$ be a coordinates chart
at $x_{0}$ corresponding to the geodesically normal coordinates at
$x_{0}$, where $E\subseteq\mathcal{M}$ contains a neighborhood of
$x_{0}$, $F\subseteq\mathbb{R}^{n}$, and $\psi(x_{0})=0$. Let $\epsilon>0$,
and let $E'\subseteq E$ be an open neighborhood of $x_{0}$ small
enough that 
\begin{equation}
(1-\epsilon)\Vert\psi(x)-\psi(y)\Vert_{2}\le d_{\mathcal{M}}(x,y)\le(1+\epsilon)\Vert\psi(x)-\psi(y)\Vert_{2}\label{eq:manifold_dist_bd}
\end{equation}
for all $x,y\in E'$. This is possible due to the smoothness of the
Riemannian metric tensor (see \cite[Definition 1.4.1, Theorem 1.4.4]{jost2008riemannian}).
Let $w>0$ be small enough that $\{y:\Vert y\Vert_{\infty}\le w\}\subseteq\psi(E')$,
and let $\bar{P}_{\alpha}:=\sum_{\beta\in\mathcal{A}}P_{\alpha}(\beta)\delta_{\psi^{-1}((w/k)\beta)}$,
where $\mathcal{A}$ and $P_{\alpha}$ are defined before. By \eqref{eq:manifold_dist_bd}
and the ratio bound in Proposition \ref{prop:rc_prop_misc},
\[
r_{c}^{*}(\mathcal{P}(\mathcal{M}))\ge r_{c}^{*}(\{\bar{P}_{\alpha}\}_{\alpha})\ge\left(\frac{1-\epsilon}{1+\epsilon}\right)^{q}r_{c}^{*}(\{P_{\alpha}\}_{\alpha}).
\]
Letting $k\to\infty$, and then $\epsilon\to0$, we obtain the same
result as in \eqref{eq:rc_lb_Gnpq}:
\begin{align*}
 & r_{c}^{*}(\mathcal{P}(\mathcal{M}))\\
 & \ge\left(1+\mathbf{1}\{q<1\}\right)\frac{1}{n}\left(1-\frac{1}{n}\right)\int_{0}^{\infty}G_{n,2,q}^{-1}(t)t^{-(1+1/n)}\mathrm{d}t.
\end{align*}
\[
\]

\medskip{}

\section{Proof of Corollary \ref{cor:rn_rc_lb_infdim}\label{subsec:pf_rn_rc_lb_infdim}}

The strategy is to find a sequence of embedding functions $g_{n}:S\to\mathcal{X}$,
where either $S=\mathbb{Z}^{n}$ or $S\subseteq\mathbb{R}^{n}$ with
$\lambda(S)>0$, such that $c(g_{n}(u),g_{n}(v))=\theta_{n}\Vert u-v\Vert_{p}^{q}$
for some constant $\theta_{n}>0$ that only depends on $n$ (i.e.,
$c(g_{n}(u),g_{n}(v))$ is proportional to $\Vert u-v\Vert_{p}^{q}$).
We will then have
\begin{align*}
r_{c}^{*}(\mathcal{P}(\mathcal{X})) & \ge r_{\Vert u-v\Vert_{p}^{q}}^{*}(\mathcal{P}(S)).
\end{align*}
If $0<q<1$, we invoke Theorem \ref{thm:rn_rc_lb_ball} to show that
$r_{\Vert u-v\Vert_{p}^{q}}^{*}(\mathcal{P}(S))=\Omega(n^{q/p})$,
which tends to $\infty$ as $n\to\infty$ if $p<\infty$. If $q\ge1$,
we use Proposition \ref{prop:rn_rc_lb_2} instead to show $r_{\Vert u-v\Vert_{p}^{q}}^{*}(\mathcal{P}(S))=\infty$.

For the case $\mathcal{X}=\{x\in\mathbb{R}^{\mathbb{N}}:\Vert x\Vert_{p}<\infty\}$,
$c(x,y)=\Vert x-y\Vert_{p}^{q}$, $p\in\mathbb{R}_{\ge1}$, $q>0$,
the result follows directly by considering the embedding $g_{n}:\mathbb{R}^{n}\to\mathcal{X}$
defined by $g_{n}(u):=(u_{1},\ldots,u_{n},0,0,\ldots)$. This result
also holds for $r_{c}^{*}(\mathcal{P}(\mathcal{X}'))$, where $\mathcal{X}'\subseteq\mathcal{X}$
is the set of non-negative, non-increasing sequences that sum to 1.
In this case, consider the embedding $g_{n}:[-1,1]^{n}\to\mathcal{X}'$
defined by
\[
(g_{n}(u))_{i}=\mathbf{1}\{i\le2n\}\frac{2n+1-i+(-1)^{i}u_{\lceil i/2\rceil}/2}{n(2n+1)}.
\]
It is straightforward to check that $\Vert g_{n}(u)-g_{n}(v)\Vert_{p}^{q}$
is proportional to $\Vert u-v\Vert_{p}^{q}$.

Next we consider the case where $\mathcal{X}$ is the space of all
continuous functions $f:[0,1]\to\mathbb{R}$ (with the topology and
$\sigma$-algebra generated by the $L_{\infty}$ metric), $c(f,g)=\Vert f-g\Vert_{p}^{q}$
is the $L_{p}$ metric to the power $q$, where $p\in\mathbb{R}_{\ge1}\cup\{\infty\}$,
$q>0$ satisfy $p<\infty$ or $q\ge1$. Refer to Remark \ref{rem:contfcn_meas}
for the proof that $c$ is measurable. Consider the embedding $g_{n}:\mathbb{Z}^{n}\to\mathcal{X}$
defined by
\[
(g_{n}(u))(t)=\left(1-\left|2\left(nt-\lfloor nt\rfloor\right)-1\right|\right)u_{\min\{\lfloor nt\rfloor+1,n\}}.
\]
It is straightforward to check that $\Vert g_{n}(u)-g_{n}(v)\Vert_{p}^{q}$
is proportional to $\Vert u-v\Vert_{p}^{q}$. The result follows.

Note that this result also holds for $r_{c}^{*}(\mathcal{P}(\mathcal{X}'))$,
where $\mathcal{X}'\subseteq\mathcal{X}=\mathrm{C}([0,1],\mathbb{R})$
is the set of non-negative, infinitely differentiable, $1$-Lipschitz
functions $f$ with $\int_{0}^{1}f=1$. In this case, we consider
the embedding $g_{n}:[-1,1]^{n}\to\mathcal{X}'$ defined by
\[
(g_{n}(u))(t)=1+\frac{1}{4n}\sum_{i=1}^{n}\left(\psi(4nt-4i+3)-\psi(4nt-4i+1)\right)u_{i},
\]
where $\psi(t):=\mathbf{1}\{|t|<1\}e^{-\frac{1}{1-t^{2}}}$ is infinitely
differentiable and $1$-Lipschitz. It is straightforward to check
that $\Vert g_{n}(u)-g_{n}(v)\Vert_{p}^{q}$ is proportional to $\Vert u-v\Vert_{p}^{q}$.

Next we consider the case $\mathcal{X}=\{x\in\{0,1\}^{\mathbb{N}}:\,\sum_{i}x_{i}<\infty\}$,
$c(x,y)=\Vert x-y\Vert_{1}^{q}$, $q>0$. Let $f:\mathbb{N}\to\mathbb{N}\times\mathbb{Z}$
be a bijection (write $f(i)=(f_{1}(i),f_{2}(i))$). Let $g_{n}:\mathbb{Z}^{n}\to\mathcal{X}$
be defined by
\[
(g_{n}(u))_{i}=\mathbf{1}\left\{ f_{1}(i)\le n\;\mathrm{and}\;f_{2}(i)\in[u_{f_{1}(i)}..-1]\cup[0..u_{f_{1}(i)}-1]\right\} .
\]
Note that $[u_{f_{1}(i)}..-1]=\emptyset$ if $u_{f_{1}(i)}\ge0$,
and $[0..u_{f_{1}(i)}-1]=\emptyset$ if $u_{f_{1}(i)}\le0$. We have
\begin{align*}
 & \Vert g_{n}(u)-g_{n}(v)\Vert_{1}\\
 & =\Big|\Big\{ i:\,\mathbf{1}\{f_{1}(i)\le n\;\mathrm{and}\;f_{2}(i)\in[u_{f_{1}(i)}..-1]\cup[0..u_{f_{1}(i)}-1]\}\\
 & \;\;\;\;\;\;\;\neq\mathbf{1}\{f_{1}(i)\le n\;\mathrm{and}\;f_{2}(i)\in[v_{f_{1}(i)}..-1]\cup[0..v_{f_{1}(i)}-1]\}\Big\}\Big|\\
 & =\Big|\Big\{(i,j)\in\mathbb{N}\times\mathbb{Z}:\,\mathbf{1}\{i\le n\;\mathrm{and}\;j\in[u_{i}..-1]\cup[0..u_{i}-1]\}\\
 & \;\;\;\;\;\;\;\neq\mathbf{1}\{i\le n\;\mathrm{and}\;j\in[v_{i}..-1]\cup[0..v_{i}-1]\}\Big\}\Big|\\
 & =\sum_{i=1}^{n}\bigg(\left|\left([u_{i}..-1]\cup[0..u_{i}-1]\right)\backslash\left([v_{i}..-1]\cup[0..v_{i}-1]\right)\right|\\
 & \;\;\;\;\;\;\;+\left|\left([v_{i}..-1]\cup[0..v_{i}-1]\right)\backslash\left([u_{i}..-1]\cup[0..u_{i}-1]\right)\right|\bigg)\\
 & =\Vert u-v\Vert_{1},
\end{align*}
and hence $c(g_{n}(u),g_{n}(v))=\Vert u-v\Vert_{1}^{q}$.
\begin{rem}
\label{rem:contfcn_meas}Suppose $\mathcal{X}$ is the space of all
continuous functions $f:[0,1]\to\mathbb{R}$ (with the topology and
$\sigma$-algebra generated by the $L_{\infty}$ metric), $c(f,g)=\Vert f-g\Vert_{p}^{q}$,
$p\in\mathbb{R}_{\ge1}\cup\{\infty\}$, $q>0$. Here we show that
$c$ is measurable for the sake of completeness. Let $\Xi\subseteq\mathcal{X}$
be the set of all polynomials with rational coefficients. Note that
$\Xi$ is countable. By the Weierstrass approximation theorem, any
$f\in\mathcal{X}$ can be lower-bounded (or upper-bounded) by a function
in $\Xi$ that is arbitrarily close to $f$ in the $L_{\infty}$ (and
hence $L_{p}$) metric. As a result, for any $\gamma>0$,
\begin{align*}
 & \left\{ (f,g):\,\Vert f-g\Vert_{p}<\gamma\right\} \\
 & =\bigcup_{\overset{\alpha_{1},\alpha_{2},\beta_{1},\beta_{2}\in\Xi:}{\overset{\Vert\alpha_{1}-\alpha_{2}\Vert_{p}+\Vert\alpha_{1}-\beta_{1}\Vert_{p}+\Vert\beta_{1}-\beta_{2}\Vert_{p}<\gamma}{}}}\left\{ (f,g):\,\alpha_{1}\le f\le\alpha_{2},\,\beta_{1}\le g\le\beta_{2}\right\} 
\end{align*}
is in the product $\sigma$-algebra since $\{f:f\ge\alpha_{1}\}=\bigcup_{i\in\mathbb{N}}\{f:\Vert f-(\alpha_{1}+i)\Vert_{\infty}\le i\}$.
Hence $c$ is measurable.
\end{rem}

\medskip{}

\section{Proof of Proposition \ref{prop:circle} and Proposition \ref{prop:circle_trunc}\label{subsec:pf_circle}}

We represent the circle as $\mathcal{X}:=[0,1)$ with the metric $d(x,y):=\min\{|x-y|,\,1-|x-y|\}$,
and $c(x,y)=(d(x,y))^{q}$, $q>0$. While the length of $\mathcal{X}=\mathcal{M}$
in Proposition \ref{prop:circle} is $2\pi$ instead of $1$, scaling
the whole space would not affect $r_{c}^{*}(\mathcal{P}(\mathcal{X}))$.

We first prove that $r_{c}^{*}(\mathcal{P}(\mathcal{X}))\ge2$ for
any $q>0$. Applying Proposition \ref{prop:rn_rc_lb_1} on $x_{i}=(i-1)/k$
for $i=1,\ldots,k$, we have
\begin{align*}
r_{c}^{*}(\mathcal{P}(\mathcal{X})) & \ge\frac{2(k-1)\min_{1\le i<j\le k}c(x_{i},x_{j})}{\sum_{i=1}^{k}c(x_{i},x_{i+1})}\\
 & =\frac{2(k-1)k^{-q}}{k\cdot k^{-q}}\\
 & =\frac{2(k-1)}{k}.
\end{align*}
The result follows from letting $k\to\infty$.

We then prove that $r_{c}^{*}(\mathcal{P}(\mathcal{X}))=\infty$ for
$q>1$. Applying Proposition \ref{prop:rn_rc_lb_2} on $x_{i}=(i-1)/(2k)$
for $i=1,\ldots,2k$, we have
\begin{align}
r_{c}^{*}(\mathcal{P}(\mathcal{X})) & \ge r_{c}^{*}\left(\mathcal{P}(\{x_{i}:i\in[1..2k]\})\right)\nonumber \\
 & \ge\left(\sum_{i=1}^{k}\frac{c(x_{i},x_{i+1})+c(x_{i+k},x_{i+k+1})}{c(x_{i},x_{i+k+1})+c(x_{i+k},x_{i+1})-c(x_{i},x_{i+1})-c(x_{i+k},x_{i+k+1})}\right)^{-1}+1\nonumber \\
 & =\left(k\cdot\frac{2(2k)^{-q}}{2((k-1)/(2k))^{q}-2(2k)^{-q}}\right)^{-1}+1\nonumber \\
 & =\left(k\cdot\frac{1}{(k-1)^{q}-1}\right)^{-1}+1\nonumber \\
 & =\frac{(k-1)^{q}-1}{k}+1\nonumber \\
 & =\Omega(k^{q-1}).\label{eq:circle_order}
\end{align}
The result follows from letting $k\to\infty$.

The upper bound in Proposition \ref{prop:circle} for $0<q<1$ follows
from Corollary \ref{cor:dyadic_manifold}.

We then prove that $r_{c}^{*}(\mathcal{P}(\mathcal{X}))\le2$ for
$q=1$ using a similar idea as in \cite{karp1989competitive}. Consider
the collection of probability distribution $\{P_{\alpha}\}_{\alpha\in\mathcal{A}}$
over $\mathcal{X}$. For $z\in\mathbb{R}$, let $\psi_{z}:[0,1)\to[0,1)$
defined by $\psi_{z}(x)\equiv x+z\;\mathrm{mod}\;1$. We construct
a coupling by $X_{\alpha,z}:=\psi_{-z}(F_{\psi_{z*}P_{\alpha}}^{-1}(U))$
(where $F_{\psi_{z*}P_{\alpha}}^{-1}$ is the inverse of the cdf of
$\psi_{z}(X)$ when $X\sim P_{\alpha}$), $X_{\alpha}:=X_{\alpha,Z}$,
where $U\sim\mathrm{Unif}[0,1]$ independent of $Z\sim\mathrm{Unif}[0,1]$.
It is straightforward to check that $X_{\alpha,z}\sim P_{\alpha}$
for any $z$, and hence $X_{\alpha}\sim P_{\alpha}$.

Consider two probability distributions $P_{\alpha},P_{\beta}$. Fix
any coupling $(\tilde{X}_{\alpha},\tilde{X}_{\beta})\in\Gamma_{\lambda}(P_{\alpha},P_{\beta})$
(assume $(\tilde{X}_{\alpha},\tilde{X}_{\beta})$ is independent of
$(U,Z)$). We have
\begin{align*}
 & \mathbf{E}\left[c(X_{\alpha},X_{\beta})\right]\\
 & =\mathbf{E}\left[c\big(\psi_{Z}(X_{\alpha,Z}),\psi_{Z}(X_{\beta,Z})\big)\right]\\
 & \le\mathbf{E}\left[\left|\psi_{Z}(X_{\alpha,Z})-\psi_{Z}(X_{\beta,Z})\right|\right]\\
 & =\mathbf{E}\left[\mathbf{E}\left[\left|F_{\psi_{Z*}P_{\alpha}}^{-1}(U)-F_{\psi_{Z*}P_{\beta}}^{-1}(U)\right|\,\big|\,Z\right]\right]\\
 & \stackrel{(a)}{\le}\mathbf{E}\left[\mathbf{E}\left[\left|\psi_{Z}(\tilde{X}_{\alpha})-\psi_{Z}(\tilde{X}_{\beta})\right|\,\big|\,Z\right]\right]\\
 & =\mathbf{E}\left[\mathbf{E}\left[\left|\psi_{Z}(\tilde{X}_{\alpha})-\psi_{Z}(\tilde{X}_{\beta})\right|\,\big|\,\tilde{X}_{\alpha},\tilde{X}_{\beta}\right]\right]\\
 & \stackrel{(b)}{=}\mathbf{E}\left[2d(\tilde{X}_{\alpha},\tilde{X}_{\beta})\left(1-d(\tilde{X}_{\alpha},\tilde{X}_{\beta})\right)\right]\\
 & \le2\mathbf{E}\left[d(\tilde{X}_{\alpha},\tilde{X}_{\beta})\right]\left(1-\mathbf{E}\left[d(\tilde{X}_{\alpha},\tilde{X}_{\beta})\right]\right)\\
 & \le2\mathbf{E}\left[d(\tilde{X}_{\alpha},\tilde{X}_{\beta})\right],
\end{align*}
where (a) is by the optimality of the quantile coupling, and (b) is
by $\mathbf{E}[|\psi_{Z}(x)-\psi_{Z}(y)|]=2d(x,y)(1-d(x,y))$. Hence,
\begin{align*}
 & \mathbf{E}\left[c(X_{\alpha},X_{\beta})\right]\\
 & \le\inf_{(\tilde{X}_{\alpha},\tilde{X}_{\beta})\in\Gamma_{\lambda}(P_{\alpha},P_{\beta})}2\mathbf{E}\left[d(\tilde{X}_{\alpha},\tilde{X}_{\beta})\right]\\
 & =2C_{c}^{*}(P_{\alpha},P_{\beta}).
\end{align*}

The same construction can also be used to prove Proposition \ref{prop:circle_trunc}
for $q\ge1$. For $\gamma\ge0$, let $h_{\gamma}:\mathbb{R}_{\ge0}\to\mathbb{R}_{\ge0}$
be defined by
\[
h_{\gamma}(x):=\begin{cases}
x^{q} & \mathrm{if}\;x\le\gamma^{1/q}\\
q\gamma^{1-1/q}(x-\gamma^{1/q})+\gamma & \mathrm{if}\;x>\gamma^{1/q}.
\end{cases}
\]
It can be checked that $h_{\gamma}$ is convex and $h_{\gamma}(x)\le\min\{x^{q},q\gamma^{1-1/q}x\}$.
Consider two probability distributions $P_{\alpha},P_{\beta}$. Fix
any coupling $(\tilde{X}_{\alpha},\tilde{X}_{\beta})\in\Gamma_{\lambda}(P_{\alpha},P_{\beta})$
(assume $(\tilde{X}_{\alpha},\tilde{X}_{\beta})$ is independent of
$(U,Z)$). For any $\gamma\ge0$, we have
\begin{align*}
 & \mathbf{E}\left[\min\left\{ c(X_{\alpha},X_{\beta}),\,\gamma\right\} \right]\\
 & =\mathbf{E}\left[\min\left\{ c\big(\psi_{Z}(X_{\alpha,Z}),\psi_{Z}(X_{\beta,Z})\big),\,\gamma\right\} \right]\\
 & \le\mathbf{E}\left[\min\left\{ \left|\psi_{Z}(X_{\alpha,Z})-\psi_{Z}(X_{\beta,Z})\right|^{q},\,\gamma\right\} \right]\\
 & \le\mathbf{E}\left[h_{\gamma}\left(|\psi_{Z}(X_{\alpha,Z})-\psi_{Z}(X_{\beta,Z})|\right)\right]\\
 & =\mathbf{E}\left[\mathbf{E}\left[h_{\gamma}\left(\left|F_{\psi_{Z*}P_{\alpha}}^{-1}(U)-F_{\psi_{Z*}P_{\beta}}^{-1}(U)\right|\right)\,\bigg|\,Z\right]\right]\\
 & \stackrel{(a)}{\le}\mathbf{E}\left[\mathbf{E}\left[h_{\gamma}\left(\left|\psi_{Z}(\tilde{X}_{\alpha})-\psi_{Z}(\tilde{X}_{\beta})\right|\right)\,\big|\,Z\right]\right]\\
 & =\mathbf{E}\left[\mathbf{E}\left[h_{\gamma}\left(\left|\psi_{Z}(\tilde{X}_{\alpha})-\psi_{Z}(\tilde{X}_{\beta})\right|\right)\,\big|\,\tilde{X}_{\alpha},\tilde{X}_{\beta}\right]\right]\\
 & =\mathbf{E}\left[d(\tilde{X}_{\alpha},\tilde{X}_{\beta})h_{\gamma}\left(1-d(\tilde{X}_{\alpha},\tilde{X}_{\beta})\right)+(1-d(\tilde{X}_{\alpha},\tilde{X}_{\beta}))h_{\gamma}\left(d(\tilde{X}_{\alpha},\tilde{X}_{\beta})\right)\right]\\
 & \stackrel{(b)}{\le}\mathbf{E}\left[d(\tilde{X}_{\alpha},\tilde{X}_{\beta})q\gamma^{1-1/q}\left(1-d(\tilde{X}_{\alpha},\tilde{X}_{\beta})\right)\right]+\mathbf{E}\left[(d(\tilde{X}_{\alpha},\tilde{X}_{\beta}))^{q}\right]\\
 & \le q\gamma^{1-1/q}\mathbf{E}\left[d(\tilde{X}_{\alpha},\tilde{X}_{\beta})\right]+\mathbf{E}\left[(d(\tilde{X}_{\alpha},\tilde{X}_{\beta}))^{q}\right],
\end{align*}
where (a) is by the optimality of the quantile coupling for convex
costs, and (b) is by $h_{\gamma}(x)\le\min\{x^{q},q\gamma^{1-1/q}x\}$.
Substituting $\gamma=\eta C_{c}^{*}(P_{\alpha},P_{\beta})$ and taking
the infimum over $(\tilde{X}_{\alpha},\tilde{X}_{\beta})$, we have
\begin{align*}
 & \mathbf{E}\left[\min\left\{ c(X_{\alpha},X_{\beta}),\,\eta C_{c}^{*}(P_{\alpha},P_{\beta})\right\} \right]\\
 & \le\inf_{(\tilde{X}_{\alpha},\tilde{X}_{\beta})\in\Gamma_{\lambda}(P_{\alpha},P_{\beta})}\left(q\left(\eta C_{c}^{*}(P_{\alpha},P_{\beta})\right)^{1-1/q}\mathbf{E}\left[d(\tilde{X}_{\alpha},\tilde{X}_{\beta})\right]+\mathbf{E}\left[(d(\tilde{X}_{\alpha},\tilde{X}_{\beta}))^{q}\right]\right)\\
 & =\inf_{(\tilde{X}_{\alpha},\tilde{X}_{\beta})\in\Gamma_{\lambda}(P_{\alpha},P_{\beta})}C_{c}^{*}(P_{\alpha},P_{\beta})\left(q\eta^{1-1/q}\frac{\mathbf{E}[d(\tilde{X}_{\alpha},\tilde{X}_{\beta})]}{(C_{c}^{*}(P_{\alpha},P_{\beta}))^{1/q}}+\frac{\mathbf{E}[(d(\tilde{X}_{\alpha},\tilde{X}_{\beta}))^{q}]}{C_{c}^{*}(P_{\alpha},P_{\beta})}\right)\\
 & \le\inf_{(\tilde{X}_{\alpha},\tilde{X}_{\beta})\in\Gamma_{\lambda}(P_{\alpha},P_{\beta})}C_{c}^{*}(P_{\alpha},P_{\beta})\left(q\eta^{1-1/q}\frac{(\mathbf{E}[(d(\tilde{X}_{\alpha},\tilde{X}_{\beta}))^{q}])^{1/q}}{(C_{c}^{*}(P_{\alpha},P_{\beta}))^{1/q}}+\frac{\mathbf{E}[(d(\tilde{X}_{\alpha},\tilde{X}_{\beta}))^{q}]}{C_{c}^{*}(P_{\alpha},P_{\beta})}\right)\\
 & =(q\eta^{1-1/q}+1)C_{c}^{*}(P_{\alpha},P_{\beta}).
\end{align*}

\medskip{}

\section{Proof of Proposition \ref{prop:finite_col} \label{subsec:pf_finite_col}}

We use the result in \cite{fakcharoenphol2004tight} to construct
a random tree over $\mathcal{A}$, and use the tree to construct the
coupling. We remark that the same strategy is used in \cite{archer2004approximate}.
We include the proof here for the sake of completeness.

We invoke the result in \cite{fakcharoenphol2004tight}, which states
that for any finite metric space $(\mathcal{Y},d_{\mathcal{Y}})$,
there exists a random laminar family $T_{U}\subseteq2^{\mathcal{Y}}$
(i.e., for any $v_{1},v_{2}\in T_{U}$, either $v_{1}\subseteq v_{2}$,
$v_{2}\subseteq v_{1}$, or $v_{1}\cap v_{2}=\emptyset$) indexed
by a random variable $U\sim\varpi_{U}$ (where $\varpi_{U}$ is a
probability distribution over $\mathbb{N}$ with finite support\footnote{We can assume finiteness since the number of choices of $T_{U}\subseteq2^{\mathcal{Y}}$
is finite.}), satisfying that $\mathcal{Y}\in T_{U}$ (the root node), $\{y\}\in T_{U}$
for all $y\in\mathcal{Y}$ (the leaf nodes), and the property in \eqref{eq:pf_finite_col_treebd}
below. We can regard $T_{U}$ as a rooted tree where $v_{1}\in T_{U}$
is the parent of $v_{2}\in T_{U}$ if $v_{2}\subsetneqq v_{1}$ is
a maximal subset (i.e., there does not exist $v_{3}\in T_{U}$ such
that $v_{2}\subsetneqq v_{3}\subsetneqq v_{1}$), and the edge $(v_{1},v_{2})$
has length $\mathrm{diam}(v_{1})/2$ (the diameter is with respect
to $d_{\mathcal{Y}}$). Let $d_{T_{U}}(y_{1},y_{2})$ be the length
of the path connecting the leaf nodes $\{y_{1}\}$ and $\{y_{2}\}$
in the tree $T_{U}$. Note that $d_{T_{U}}(y_{1},y_{2})\ge d_{\mathcal{Y}}(y_{1},y_{2})$
since the lowest common ancestor of $\{y_{1}\}$ and $\{y_{2}\}$
has diameter at least $d_{\mathcal{Y}}(y_{1},y_{2})$. The random
tree $T_{U}$ in \cite{fakcharoenphol2004tight} satisfies that, for
any $y_{1},y_{2}\in\mathcal{Y}$,
\begin{equation}
\mathbf{E}_{U}[d_{T_{U}}(y_{1},y_{2})]\le\frac{8\log|\mathcal{Y}|}{\log2}d_{\mathcal{Y}}(y_{1},y_{2}).\label{eq:pf_finite_col_treebd}
\end{equation}

We now use this result to prove Proposition \ref{prop:finite_col}.
Consider the metric space $(\mathcal{A},\,(\alpha,\beta)\mapsto C_{c}^{*}(P_{\alpha},P_{\beta}))$.
Let $T_{U}\subseteq2^{\mathcal{A}}$ satisfy the aforementioned conditions.
For any $u\in\mathrm{supp}(\varpi_{U})$ and $\alpha,\beta\in\mathcal{A}$,
define $v_{u,\alpha,\beta}$ to be the lowest common ancestor of $\{\alpha\}$
and $\{\beta\}$ in $T_{u}$, and define $w_{u,\alpha,\beta,0},\ldots,w_{u,\alpha,\beta,l_{u,\alpha,\beta}}$
to be the path connecting $v_{u,\alpha,\beta}$ and $\{\alpha\}$
in $T_{u}$ (where $w_{u,\alpha,\beta,0}=v_{u,\alpha,\beta}$ and
$w_{u,\alpha,\beta,l_{u,\alpha,\beta}}=\{\alpha\}$). Note that the
path connecting $\{\alpha\}$ and $\{\beta\}$ in $T_{u}$ is $\{\alpha\}=w_{u,\alpha,\beta,l_{u,\alpha,\beta}},w_{u,\alpha,\beta,l_{u,\alpha,\beta}-1},\ldots,w_{u,\alpha,\beta,1},\,w_{u,\alpha,\beta,0}=w_{u,\beta,\alpha,0}=v_{u,\alpha,\beta},\,w_{u,\beta,\alpha,1},\ldots,w_{u,\beta,\alpha,l_{u,\beta,\alpha}}=\{\beta\}$.

Fix any $\epsilon>0$. For any $\alpha,\beta\in\mathcal{A}$, let
$Q_{\alpha,\beta}\in\Gamma(P_{\alpha},P_{\beta})$ such that $\mathbf{E}_{(X,Y)\sim Q_{\alpha,\beta}}[c(X,Y)]\le(1+\epsilon)C_{c}^{*}(P_{\alpha},P_{\beta})$,
and write $Q_{\alpha|\beta}$ for the conditional distribution of
$X$ given $Y$ when $(X,Y)\sim Q_{\alpha,\beta}$. Define random
variables $X_{u,v}\in\mathcal{X}$ for $u\in\mathrm{supp}(\varpi_{U})$,
$v\in T_{u}$ recursively as $X_{u,\mathcal{A}}\sim P_{\mathrm{cen}(\mathcal{A})}$
(where $\mathrm{cen}(v):=\arg\min_{y\in v}\max_{y'\in v}d_{\mathcal{Y}}(y,y')$
with arbitrary tie-breaking), and for any node $v_{2}$ with parent
$v_{1}$ in $T_{u}$, let $X_{u,v_{2}}|X_{u,v_{1}}\sim Q_{\mathrm{cen}(v_{2})|\mathrm{cen}(v_{1})}$.
The random vectors $\{X_{u,v}\}_{v\in T_{u}}$ are coupled arbitrarily
across different $u\in\mathrm{supp}(\varpi_{U})$. We have
\begin{align*}
 & \mathbf{E}[c(X_{u,\{\alpha\}},X_{u,\{\beta\}})]\\
 & \stackrel{(a)}{\le}\sum_{i=1}^{l_{u,\alpha,\beta}}\mathbf{E}[c(X_{u,w_{u,\alpha,\beta,i-1}},X_{u,w_{u,\alpha,\beta,i}})]+\sum_{i=1}^{l_{u,\beta,\alpha}}\mathbf{E}[c(X_{u,w_{u,\beta,\alpha,i-1}},X_{u,w_{u,\beta,\alpha,i}})]\\
 & =\sum_{i=1}^{l_{u,\alpha,\beta}}\mathbf{E}_{(X,Y)\sim Q_{\mathrm{cen}(w_{u,\alpha,\beta,i-1}),\,\mathrm{cen}(w_{u,\alpha,\beta,i})}}[c(X,Y)]\\
 & \;\;\;\;\;+\sum_{i=1}^{l_{u,\beta,\alpha}}\mathbf{E}_{(X,Y)\sim Q_{\mathrm{cen}(w_{u,\beta,\alpha,i-1}),\,\mathrm{cen}(w_{u,\beta,\alpha,i})}}[c(X,Y)]\\
 & \le\sum_{i=1}^{l_{u,\alpha,\beta}}(1+\epsilon)C_{c}^{*}(P_{\mathrm{cen}(w_{u,\alpha,\beta,i-1})},P_{\mathrm{cen}(w_{u,\alpha,\beta,i})})\\
 & \;\;\;\;\;+\sum_{i=1}^{l_{u,\beta,\alpha}}(1+\epsilon)C_{c}^{*}(P_{\mathrm{cen}(w_{u,\beta,\alpha,i-1})},P_{\mathrm{cen}(w_{u,\beta,\alpha,i})})\\
 & \stackrel{(b)}{\le}\sum_{i=1}^{l_{u,\alpha,\beta}}(1+\epsilon)\mathrm{diam}(w_{u,\alpha,\beta,i-1})+\sum_{i=1}^{l_{u,\beta,\alpha}}(1+\epsilon)\mathrm{diam}(w_{u,\beta,\alpha,i-1})\\
 & =2(1+\epsilon)d_{T_{u}}(\alpha,\beta),
\end{align*}
where (a) is by the triangle inequality, and (b) is because $\mathrm{cen}(w_{u,\alpha,\beta,i-1}),\mathrm{cen}(w_{u,\alpha,\beta,i})\in w_{u,\alpha,\beta,i-1}$.
We then define $X_{\alpha}:=X_{U,\{\alpha\}}$, where $U\sim\varpi_{U}$.
We have
\begin{align*}
\mathbf{E}[c(X_{\alpha},X_{\beta})] & =\mathbf{E}\left[\mathbf{E}[c(X_{U,\{\alpha\}},X_{U,\{\beta\}})\,|\,U]\right]\\
 & \le\mathbf{E}[2(1+\epsilon)d_{T_{U}}(\alpha,\beta)]\\
 & \stackrel{(a)}{\le}2(1+\epsilon)\frac{8\log|\mathcal{A}|}{\log2}C_{c}^{*}(P_{\alpha},P_{\beta})\\
 & \le23.09(1+\epsilon)(\log|\mathcal{A}|)C_{c}^{*}(P_{\alpha},P_{\beta}),
\end{align*}
where (a) is by \eqref{eq:pf_finite_col_treebd}. The result follows
from letting $\epsilon\to0$.

\[
\]

\medskip{}

\[
\]

\bibliographystyle{IEEEtran}
\bibliography{ref}

\end{document}